\newtheorem{thm}{Theorem}[section]
\newtheorem{cor}[thm]{Corollary}
\newtheorem{lem}[thm]{Lemma}
\newtheorem{prop}[thm]{Proposition}
\newtheorem{rem}[thm]{Remark}
\newtheorem{defi}[thm]{Definition}
\numberwithin{equation}{section}
\newcommand{\Om}{{\Omega}}
\newcommand{\R}{{\rm I}\!{\rm R}}
\begin{document}

\title[Positive steady states of an indefinite 
equation]{Positive steady states of an indefinite 
equation with a nonlinear boundary condition: existence, multiplicity, stability and asymptotic profiles}
\vspace{1cm}

\author{Humberto Ramos Quoirin}
\address{H. Ramos Quoirin \newline Universidad de Santiago de Chile, Casilla 307, Correo 2, Santiago, Chile}
\email{\tt humberto.ramos@usach.cl}

\author{Kenichiro Umezu}
\address{K. Umezu \newline Department of Mathematics, Faculty of Education, Ibaraki University, Mito 310-8512, Japan}
\email{\tt kenichiro.umezu.math@vc.ibaraki.ac.jp}

\subjclass{35J20, 35J25, 35J61, 35B32, 35P30} \keywords{Semilinear elliptic problem, Indefinite weight, Variational methods, Bifurcation approach, Population dynamics}
\thanks{The first author was supported by the FONDECYT grant 11121567}

\begin{abstract}
We investigate positive steady states of an indefinite superlinear reaction-diffusion equation arising from population dynamics, coupled
with a nonlinear boundary condition. Both the equation and the boundary condition depend upon a positive parameter $\lambda$, which is inversely proportional to the diffusion rate.
We establish several multiplicity results when the diffusion rate is large and analyze the asymptotic profiles and the stability properties of these steady states as the diffusion rate grows to infinity. In particular, our results show that in some cases bifurcation from zero and from infinity occur at $\lambda=0$. Our approach combines variational and bifurcation techniques. 
\end{abstract}


\maketitle


\section{Introduction and main results} \label{sec1} 
\setcounter{equation}{0}
Let $\Omega$ be a bounded and regular domain of $\mathbb{R}^N$ with $N \geq 2$. In this article we are concerned with the problem
$$ \left\{
\begin{array}{lll}
-\Delta u=\lambda(m(x)u+a(x)|u|^{p-2}u) & {\rm in } & \Omega,  \\
          \frac{\partial u}{\partial \bf{n}}=\lambda b(x)|u|^{q-2}u & {\rm on } & \partial \Omega,\\
\end{array}\right. \leqno{(P_\lambda)}
$$
where:
\begin{itemize}
\item $\Delta$ is the usual Laplacian in $\R^N$
\item $\lambda \in \R$
\item $1<q<2<p$ and if $N>2$ 
then $p < 2^*=\frac{2N}{N-2}$  
\item $m,a \in L^{\infty}(\Omega)$, $m^+ \not \equiv 0$
\item $b \in L^{\infty}(\partial \Omega)$
\item $\bf{n}$ is the outward unit normal to $\partial \Om$. 
\end{itemize}

Our main goal in this article is to carry on the study of $(P_\lambda)$, which was addressed in \cite{RQU} for the logistic case $a \leq 0$. By variational and bifurcation techniques, we established  existence and multiplicity results for non-negative solutions of $(P_\lambda)$. Moreover, the structure of the non-negative solutions set was also discussed. We intend now to deal with these issues in the case where $a$ changes sign. 

By a solution of $(P_\lambda)$ we mean a weak solution, i.e. $u \in H^1(\Om)$ satisfying
$$ 
\int_\Om \nabla u \nabla \varphi - \lambda \int_\Om m(x) u \varphi - \lambda \int_\Om a(x)|u|^{p-2}u \varphi - \lambda \int_{\partial \Om} b(x) |u|^{q-2}u \varphi = 0, \quad \forall \varphi \in H^1(\Om). 
$$
In this case, we may also say that the couple $(\lambda,u)$ is a solution of $(P_\lambda)$. As already pointed out in \cite{RQU}, solutions of $(P_\lambda)$ satisfy $u \in W^{2,r}_{\textrm{loc}}(\Omega) \cap \mathcal{C}^\theta (\overline{\Omega})$ for some $r>N$ and $0<\theta < 1$, so that by the weak maximum principle \cite[Theorem 9.1]{GT}, nontrivial non-negative solutions of $(P_\lambda)$ are strictly positive in $\Omega$.  

$(P_\lambda)$ describes the steady state of solutions of the corresponding initial boundary value problem
\begin{align}  \label{ibvp}
\begin{cases}
\dfrac{\partial u}{\partial t} = \nabla \cdot (d \nabla u) + (m(x) + a(x)|u|^{p-2})u & \mbox{in $(0, \infty)\times \Omega$}, \\ 
u(0,x) = u_0(x) \geq 0 & \mbox{in $\Omega$}, \\ 
(d\nabla u) \cdot \mathbf{n} = b(x)|u|^{q-2}u & \mbox{on $(0, \infty)\times \partial \Omega$}, 
\end{cases} 
\end{align} 
which appears as a model in population dynamics (see Cantrell and Cosner \cite{CC03}, G\'omez-Re\~nasco and L\'opez-G\'omez \cite{GRLG00}). Here the unknown function $u$ stands for the population density of some species having $m(x)$ as intrinsic growth rate and  $m(x) + a(x)|u|^{p-2}$ as extrinsic growth rate. If $a(x)\leq 0$ then the latter one is the well known logistic growth rate, with self-limitation ($a(x)<0$) or without limitation ($a(x)=0$), so that the region where $a(x)=0$ can be considered as a {\it refuge}. We can give the case $a(x)>0$ the following biological interpretation (cf. \cite{GRLG00}): in this case, the extrinsic growth rate measures the {\it symbiosis} effect due to the intraspecific cooperation whereas, in the case $a(x)<0$, it measures the crowding effect associated with competition. As for the nonlinear boundary condition, it suggests that the flux rate $(d \nabla u) \cdot \mathbf{n}$ of the population on $\partial \Omega$ is incoming or outgoing (according to the sign of $b(x)$) and depends nonlinearly on $u$ as $|u|^{q-2}u$ (cf.\ \cite{G-MM-RRS08}). From the population dynamics viewpoint, we point out that the parameter $\lambda$ appearing in $(P_\lambda)$ describes the reciprocal number of the diffusion coefficient $d>0$, and only non-negative solutions are of interest.

Elliptic problems with indefinite nonlinearities have been studied over the last 25 years, starting with the works of Bandle, Pozio and Tesei \cite{BPT}, Ouyang \cite{Ou}, Alama and Tarantello \cite{AT}, Berestycki, Capuzzo-Dolcetta and Nirenberg \cite{BCN94, BCN95}, Lopez-Gomez \cite{LG}, etc. Brown and Zhang \cite{BZ03} and Brown \cite{B07} used the Nehari manifold (or fibering) method to discuss existence, multiplicity, and non-existence of positive solutions for the problem $$ \left\{
\begin{array}{lll}
-\Delta u=\lambda m(x)u+a(x)|u|^{p-2}u & {\rm in } & \Omega,  \\
          u=0 & {\rm on } & \partial \Omega,\\
\end{array}\right.
$$
according to the position of $\lambda$. The sublinear case $1<p<2$ and the superlinear case $p>2$ were treated in  \cite{B07} and \cite{BZ03}, respectively.  We shall see in this article that the Nehari manifold method turns out to be efficient for $(P_\lambda)$ as well.
\par 
 We note that $(P_\lambda)$ is characterized by the combination of the nonlinearities $m(x)u + a(x)|u|^{p-2}u$ in $\Omega$ and $b(x)|u|^{q-2}u$ on $\partial \Omega$. Furthermore,  the signs of $m$, $a$ and $b$ may completely change the effect of these nonlinearities. For elliptic problems with such combined nonlinearities, we refer to Chipot, Fila, and Quittner \cite{CFQ91}, L\'opez-G\'omez, M\'arquez, and Wolanski \cite{L-GMW93}, Morales-Rodrigo and Su\'arez \cite{M-RS05}, Wu \cite{W06}. Besides investigating  existence and multiplicity of positive solutions, we shall analyze as well the structure of the positive solutions set. Our approach is mainly based on a detailed study of the energy functional associated to $(P_\lambda)$. Note however that unlike most of the aforementioned works, $(P_\lambda)$ lacks coercivity on its left-hand side, since the term $(\int_\Omega |\nabla u|^2)^{\frac{1}{2}}$ does not correspond to the norm  of $u$ in $H^1(\Omega)$.  

Since the set of solutions of $(P_\lambda)$ at $\lambda = 0$ is explicitly provided by the constants, we may obtain positive solutions for $|\lambda|$ small by a bifurcation analysis on the line $(\lambda, u)=(0,c)$, where $c> 0$ is a constant, cf. \cite{U12,U13}. On the other hand, the lack of continuity of the derivative of $|u|^{q-2}u$ at $u=0$ prevents the use of the bifurcation approach to obtain positive solutions bifurcating from the zero solution. Finally, let us remark that the boundary point lemma cannot be applied directly to our problem, since it seems difficult to deduce that any nontrivial non-negative solution belongs to $\mathcal{C}^1(\overline{\Omega})$ in view of the assumption $0<q-1<1$. Therefore we are not able to infer that nontrivial non-negative solutions are positive on $\overline{\Omega}$. However we shall prove in Proposition \ref{prop:positive} that if $u$ is a nontrivial non-negative solution of $(P_\lambda)$ then the set $\{ x \in \partial \Omega : u(x) = 0 \}$ has no interior points in the relative topology of $\partial \Omega$. Note that, in the one-dimensional case $N=1$, the boundary point lemma is applicable, and we can deduce that any nontrivial non-negative weak solution of $(P_\lambda)$ is positive on $\overline{\Omega}$, see Proposition \ref{p:positivedef} in Appendix \ref{sec:posi}.

Let us set the notations and conventions used in this article: 
\begin{itemize}
\item The infimum of an empty set is assumed to be $\infty$.
\item Unless otherwise stated, for any $f \in L^1(\Om)$ the integral $\int_\Omega f$ is considered with respect to the Lebesgue measure, whereas for any $g \in L^1(\partial \Om)$ the integral $\int_{\partial \Om} g$  is considered with respect to the surface measure. 
\item For $r\geq 1$ the Lebesgue norm in $L^r (\Omega)$ will be denoted by $\| \cdot \|_r$ and the usual norm of $H^1(\Omega)$ by $\|\cdot \|$.
\item We set $2^*=\frac{2N}{N-2}$ and $2_*=\frac{2(N-1)}{N-2}$ for $N>2$. 
\item The strong and weak convergence are denoted by $\rightarrow$ and $\rightharpoonup$, respectively. 
\item The positive
and negative parts of a function $u$ are defined by $u^{\pm} :=\max
\{\pm u,0\}$. 
\item If $U \subset \R^N$ then we denote the closure of $U$ by $\overline{U}$ and the interior of $U$ by $\text{int }U$.
\item The support of a measurable function $f$ is denoted by supp $f$.
\item If $\Phi$ is a functional defined on $H^1(\Omega)$, we set
$$\Phi^{\pm}:=\{u\in H^1(\Omega);~\Phi(u)\gtrless 0\}, \quad \Phi_{0}:= \Phi^{-1}(0) \quad \text{and} \quad \Phi_0^{\pm}:= \Phi^{\pm} \cup \Phi_{0}.$$ 
\end{itemize}

Recall that 
\begin{equation}
\lambda_1=\lambda_1(m):=\inf\left\{\int_{\Omega} |\nabla u|^2; u \in H^1(\Omega), \int_{\Omega}mu^2=1\right\}
\end{equation}
 is a principal and simple eigenvalue of the problem
$$ \left\{
\begin{array}{lll}
-\Delta u=\lambda m(x)u & {\rm in } & \Omega,  \\
          \frac{\partial u}{\partial \bf{n}}=0 & {\rm on } & \partial \Omega.\\
\end{array}\right. 
$$
It is well known (cf.\ Brown and Lin \cite{BL}) that $\lambda_1(m)>0$ if and only if $\int_{\Omega} m<0$, in which case $\lambda_1(m)$ is achieved by a unique non-constant eigenfunction $\varphi_1$ such that $\varphi_1>0$ on $\overline{\Omega}$. If $\int_{\Omega} m>0$ then $\lambda_1(m)=0$ is achieved by  $\varphi_1=\left(\int_{\Omega} m\right)^{-\frac{1}{2}}$.

We set 
\begin{equation}
\label{cpq}
C_{pq}=\frac{q(p-2)}{2(p-q)}\left(\frac{p(2-q)}{2(p-q)}\right)^{\frac{2-q}{p-2}}
\end{equation} 
and
\begin{equation}
\label{ek1}
K_1(m,a)=C_{pq} \frac{\left(\int_\Om m\right)^{\frac{p-q}{p-2}}}{\left(-\int_{\Omega} a\right)^{\frac{2-q}{p-2}}},
\end{equation}
whenever $\int_\Omega m > 0>\int_\Omega a$.

Let $\varphi:[0,\infty) \rightarrow \R$ be given by \begin{equation}
\label{ephi}
\varphi (t) = t^{2-q} \int_\Omega m  + t^{p-q} 
\int_{\Omega} a + \int_{\partial \Omega} b.
\end{equation} 
It is easily seen that if either
$$\int_{\Omega} m> 0>\int_{\Omega} a \quad \text{and} \quad -K_1(m,a)<\int_{\partial \Omega} b<0$$
or
$$\int_{\Omega} m< 0<\int_{\Omega} a \quad \text{and} \quad K_1(-m,-a)>\int_{\partial \Omega} b>0$$
then $\varphi$ has two zeros $c_1<c_2$, which satisfy
\begin{equation}
\label{eph}
c_1<\left(-\frac{(2-q)\int_\Omega m}{(p-q)\int_\Omega a} \right)^{\frac{1}{p-2}} < c_2. 
\end{equation}
If either $$\int_\Omega a<0<\int_{\partial \Omega} b \quad \text{or} \quad \int_\Omega a>0>\int_{\partial \Omega} b$$
then
$\varphi$ has an unique zero, denoted by $c_0$.

In order to simplify the notation, we introduce the functionals
$$E_{\lambda}(u)=\int_{\Omega}\left(|\nabla u|^2 -\lambda m(x) u^2\right), \quad A(u)=\int_{\Omega} a(x)|u|^p \quad \text{and} \quad B(u)=\int_{\partial \Omega} b(x)|u|^q,$$
defined on $H^1(\Om)$.
From the compactness of the Sobolev embeddings
$$H^1(\Om)\hookrightarrow L^r(\Om), \quad H^1(\Om) \hookrightarrow L^t(\partial \Omega),$$ 
for $r \in [1,2^*)$ and $t \in [1,2_*)$ respectively, it is straightforward that $E_\lambda$ is weakly lower semi-continuous, whereas $A$ and $B$ are weakly continuous.
\begin{rem}{\rm 
 The following results, which can be easily verified, will be used repeatedly throughout this article: 
\begin{enumerate}
\item If $(u_n)$ is a bounded sequence in $H^1(\Om)$ then we may assume that $u_n \rightharpoonup u_0$, $A(u_n) \rightarrow A(u_0)$, $B(u_n) \rightarrow B(u_0)$, and $E_\lambda(u_0) \leq \liminf E_{\lambda}(u_n)$, for some $u_0 \in H^1(\Om)$.\\
\item If $u_n \rightharpoonup 0$ in $H^1(\Om)$ and $\limsup E_{\lambda}(u_n) \leq 0$ then $u_n \rightarrow 0$ in $H^1(\Om)$.
\end{enumerate}   
}\end{rem}
Let us set
\begin{equation}
\lambda_a =\lambda_a(m):=\inf\left\{\int_{\Omega} |\nabla u|^2;    u \in A_0^+, \int_{\Omega}mu^2=1 \right\},
\end{equation} 
 \begin{equation}
\lambda_b =\lambda_b(m):=\inf\left\{\int_{\Omega} |\nabla u|^2;  u \in B_0^+, \int_{\Omega}mu^2=1 \right\},
\end{equation} 
and
\begin{equation}
\label{els}
\lambda_s=\lambda_s(m,a,b)=\inf \left\{ \int_{\Omega} |\nabla u|^2;\ u \in A_0^+ \cap B_0^+, \ S(u)=1\right\},
\end{equation}
where
\begin{equation}
\label{eqs}
S(u)=\int_{\Omega} mu^2+\left(C_{pq}^{-1} B(u)A(u)^{\frac{2-q}{p-2}}\right)^{\frac{p-2}{p-q}}
\end{equation}
is defined for $u \in A_0^+ \cap B_0^+$. One may easily check that $S$ is quadratic, i.e. $S(tu)=t^2S(u)$ for $t \in \R$ and $u \in A_0^+ \cap B_0^+$. Lastly, we set
\begin{equation}
\label{defompm}
\Omega_{\pm}:=\text{int supp } a^{\pm}.
\end{equation}

We are now in position to state our main results. To begin with, we state an existence result, which provides bifurcation from zero and from infinity at $\lambda=0$ in the following sense:
\begin{defi}{\rm
It is said that {\it bifurcation from zero occurs at $\lambda = \lambda_0 \in \R$} for $(P_\lambda)$ if there exist nontrivial nonnegative solutions $u_j$ of $(P_{\lambda_j})$ such that $\lambda_j \to \lambda_0$, and $u_j \to 0$ in $\mathcal{C}(\overline{\Omega})$ as $j\to \infty$. Similarly, 
it is said that {\it bifurcation from infinity occurs at $\lambda = \lambda_\infty \in \R$} for $(P_\lambda)$ if there exist nontrivial nonnegative solutions $u_j$ of $(P_{\lambda_j})$ such that $\lambda_j \to \lambda_\infty$, and $\| u_j \|_{\mathcal{C}(\overline{\Omega})} \to \infty$ as $j\to \infty$.}
\end{defi}
\begin{thm}
\label{t1}
\strut
\begin{enumerate}
\item If $\int_{\partial \Omega} b<0$ then $\lambda_b, \lambda_s>0$. If, in addition, $b^+ \not \equiv 0$ then $(P_\lambda)$ has a nontrivial non-negative solution $u_{0,\lambda}$ for $0<\lambda <\min\{\lambda_b,\lambda_s\}$, 
which satisfies $u_{0,\lambda} \rightarrow 0$  in $\mathcal{C}^{\theta}(\overline{\Om})$ for some $\theta \in (0,1)$ as $\lambda \to 0^+$. 
Moreover, there exists $\lambda_n \to 0^+$ such that $\lambda_n^{-\frac{1}{2-q}}u_{0,\lambda_n} \rightarrow w_0$ in $H^1(\Omega)\cap \mathcal{C}^{\theta} (\overline{\Omega})$ as $n \to \infty$, where $w_0$ is a nontrivial non-negative solution of 
\begin{equation} \label{w0}
-\Delta w = 0 \quad\mbox{in} \ \Omega, \qquad
\frac{\partial w}{\partial \mathbf{n}} = b(x)w^{q-1} \quad \mbox{on} \
\partial \Omega.
\end{equation}
Furthermore, $w_0 > 0$ in $\Omega$, the set $\{ x\in \partial \Omega : w_0 = 0 \}$ has no interior points in the relative topology of $\partial \Omega$, and it is contained in $\{ x \in \partial \Omega : b(x) \leq 0 \}$ if $b\in \mathcal{C}(\partial \Omega)$.\\ 
	\item If $\int_{\Omega} a<0$ then $\lambda_a, \lambda_s>0$. If, in addition, $a^+ \not \equiv 0$ then $(P_\lambda)$ has a nontrivial non-negative solution $u_{2,\lambda}$ for $0<\lambda <\min\{\lambda_a,\lambda_s\}$, which satisfies $u_{2,\lambda} > 0$ in $\overline{\Omega}$ for $\lambda > 0$ sufficiently small and $\displaystyle \min_{\overline{\Omega}} u_{2,\lambda} \to \infty$ as $\lambda \to 0^+$. Moreover, there exists $\lambda_n \to 0^+$ such that $\lambda_n^{\frac{1}{p-2}}u_{2,\lambda_n} \rightarrow w_{\infty}$ in $H^1(\Omega)\cap C^\theta (\overline{\Omega})$ for some $\theta \in (0,1)$ as $n \to \infty$, where $w_\infty \in W^{2,r}(\Omega)$, $r>N$, is a solution of the  problem
	$$
	-\Delta w = a(x)w^{p-1} \quad\mbox{in} \ \Omega, \qquad
	\frac{\partial w}{\partial \mathbf{n}} = 0 \quad \mbox{on} \
	\partial \Omega. 
	$$
Furthermore $w_\infty>0$ on $\overline{\Omega}$. 
%
%
\end{enumerate}
\end{thm}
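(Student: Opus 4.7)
The approach is variational: take
\[
\Phi_\lambda(u) := \tfrac{1}{2} E_\lambda(u) - \tfrac{\lambda}{p} A(u) - \tfrac{\lambda}{q} B(u), \qquad u \in H^1(\Omega),
\]
whose critical points are weak solutions of $(P_\lambda)$, and look for them by constrained minimization on suitable pieces of the Nehari manifold, following the fibering method of Brown--Zhang \cite{BZ03}. For $u \not\equiv 0$, the fiber $t \mapsto \Phi_\lambda(tu)$ has at most two positive critical points, a local minimum $t_1(u)$ and a local maximum $t_2(u)$, whose existence and separation depend on the signs of $E_\lambda(u)$, $A(u)$, $B(u)$; the thresholds $\lambda_b$, $\lambda_a$, $\lambda_s$ are the exact values below which $E_\lambda$ is positive on the admissible classes $B_0^+$, $A_0^+$, and $A_0^+ \cap B_0^+$, respectively.

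I prove $\lambda_b, \lambda_s > 0$ (and analogously $\lambda_a$) by contradiction: a minimizing sequence $u_n \in B_0^+$ with $\int_\Omega m u_n^2 = 1$ and $\int_\Omega |\nabla u_n|^2 \to 0$ must, via Poincar\'e--Wirtinger applied to the zero-mean part, converge to a constant $c$ satisfying $c^2 \int_\Omega m = 1$; the weak continuity of $B$ then yields $c^q \int_{\partial \Omega} b \ge 0$, contradicting $\int_{\partial \Omega} b < 0$. For $\lambda_s$ the same argument applies with $S$ in place of $B$.

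For the existence of $u_{0,\lambda}$ in Part (1), I would minimize $\Phi_\lambda$ on the ``small-fiber'' component of the Nehari manifold (essentially the set $\{t_1(u) u\}$). Since $b^+ \not\equiv 0$, one chooses $\phi \in H^1(\Omega)$ with $B(\phi) > 0$; then the sublinear term $-\tfrac{\lambda t^q}{q} B(\phi)$ drives $\Phi_\lambda(t\phi) < 0$ for small $t > 0$, so the infimum is strictly negative. The assumption $\lambda < \min\{\lambda_b, \lambda_s\}$ provides coercivity and a uniform lower bound on this part of the Nehari manifold; weak lower semicontinuity of $E_\lambda$ and weak continuity of $A, B$ then yield a minimizer $u_{0,\lambda}$ lying in the relative interior of the constraint, so its Lagrange multiplier vanishes and $u_{0,\lambda}$ is a weak solution, which I replace by $|u_{0,\lambda}|$ to ensure non-negativity. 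The asymptotic analysis uses the rescaling $u_{0,\lambda} = \lambda^{1/(2-q)} w_\lambda$; a direct substitution transforms $(P_\lambda)$ into
\[
-\Delta w_\lambda = \lambda m w_\lambda + \lambda^{(p-q)/(2-q)} a\,|w_\lambda|^{p-2} w_\lambda \;\; \text{in } \Omega, \qquad \frac{\partial w_\lambda}{\partial \mathbf{n}} = b\,|w_\lambda|^{q-2} w_\lambda \;\; \text{on } \partial \Omega,
\]
with the boundary term \emph{unchanged}. Testing $\Phi_\lambda$ at scale $\lambda^{1/(2-q)}$ yields a uniform $H^1$-bound on $w_\lambda$; weak compactness provides a candidate limit $w_0$, and passage to the limit in the weak formulation (together with the elliptic regularity recalled after the introduction of $(P_\lambda)$) identifies $w_0$ as a solution of \eqref{w0}, upgrades the convergence to $\mathcal{C}^\theta(\overline{\Omega})$, and yields the structural statement on $\{w_0 = 0\} \cap \partial\Omega$ via Proposition \ref{prop:positive} applied to the limit problem.

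Part (2) is handled by the same scheme with the roles of $A$ and $B$ interchanged: the admissible component of the Nehari manifold is now the $t_2(u)$ branch, coercivity on it is ensured by $\lambda < \lambda_a$ together with $a^+ \not\equiv 0$, and the rescaling is $u_{2,\lambda} = \lambda^{-1/(p-2)} v_\lambda$, under which the limit problem becomes $-\Delta w = a(x) w^{p-1}$ in $\Omega$ with homogeneous Neumann condition $\partial w / \partial \mathbf{n} = 0$. The main obstacle I foresee in both parts is non-triviality of the rescaled limits $w_0$ and $w_\infty$: weak convergence alone only gives a non-negative limit, so one must establish sharp lower bounds on the minimum energy (of the order $-c\,\lambda^{2/(2-q)}$ in Part (1), and its analogue in Part (2)), obtained by a careful choice of test functions, that rule out the zero limit. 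An additional subtle point in Part (2) is the strict positivity of $w_\infty$ on $\overline{\Omega}$: here Hopf's boundary-point lemma applies (the limit boundary condition is homogeneous Neumann and $w_\infty \in W^{2,r}(\Omega)$ with $r>N$), and this strict positivity transfers via the rescaling to $\min_{\overline{\Omega}} u_{2,\lambda} \to \infty$.
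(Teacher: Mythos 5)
Your proposal follows essentially the same route as the paper: minimization of the energy on the local-minimum branch of the Nehari manifold intersected with $B^+$ (for $u_{0,\lambda}$) and on the local-maximum branch intersected with $A^+$ (for $u_{2,\lambda}$), the same rescalings $\lambda^{-1/(2-q)}u_{0,\lambda}$ and $\lambda^{1/(p-2)}u_{2,\lambda}$ leading to the two limit problems, and you correctly single out the key technical point, namely the sharp energy estimate $I_\lambda(u_{0,\lambda})<-D_0\lambda^{2/(2-q)}+o(\lambda^{2/(2-q)})$ needed to rule out a trivial rescaled limit (this is exactly Lemma \ref{leqil} in the paper, together with the Nehari lower bound $\|w_n\|\geq C^{1/(p-2)}$ for Part (2)). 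The only imprecision is your description of $\lambda_s$ as the threshold for positivity of $E_\lambda$ on $A_0^+\cap B_0^+$; it is in fact the threshold for $\int_\Omega|\nabla u|^2-\lambda S(u)>0$, which is what guarantees the two-critical-point fiber structure, but this does not affect your argument.
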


\begin{rem}
\strut
{\rm 
\begin{enumerate}
\item  Theorem \ref{t1} states that if $b^+ \not \equiv 0$ and $\int_{\partial \Omega} b<0$ then $(P_\lambda)$ has, besides the trivial branch $\{(\lambda,u)=(0,c); c \text{ is a positive constant}\}$, the solution $u_{0,\lambda}$ bifurcating from zero at $\lambda=0$. In a similar way, if  $a^+ \not \equiv 0$ and $\int_{\Omega} a<0$ then $(P_\lambda)$ has, besides the trivial branch of positive constants, the solution $u_{2,\lambda}$ bifurcating from infinity at $\lambda=0$. Furthermore, the blow up of $u_{2,\lambda}$ as $\lambda \to 0^+$ occurs uniformly on $\overline{\Omega}$. \\

\item We shall see in Proposition \ref{prop:positive} that for any $x \in \partial \Omega$ such that $b(x) > 0$ there holds $u_{0,\lambda}(x)>0$ if $\lambda>0$ is sufficiently small. More precisely, if $\Sigma \subset \partial \Omega$ is such that $\overline{\Sigma} \subset \{ x \in \partial \Omega : b(x) > 0 \}$ then $\displaystyle \inf_{\Sigma}u_{0,\lambda} > 0$ for $\lambda > 0$ sufficiently small.
\end{enumerate}
}\end{rem}

Our next results provide multiplicity of non-negative solutions and are stated under the condition
\begin{equation}
\label{cm}
\int_\Omega m >0.
\end{equation}
Since we are considering $\lambda \in \R$, the case $\int_{\Omega} m<0$ reduces to \eqref{cm} after the change of variable $\lambda \mapsto -\lambda$.

\begin{thm}
\label{t1'}
Assume \eqref{cm}, $\int_\Om a<0$, and $-K_1(m,a)<\int_{\partial \Om} b <0$. Then:
\begin{enumerate}
\item  $(P_\lambda)$ has a nontrivial non-negative solution $u_{1,\lambda}$ for $0<\lambda<\min\{\lambda_a,\lambda_b\}$. Moreover
$u_{1,\lambda} \rightarrow c_2$ in $\mathcal{C}^{\theta}(\overline{\Om})$ for some $\theta \in (0,1)$ as $\lambda \to 0^+$.\\
\item $(P_\lambda)$ has two nontrivial non-negative solutions $v_{1,\lambda}$, $v_{2,\lambda}$ for $\max\{\tilde{\lambda}_1,\tilde{\lambda}_s\}<\lambda<0$, where $\tilde{\lambda}_1=-\lambda_1(-m)$ and $\tilde{\lambda}_s=-\lambda_s(-m,-a,-b)$. Furthermore, $v_{1,\lambda} \to c_1$ and $v_{2,\lambda} \to c_2$ in $\mathcal{C}^{\theta}(\overline{\Om})$ for some $\theta \in (0,1)$ as $\lambda \to 0^-$.
\end{enumerate}

\end{thm}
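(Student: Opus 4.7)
My plan is to apply the Nehari manifold (fibering) method to the $C^1$ energy functional
\[
I_\lambda(u) := \tfrac{1}{2}E_\lambda(u) - \tfrac{\lambda}{p}A(u) - \tfrac{\lambda}{q}B(u), \qquad u\in H^1(\Omega).
\]
For each $u\in H^1(\Omega)\setminus\{0\}$ one considers the fibering map $\psi_u(t):=I_\lambda(tu)$ for $t>0$, and partitions the Nehari set $N_\lambda=\{u\neq 0:\psi_u'(1)=0\}$ as $N_\lambda=N_\lambda^+\cup N_\lambda^0\cup N_\lambda^-$ according to the sign of $\psi_u''(1)$. A direct calculation gives $I_\lambda(c)=-\lambda\bigl(\tfrac{c^2}{2}\int_\Omega m+\tfrac{c^p}{p}\int_\Omega a+\tfrac{c^q}{q}\int_{\partial\Omega}b\bigr)$ for a constant $c$, whose derivative in $c$ is $-\lambda c^{q-1}\varphi(c)$, so that the two positive zeros $c_1<c_2$ of $\varphi$ lie in $N_\lambda$. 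A short computation using $\varphi(c_i)=0$ shows that $\psi_{c_1}''(1)$ and $\psi_{c_2}''(1)$ have opposite signs, with $c_2\in N_\lambda^+$ and $c_1\in N_\lambda^-$ for $\lambda>0$, and the roles swapped for $\lambda<0$. The strict inequality $-K_1(m,a)<\int_{\partial\Omega}b<0$ in the hypotheses is exactly the non-degeneracy condition behind this splitting.

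For case~(1) I would take $u_{1,\lambda}$ to be a minimizer of $I_\lambda$ over $N_\lambda^+\cap\{u\geq 0\}$; non-negativity is free since $I_\lambda(|u|)=I_\lambda(u)$. Using the Nehari identity, $I_\lambda$ restricted to $N_\lambda$ rewrites as $\tfrac{p-2}{2p}E_\lambda(u)-\tfrac{p-q}{pq}\lambda B(u)$, and the hypothesis $0<\lambda<\min\{\lambda_a,\lambda_b\}$ is used simultaneously to ensure $N_\lambda^0=\emptyset$ (so that $N_\lambda^+$ is a $C^1$-manifold), to produce uniform $H^1(\Omega)$ bounds on minimizing sequences, and to guarantee that $\alpha_\lambda^+:=\inf_{N_\lambda^+\cap\{u\geq 0\}}I_\lambda$ is attained. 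The Lagrange multiplier rule then yields a solution of $(P_\lambda)$, and positivity in $\Omega$ follows from the strong maximum principle. For case~(2) the same procedure is carried out on \emph{both} components $N_\lambda^+$ (containing $c_1$) and $N_\lambda^-$ (containing $c_2$), giving two distinct non-negative solutions $v_{1,\lambda}$ and $v_{2,\lambda}$; the hypothesis $\max\{\tilde{\lambda}_1,\tilde{\lambda}_s\}<\lambda<0$ plays the same role there as $0<\lambda<\min\{\lambda_a,\lambda_b\}$ does in case~(1).

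The asymptotic profiles as $\lambda\to 0^{\pm}$ are obtained from uniform $H^1(\Omega)$ bounds (coming from the Nehari identity combined with the elementary energy comparisons $I_\lambda(u_{1,\lambda})\leq I_\lambda(c_2)$ and $I_\lambda(v_{i,\lambda})\leq I_\lambda(c_i)$), upgraded to $\mathcal{C}^{\theta}(\overline{\Omega})$ by the $W^{2,r}_{\text{loc}}\cap\mathcal{C}^{\theta}(\overline{\Omega})$ regularity already recorded in the introduction. Any limit $u_0$ along a subsequence is a weak solution of $(P_0)$, hence a constant; passing to the limit in $\psi_{u_{\lambda_n}}'(1)=0$ forces $\varphi(u_0)=0$, so $u_0\in\{c_1,c_2\}$. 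Keeping track of the Nehari component to which each minimizer belongs then gives the claimed identifications $u_{1,\lambda}\to c_2$, $v_{1,\lambda}\to c_1$, and $v_{2,\lambda}\to c_2$.

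The hardest step will be establishing the emptiness of $N_\lambda^0$ together with the coercivity and boundedness-from-below of $I_\lambda$ on $N_\lambda^{\pm}$ throughout the stated ranges of $\lambda$. Since $m$, $a$, $b$ are all indefinite, $E_\lambda$ is not coercive on $H^1(\Omega)$, so these properties must be extracted from the Nehari identity itself, using the auxiliary eigenvalue-type thresholds $\lambda_a,\lambda_b,\tilde{\lambda}_1,\tilde{\lambda}_s$, which are designed precisely to mark the regimes in which the fibering maps of elements of $A_0^+$, $B_0^+$ and $A_0^+\cap B_0^+$ still admit two non-degenerate positive critical points. A secondary delicate point is to rule out that the limits of the minimizers degenerate to zero or escape to infinity; this is ultimately forced by the limiting Nehari constraint $\varphi(u_0)=0$ together with the uniform $\mathcal{C}^\theta(\overline{\Omega})$ bounds.
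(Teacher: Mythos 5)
Your overall strategy (fibering/Nehari decomposition, minimization on the sets containing $c_1$ and $c_2$, passage to the limit in the Nehari identity to get $\varphi(u_0)=0$) is the same as the paper's, and your treatment of part (2) essentially coincides with it: for $\lambda<0$ in the stated range one has $E_{-\lambda}(u)>0$ for all $u\neq 0$ after the sign change, which forces $\mathcal{N}^+\subset B^+$ and $\mathcal{N}^-\subset A^+$ (for the reflected weights), so minimizing on the two Nehari components is exactly what the paper does via Propositions \ref{pmn1} and \ref{pmn2}.

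There is, however, a genuine gap in part (1): you minimize over all of $\mathcal{N}_\lambda^+\cap\{u\geq 0\}$, whereas for $\lambda>0$ the set $\mathcal{N}_\lambda^+$ splits into two disjoint nonempty pieces when $b^+\not\equiv 0$, namely $\mathcal{N}_\lambda^+\cap B^+$ (which carries the small solution $u_{0,\lambda}\to 0$ of Theorem \ref{t1}) and $\mathcal{N}_\lambda^+\cap E_\lambda^-$ (which contains $c_2$, since $E_\lambda(c)=-\lambda c^2\int_\Omega m<0$). The paper minimizes only over $\mathcal{N}_\lambda^+\cap E_\lambda^-$, and this restriction is doing real work in two places. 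First, attainment: the infimum over the $B^+$ piece is only shown to be achieved for $\lambda<\lambda_s$, while Theorem \ref{t1'}(1) claims existence for all $0<\lambda<\min\{\lambda_a,\lambda_b\}$; if for some such $\lambda$ the global infimum over $\mathcal{N}_\lambda^+$ were attained only along a sequence in the $B^+$ piece, your argument would not produce a minimizer. Second, and more importantly, your limiting step ``any limit $u_0$ is a constant solution of $(P_0)$, hence $\varphi(u_0)=0$, hence $u_0\in\{c_1,c_2\}$'' overlooks the constant $u_0=0$: the identity $u_0^{2-q}\int_\Omega m+u_0^{p-q}\int_\Omega a+\int_{\partial\Omega}b=0$ is obtained only after dividing by a power of $u_0$, so it gives no information if $u_0=0$, and this degeneration actually occurs for the minimizers on the $B^+$ piece. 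The paper excludes it precisely by working in $E_\lambda^-$: normalizing $v_n=u_n/\|u_n\|$ and using $E_{\lambda_n}(v_n)<0$ one gets a positive constant limit $v_0$ with $B(v_0)\geq 0$, contradicting $v_n\in B^-$ and $\int_{\partial\Omega}b<0$; a further quantitative use of the inequality $E_{\lambda_n}(u_n)<\lambda_n\frac{p-q}{p-2}B(u_n)$ combined with \eqref{eph} is then needed to select $c_2$ rather than $c_1$, since membership in $\mathcal{N}_{\lambda_n}^+$ is an open condition that could a priori degenerate in the limit. To repair your proof of part (1) you should either restrict the minimization to $\mathcal{N}_\lambda^+\cap E_\lambda^-$ from the outset, or add an energy comparison showing $\inf_{\mathcal{N}_\lambda^+\cap E_\lambda^-}I_\lambda\leq I_\lambda(c_2)<\inf_{\mathcal{N}_\lambda^+\cap B^+}I_\lambda$, which in any case only holds for $\lambda$ small.
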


Under the assumptions of Theorem \ref{t1'} and stronger regularity conditions on $m,a,b$, we shall obtain, for $\lambda > 0$ sufficiently small, a positive solution of $(P_\lambda)$ converging to $c_1$ as $\lambda\to 0^+$. This will be carried out by a bifurcation argument and, as a consequence, will provide at least four nontrivial non-negative solutions of $(P_\lambda)$ for $\lambda > 0$ small enough. Since this argument requires only the existence of zeros of $\varphi$ and some regularity on $m$, $a,$ and $b$, we shall assume, in addition to \eqref{cm},
\begin{align} \label{assump:mbreg}
m,a \in \mathcal{C}^\alpha (\overline{\Omega}) \quad \text{and} \quad b \in 
\mathcal{C}^{1+\alpha}(\partial \Omega),\quad \text{for some} 
\quad 0<\alpha < 1,
\end{align}
and
\begin{align}
\int_\Omega a<0 \quad \text{and} \quad -\tilde{K}_1(m,a)< \int_{\partial \Omega} b<0, \label{assump:tildK}
\end{align}
where 
\begin{equation}
\label{ktilda}
\tilde{K}_1(m,a) = \tilde{C}_{pq} \frac{\left(\int_\Om m\right)^{\frac{p-q}{p-2}}}{\left(-\int_{\Omega} a\right)^{\frac{2-q}{p-2}}},
\end{equation}
and $$\tilde{C}_{pq} =\left(\frac{q}{2}\left( \frac{p}{2} \right)^{\frac{2-q}{p-2}} \right)^{-1}C_{pq}.$$ It is easily seen that $\tilde{C}_{pq} >C_{pq}$, so that  $\tilde{K}_1(m,a)>K_1(m,a)$.  

Hereafter, by a classical positive solution of $(P_\lambda)$ we mean $u \in \mathcal{C}^{2+\theta}(\overline{\Omega})$ for some $0< \theta < 1$ which satisfies $(P_\lambda)$ in the classical sense and is strictly positive on $\overline{\Omega}$.

\begin{thm} \label{t3b} 
Assume \eqref{assump:mbreg}. If $a$ changes sign and $\int_\Omega a < 0$, then, for $\lambda>0$ sufficiently small, the solution $u_{2,\lambda}$ obtained in Theorem \ref{t1} is a classical positive solution of $(P_\lambda)$, which is moreover unstable.
\end{thm}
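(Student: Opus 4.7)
The plan has two steps: first upgrade the regularity of $u_{2,\lambda}$, then analyse the principal eigenvalue $\mu_1(\lambda)$ of the linearized operator about $u_{2,\lambda}$, showing that $\mu_1(\lambda) < 0$ for all $\lambda > 0$ sufficiently small. For the regularity, Theorem \ref{t1}(ii) already gives $u_{2,\lambda} > 0$ on $\overline{\Omega}$ and the convergence $v_\lambda := \lambda^{1/(p-2)} u_{2,\lambda} \to w_\infty > 0$ in $\mathcal{C}^\theta(\overline{\Omega})$ (along a subsequence), so $u_{2,\lambda}$ is bounded below by a positive constant on $\overline{\Omega}$ for small $\lambda$. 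This bound, combined with \eqref{assump:mbreg}, makes $x \mapsto b(x) u_{2,\lambda}(x)^{q-1}$ a $\mathcal{C}^{1+\alpha}(\partial\Omega)$ function and $x \mapsto m(x) u_{2,\lambda}(x) + a(x) u_{2,\lambda}(x)^{p-1}$ a $\mathcal{C}^\alpha(\overline{\Omega})$ function. Standard Schauder estimates for the nonlinear Robin problem then yield $u_{2,\lambda} \in \mathcal{C}^{2+\theta}(\overline{\Omega})$.

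Turning to instability, the linearized eigenvalue problem at $u = u_{2,\lambda}$ is
$$-\Delta \phi - \lambda\bigl[m + (p-1) a u_{2,\lambda}^{p-2}\bigr]\phi = \mu \phi \quad \text{in } \Omega, \qquad \frac{\partial \phi}{\partial \mathbf{n}} = \lambda(q-1) b u_{2,\lambda}^{q-2} \phi \quad \text{on } \partial\Omega.$$
Its principal eigenvalue is $\mu_1(\lambda) = \inf \{ Q_\lambda(\phi) : \phi \in H^1(\Omega),\ \|\phi\|_2 = 1\}$, with
$$Q_\lambda(\phi) = \int_\Omega |\nabla \phi|^2 - \lambda \int_\Omega \bigl[m + (p-1) a u_{2,\lambda}^{p-2}\bigr] \phi^2 - \lambda(q-1) \int_{\partial \Omega} b u_{2,\lambda}^{q-2} \phi^2,$$
and $u_{2,\lambda}$ is linearly unstable (as a steady state of \eqref{ibvp}) precisely when $\mu_1(\lambda) < 0$. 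I would test $Q_\lambda$ with $\phi = u_{2,\lambda}$ itself; substituting the weak form of $(P_\lambda)$ (with $\varphi = u_{2,\lambda}$) into the first integral and collecting the resulting powers of $u_{2,\lambda}^p$ and $u_{2,\lambda}^q$ yields
$$Q_\lambda(u_{2,\lambda}) = -\lambda(p-2) \int_\Omega a u_{2,\lambda}^p + \lambda(2-q) \int_{\partial \Omega} b u_{2,\lambda}^q.$$

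The asymptotic $v_\lambda \to w_\infty$ gives $\lambda \int_\Omega a u_{2,\lambda}^p \sim \lambda^{-2/(p-2)} \int_\Omega a w_\infty^p$, while $\lambda \int_{\partial\Omega} b u_{2,\lambda}^q \sim \lambda^{(p-q-2)/(p-2)} \int_{\partial\Omega} b w_\infty^q$ and $\|u_{2,\lambda}\|_2^2 \sim \lambda^{-2/(p-2)} \|w_\infty\|_2^2$. Since $p > q$ gives $-2/(p-2) < (p-q-2)/(p-2)$, the boundary term becomes negligible after dividing by $\|u_{2,\lambda}\|_2^2$, so that
$$\frac{Q_\lambda(u_{2,\lambda})}{\|u_{2,\lambda}\|_2^2} \longrightarrow -\frac{(p-2)\int_\Omega a w_\infty^p}{\int_\Omega w_\infty^2}.$$
Testing the limit problem $-\Delta w_\infty = a w_\infty^{p-1}$ with $w_\infty$ gives $\int_\Omega a w_\infty^p = \int_\Omega |\nabla w_\infty|^2$, and this quantity is strictly positive because $w_\infty$ cannot be constant: if it were, then $a w_\infty^{p-1} \equiv 0$ would force $a \equiv 0$, contradicting the sign change of $a$. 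The main obstacle is that Theorem \ref{t1}(ii) only delivers convergence of $v_\lambda$ along a subsequence; this is handled by a standard contradiction argument, noting that any sequence $\lambda_j \to 0^+$ along which $\mu_1(\lambda_j) \ge 0$ would admit a further subsequence along which $v_{\lambda_j}$ converges to some positive solution $w_\infty$ of the limit problem, and the preceding computation would then force $\mu_1(\lambda_j) < 0$ for large $j$, a contradiction.
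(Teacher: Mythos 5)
Your proposal is correct, but the instability argument takes a genuinely different route from the paper's. After deriving the identity $Q_\lambda(u_{2,\lambda})=-\lambda(p-2)A(u_{2,\lambda})+\lambda(2-q)B(u_{2,\lambda})$ (which is correct), you pass to the asymptotic regime $\lambda\to 0^+$, using the blow-up profile $\lambda^{\frac{1}{p-2}}u_{2,\lambda}\to w_\infty$ to show that the first term dominates and is negative because $\int_\Omega a\,w_\infty^p=\int_\Omega|\nabla w_\infty|^2>0$; the subsequence issue is correctly handled by contradiction. The paper instead finishes non-asymptotically: it exploits the fact that $u_{2,\lambda}$ is a minimizer on $\mathcal{N}_\lambda^-$, so that $E_\lambda(u_{2,\lambda})<\lambda\frac{p-q}{2-q}A(u_{2,\lambda})$, which combined with the Nehari identity $\lambda B=E_\lambda-\lambda A$ gives $\lambda(2-q)B(u_{2,\lambda})<\lambda(p-2)A(u_{2,\lambda})$ and hence negativity of exactly the quantity you computed, with no limit process; this yields instability for \emph{every} $\lambda$ in the existence range, not just small $\lambda$ (the paper phrases this via Picone's identity because it normalizes the linearized eigenvalue problem with the eigenvalue appearing on the boundary as well as in the interior, but the sign of the principal eigenvalue is the same in both normalizations, so your formulation is equivalent for the purpose of deciding stability). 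In short: your proof is valid for the statement as written, but you could delete the entire asymptotic analysis and conclude in one line from $u_{2,\lambda}\in\mathcal{N}_\lambda^-$; what your approach buys instead is the quantitative information that the principal eigenvalue tends to $-\infty$ like $-\lambda^{-\frac{2}{p-2}}$ as $\lambda\to 0^+$.
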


\begin{rem}
\label{rw0}
{\rm Since $1<q<2$, solutions of \eqref{w0} are in $\mathcal{C}^{\theta}(\overline{\Om})$ with $\theta \in (0,1)$, so that the boundary point lemma is not applicable, and consequently we can not deduce the positivity of $w_0$ on $\partial \Omega$. If this is the case then we can prove a result similar to Theorem \ref{t3b} for $u_{0,\lambda}$, obtained in Theorem \ref{t1}. More precisely, let us assume \eqref{assump:mbreg} and $w_0>0$ on $\overline{\Omega}$. If $b$ changes sign and $\int_{\partial \Omega}b<0$ then, for $\lambda > 0$ sufficiently small, $u_{0,\lambda}$ is a classical positive solution of $(P_\lambda)$ which is moreover asymptotically stable. We include a sketch of the proof of this result in Section \ref{sec5}.
}\end{rem}


\begin{thm} \label{t3} 
Assume \eqref{cm}, \eqref{assump:mbreg} and \eqref{assump:tildK}. 
Then there exists $\underline{\lambda} >0$ such that: 
\begin{enumerate}
  \item  $u_{1,\lambda}$ is a classical positive solution of $(P_\lambda)$ which is asymptotically stable for $\lambda \in (0,\underline{\lambda})$. Moreover, $(P_\lambda)$ has a classical positive solution $u_{3,\lambda}$ for $\lambda \in (0,\underline{\lambda})$ which is unstable. These solutions are continuous in $\mathcal{C}^{2+\alpha}(\overline{\Omega})$ with respect to $\lambda$ and emanate from $(0,c_2)$ and $(0,c_1)$ respectively, i.e. $u_{1,0}=c_2$ and $u_{3,0}=c_1$. Finally, there are no other classical positive solutions of $(P_\lambda)$ converging to some positive constant in $\mathcal{C}(\overline{\Omega})$ as $\lambda \to 0^+$. \\
  \item $v_{1,\lambda}, v_{2,\lambda}$ are classical positive solutions of $(P_\lambda)$ for $\lambda \in (-\underline{\lambda},0)$, which are asymptotically stable, and unstable, respectively. Moreover, these solutions are continuous in $\mathcal{C}^{2+\alpha}(\overline{\Omega})$ with respect to $\lambda$ and emanate from $(0,c_1)$ and $(0,c_2)$ respectively, i.e. $v_{1,0}=c_1$ and $v_{2,0}=c_2$. Finally, there are no other classical positive solutions of $(P_\lambda)$ converging to some positive constant in  $\mathcal{C}(\overline{\Omega})$ as $\lambda \to 0^-$. 
\end{enumerate}
\end{thm}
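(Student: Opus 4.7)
The plan is to construct the four branches by a Lyapunov--Schmidt reduction followed by the implicit function theorem (IFT), applied near $(\lambda,u)=(0,c_i)$ in $\mathbb{R}\times\mathcal{C}^{2+\alpha}(\overline{\Omega})$, and to read off stability from a first-order perturbation of the principal eigenvalue of the associated linearization. Define
\[
F:\mathbb{R}\times\mathcal{C}^{2+\alpha}(\overline{\Omega})\longrightarrow \mathcal{C}^{\alpha}(\overline{\Omega})\times\mathcal{C}^{1+\alpha}(\partial\Omega),\quad F(\lambda,u)=\Bigl(-\Delta u-\lambda(mu+a|u|^{p-2}u),\ \tfrac{\partial u}{\partial\mathbf{n}}-\lambda b|u|^{q-2}u\Bigr).
\]
Under \eqref{assump:mbreg}, and since $s\mapsto|s|^{r-2}s$ is smooth near each positive value, $F$ is $\mathcal{C}^1$ in a neighborhood of $(0,c)$ for every $c>0$, with $F(0,c)=0$. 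The derivative $D_uF(0,c)v=(-\Delta v,\partial v/\partial\mathbf{n})$ is Fredholm of index $0$, with kernel $\mathbb{R}\cdot 1$ and range $R=\{(f,g):\int_\Omega f+\int_{\partial\Omega}g=0\}$.

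Write $u=c+v$ with $\int_\Omega v=0$. The projection of $F(\lambda,u)=0$ onto $R$ can be inverted by IFT, producing a smooth map $(\lambda,c)\mapsto v(\lambda,c)$ near $(0,c_i)$ with $v(0,c)=0$. The remaining one-dimensional compatibility condition, after the divergence theorem cancels the contribution of $v$ and we divide by $-\lambda\ne 0$, reduces to
\[
H(\lambda,c):=\int_\Omega m(c+v)+\int_\Omega a(c+v)^{p-1}+\int_{\partial\Omega}b(c+v)^{q-1}=0,
\]
which extends smoothly to $\lambda=0$ with $H(0,c)=c^{q-1}\varphi(c)$. Under \eqref{assump:tildK}, the function $\varphi$ has precisely two positive zeros $c_1<c_2$, separated by its unique critical point as indicated in \eqref{eph}, and a direct inspection of its shape (it starts negative at $0^+$, rises to a maximum, then decreases to $-\infty$) gives $\varphi'(c_1)>0>\varphi'(c_2)$. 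Hence $\partial_c H(0,c_i)=c_i^{q-1}\varphi'(c_i)\ne 0$, so the IFT yields, for each $i\in\{1,2\}$, a smooth branch $\lambda\mapsto u_\lambda^{(i)}\in\mathcal{C}^{2+\alpha}(\overline{\Omega})$ of classical positive solutions of $(P_\lambda)$ with $u_0^{(i)}=c_i$, defined on some interval $(-\underline\lambda,\underline\lambda)$. Continuity in $\mathcal{C}^{2+\alpha}$ is immediate from IFT and the Schauder estimates implicit in the choice of target space.

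Stability is encoded in the principal eigenvalue $\mu(\lambda)$ of the linearized problem $-\Delta\psi-\lambda(m+(p-1)au^{p-2})\psi=\mu\psi$ in $\Omega$ with $\partial\psi/\partial\mathbf{n}=\lambda(q-1)bu^{q-2}\psi$ on $\partial\Omega$. Since $\mu(0)=0$ with normalized eigenfunction $|\Omega|^{-1/2}$, first-order perturbation theory yields
\[
\mu'(0)=-\frac{1}{|\Omega|}\Bigl[\int_\Omega m+(p-1)c_i^{p-2}\int_\Omega a+(q-1)c_i^{q-2}\int_{\partial\Omega}b\Bigr]=-\frac{c_i^{q-1}\varphi'(c_i)}{|\Omega|},
\]
where the last equality uses $\varphi(c_i)=0$ to eliminate $\int_{\partial\Omega}b$. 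Thus for $\lambda>0$ small the branch from $c_2$ (where $\varphi'<0$) is asymptotically stable and the branch from $c_1$ is unstable; the signs reverse for $\lambda<0$. The variational solutions $u_{1,\lambda},v_{1,\lambda},v_{2,\lambda}$ of Theorem~\ref{t1'} converge in $\mathcal{C}^\theta(\overline{\Omega})$ to $c_2,c_1,c_2$ respectively and must therefore coincide with the corresponding IFT branches by local uniqueness; we denote by $u_{3,\lambda}$ the branch from $c_1$ for $\lambda>0$. For the uniqueness statement, if $(\lambda_n,u_n)$ are classical positive solutions with $\lambda_n\to 0^\pm$ and $u_n\to c^*>0$ in $\mathcal{C}(\overline{\Omega})$, testing against $\psi\equiv 1$, dividing by $\lambda_n$ and passing to the limit forces $\varphi(c^*)=0$, so $c^*\in\{c_1,c_2\}$ and IFT local uniqueness pins $u_n$ to the corresponding branch for large $n$. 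The main obstacle I anticipate is purely technical: ensuring enough smoothness of $v(\lambda,c)$ to legitimize dividing by $\lambda$ in the reduced equation and to carry out the eigenvalue expansion rigorously; this is resolved by working strictly inside $\{u>0\}$, where the boundary nonlinearity $|u|^{q-2}u$ is analytic.
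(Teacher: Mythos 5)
Your proposal is correct and follows essentially the same route as the paper's proof (Proposition \ref{prop:bif}): a Lyapunov--Schmidt reduction onto the constant direction, the bifurcation equation reducing to $\varphi(c)=0$ at $\lambda=0$ with the implicit function theorem applied via $\varphi'(c_j)\neq 0$, first-order perturbation of the principal eigenvalue giving $\gamma_1'(0)$ proportional to $-\varphi'(c_j)$, and the necessary condition $\varphi(c)=0$ obtained by testing against the constant function. The only cosmetic difference is your convention for the linearized eigenvalue problem (eigenvalue appearing only in the interior equation, hence the normalizing factor $|\Omega|$ rather than the paper's $|\Omega|+|\partial\Omega|$), which does not affect the sign and hence the stability conclusions.
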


\begin{rem}
{\rm Theorem \ref{t3} states, in particular, that $v_{1,\lambda}$, $v_{2,\lambda}$ are the extensions of $u_{3,\lambda}$, $u_{1,\lambda}$, respectively, to the region $\lambda < 0$ (see Figure \ref{fig1}).}
\end{rem}

Combining Theorems \ref{t1} and \ref{t1'}, we get a multiplicity result for $(P_\lambda)$ with $\lambda > 0$ small, which is obtained by  variational techniques:

\begin{cor}  \label{t2}
Under the assumptions of Theorem \ref{t1'}, assume moreover that $a^+\not\equiv 0$ and $b^+\not\equiv 0$. 
Then $(P_\lambda)$ has three nontrivial non-negative solutions $u_{j,\lambda}$, $j=0,1,2$, for $0<\lambda <\min\{\lambda_s,\lambda_a,\lambda_b\}$, which satisfy $u_{0,\lambda} \rightarrow 0$ and $u_{1,\lambda} \rightarrow c_2$ in $\mathcal{C}^{\theta}(\overline{\Om})$ for some $\theta \in (0,1)$ and $\displaystyle \min_{\overline{\Omega}} u_{2,\lambda} \to \infty$ as $\lambda \to 0^+$.
\end{cor}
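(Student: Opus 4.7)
The plan is straightforward: Corollary \ref{t2} follows essentially by bookkeeping from Theorems \ref{t1} and \ref{t1'}. First I would verify that under its hypotheses, both items of Theorem \ref{t1} as well as item (1) of Theorem \ref{t1'} apply. The assumption $\int_{\Omega}a<0$ from Theorem \ref{t1'} combined with the extra hypothesis $a^{+}\not\equiv 0$ triggers Theorem \ref{t1}(2), yielding $u_{2,\lambda}$ for $0<\lambda<\min\{\lambda_a,\lambda_s\}$ with $\min_{\overline{\Omega}}u_{2,\lambda}\to\infty$ as $\lambda\to 0^{+}$. The inequality $-K_1(m,a)<\int_{\partial\Omega}b<0$ forces in particular $\int_{\partial\Omega}b<0$, which together with $b^{+}\not\equiv 0$ activates Theorem \ref{t1}(1) and produces $u_{0,\lambda}$ for $0<\lambda<\min\{\lambda_b,\lambda_s\}$ with $u_{0,\lambda}\to 0$ in $\mathcal{C}^{\theta}(\overline{\Omega})$. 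Finally, Theorem \ref{t1'}(1) applies verbatim under \eqref{cm}, $\int_{\Omega}a<0$ and $-K_1(m,a)<\int_{\partial\Omega}b<0$, providing $u_{1,\lambda}$ on $0<\lambda<\min\{\lambda_a,\lambda_b\}$ with $u_{1,\lambda}\to c_2$ in $\mathcal{C}^{\theta}(\overline{\Omega})$. Theorem \ref{t1} also delivers $\lambda_a,\lambda_b,\lambda_s>0$, so the intersection of the three validity ranges, namely $0<\lambda<\min\{\lambda_s,\lambda_a,\lambda_b\}$, is a nonempty interval on which all three solutions are simultaneously available.

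Having produced three candidates on this common range, the only remaining point is pairwise distinctness. For $\lambda>0$ small this is immediate because the three asymptotic limits $0$, $c_2>0$ and $+\infty$ (in the sense of Theorems \ref{t1} and \ref{t1'}) are mutually distinct in $\mathcal{C}(\overline{\Omega})$. For the full range $0<\lambda<\min\{\lambda_s,\lambda_a,\lambda_b\}$, distinctness is inherent in the constructions: $u_{0,\lambda}$, $u_{1,\lambda}$ and $u_{2,\lambda}$ arise from minimisations on different pieces of the Nehari-type decomposition employed in the proofs of Theorems \ref{t1} and \ref{t1'} (tied respectively to the signs of $B$ and $A$ on the solution), so they belong to disjoint subsets of $H^1(\Omega)$ and cannot coincide at any $\lambda$ in the range.

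The asymptotic statements $u_{0,\lambda}\to 0$, $u_{1,\lambda}\to c_2$ and $\min_{\overline{\Omega}}u_{2,\lambda}\to\infty$ as $\lambda\to 0^{+}$ claimed in Corollary \ref{t2} are already part of Theorems \ref{t1} and \ref{t1'}, so the proof reduces to the verification above. There is no genuine technical obstacle here; the only mild subtlety worth spelling out is that pairwise distinctness must be confirmed uniformly in $\lambda$, not just in the small $\lambda$ regime, and this follows from recalling the different variational origins of the three branches in the prior theorems.
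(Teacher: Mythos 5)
Your proposal is correct and matches the paper's (implicit) argument: the corollary is stated as a direct combination of Theorems \ref{t1} and \ref{t1'}, with the three solutions living on the pairwise disjoint sets $\mathcal{N}_\lambda^+\cap B^+$, $\mathcal{N}_\lambda^+\cap E_\lambda^-$ (contained in $B^-$ for $\lambda<\lambda_b$) and $\mathcal{N}_\lambda^-\cap A^+$, which gives distinctness on the whole range exactly as you observe.
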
       

From Corollary \ref{t2} and Theorem \ref{t3}(1), we infer the following multiplicity result, which is obtained combining the variational and bifurcation approaches: 

\begin{cor}
In addition to the assumptions of Theorem \ref{t1'}, we assume \eqref{assump:mbreg}. 
Assume moreover $a^+\not \equiv 0$. Then $(P_\lambda)$ has at least  three classical positive solutions for $0<\lambda<\underline{\lambda}$, namely $u_{j,\lambda}$, $j=1,2,3$.  If, in addition, $b^+ \not \equiv 0$ then $(P_\lambda)$ has a fourth nontrivial non-negative solution for $0<\lambda<\underline{\lambda}$, namely, $u_{0,\lambda}$.
\end{cor}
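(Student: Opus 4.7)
The plan is to assemble the statement by invoking Corollary \ref{t2} (variational) together with Theorems \ref{t3b} and \ref{t3}(1) (bifurcation), verifying that the four candidate solutions obtained from these two streams are mutually distinct in the small-$\lambda$ regime. Before applying the bifurcation theorems, I would check that the hypotheses of Theorem \ref{t1'} already imply the extra hypotheses needed. Since $\tilde{C}_{pq} > C_{pq}$ yields $\tilde{K}_1(m,a) > K_1(m,a)$, the condition $-K_1(m,a) < \int_{\partial\Omega} b < 0$ from Theorem \ref{t1'} automatically gives $-\tilde{K}_1(m,a) < \int_{\partial\Omega} b < 0$, so \eqref{assump:tildK} holds and Theorem \ref{t3} is applicable. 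Moreover, the assumption $a^+ \not\equiv 0$ together with $\int_\Omega a < 0$ forces $a$ to change sign, so Theorem \ref{t3b} is applicable as well.

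Shrinking $\underline{\lambda}>0$ if necessary, I may assume $\underline{\lambda} \leq \min\{\lambda_s,\lambda_a,\lambda_b\}$ and that Theorem \ref{t3b} also applies on $(0,\underline{\lambda})$. Corollary \ref{t2} then produces the non-negative solutions $u_{1,\lambda}$ and $u_{2,\lambda}$ (together with $u_{0,\lambda}$ when $b^+\not\equiv 0$), Theorem \ref{t3}(1) upgrades $u_{1,\lambda}$ to a classical positive asymptotically stable solution and yields the additional classical positive unstable solution $u_{3,\lambda}$ with $u_{3,\lambda} \to c_1$ in $\mathcal{C}(\overline{\Omega})$ as $\lambda \to 0^+$, and Theorem \ref{t3b} upgrades $u_{2,\lambda}$ to a classical positive (unstable) solution, all on $(0,\underline{\lambda})$ after possibly shrinking again.

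It remains to verify distinctness. From Theorem \ref{t1'}(1) and Theorem \ref{t3}(1) we have $u_{1,\lambda} \to c_2$ and $u_{3,\lambda} \to c_1$ in $\mathcal{C}^\theta(\overline{\Omega})$ as $\lambda\to 0^+$, and \eqref{eph} gives $c_1 < c_2$, so $u_{1,\lambda} \not\equiv u_{3,\lambda}$ for $\lambda > 0$ small. On the other hand, Theorem \ref{t1}(2) yields $\min_{\overline{\Omega}} u_{2,\lambda} \to \infty$, so $u_{2,\lambda}$ cannot coincide with either of the bounded solutions $u_{1,\lambda}, u_{3,\lambda}$ for small $\lambda$. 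When $b^+ \not\equiv 0$, Theorem \ref{t1}(1) gives the fourth non-negative solution $u_{0,\lambda}$ with $u_{0,\lambda} \to 0$ in $\mathcal{C}^\theta(\overline{\Omega})$, and the three other solutions either stay bounded away from $0$ (converging to $c_1$ or $c_2$) or blow up, so $u_{0,\lambda}$ is distinct from $u_{1,\lambda}, u_{2,\lambda}, u_{3,\lambda}$ for $\lambda > 0$ sufficiently small. The only real obstacle is the bookkeeping: choosing a single $\underline{\lambda}$ which is simultaneously compatible with the variational existence window, the bifurcation window of Theorem \ref{t3}(1), the smallness required by Theorem \ref{t3b}, and the separation of asymptotic profiles; no further analytical input beyond the cited results is needed.
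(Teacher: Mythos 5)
Your proposal is correct and follows essentially the same route as the paper, which states this corollary as a direct consequence of Corollary \ref{t2} together with Theorems \ref{t3b} and \ref{t3}(1); your verification that the hypotheses of Theorem \ref{t1'} plus $a^+\not\equiv 0$ automatically yield \eqref{assump:tildK} and the sign change of $a$, and your separation of the four solutions via their distinct asymptotic profiles ($0$, $c_1$, $c_2$, and uniform blow-up), are exactly the implicit steps the paper relies on.
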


\begin{rem} \label{rem:classifiP}
{\rm We shall see in Proposition \ref{prop:bif} that under \eqref{cm},  \eqref{assump:mbreg}, and \eqref{assump:tildK}, $(P_\lambda)$ has, for $|\lambda|$ sufficiently small, two classical positive solutions $U_{j, \lambda}$, $j=1,2$, which are continuous (with respect to $\lambda$) in $\mathcal{C}^{2+\alpha}(\overline{\Omega})$ and satisfy $U_{j,0}=c_j$, $j=1,2$. Furthermore, this result does not require the condition $p<2^*$.
So, under \eqref{cm}, \eqref{assump:mbreg} and the conditions
$$a,b \text{ change sign,} \quad \int_\Omega a<0, \quad \text{and} \quad  \int_{\partial \Omega} b<0,$$ 
 Theorem \ref{t1} and Proposition \ref{prop:bif} provide, according to the value of $\int_{\partial \Omega}b$,  the following result on the number of nontrivial non-negative solutions of $(P_\lambda)$:\\
\begin{enumerate}
\item If $\int_{\partial \Omega}b >-\tilde{K_1}(m,a)$ then $(P_\lambda)$ has at least four nontrivial non-negative solutions (two variational nontrivial non-negative solutions, among which one is a classical positive solution, and two bifurcating classical positive solutions) for $\lambda > 0$ sufficiently small.\\
\item If $\int_{\partial \Omega}b \leq -\tilde{K_1}(m,a)$ then $(P_\lambda)$ has at least two (variational) nontrivial non-negative solutions, among which one is a classical positive solution, for $\lambda > 0$ sufficiently small. Moreover, if $\int_{\partial \Omega}b < -\tilde{K_1}(m,a)$ then $(P_\lambda)$ has no classical positive solutions converging to a positive constant as $\lambda \to 0^+$.
\end{enumerate} 
}\end{rem}

The following condition provides an {\it a priori} bound on the values of $\lambda$ for which $(P_\lambda)$ has a nontrivial non-negative solution: 
	$$ \left\{
	\begin{array}{lll} \text{There are smooth sub-domains }D_{\pm} \Subset \Omega \text{ such that }\\ D_{\pm} \subset \Omega_{\pm}
	 \text{ and $m$ changes sign in $D_+$ and in $D_-$.} \end{array}\right. \leqno{\mathcal{(H)}}$$

	\begin{thm} 
	\label{t4}
	Assume that $a$ changes sign and $\mathcal{(H)}$ holds.  Then there exists a constant $\Lambda > 0$ such that if $u$ is a nontrivial non-negative solution of $(P_\lambda)$ then $|\lambda|\leq \Lambda$. 
	\end{thm}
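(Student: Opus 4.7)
The plan is to exploit the indefinite character of $m$ on the subdomains $D_\pm$ via a Dirichlet principal-eigenvalue comparison, using that the forcing $\lambda a u^{p-1}$ has a favourable sign on $D_+$ when $\lambda>0$ and on $D_-$ when $\lambda<0$. To prepare the ground I recall that any nontrivial non-negative solution $u$ of $(P_\lambda)$ lies in $W^{2,r}_{\mathrm{loc}}(\Omega)$ for some $r>N$ and, by the strong maximum principle, satisfies $u>0$ in $\Omega$; in particular $u\in \mathcal{C}^1(\overline{D_\pm})$ with $u\geq c>0$ on the compact sets $\overline{D_\pm}\subset\Omega$, so the classical Green identity is applicable on $D_\pm$.

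For a smooth subdomain $D\Subset\Omega$ on which $m$ changes sign, I introduce the first Dirichlet eigenvalue of $-\Delta-\lambda m$,
\[
\mu_1(\lambda;D) := \inf_{\phi\in H_0^1(D)\setminus\{0\}} \frac{\int_D |\nabla \phi|^2 - \lambda \int_D m\,\phi^2}{\int_D \phi^2}\,.
\]
As a function of $\lambda\in\R$ this is concave, equals $\lambda_1(-\Delta;D)>0$ at $\lambda=0$, and tends to $-\infty$ as $\lambda\to\pm\infty$ (testing against functions supported in $\{m>0\}\cap D$ and $\{m<0\}\cap D$ respectively, both of positive measure since $m$ changes sign in $D$). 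Hence there exist unique $\Lambda^+(D)>0$ and $\Lambda^-(D)<0$ with $\mu_1(\Lambda^\pm(D);D)=0$, and $\mu_1(\lambda;D)<0$ for $\lambda\notin[\Lambda^-(D),\Lambda^+(D)]$.

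Suppose for contradiction that $\lambda>\Lambda^+(D_+)$ and that $u$ is a nontrivial non-negative solution. Since $D_+\subset\Omega_+$ one has $a\geq 0$ on $D_+$, so $(-\Delta-\lambda m)u=\lambda a u^{p-1}\geq 0$ in $D_+$, with strict inequality on a set of positive measure. Let $\phi>0$ be the principal eigenfunction associated with $\mu_1(\lambda;D_+)<0$. Green's identity on $D_+$ yields
\[
\int_{D_+} \phi\,(-\Delta - \lambda m)u \;-\; \mu_1(\lambda;D_+)\int_{D_+} u\phi \;=\; \int_{\partial D_+} u\,\partial_\nu \phi\,d\sigma,
\]
whose left-hand side is strictly positive (both terms are $\geq 0$, the first one strictly), while the right-hand side is strictly negative by Hopf's lemma ($\partial_\nu\phi<0$ on $\partial D_+$) combined with $u>0$ on $\partial D_+$: a contradiction. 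Therefore $\lambda\leq \Lambda^+(D_+)$. The case $\lambda<0$ is handled symmetrically on $D_-$, where $a\leq 0$ forces $\lambda a u^{p-1}\geq 0$, and the analogous comparison produces $\lambda\geq \Lambda^-(D_-)$. Setting $\Lambda:=\max\{\Lambda^+(D_+),\,-\Lambda^-(D_-)\}$ gives the desired estimate.

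The only delicate point is securing the strict sign of the boundary term in Green's identity; this relies on the smoothness of $\partial D_\pm$ assumed in $(\mathcal H)$, the interior positivity of $u$ (strong maximum principle), and the $\mathcal{C}^{1,\alpha}$ regularity of $\phi$ from classical elliptic theory with $m\in L^\infty(D)$, all of which are standard.
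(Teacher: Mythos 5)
Your proposal is correct and follows essentially the same route as the paper: a Dirichlet principal-eigenvalue comparison for $-\Delta-\lambda m$ on $D_\pm$, Green's identity together with the boundary point lemma to obtain the strict sign of the boundary term (the paper phrases this as testing the weak formulation against the zero-extension of the eigenfunction, which yields the same identity), the sign of $a$ on $D_\pm$ to control the term $\lambda a u^{p-1}$, and the sign-change of $m$ in $D_\pm$ to force $\mu_1(\lambda;D_\pm)<0$ for $|\lambda|$ large. The only cosmetic difference is that you argue by contradiction from $\mu_1<0$, whereas the paper directly deduces $\mu_{1,+}>0$ for $\lambda>0$ (and $\mu_{1,-}>0$ for $\lambda<0$); these are logically equivalent.
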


As a particular case of $(P_\lambda)$, we shall consider the problem
$$ \left\{
\begin{array}{lll}
-\Delta u=\lambda m(x)(u-|u|^{p-2}u) & {\rm in } & \Omega,  \\ 
          \frac{\partial u}{\partial \bf{n}}=\lambda b(x)|u|^{q-2}u & {\rm on } & \partial \Omega,\\
\end{array}\right. \leqno{(Q_\lambda)} 
$$
where $\lambda$, $m$, $b$, $p$, $q$ are as above. Note that $(Q_\lambda)$ corresponds to $(P_\lambda)$ with $a\equiv -m$.  The nonlinearity $\lambda m(x)(u-|u|^{p-2}u)$ arises from population genetics and has already been studied under homogeneous boundary conditions in \cite{BH, F, S}.

Let 
\begin{equation*}
\lambda_r =\lambda_r(m,b):=\inf\left\{\int_{\Omega} |\nabla u|^2; \ u \in H^1(\Omega), \ \int_{\Omega}mu^2=1, \int_\Om m|u|^p \leq 0 \leq \int_{\partial \Om} b|u|^q  \right\},
\end{equation*} 
$$\lambda_t=\lambda_t(m,b):=\inf \left\{ \int_{\Omega} |\nabla u|^2;\ u \in H^1(\Omega), \ \int_\Om m|u|^p \leq 0 \leq \int_{\partial \Om} b|u|^q, \ T(u)=1\right\},$$
where
$$T(u)=\int_{\Omega} mu^2+\left(C_{pq}^{-1} \left(\int_{\partial \Om} b|u|^q \right)\left(-\int_\Om m|u|^p\right)^{\frac{2-q}{p-2}}\right)^{\frac{p-2}{p-q}},$$
is defined for $u \in H^1(\Om)$ such that $\int_\Om m|u|^p \leq 0 \leq \int_{\partial \Om} b|u|^q$. Note that $\lambda_r=\min\{\lambda_a(m),\lambda_b(m)\}$ and $\lambda_t=\lambda_s(m,a,b)$ with $a\equiv -m$. 

Finally, let 
$$\phi(t)=\left(t^{2-q}-t^{p-q}\right)\int_\Om m + \int_{\partial \Om} b.$$

The assumption of Theorem \ref{t1'}
reads now
\begin{equation}
\label{emf}\int_{\Omega} m>0 >\int_{\partial \Omega} b> -C_{pq} \int_\Om m,
\end{equation}
in which case $\phi$ has two positive zeros $c_1<c_2$.


Applying Theorems \ref{t1} and \ref{t1'} to $(Q_\lambda)$ we obtain the following: 

\begin{thm}
\label{t5}
Assume \eqref{emf} and $m$ changes sign. Then $\lambda_r,\lambda_t>0$ and:
\begin{enumerate}
\item $(Q_\lambda)$ has two nontrivial non-negative solutions $u_{1,\lambda}$, $u_{2,\lambda}$ for $0<\lambda< \min\{\lambda_1, \lambda_t\}$, which satisfy $u_{1,\lambda} \rightarrow c_2$ in $\mathcal{C}^{\theta}(\overline{\Om})$ for some $\theta \in (0,1)$ and $\displaystyle \min_{\overline{\Omega}} u_{2,\lambda} \rightarrow \infty$ as $\lambda \to 0^+$. If, in addition, $b^+ \not \equiv 0$, then $(Q_\lambda)$ has a further nontrivial non-negative solution $u_{0,\lambda}$ for $0<\lambda <\lambda_r$,
which satisfies $u_{0,\lambda} \rightarrow 0$  in $\mathcal{C}^{\theta}(\overline{\Om})$ as $\lambda \to 0^+$. \\
\item $(Q_\lambda)$ has two nontrivial non-negative solutions $v_{1,\lambda}$, $v_{2,\lambda}$ for $\max\{\tilde{\lambda}_1,\tilde{\lambda}_t\}<\lambda<0$, where $\tilde{\lambda}_1=-\lambda_1(-m)$ and $\tilde{\lambda}_t=-\lambda_t(-m,-b)$. Furthermore, $v_{1,\lambda} \to c_1$ and $v_{2,\lambda} \to c_2$ in $\mathcal{C}^{\theta}(\overline{\Om})$ for some $\theta \in (0,1)$ as $\lambda \to 0^-$.
\end{enumerate}
\end{thm}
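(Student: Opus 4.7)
The plan is to derive Theorem~\ref{t5} as a direct specialization of Theorems~\ref{t1} and~\ref{t1'} to $(Q_\lambda)$, which is $(P_\lambda)$ with $a\equiv -m$. The first step is to check that the sign change of $m$ together with condition~\eqref{emf} supplies exactly the hypotheses required by those theorems: the identities $\int_\Omega a = -\int_\Omega m < 0$, $a^+ = m^- \not\equiv 0$, and $\int_{\partial\Omega} b < 0$ cover the hypotheses of Theorem~\ref{t1}(1)--(2) and both parts of Theorem~\ref{t1'}; the extra assumption $b^+\not\equiv 0$ then activates Theorem~\ref{t1}(1) for the third solution. The two-sided bound on $\int_{\partial\Omega} b$ matches via the elementary identity
\begin{equation*}
K_1(m,-m) \;=\; C_{pq}\,\frac{\left(\int_\Omega m\right)^{(p-q)/(p-2)}}{\left(\int_\Omega m\right)^{(2-q)/(p-2)}} \;=\; C_{pq}\int_\Omega m,
\end{equation*}
which follows from $(p-q)-(2-q) = p-2$, so that the range $-K_1(m,-m) < \int_{\partial\Omega} b < 0$ in Theorem~\ref{t1'}(1) is precisely~\eqref{emf}.

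The second step is to identify $\lambda_r$ and $\lambda_t$ with the relevant constants from the earlier theorems. When $a\equiv -m$, the constraint $u\in A_0^+$ reads $\int_\Omega m|u|^p\leq 0$ and the functional $S$ from~\eqref{eqs} coincides with $T$; hence $\lambda_t = \lambda_s(m,-m,b)$, while the identification $\lambda_r = \min\{\lambda_a(m),\lambda_b(m)\}$ recorded in the paragraph defining these constants is immediate from inspection of the constraint sets. The positivity $\lambda_r,\lambda_t > 0$ is then inherited from the corresponding statements $\lambda_a,\lambda_b,\lambda_s > 0$ contained in Theorems~\ref{t1}(1)--(2), whose hypotheses have just been verified.

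Part~(1) is then assembled by intersecting ranges: Theorem~\ref{t1'}(1) yields $u_{1,\lambda}$ on $(0,\min\{\lambda_a,\lambda_b\})=(0,\lambda_r)$ with $u_{1,\lambda}\to c_2$; Theorem~\ref{t1}(2) yields $u_{2,\lambda}$ on $(0,\min\{\lambda_a,\lambda_s\})$ with $\min_{\overline\Omega} u_{2,\lambda}\to \infty$; and under $b^+\not\equiv 0$ Theorem~\ref{t1}(1) yields $u_{0,\lambda}$ on $(0,\min\{\lambda_b,\lambda_s\})$ with $u_{0,\lambda}\to 0$. Restricting to the subintervals $(0,\min\{\lambda_r,\lambda_t\})$ and $(0,\lambda_r)$ respectively (each contained in the corresponding range above since $\lambda_r\leq\lambda_a,\lambda_b$ and $\lambda_t=\lambda_s$) gives the asserted existence intervals. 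Part~(2) is immediate from Theorem~\ref{t1'}(2) applied to $(Q_\lambda)$: the interval $\max\{\tilde\lambda_1,\tilde\lambda_t\}<\lambda<0$ and the conventions $\tilde\lambda_1 = -\lambda_1(-m)$, $\tilde\lambda_t = -\lambda_t(-m,-b)$ are read off directly. The only point of care is this bookkeeping --- in particular the $K_1$ identity and the correct identification of $\lambda_r,\lambda_t$ --- and no genuine technical obstacle is expected, since all the PDE analysis is already packaged in Theorems~\ref{t1} and~\ref{t1'}.
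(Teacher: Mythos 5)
Your proposal is correct and follows exactly the route the paper takes: the paper offers no separate argument for Theorem~\ref{t5} beyond the sentence ``Applying Theorems~\ref{t1} and~\ref{t1'} to $(Q_\lambda)$,'' and your write-up simply makes the required bookkeeping explicit (the identity $K_1(m,-m)=C_{pq}\int_\Omega m$, the identifications $\lambda_t=\lambda_s(m,-m,b)$ and $\lambda_r=\min\{\lambda_a,\lambda_b\}$ already recorded in the paper, and the intersection of the existence intervals). The only substantive reading choice you make --- interpreting the interval $(0,\min\{\lambda_1,\lambda_t\})$ as $(0,\min\{\lambda_r,\lambda_t\})$, since $\lambda_1(m)=0$ under \eqref{emf} --- is the natural one and consistent with the paper's intent.
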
      

Lastly, we consider the case where $\phi$ has a unique positive zero $c_0$ such that $\phi'(c_0)=0$, which occurs precisely when
\begin{align} \label{assump:=-tildeK1}
\int_\Omega m > 0 > \int_{\partial \Omega}b = - \tilde{K}_1(m,-m),
\end{align}
where $\tilde{K}_1$ is defined in \eqref{ktilda}.
Under (\ref{assump:mbreg}), we obtain a smooth curve of classical positive solutions of $(Q_\lambda)$ for $|\lambda|$ sufficiently small by a bifurcation argument.


The following result asserts that if \eqref{assump:=-tildeK1} holds then $(0, c_0)$ is a turning point to the right on the smooth curve of positive solutions of $(Q_\lambda)$. 

\begin{thm} \label{thm:a=-m:bif} 
Assume \eqref{assump:mbreg}. If $(Q_\lambda)$ has a classical positive solution $u$ with $\lambda \not= 0$ such that $u_\lambda \rightarrow c$ in $\mathcal{C}(\overline{\Omega})$ as $\lambda \to 0$, where $c$ is a positive constant, then  $\phi (c) = 0$. Moreover: 
\begin{enumerate}
\item Assume \eqref{assump:tildK} with $a=-m$. Then  there exist two arbitrarily smooth maps $\lambda \mapsto U_{1,\lambda}, U_{2,\lambda} \in \mathcal{C}^{2+\alpha}(\overline{\Omega})$ for $\lambda$ close to $0$ such that $U_{1,\lambda}$, $U_{2,\lambda}$ are classical positive solutions of $(Q_\lambda)$ and satisfy $U_{1,0}=c_1$ and $U_{2,0}=c_2$. \\

\item Assume \eqref{assump:=-tildeK1}. 
Then there exist two arbitrarily smooth maps $t \mapsto \lambda(t) \in \R$ and $t \mapsto u(t)\in \mathcal{C}^{2+\alpha}(\overline{\Omega})$ for $t$ close to $c_0$ such that $(\lambda,u)=(\lambda(t),u(t))$ is a classical positive solution of $(Q_\lambda)$ with $\lambda(c_0)=\lambda'(c_0)=0$, $\lambda''(c_0)>0$, and $u(t)= t + o(1)$ as $t\to c_0$. Moreover, there exists a constant $\varepsilon > 0$ such that $u(t)$ is asymptotically stable for $c_0<t<c_0+\varepsilon$ and unstable for $c_0-\varepsilon < t < c_0$. 
\end{enumerate}
\end{thm}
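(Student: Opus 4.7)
The proof proceeds by Lyapunov--Schmidt reduction about the degenerate line of constant solutions $\{(0,c):c>0\}$ of $(Q_\lambda)$. For Part (1), integrating the interior equation and applying the divergence theorem together with the boundary condition gives
\begin{equation*}
\lambda\!\left[\int_\Omega m(u_\lambda-u_\lambda^{p-1})+\int_{\partial\Omega}b\,u_\lambda^{q-1}\right]=0;
\end{equation*}
dividing by $\lambda\neq 0$ and letting $\lambda\to 0$ with $u_\lambda\to c>0$ uniformly yields $c^{q-1}\phi(c)=0$, hence $\phi(c)=0$.

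For Parts (2)--(3), set $X=\mathcal{C}^{2+\alpha}(\overline{\Omega})$, $X_0=\{v\in X:\int_\Omega v=0\}$, $Y=\mathcal{C}^\alpha(\overline{\Omega})\times\mathcal{C}^{1+\alpha}(\partial\Omega)$, and split $Y=Y_0\oplus\R$ via $\ell(f,g):=\int_\Omega f+\int_{\partial\Omega}g$. Writing $u=t+v$ with $t\in\R$, $v\in X_0$, let $F(\lambda,t,v)\in Y$ collect the interior and boundary residuals. The linearization $D_vF(0,t,0):X_0\to Y_0$ is the Neumann Laplacian, hence an isomorphism by Schauder theory; IFT applied to the $Y_0$-projection of $F=0$ yields a $\mathcal{C}^\infty$ map $v=v(\lambda,t)$ defined near $\lambda=0$ with $v(0,t)\equiv 0$. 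Since $\ell(-\Delta v,\partial_\mathbf{n}v)=0$, the remaining scalar bifurcation equation factors (for $\lambda\neq 0$) as $h(\lambda,t)=0$ with
\begin{equation*}
h(\lambda,t):=\int_\Omega m\bigl[(t+v)-(t+v)^{p-1}\bigr]+\int_{\partial\Omega}b(t+v)^{q-1},\qquad h(0,t)=t^{q-1}\phi(t).
\end{equation*}
Under \eqref{assump:tildK} with $a=-m$, $\phi$ has simple zeros $c_1<c_2$; since $\partial_th(0,c_i)=c_i^{q-1}\phi'(c_i)\neq 0$, IFT produces smooth $t_i(\lambda)$ with $t_i(0)=c_i$, and $U_{i,\lambda}:=t_i(\lambda)+v(\lambda,t_i(\lambda))$ settles Part (2).

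For Part (3), under \eqref{assump:=-tildeK1} the zero $c_0$ is double: $\phi'(c_0)=0$ is equivalent to $c_0^{p-2}=(2-q)/(p-q)$, and a direct computation gives $\phi''(c_0)=c_0^{-q}(2-q)(2-p)\int_\Omega m<0$. Since $\partial_th(0,c_0)=0$, we instead solve $h=0$ for $\lambda$ as a function of $t$, which requires $\partial_\lambda h(0,c_0)\neq 0$. Differentiating the $Y_0$-projection of $F=0$ in $\lambda$ at $(0,c_0,0)$ identifies $w:=\partial_\lambda v(0,c_0)\in X_0$ as the unique solution of $-\Delta w=A_0 m$ in $\Omega$, $\partial_\mathbf{n}w=B_0 b$ on $\partial\Omega$, where $A_0:=c_0(1-c_0^{p-2})=c_0\tfrac{p-2}{p-q}$ and $B_0:=c_0^{q-1}$; compatibility $A_0\int m+B_0\int b=0$ is exactly $\phi(c_0)=0$. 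A direct differentiation gives
\begin{equation*}
\partial_\lambda h(0,c_0)=\alpha\int_\Omega mw+\beta\int_{\partial\Omega}bw,\qquad \alpha:=1-(p-1)c_0^{p-2}=\tfrac{(p-2)(q-1)}{p-q},\ \beta:=(q-1)c_0^{q-2},
\end{equation*}
while testing the equation for $w$ against $w$ yields $A_0\int mw+B_0\int bw=\|\nabla w\|_2^2$. The algebraic identity $\beta A_0=\alpha B_0$, immediate from $c_0^{p-2}=(2-q)/(p-q)$, then produces $\partial_\lambda h(0,c_0)=(\alpha/A_0)\|\nabla w\|_2^2>0$ (nonvanishing because $m\not\equiv 0$ forces $w\not\equiv\mathrm{const}$). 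IFT now yields a smooth curve $\lambda=\lambda(t)$ with $\lambda(c_0)=\lambda'(c_0)=0$ and $\lambda''(c_0)=-c_0^{q-1}\phi''(c_0)/\partial_\lambda h(0,c_0)>0$; setting $u(t):=t+v(\lambda(t),t)$ gives the claimed family with $u(t)=t+o(1)$ as $t\to c_0$.

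For stability, the linearization $L_t$ at $u(t)$ is a self-adjoint Robin-type elliptic operator whose principal eigenvalue $\mu(t)$ is smooth in $t$ and satisfies $\mu(c_0)=0$ (constant eigenfunction). Testing the Rayleigh quotient against $\psi\equiv 1$, using $\lambda(t)=\tfrac12\lambda''(c_0)(t-c_0)^2+\cdots$ and Taylor expansions of $f'(u(t))$, $g'(u(t))$ about $c_0$, yields
\begin{equation*}
\mu(t)\leq \frac{\lambda''(c_0)(p-2)(2-q)\int_\Omega m}{2c_0|\Omega|}(t-c_0)^3+O((t-c_0)^4),
\end{equation*}
whose right-hand side is strictly negative for $t\in(c_0-\varepsilon,c_0)$, yielding instability there; combined with the standard exchange-of-stability across a nondegenerate fold (only $\mu(t)$ can cross zero, since the higher Neumann eigenvalues remain strictly positive at $c_0$), this forces $\mu(t)>0$ for $t\in(c_0,c_0+\varepsilon)$, hence asymptotic stability. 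The principal obstacle in the argument is the transversality $\partial_\lambda h(0,c_0)\neq 0$: it rests on the algebraic identity $\beta A_0=\alpha B_0$, which encodes precisely the degeneracy $\phi'(c_0)=0$ and is what makes the turning-point structure detectable by the IFT.
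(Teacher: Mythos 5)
Your Lyapunov--Schmidt reduction, the identification of the bifurcation equation $h(\lambda,t)=0$ with $h(0,t)=t^{q-1}\phi(t)$, the treatment of item (1) via $\partial_t h(0,c_i)=c_i^{q-1}\phi'(c_i)\neq 0$, and the turning-point computation in item (2) all coincide with the paper's argument (Proposition \ref{prop:bif}, Lemma \ref{lem:Phi}, Proposition \ref{prop:turn}); in particular your identity $\beta A_0=\alpha B_0$ is exactly the mechanism behind the paper's formula $\Phi_\lambda(0,c_0)=(q-1)c_0^{-1}\int_\Omega|\nabla v_\lambda(0,c_0)|^2>0$, and your observation that $v(0,t)\equiv 0$ kills all pure $t$-derivatives of $v$ is a clean shortcut for the paper's verifications $v_t(0,c_0)=v_{tt}(0,c_0)=0$. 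The Rayleigh-quotient upper bound with test function $\psi\equiv 1$ correctly gives instability for $c_0-\varepsilon<t<c_0$.

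The gap is the asymptotic stability for $t>c_0$. There the upper bound $\mu(t)\leq C(t-c_0)^3+O((t-c_0)^4)$ with $C>0$ says nothing, and the appeal to ``standard exchange of stability across a nondegenerate fold'' does not apply: the classical fold theorem requires $F_\lambda\notin\mathrm{Range}\,F_u$ at the turning point, i.e. a nonzero pairing of $F_\lambda(0,c_0)=-(f(\cdot,c_0),b\,g(c_0))$ with the constant null eigenfunction, and that pairing equals $-c_0^{q-1}\phi(c_0)=0$ here. This degeneracy is precisely why the critical eigenvalue vanishes to \emph{third} order in $t-c_0$ rather than first order, so the standard theorem (which would predict $\mu(t)\sim -K\lambda'(t)$ with $K\neq 0$) is inapplicable, and the parenthetical ``only $\mu(t)$ can cross zero'' rules nothing out: a priori $\mu$ could satisfy, say, $\mu(t)\sim -(t-c_0)^2$ and be negative on both sides. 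To close the gap you must either (i) follow the paper and compute $\gamma_1'(c_0)=\gamma_1''(c_0)=0$ and $\gamma_1'''(c_0)>0$ directly from the differentiated eigenvalue problem, or (ii) upgrade your one-sided Rayleigh bound to a two-sided asymptotic $\mu(t)=C(t-c_0)^3+o((t-c_0)^3)$, which requires showing that the principal eigenfunction satisfies $\psi_1(t)=\mathrm{const}+O((t-c_0)^2)$ (equivalently $\psi_1'(c_0)=0$, which the paper proves) so that the error $R[1]-\mu(t)=O(\|(I-P_1)1\|^2)$ is $O((t-c_0)^4)$; a first-order-only approximation of $\psi_1$ by a constant gives an error $O((t-c_0)^2)$ that swamps the cubic leading term.
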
 

\begin{rem}{\rm \strut
\begin{enumerate}
\item Theorem \ref{thm:a=-m:bif} holds in fact without the condition $p<2^*$. 
\item Assertion (1) in Theorem \ref{thm:a=-m:bif} corresponds to Theorem \ref{t3} for $(Q_\lambda)$. 
\end{enumerate}

}\end{rem}



\begin{rem}\label{rem:classifiQ}
{\rm 
 The results on the number of nontrivial non-negative solutions from Remark \ref{rem:classifiP} can be sharpened for $(Q_\lambda)$. In this case, Theorems \ref{t1} and \ref{thm:a=-m:bif} provide that, under \eqref{assump:mbreg}, if $m$ and $b$ change sign, and $\int_\Omega m>0>    \int_{\partial \Omega} b$, then $(Q_\lambda)$ has: 
  \begin{enumerate}
  \item at least four nontrivial non-negative solutions (two variational nontrivial non-negative solutions, among which one is a classical positive solution, and two bifurcating classical positive solutions) for $\lambda>0$ sufficiently small if $\int_{\partial \Omega}b \geq -\tilde{K_1}(m,-m)$.  \\
  \item at least two (variational) nontrivial non-negative solutions, among which one is a classical positive solution, for $\lambda>0$ sufficiently small and no classical positive solutions converging to a positive constant as $\lambda \to 0^+$ if $\int_{\partial \Omega}b < -\tilde{K_1}(m,-m)$. 
  \end{enumerate}
}\end{rem}

\subsection{Suggested bifurcation diagrams}
\strut
\medskip

In view of the results stated above, we analyze now the possible bifurcations diagrams of $(P_\lambda)$ and $(Q_\lambda)$. This will be done assuming that $a$ changes sign and $(\mathcal{H})$ holds, in which case Theorem \ref{t4} ensures that $(P_\lambda)$ has no nontrivial non-negative solution for  $|\lambda|>\Lambda$.

\begin{enumerate}
\item Assume \eqref{cm}, $a$ and $b$ change sign, and $\int_\Omega a<0$. If $-K_1(m,a)<\int_{\partial \Om} b<0$ then, by Theorems \ref{t1}, \ref{t1'}, and \ref{t3}, the bifurcation diagram for $(P_\lambda)$ is suggested by Figure \ref{fig1}.
Moreover, Remark \ref{rem:classifiP} suggests that this bifurcation diagram  approaches the one shown in Figure \ref{figbinfty} as $\int_{\partial \Omega} b \to -\infty$. Indeed, note that the value $\Lambda$ provided by Theorem \ref{t4} does not depend on $b$. Furthermore, it can be shown that $\lambda_b$ and $\lambda_s$ stay bounded away from zero if $b^+$ is bounded from above (cf. Remark \ref{rbb}). By a formal observation, the nonlinear boundary condition in $(P_\lambda)$ approaches the Dirichlet boundary condition $u=0$
as $\int_{\partial \Omega}b \to -\infty$. So, after the change of variable $v = \lambda^{\frac{1}{p-2}}u$ for $u\geq 0$, the limiting problem for $(P_\lambda)$ when $\int_{\partial \Omega}b \to -\infty$ would be $$-\Delta v = \lambda m(x)v + a(x)v^{p-1} \mbox{ in } \Omega, \quad v=0  \mbox{ on } \partial \Omega.$$ This problem has been investigated by Ouyang \cite{Ou}  in the case $m \equiv 1$ and the existence of a single turning point in the positive solutions set has been proved under some conditions on $a$. We expect then that the bifurcation diagram of $(P_\lambda)$ has a unique turning point if $\int_{\partial \Omega}b \ll 0$.
\\

\item Assume \eqref{cm}, $b \leq 0$, $a$ changes sign, and $\int_\Om a<0$.\\
\begin{enumerate}
\item If $-K_1(m,a)<\int_{\partial \Om} b<0$ then the bifurcation diagram for $(P_\lambda)$ in the case $b\leq 0$ is suggested by Figure \ref{fig0sol}. This is motivated by Theorems \ref{t1}, \ref{t1'}, and \ref{t3}, and the fact that $u_{0,\lambda}$ approaches zero as $b^+$ converges to zero. More precisely, in Proposition \ref{pmn1} we prove that if $b_n \rightarrow b$ in $L^{\infty}(\partial \Omega)$ with $b_n^+ \not \equiv 0$ for every $n$, then there exists $\lambda_0>0$ such that the solution $u_{0,\lambda,b_n}$ exists for every $n$, and satisfies $u_{0,\lambda,b_n} \to 0$ in $\mathcal{C}^{\theta}(\overline{\Om})$ for some $\theta \in (0,1)$ and $0<\lambda<\lambda_0$. Note that in \cite{RQU3} we have obtained bifurcation from zero in the case $b\leq 0$ and $b\not\equiv 0$ when $a< 0$.\\
\item In a similar way, if $-\tilde{K}_1(m,a)-\varepsilon<\int_{\partial \Om} b<-\tilde{K}_1(m,a)$ with $\varepsilon>0$ sufficiently small then the bifurcation diagram for $(P_\lambda)$ would be suggested by Figure \ref{figbnonposi:large}. Lastly, as $\int_{\partial \Om} b \to -\infty$, the bifurcation diagram for $(P_\lambda)$ is expected to approach Figure \ref{figbnonposi:enoughlarge}.\\
\end{enumerate}
\item Assume that $m$ and $b$ change sign, and $\int_\Omega m>0>    \int_{\partial \Omega} b = -\tilde{K_1}(m,-m)$. By Remark \ref{rem:classifiQ}, the bifurcation diagram of $(Q_\lambda)$ is suggested by Figure \ref{fig:a=-m}.
\end{enumerate}

\begin{rem} \label{rem:bio} {\rm
From the biological viewpoint, the bifurcation diagram 
suggested in Figure \ref{fig1} would provide three possible conditional states for the population as the diffusion rate grows to infinity, namely: extinction, explosion and persistence with a spatially uniform distribution.
}\end{rem} 

	\begin{figure}[H]
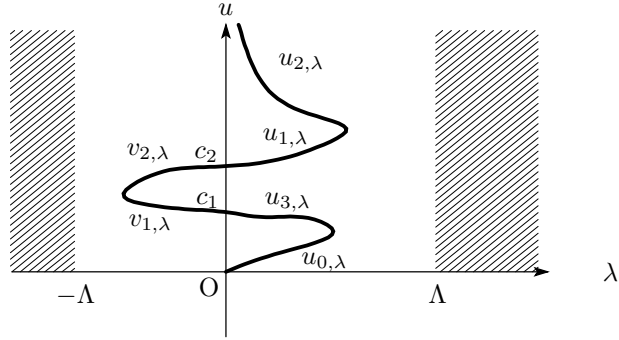
 
		\input fig14_0516ss
	  \caption{Suggested bifurcation diagram for $(P_\lambda)$ when $a$ and $b$ change sign, $\int_\Omega m > 0 > \int_\Omega a$, and $0> \int_{\partial \Omega}b > -K_1(m,a)$.} \label{fig1}
	    \end{figure} 
	   
	\begin{figure}[H]
{\unitlength 0.1in
\begin{picture}( 24.6500, 20.9400)( 20.6000,-21.9000)
%
\special{pn 8}%
\special{pa 2060 1790}%
\special{pa 4526 1790}%
\special{fp}%
\special{sh 1}%
\special{pa 4526 1790}%
\special{pa 4458 1770}%
\special{pa 4472 1790}%
\special{pa 4458 1810}%
\special{pa 4526 1790}%
\special{fp}%
%
\special{pn 8}%
\special{pa 3046 2190}%
\special{pa 3046 268}%
\special{fp}%
\special{sh 1}%
\special{pa 3046 268}%
\special{pa 3026 334}%
\special{pa 3046 320}%
\special{pa 3066 334}%
\special{pa 3046 268}%
\special{fp}%
\put(29.6700,-18.8100){\makebox(0,0){O}}%
\put(48.1400,-17.9000){\makebox(0,0){$\lambda$}}%
%
\special{pn 8}%
\special{pa 4010 1790}%
\special{pa 4010 302}%
\special{pa 4480 302}%
\special{pa 4480 1790}%
\special{pa 4010 1790}%
\special{ip}%
%
\special{pn 4}%
\special{pa 4480 1174}%
\special{pa 4010 1686}%
\special{fp}%
\special{pa 4480 1216}%
\special{pa 4010 1728}%
\special{fp}%
\special{pa 4480 1258}%
\special{pa 4010 1770}%
\special{fp}%
\special{pa 4480 1300}%
\special{pa 4030 1790}%
\special{fp}%
\special{pa 4480 1342}%
\special{pa 4068 1790}%
\special{fp}%
\special{pa 4480 1384}%
\special{pa 4106 1790}%
\special{fp}%
\special{pa 4480 1426}%
\special{pa 4146 1790}%
\special{fp}%
\special{pa 4480 1468}%
\special{pa 4184 1790}%
\special{fp}%
\special{pa 4480 1510}%
\special{pa 4222 1790}%
\special{fp}%
\special{pa 4480 1552}%
\special{pa 4262 1790}%
\special{fp}%
\special{pa 4480 1594}%
\special{pa 4300 1790}%
\special{fp}%
\special{pa 4480 1636}%
\special{pa 4338 1790}%
\special{fp}%
\special{pa 4480 1678}%
\special{pa 4378 1790}%
\special{fp}%
\special{pa 4480 1720}%
\special{pa 4416 1790}%
\special{fp}%
\special{pa 4480 1762}%
\special{pa 4454 1790}%
\special{fp}%
\special{pa 4480 1130}%
\special{pa 4010 1644}%
\special{fp}%
\special{pa 4480 1088}%
\special{pa 4010 1602}%
\special{fp}%
\special{pa 4480 1046}%
\special{pa 4010 1558}%
\special{fp}%
\special{pa 4480 1004}%
\special{pa 4010 1516}%
\special{fp}%
\special{pa 4480 962}%
\special{pa 4010 1474}%
\special{fp}%
\special{pa 4480 920}%
\special{pa 4010 1432}%
\special{fp}%
\special{pa 4480 878}%
\special{pa 4010 1390}%
\special{fp}%
\special{pa 4480 836}%
\special{pa 4010 1348}%
\special{fp}%
\special{pa 4480 794}%
\special{pa 4010 1306}%
\special{fp}%
\special{pa 4480 752}%
\special{pa 4010 1264}%
\special{fp}%
\special{pa 4480 710}%
\special{pa 4010 1222}%
\special{fp}%
\special{pa 4480 668}%
\special{pa 4010 1180}%
\special{fp}%
\special{pa 4480 626}%
\special{pa 4010 1138}%
\special{fp}%
\special{pa 4480 584}%
\special{pa 4010 1096}%
\special{fp}%
\special{pa 4480 542}%
\special{pa 4010 1054}%
\special{fp}%
\special{pa 4480 500}%
\special{pa 4010 1012}%
\special{fp}%
\special{pa 4480 458}%
\special{pa 4010 970}%
\special{fp}%
\special{pa 4480 416}%
\special{pa 4010 928}%
\special{fp}%
\special{pa 4480 374}%
\special{pa 4010 886}%
\special{fp}%
\special{pa 4480 332}%
\special{pa 4010 844}%
\special{fp}%
\special{pa 4468 302}%
\special{pa 4010 802}%
\special{fp}%
\special{pa 4428 302}%
\special{pa 4010 760}%
\special{fp}%
\special{pa 4390 302}%
\special{pa 4010 716}%
\special{fp}%
\special{pa 4352 302}%
\special{pa 4010 674}%
\special{fp}%
\special{pa 4312 302}%
\special{pa 4010 632}%
\special{fp}%
\special{pa 4274 302}%
\special{pa 4010 590}%
\special{fp}%
\special{pa 4236 302}%
\special{pa 4010 548}%
\special{fp}%
\special{pa 4196 302}%
\special{pa 4010 506}%
\special{fp}%
\special{pa 4158 302}%
\special{pa 4010 464}%
\special{fp}%
\special{pa 4120 302}%
\special{pa 4010 422}%
\special{fp}%
\special{pa 4082 302}%
\special{pa 4010 380}%
\special{fp}%
\special{pa 4042 302}%
\special{pa 4010 338}%
\special{fp}%
\put(40.1600,-19.2300){\makebox(0,0){$\Lambda$}}%
\put(30.4500,-1.7600){\makebox(0,0){$u$}}%
\put(33.6600,-5.9000){\makebox(0,0){$u_{2,\lambda}$}}%
%
\special{pn 20}%
\special{pa 3046 1790}%
\special{pa 3070 1768}%
\special{pa 3092 1744}%
\special{pa 3116 1720}%
\special{pa 3188 1654}%
\special{pa 3236 1612}%
\special{pa 3284 1572}%
\special{pa 3310 1554}%
\special{pa 3334 1538}%
\special{pa 3386 1506}%
\special{pa 3438 1478}%
\special{pa 3466 1466}%
\special{pa 3522 1438}%
\special{pa 3552 1424}%
\special{pa 3612 1392}%
\special{pa 3644 1376}%
\special{pa 3678 1358}%
\special{pa 3712 1338}%
\special{pa 3776 1294}%
\special{pa 3804 1272}%
\special{pa 3830 1248}%
\special{pa 3848 1224}%
\special{pa 3862 1200}%
\special{pa 3868 1176}%
\special{pa 3866 1152}%
\special{pa 3856 1128}%
\special{pa 3840 1104}%
\special{pa 3816 1080}%
\special{pa 3788 1058}%
\special{pa 3758 1036}%
\special{pa 3728 1016}%
\special{pa 3696 996}%
\special{pa 3664 978}%
\special{pa 3634 960}%
\special{pa 3574 926}%
\special{pa 3546 908}%
\special{pa 3462 860}%
\special{pa 3410 828}%
\special{pa 3362 792}%
\special{pa 3340 774}%
\special{pa 3318 754}%
\special{pa 3278 710}%
\special{pa 3260 688}%
\special{pa 3242 662}%
\special{pa 3226 636}%
\special{pa 3196 582}%
\special{pa 3180 554}%
\special{pa 3168 524}%
\special{pa 3154 494}%
\special{pa 3142 464}%
\special{pa 3118 402}%
\special{pa 3094 338}%
\special{pa 3084 306}%
\special{pa 3078 288}%
\special{fp}%
\put(36.0000,-15.9000){\makebox(0,0){$u_{0,\lambda}$}}%
\end{picture}}%
	  \caption{Suggested bifurcation diagram for $(P_\lambda)$ when $a$ and $b$ change sign, 
	$\int_\Omega m > 0 > \int_\Omega a$,  and $\int_{\partial \Omega}b \to -\infty$.}
	\label{figbinfty}
	 \end{figure}

	\begin{figure}[H]
	\input fig14_0702s
  \caption{Suggested bifurcation diagram for $(P_\lambda)$ when $a$ changes sign, $b \leq 0$, $\int_\Omega m > 0 > \int_\Omega a$, and  $0> \int_{\partial \Omega}b > -K_1(m,a)$.} \label{fig0sol}
    \end{figure} 

\begin{figure}[H]
{\unitlength 0.1in
\begin{picture}( 24.1800, 17.0700)( 20.6000,-18.0000)
%
\special{pn 8}%
\special{pa 2060 1476}%
\special{pa 4478 1476}%
\special{fp}%
\special{sh 1}%
\special{pa 4478 1476}%
\special{pa 4412 1456}%
\special{pa 4426 1476}%
\special{pa 4412 1496}%
\special{pa 4478 1476}%
\special{fp}%
%
\special{pn 8}%
\special{pa 3026 1800}%
\special{pa 3026 248}%
\special{fp}%
\special{sh 1}%
\special{pa 3026 248}%
\special{pa 3006 314}%
\special{pa 3026 300}%
\special{pa 3046 314}%
\special{pa 3026 248}%
\special{fp}%
\put(29.4900,-15.5100){\makebox(0,0){O}}%
\put(47.6100,-14.7600){\makebox(0,0){$\lambda$}}%
%
\special{pn 8}%
\special{pa 3974 1476}%
\special{pa 3974 274}%
\special{pa 4434 274}%
\special{pa 4434 1476}%
\special{pa 3974 1476}%
\special{ip}%
%
\special{pn 4}%
\special{pa 4434 978}%
\special{pa 3974 1392}%
\special{fp}%
\special{pa 4434 1012}%
\special{pa 3974 1426}%
\special{fp}%
\special{pa 4434 1046}%
\special{pa 3974 1460}%
\special{fp}%
\special{pa 4434 1080}%
\special{pa 3992 1476}%
\special{fp}%
\special{pa 4434 1114}%
\special{pa 4030 1476}%
\special{fp}%
\special{pa 4434 1148}%
\special{pa 4068 1476}%
\special{fp}%
\special{pa 4434 1184}%
\special{pa 4106 1476}%
\special{fp}%
\special{pa 4434 1216}%
\special{pa 4144 1476}%
\special{fp}%
\special{pa 4434 1250}%
\special{pa 4182 1476}%
\special{fp}%
\special{pa 4434 1284}%
\special{pa 4220 1476}%
\special{fp}%
\special{pa 4434 1318}%
\special{pa 4258 1476}%
\special{fp}%
\special{pa 4434 1354}%
\special{pa 4296 1476}%
\special{fp}%
\special{pa 4434 1386}%
\special{pa 4334 1476}%
\special{fp}%
\special{pa 4434 1420}%
\special{pa 4370 1476}%
\special{fp}%
\special{pa 4434 1456}%
\special{pa 4408 1476}%
\special{fp}%
\special{pa 4434 944}%
\special{pa 3974 1358}%
\special{fp}%
\special{pa 4434 910}%
\special{pa 3974 1324}%
\special{fp}%
\special{pa 4434 876}%
\special{pa 3974 1290}%
\special{fp}%
\special{pa 4434 842}%
\special{pa 3974 1256}%
\special{fp}%
\special{pa 4434 810}%
\special{pa 3974 1222}%
\special{fp}%
\special{pa 4434 774}%
\special{pa 3974 1188}%
\special{fp}%
\special{pa 4434 740}%
\special{pa 3974 1154}%
\special{fp}%
\special{pa 4434 706}%
\special{pa 3974 1120}%
\special{fp}%
\special{pa 4434 672}%
\special{pa 3974 1086}%
\special{fp}%
\special{pa 4434 638}%
\special{pa 3974 1052}%
\special{fp}%
\special{pa 4434 604}%
\special{pa 3974 1018}%
\special{fp}%
\special{pa 4434 570}%
\special{pa 3974 984}%
\special{fp}%
\special{pa 4434 536}%
\special{pa 3974 950}%
\special{fp}%
\special{pa 4434 502}%
\special{pa 3974 916}%
\special{fp}%
\special{pa 4434 468}%
\special{pa 3974 882}%
\special{fp}%
\special{pa 4434 434}%
\special{pa 3974 848}%
\special{fp}%
\special{pa 4434 400}%
\special{pa 3974 814}%
\special{fp}%
\special{pa 4434 366}%
\special{pa 3974 780}%
\special{fp}%
\special{pa 4434 332}%
\special{pa 3974 746}%
\special{fp}%
\special{pa 4434 298}%
\special{pa 3974 712}%
\special{fp}%
\special{pa 4422 274}%
\special{pa 3974 678}%
\special{fp}%
\special{pa 4384 274}%
\special{pa 3974 644}%
\special{fp}%
\special{pa 4346 274}%
\special{pa 3974 610}%
\special{fp}%
\special{pa 4308 274}%
\special{pa 3974 576}%
\special{fp}%
\special{pa 4270 274}%
\special{pa 3974 542}%
\special{fp}%
\special{pa 4232 274}%
\special{pa 3974 508}%
\special{fp}%
\special{pa 4194 274}%
\special{pa 3974 474}%
\special{fp}%
\special{pa 4156 274}%
\special{pa 3974 440}%
\special{fp}%
\special{pa 4118 274}%
\special{pa 3974 406}%
\special{fp}%
\special{pa 4080 274}%
\special{pa 3974 372}%
\special{fp}%
\special{pa 4042 274}%
\special{pa 3974 338}%
\special{fp}%
\special{pa 4006 274}%
\special{pa 3974 304}%
\special{fp}%
\put(39.7800,-15.8400){\makebox(0,0){$\Lambda$}}%
\put(30.2500,-1.7300){\makebox(0,0){$u$}}%
\put(34.4000,-4.9000){\makebox(0,0){$u_{2,\lambda}$}}%
%
\special{pn 20}%
\special{pa 3676 1476}%
\special{pa 3670 1444}%
\special{pa 3662 1412}%
\special{pa 3654 1382}%
\special{pa 3642 1352}%
\special{pa 3630 1324}%
\special{pa 3616 1298}%
\special{pa 3600 1272}%
\special{pa 3580 1250}%
\special{pa 3558 1230}%
\special{pa 3532 1210}%
\special{pa 3504 1190}%
\special{pa 3472 1172}%
\special{pa 3438 1154}%
\special{pa 3402 1134}%
\special{pa 3368 1116}%
\special{pa 3338 1096}%
\special{pa 3320 1074}%
\special{pa 3318 1050}%
\special{pa 3332 1024}%
\special{pa 3356 998}%
\special{pa 3384 978}%
\special{pa 3410 960}%
\special{pa 3438 946}%
\special{pa 3464 930}%
\special{pa 3490 912}%
\special{pa 3514 890}%
\special{pa 3534 862}%
\special{pa 3548 832}%
\special{pa 3546 804}%
\special{pa 3530 780}%
\special{pa 3504 760}%
\special{pa 3440 720}%
\special{pa 3410 700}%
\special{pa 3380 682}%
\special{pa 3328 642}%
\special{pa 3304 620}%
\special{pa 3280 600}%
\special{pa 3258 578}%
\special{pa 3238 554}%
\special{pa 3220 532}%
\special{pa 3202 508}%
\special{pa 3184 482}%
\special{pa 3152 428}%
\special{pa 3136 400}%
\special{pa 3108 344}%
\special{pa 3080 284}%
\special{pa 3070 258}%
\special{fp}%
\end{picture}}%
  \caption{Suggested bifurcation diagram for $(P_\lambda)$ when $a$ changes sign, $b\leq 0$, $\int_\Omega m > 0 > \int_\Omega a$, and $\int_{\partial
\Omega}b < -\tilde{K}_1(m,a)$.}
\label{figbnonposi:large} 
 \end{figure}

\begin{figure}[H]
      \begin{center}
      \subfigure[Bifurcation diagram with a single turning point.]{
{\unitlength 0.1in
\begin{picture}( 23.6200, 18.2400)( 22.9000,-26.8000)
%
\special{pn 8}%
\special{pa 2290 2334}%
\special{pa 4652 2334}%
\special{fp}%
\special{sh 1}%
\special{pa 4652 2334}%
\special{pa 4586 2314}%
\special{pa 4600 2334}%
\special{pa 4586 2354}%
\special{pa 4652 2334}%
\special{fp}%
%
\special{pn 8}%
\special{pa 3232 2680}%
\special{pa 3232 1016}%
\special{fp}%
\special{sh 1}%
\special{pa 3232 1016}%
\special{pa 3212 1084}%
\special{pa 3232 1070}%
\special{pa 3252 1084}%
\special{pa 3232 1016}%
\special{fp}%
\put(31.6000,-24.1200){\makebox(0,0){O}}%
\put(49.2800,-23.3300){\makebox(0,0){$\lambda$}}%
%
\special{pn 8}%
\special{pa 4160 2334}%
\special{pa 4160 1046}%
\special{pa 4608 1046}%
\special{pa 4608 2334}%
\special{pa 4160 2334}%
\special{ip}%
%
\special{pn 4}%
\special{pa 4608 1798}%
\special{pa 4160 2244}%
\special{fp}%
\special{pa 4608 1834}%
\special{pa 4160 2278}%
\special{fp}%
\special{pa 4608 1872}%
\special{pa 4160 2316}%
\special{fp}%
\special{pa 4608 1906}%
\special{pa 4178 2334}%
\special{fp}%
\special{pa 4608 1946}%
\special{pa 4214 2334}%
\special{fp}%
\special{pa 4608 1982}%
\special{pa 4250 2334}%
\special{fp}%
\special{pa 4608 2016}%
\special{pa 4286 2334}%
\special{fp}%
\special{pa 4608 2054}%
\special{pa 4326 2334}%
\special{fp}%
\special{pa 4608 2090}%
\special{pa 4360 2334}%
\special{fp}%
\special{pa 4608 2126}%
\special{pa 4400 2334}%
\special{fp}%
\special{pa 4608 2162}%
\special{pa 4436 2334}%
\special{fp}%
\special{pa 4608 2200}%
\special{pa 4472 2334}%
\special{fp}%
\special{pa 4608 2236}%
\special{pa 4512 2334}%
\special{fp}%
\special{pa 4608 2272}%
\special{pa 4548 2334}%
\special{fp}%
\special{pa 4608 2308}%
\special{pa 4582 2334}%
\special{fp}%
\special{pa 4608 1762}%
\special{pa 4160 2206}%
\special{fp}%
\special{pa 4608 1726}%
\special{pa 4160 2170}%
\special{fp}%
\special{pa 4608 1688}%
\special{pa 4160 2132}%
\special{fp}%
\special{pa 4608 1652}%
\special{pa 4160 2096}%
\special{fp}%
\special{pa 4608 1616}%
\special{pa 4160 2060}%
\special{fp}%
\special{pa 4608 1580}%
\special{pa 4160 2024}%
\special{fp}%
\special{pa 4608 1544}%
\special{pa 4160 1988}%
\special{fp}%
\special{pa 4608 1506}%
\special{pa 4160 1952}%
\special{fp}%
\special{pa 4608 1470}%
\special{pa 4160 1914}%
\special{fp}%
\special{pa 4608 1434}%
\special{pa 4160 1876}%
\special{fp}%
\special{pa 4608 1396}%
\special{pa 4160 1840}%
\special{fp}%
\special{pa 4608 1362}%
\special{pa 4160 1804}%
\special{fp}%
\special{pa 4608 1324}%
\special{pa 4160 1768}%
\special{fp}%
\special{pa 4608 1288}%
\special{pa 4160 1732}%
\special{fp}%
\special{pa 4608 1252}%
\special{pa 4160 1694}%
\special{fp}%
\special{pa 4608 1214}%
\special{pa 4160 1658}%
\special{fp}%
\special{pa 4608 1180}%
\special{pa 4160 1622}%
\special{fp}%
\special{pa 4608 1142}%
\special{pa 4160 1586}%
\special{fp}%
\special{pa 4608 1106}%
\special{pa 4160 1550}%
\special{fp}%
\special{pa 4608 1068}%
\special{pa 4160 1512}%
\special{fp}%
\special{pa 4596 1046}%
\special{pa 4160 1476}%
\special{fp}%
\special{pa 4558 1046}%
\special{pa 4160 1440}%
\special{fp}%
\special{pa 4522 1046}%
\special{pa 4160 1406}%
\special{fp}%
\special{pa 4486 1046}%
\special{pa 4160 1368}%
\special{fp}%
\special{pa 4448 1046}%
\special{pa 4160 1332}%
\special{fp}%
\special{pa 4412 1046}%
\special{pa 4160 1294}%
\special{fp}%
\special{pa 4374 1046}%
\special{pa 4160 1260}%
\special{fp}%
\special{pa 4338 1046}%
\special{pa 4160 1220}%
\special{fp}%
\special{pa 4302 1046}%
\special{pa 4160 1184}%
\special{fp}%
\special{pa 4264 1046}%
\special{pa 4160 1148}%
\special{fp}%
\special{pa 4226 1046}%
\special{pa 4160 1112}%
\special{fp}%
\special{pa 4190 1046}%
\special{pa 4160 1074}%
\special{fp}%
\put(41.6400,-24.4900){\makebox(0,0){$\Lambda$}}%
\put(32.3200,-9.3600){\makebox(0,0){$u$}}%
\put(35.4100,-12.9300){\makebox(0,0){$u_{2,\lambda}$}}%
%
\special{pn 20}%
\special{pa 3720 2334}%
\special{pa 3740 2310}%
\special{pa 3758 2284}%
\special{pa 3800 2236}%
\special{pa 3822 2212}%
\special{pa 3846 2188}%
\special{pa 3870 2162}%
\special{pa 3892 2138}%
\special{pa 3912 2112}%
\special{pa 3926 2084}%
\special{pa 3936 2056}%
\special{pa 3938 2026}%
\special{pa 3934 1996}%
\special{pa 3922 1966}%
\special{pa 3908 1934}%
\special{pa 3868 1878}%
\special{pa 3844 1852}%
\special{pa 3820 1828}%
\special{pa 3796 1808}%
\special{pa 3770 1788}%
\special{pa 3746 1768}%
\special{pa 3718 1750}%
\special{pa 3692 1734}%
\special{pa 3640 1698}%
\special{pa 3616 1680}%
\special{pa 3590 1660}%
\special{pa 3568 1638}%
\special{pa 3544 1616}%
\special{pa 3500 1568}%
\special{pa 3480 1544}%
\special{pa 3460 1518}%
\special{pa 3424 1464}%
\special{pa 3408 1436}%
\special{pa 3394 1406}%
\special{pa 3380 1378}%
\special{pa 3366 1348}%
\special{pa 3344 1288}%
\special{pa 3324 1228}%
\special{pa 3300 1134}%
\special{pa 3294 1102}%
\special{pa 3286 1072}%
\special{pa 3280 1040}%
\special{pa 3276 1026}%
\special{fp}%
\end{picture}}%
          \label{fig:turningpt}
      }
      \hfill
      \subfigure[Bifurcation diagram without a turning point.]{
{\unitlength 0.1in
\begin{picture}( 24.9100, 19.1200)( 23.8000,-26.6000)
%
\special{pn 8}%
\special{pa 2380 2296}%
\special{pa 4872 2296}%
\special{fp}%
\special{sh 1}%
\special{pa 4872 2296}%
\special{pa 4804 2276}%
\special{pa 4818 2296}%
\special{pa 4804 2316}%
\special{pa 4872 2296}%
\special{fp}%
%
\special{pn 8}%
\special{pa 3376 2660}%
\special{pa 3376 910}%
\special{fp}%
\special{sh 1}%
\special{pa 3376 910}%
\special{pa 3356 978}%
\special{pa 3376 964}%
\special{pa 3396 978}%
\special{pa 3376 910}%
\special{fp}%
\put(32.9600,-23.8000){\makebox(0,0){O}}%
\put(51.6400,-22.9600){\makebox(0,0){$\lambda$}}%
%
\special{pn 8}%
\special{pa 4350 2296}%
\special{pa 4350 942}%
\special{pa 4826 942}%
\special{pa 4826 2296}%
\special{pa 4350 2296}%
\special{ip}%
%
\special{pn 4}%
\special{pa 4826 1734}%
\special{pa 4350 2200}%
\special{fp}%
\special{pa 4826 1774}%
\special{pa 4350 2238}%
\special{fp}%
\special{pa 4826 1812}%
\special{pa 4350 2276}%
\special{fp}%
\special{pa 4826 1850}%
\special{pa 4370 2296}%
\special{fp}%
\special{pa 4826 1888}%
\special{pa 4408 2296}%
\special{fp}%
\special{pa 4826 1928}%
\special{pa 4448 2296}%
\special{fp}%
\special{pa 4826 1964}%
\special{pa 4488 2296}%
\special{fp}%
\special{pa 4826 2002}%
\special{pa 4526 2296}%
\special{fp}%
\special{pa 4826 2042}%
\special{pa 4566 2296}%
\special{fp}%
\special{pa 4826 2080}%
\special{pa 4606 2296}%
\special{fp}%
\special{pa 4826 2118}%
\special{pa 4644 2296}%
\special{fp}%
\special{pa 4826 2156}%
\special{pa 4682 2296}%
\special{fp}%
\special{pa 4826 2194}%
\special{pa 4720 2296}%
\special{fp}%
\special{pa 4826 2232}%
\special{pa 4760 2296}%
\special{fp}%
\special{pa 4826 2272}%
\special{pa 4800 2296}%
\special{fp}%
\special{pa 4826 1698}%
\special{pa 4350 2162}%
\special{fp}%
\special{pa 4826 1658}%
\special{pa 4350 2124}%
\special{fp}%
\special{pa 4826 1620}%
\special{pa 4350 2086}%
\special{fp}%
\special{pa 4826 1582}%
\special{pa 4350 2046}%
\special{fp}%
\special{pa 4826 1544}%
\special{pa 4350 2008}%
\special{fp}%
\special{pa 4826 1504}%
\special{pa 4350 1972}%
\special{fp}%
\special{pa 4826 1468}%
\special{pa 4350 1932}%
\special{fp}%
\special{pa 4826 1428}%
\special{pa 4350 1896}%
\special{fp}%
\special{pa 4826 1390}%
\special{pa 4350 1856}%
\special{fp}%
\special{pa 4826 1352}%
\special{pa 4350 1818}%
\special{fp}%
\special{pa 4826 1312}%
\special{pa 4350 1778}%
\special{fp}%
\special{pa 4826 1274}%
\special{pa 4350 1742}%
\special{fp}%
\special{pa 4826 1236}%
\special{pa 4350 1702}%
\special{fp}%
\special{pa 4826 1200}%
\special{pa 4350 1666}%
\special{fp}%
\special{pa 4826 1160}%
\special{pa 4350 1626}%
\special{fp}%
\special{pa 4826 1122}%
\special{pa 4350 1588}%
\special{fp}%
\special{pa 4826 1082}%
\special{pa 4350 1548}%
\special{fp}%
\special{pa 4826 1046}%
\special{pa 4350 1512}%
\special{fp}%
\special{pa 4826 1006}%
\special{pa 4350 1472}%
\special{fp}%
\special{pa 4826 970}%
\special{pa 4350 1436}%
\special{fp}%
\special{pa 4812 942}%
\special{pa 4350 1396}%
\special{fp}%
\special{pa 4772 942}%
\special{pa 4350 1358}%
\special{fp}%
\special{pa 4734 942}%
\special{pa 4350 1320}%
\special{fp}%
\special{pa 4696 942}%
\special{pa 4350 1282}%
\special{fp}%
\special{pa 4656 942}%
\special{pa 4350 1244}%
\special{fp}%
\special{pa 4616 942}%
\special{pa 4350 1206}%
\special{fp}%
\special{pa 4580 942}%
\special{pa 4350 1168}%
\special{fp}%
\special{pa 4540 942}%
\special{pa 4350 1128}%
\special{fp}%
\special{pa 4500 942}%
\special{pa 4350 1090}%
\special{fp}%
\special{pa 4462 942}%
\special{pa 4350 1052}%
\special{fp}%
\special{pa 4422 942}%
\special{pa 4350 1014}%
\special{fp}%
\special{pa 4384 942}%
\special{pa 4350 976}%
\special{fp}%
\put(43.5700,-24.1700){\makebox(0,0){$\Lambda$}}%
\put(33.7500,-8.2800){\makebox(0,0){$u$}}%
\put(37.0100,-12.0500){\makebox(0,0){$u_{2,\lambda}$}}%
%
\special{pn 20}%
\special{pa 4020 2296}%
\special{pa 4012 2266}%
\special{pa 3996 2202}%
\special{pa 3988 2172}%
\special{pa 3978 2142}%
\special{pa 3966 2112}%
\special{pa 3954 2084}%
\special{pa 3924 2028}%
\special{pa 3906 2000}%
\special{pa 3834 1896}%
\special{pa 3814 1870}%
\special{pa 3776 1818}%
\special{pa 3756 1790}%
\special{pa 3720 1738}%
\special{pa 3702 1710}%
\special{pa 3654 1626}%
\special{pa 3612 1540}%
\special{pa 3600 1510}%
\special{pa 3586 1480}%
\special{pa 3574 1452}%
\special{pa 3552 1392}%
\special{pa 3542 1360}%
\special{pa 3532 1330}%
\special{pa 3512 1268}%
\special{pa 3502 1238}%
\special{pa 3494 1208}%
\special{pa 3478 1146}%
\special{pa 3468 1114}%
\special{pa 3462 1084}%
\special{pa 3446 1022}%
\special{pa 3438 990}%
\special{pa 3432 958}%
\special{pa 3424 928}%
\special{pa 3422 918}%
\special{fp}%
\end{picture}}%
        \label{fig:noturningpt}
      }
      \end{center}
      \caption{Suggested bifurcation diagram for $(P_\lambda)$ when $a$ changes sign, $b\leq 0$, $\int_\Omega m > 0 > \int_\Omega a$, and $\int_{\partial
\Omega}b \to -\infty$.}
      \label{figbnonposi:enoughlarge}
    \end{figure} 
	
\begin{figure}[H]
{\unitlength 0.1in
\begin{picture}( 28.3200, 19.3400)( 23.4500,-26.0000)
%
\special{pn 8}%
\special{pa 2776 2232}%
\special{pa 5178 2232}%
\special{fp}%
\special{sh 1}%
\special{pa 5178 2232}%
\special{pa 5110 2212}%
\special{pa 5124 2232}%
\special{pa 5110 2252}%
\special{pa 5178 2232}%
\special{fp}%
%
\special{pn 8}%
\special{pa 3736 2600}%
\special{pa 3736 830}%
\special{fp}%
\special{sh 1}%
\special{pa 3736 830}%
\special{pa 3716 898}%
\special{pa 3736 884}%
\special{pa 3756 898}%
\special{pa 3736 830}%
\special{fp}%
\put(36.6000,-23.1600){\makebox(0,0){O}}%
\put(41.7300,-21.3400){\makebox(0,0){$u_{0,\lambda}$}}%
\put(54.6000,-22.3200){\makebox(0,0){$\lambda$}}%
%
\special{pn 8}%
\special{pa 4676 2232}%
\special{pa 4676 862}%
\special{pa 5134 862}%
\special{pa 5134 2232}%
\special{pa 4676 2232}%
\special{ip}%
%
\special{pn 4}%
\special{pa 5134 1664}%
\special{pa 4676 2136}%
\special{fp}%
\special{pa 5134 1702}%
\special{pa 4676 2174}%
\special{fp}%
\special{pa 5134 1742}%
\special{pa 4676 2212}%
\special{fp}%
\special{pa 5134 1780}%
\special{pa 4696 2232}%
\special{fp}%
\special{pa 5134 1820}%
\special{pa 4734 2232}%
\special{fp}%
\special{pa 5134 1858}%
\special{pa 4770 2232}%
\special{fp}%
\special{pa 5134 1896}%
\special{pa 4808 2232}%
\special{fp}%
\special{pa 5134 1936}%
\special{pa 4846 2232}%
\special{fp}%
\special{pa 5134 1974}%
\special{pa 4884 2232}%
\special{fp}%
\special{pa 5134 2014}%
\special{pa 4922 2232}%
\special{fp}%
\special{pa 5134 2052}%
\special{pa 4958 2232}%
\special{fp}%
\special{pa 5134 2090}%
\special{pa 4996 2232}%
\special{fp}%
\special{pa 5134 2130}%
\special{pa 5034 2232}%
\special{fp}%
\special{pa 5134 2168}%
\special{pa 5072 2232}%
\special{fp}%
\special{pa 5134 2208}%
\special{pa 5108 2232}%
\special{fp}%
\special{pa 5134 1626}%
\special{pa 4676 2096}%
\special{fp}%
\special{pa 5134 1586}%
\special{pa 4676 2058}%
\special{fp}%
\special{pa 5134 1548}%
\special{pa 4676 2018}%
\special{fp}%
\special{pa 5134 1510}%
\special{pa 4676 1980}%
\special{fp}%
\special{pa 5134 1470}%
\special{pa 4676 1942}%
\special{fp}%
\special{pa 5134 1432}%
\special{pa 4676 1902}%
\special{fp}%
\special{pa 5134 1392}%
\special{pa 4676 1864}%
\special{fp}%
\special{pa 5134 1354}%
\special{pa 4676 1826}%
\special{fp}%
\special{pa 5134 1316}%
\special{pa 4676 1786}%
\special{fp}%
\special{pa 5134 1276}%
\special{pa 4676 1748}%
\special{fp}%
\special{pa 5134 1238}%
\special{pa 4676 1708}%
\special{fp}%
\special{pa 5134 1198}%
\special{pa 4676 1670}%
\special{fp}%
\special{pa 5134 1160}%
\special{pa 4676 1632}%
\special{fp}%
\special{pa 5134 1122}%
\special{pa 4676 1592}%
\special{fp}%
\special{pa 5134 1082}%
\special{pa 4676 1554}%
\special{fp}%
\special{pa 5134 1044}%
\special{pa 4676 1516}%
\special{fp}%
\special{pa 5134 1004}%
\special{pa 4676 1476}%
\special{fp}%
\special{pa 5134 966}%
\special{pa 4676 1438}%
\special{fp}%
\special{pa 5134 928}%
\special{pa 4676 1398}%
\special{fp}%
\special{pa 5134 888}%
\special{pa 4676 1360}%
\special{fp}%
\special{pa 5122 862}%
\special{pa 4676 1322}%
\special{fp}%
\special{pa 5084 862}%
\special{pa 4676 1282}%
\special{fp}%
\special{pa 5048 862}%
\special{pa 4676 1244}%
\special{fp}%
\special{pa 5008 862}%
\special{pa 4676 1206}%
\special{fp}%
\special{pa 4972 862}%
\special{pa 4676 1166}%
\special{fp}%
\special{pa 4934 862}%
\special{pa 4676 1128}%
\special{fp}%
\special{pa 4896 862}%
\special{pa 4676 1088}%
\special{fp}%
\special{pa 4858 862}%
\special{pa 4676 1050}%
\special{fp}%
\special{pa 4820 862}%
\special{pa 4676 1012}%
\special{fp}%
\special{pa 4784 862}%
\special{pa 4676 972}%
\special{fp}%
\special{pa 4744 862}%
\special{pa 4676 934}%
\special{fp}%
\special{pa 4708 862}%
\special{pa 4676 896}%
\special{fp}%
\put(46.8200,-23.5400){\makebox(0,0){$\Lambda$}}%
\put(37.3500,-7.4600){\makebox(0,0){$u$}}%
%
\special{pn 20}%
\special{pa 3736 2232}%
\special{pa 3786 2192}%
\special{pa 3812 2172}%
\special{pa 3864 2136}%
\special{pa 3890 2120}%
\special{pa 3948 2092}%
\special{pa 4008 2068}%
\special{pa 4038 2060}%
\special{pa 4102 2044}%
\special{pa 4166 2030}%
\special{pa 4196 2022}%
\special{pa 4226 2008}%
\special{pa 4252 1990}%
\special{pa 4276 1966}%
\special{pa 4288 1936}%
\special{pa 4286 1908}%
\special{pa 4272 1880}%
\special{pa 4248 1854}%
\special{pa 4218 1832}%
\special{pa 4188 1814}%
\special{pa 4156 1800}%
\special{pa 4124 1790}%
\special{pa 4092 1784}%
\special{pa 4060 1780}%
\special{pa 4000 1772}%
\special{pa 3940 1760}%
\special{pa 3910 1752}%
\special{pa 3880 1742}%
\special{pa 3850 1730}%
\special{pa 3820 1716}%
\special{pa 3786 1700}%
\special{pa 3758 1682}%
\special{pa 3738 1658}%
\special{pa 3736 1630}%
\special{pa 3748 1598}%
\special{pa 3766 1568}%
\special{pa 3786 1540}%
\special{pa 3808 1516}%
\special{pa 3832 1496}%
\special{pa 3858 1476}%
\special{pa 3910 1444}%
\special{pa 3936 1430}%
\special{pa 4020 1394}%
\special{pa 4080 1368}%
\special{pa 4112 1354}%
\special{pa 4144 1338}%
\special{pa 4178 1318}%
\special{pa 4206 1298}%
\special{pa 4230 1274}%
\special{pa 4242 1248}%
\special{pa 4242 1222}%
\special{pa 4228 1194}%
\special{pa 4206 1166}%
\special{pa 4180 1142}%
\special{pa 4152 1122}%
\special{pa 4124 1104}%
\special{pa 4068 1076}%
\special{pa 4038 1064}%
\special{pa 4012 1050}%
\special{pa 3984 1034}%
\special{pa 3958 1018}%
\special{pa 3910 978}%
\special{pa 3886 956}%
\special{pa 3864 934}%
\special{pa 3842 910}%
\special{pa 3798 860}%
\special{pa 3776 834}%
\special{pa 3768 824}%
\special{fp}%
\put(31.4500,-16.2100){\makebox(0,0){$c_0 = (\lambda(0), u(0))$}}%
\put(40.3600,-9.0500){\makebox(0,0){$u_{2,\lambda}$}}%
\put(41.5600,-16.0300){\makebox(0,0){$(\lambda(t), u(t))$}}%
\end{picture}}%
  \caption{Suggested bifurcation diagram for $(Q_\lambda)$ when $m$ and $b$ change sign and $\int_\Omega m > 0 > \int_{\partial \Omega}b = -\tilde{K}_1(m,-m)$.} \label{fig:a=-m} 
    \end{figure}
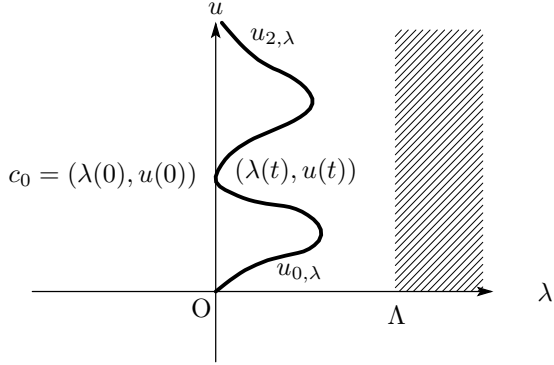 


The rest of this article is organized as follows: in Section \ref{sec2} we collect some preliminary results on $\lambda_a$, $\lambda_b$, $\lambda_s$ and $\mathcal{N}_\lambda$, the Nehari manifold associated to $(P_\lambda)$. In Section \ref{sec3}, we obtain two minimizers for $I_\lambda$ (the energy functional associated to $(P_\lambda)$) constrained to $\mathcal{N}_\lambda$. These minimizers are located in $\mathcal{N}_\lambda^+$ and correspond to local minimizers of $I_\lambda$. In Section \ref{sec4}, we obtain a further minimizer of $I_\lambda$ constrained to $\mathcal{N}_\lambda^-$, which corresponds to a minimax critical point of $I_\lambda$. Finally, in Section \ref{sec5}, we carry out a bifurcation analysis of $(P_\lambda)$ and prove the theorems stated above.


\section{Preliminaries} \label{sec2} 
Solutions of $(P_\lambda)$ shall be obtained as critical points of the functional
$$I_{\lambda}(u)=\frac{1}{2}E_{\lambda}(u) -\frac{\lambda}{p}A(u) -\frac{\lambda}{q}B(u),$$
defined on $H^1(\Omega)$. 
It is straightforward that $I_{\lambda}$ is weakly lower semicontinuous for any $\lambda \in \R$. In contrast with the case $a < 0$, for any value of $\lambda$ the functional $I_\lambda$ is not coercive. Consequently we shall restrict $I_\lambda$ to its associated Nehari manifold, which is given by
\begin{eqnarray*}
\mathcal{N}_\lambda &=&\{u \in H^1(\Omega)\setminus \{0\};\, \langle I'_{\lambda}(u), u \rangle=0\}\\
&=&\left\{u \in H^1(\Omega)\setminus \{0\}; \, E_\lambda(u)=\lambda \left[A(u)+B(u)\right]\right\}.
\end{eqnarray*}
$\mathcal{N}_\lambda$ is naturally split into $\mathcal{N}_\lambda^+$, $\mathcal{N}_\lambda^-$ and $\mathcal{N}_\lambda^0$, given by
$$\mathcal{N}_{\lambda}^{\pm}=\left\{u\in \mathcal{N}_\lambda; \,
\langle J'_\lambda(u), u \rangle \gtrless 0\right\} \quad \text{and} \quad 
\mathcal{N}_\lambda^0=\left\{u\in \mathcal{N}_\lambda; \,
\langle J'_\lambda(u), u \rangle = 0\right\},$$
where $J_\lambda(u)=\langle I'_\lambda(u),u \rangle$ for $u \in H^1(\Omega)$.
Thus
$$
\mathcal{N}_{\lambda}^{\pm}
=\left\{u\in \mathcal{N}_\lambda; \,
E_\lambda(u)\lessgtr \lambda \frac{p-q}{p-2}B(u)\right\}
=\left\{u\in \mathcal{N}_\lambda; \,
E_\lambda(u)\gtrless \lambda \frac{p-q}{2-q}A(u)\right\}.
$$
and
$$\mathcal{N}_\lambda^0
=\left\{u\in \mathcal{N}_\lambda; \,
E_\lambda(u)= \lambda \frac{p-q}{p-2}B(u)=\lambda \frac{p-q}{2-q}A(u)\right\}.$$

It is well-known that $\mathcal{N}_\lambda \setminus \mathcal{N}_\lambda^0$ is a $\mathcal{C}^1$ manifold defined by a natural constraint, i.e. 
any critical point of the restriction of $I_{\lambda}$ to $\mathcal{N}_\lambda \setminus \mathcal{N}_\lambda^0$ is a critical point of $I_{\lambda}$ (see for instance \cite[Theorem 2.3]{BZ03}).
Given $u\in H^1(\Omega) \setminus \{0\}$, we set $$j_u(t)=I_{\lambda}(tu), \quad t>0.$$
Then $tu \in \mathcal{N}_\lambda^{\pm}$ if, and only if,  $j'_u(t)=0$ and  $j''_u(t) \gtrless 0$. We shall use the map $j_u$ to deduce some properties of $\mathcal{N}_\lambda$. 
Note that $j_u(t)=t^qi_u(t)$,
where 
\begin{equation}
\label{eqi}
i_u(t)=\frac{1}{2}t^{2-q} E_\lambda(u)-\frac{\lambda}{p}t^{p-q}A(u)-\frac{\lambda}{q}B(u)
\end{equation}
for $t>0$.

\begin{rem}
\label{cn}{\rm 
It is easily seen that $c$ is a positive zero of $\varphi$ if and only if $c \in \mathcal{N}_\lambda$, for any $\lambda \in \R$. Additionally, if $\varphi'(c) \gtrless 0$ then $c \in \mathcal{N}_\lambda^{\pm}$. More precisely:
\begin{enumerate}
\item  If $\int_{\Omega} m> 0>\int_{\Omega} a$ and $-K_1(m,a)<\int_{\partial \Omega} b<0$ then $c_1 \in \mathcal{N}_\lambda^-$ and $c_2 \in \mathcal{N}_\lambda^+$.
\item If $\int_{\Omega} m< 0<\int_{\Omega} a$ and $0<\int_{\partial \Omega} b<K_1(-m,-a)$ then $c_1 \in \mathcal{N}_\lambda^+$ and $c_2 \in \mathcal{N}_\lambda^-$.
\item If $\int_{\Omega} a> 0> \int_{\partial \Omega} b$  then $c_0 \in \mathcal{N}_\lambda^-$.
\item If $\int_{\Omega} a< 0< \int_{\partial \Omega} b$ then $c_0 \in \mathcal{N}_\lambda^+$.
\end{enumerate}
}\end{rem}

The role of $\lambda_s$ in the study of $\mathcal{N}_\lambda$ becomes clear in the next result:

\begin{prop}
\label{p1}
If either $\int_{\Omega} a<0$ or $\int_{\partial \Omega} b<0$ or
\begin{equation}
\label{n1}
\int_{\Omega} m<0<\int_{\Omega} a \quad \text{and} \quad 0<\int_{\partial \Omega} b<K_1(-m,-a).
\end{equation}
then $\lambda_s$, given by \eqref{els}, is achieved. In particular, $\lambda_s>0$.
Furthermore, if $\lambda \in (0,\lambda_s)$ then for every $u \in B^+ \cap E_{\lambda}^+ \cap A^+$ there are $0<t_1<t_2$ such that $t_1 u \in \mathcal{N}_\lambda^+$ and $t_2 u \in \mathcal{N}_\lambda^-$, and there is no other $t>0$ such that $tu \in \mathcal{N}_\lambda$.
\end{prop}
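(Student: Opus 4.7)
The proof splits into two independent parts: showing that $\lambda_s$ is attained and strictly positive, and then exhibiting the fibering structure for $\lambda \in (0,\lambda_s)$. The first is handled by the direct method, with the key step being boundedness of minimizing sequences under each of the three hypotheses. The second hinges on the precise matching between the definition of $S$ (through the constant $C_{pq}$) and the geometry of the auxiliary map $i_u(t)$ from \eqref{eqi}.

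\textbf{Existence and positivity of $\lambda_s$.} Let $(u_n) \subset A_0^+ \cap B_0^+$ be a minimizing sequence with $S(u_n) = 1$. I will argue boundedness in $H^1(\Om)$ by contradiction: if $\|u_n\| \to \infty$ and $v_n := u_n/\|u_n\|$, then $\int_\Om |\nabla v_n|^2 \to 0$, so along a subsequence $v_n \to c$ strongly in $H^1(\Om)$ for some constant $c$ with $c^2 |\Om| = 1$, hence $c > 0$. From $A(v_n), B(v_n) \ge 0$ and continuity we obtain $c^p \int_\Om a \ge 0$ and $c^q \int_{\partial \Om} b \ge 0$, directly contradicting each of the first two hypotheses. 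Under \eqref{n1}, I use instead $S(v_n) = \|u_n\|^{-2} \to 0$. A computation in which the exponents $q(p-2)/(p-q) + p(2-q)/(p-q)$ telescope to $2$ yields
\[
0 = S(c) = c^2 \left[ \int_\Om m + C_{pq}^{-\frac{p-2}{p-q}} \left(\int_{\partial \Om} b\right)^{\frac{p-2}{p-q}} \left(\int_\Om a\right)^{\frac{2-q}{p-q}} \right],
\]
and solving for $\int_{\partial \Om} b$ in the vanishing bracket produces exactly $\int_{\partial \Om} b = K_1(-m,-a)$, contradicting the strict inequality in \eqref{n1}. Once $(u_n)$ is bounded, extracting a weakly convergent subsequence $u_n \rightharpoonup u_0$ and using the compactness of $A, B$ and of the $L^2$ embedding gives $S(u_0) = 1$ with $u_0 \in A_0^+ \cap B_0^+$, while weak lower semicontinuity forces $\int_\Om |\nabla u_0|^2 = \lambda_s$. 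To see $\lambda_s > 0$, I rule out $u_0$ being a nonzero constant $c$: in the first (resp.\ second) case $A(c) = |c|^p \int_\Om a < 0$ (resp.\ $B(c) = |c|^q \int_{\partial \Om} b < 0$) would violate $u_0 \in A_0^+$ (resp.\ $B_0^+$), whereas under \eqref{n1} the bracket displayed above is strictly negative, so $S(c) < 0 \ne 1$.

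\textbf{Fibering structure for $\lambda \in (0,\lambda_s)$.} Fix $u \in B^+ \cap E_\lambda^+ \cap A^+$. If $S(u) \le 0$, then $\lambda S(u) < \int_\Om |\nabla u|^2$ is automatic since $E_\lambda(u) > 0$; if $S(u) > 0$, the $t^2$-homogeneity of $S$ and the definition of $\lambda_s$ give $\lambda_s \le \int |\nabla u|^2 / S(u)$, again yielding the same strict inequality. Unpacking $S$, this reads
\[
E_\lambda(u) > \lambda \left( C_{pq}^{-1} B(u) A(u)^{\frac{2-q}{p-2}} \right)^{\frac{p-2}{p-q}}.
\]
Computing the unique critical point $t_{**}$ of $i_u$ on $(0,\infty)$, at which $t_{**}^{p-2} = p(2-q) E_\lambda(u) / [2\lambda(p-q) A(u)]$, and substituting into $i_u$ shows that, with the precise value \eqref{cpq} of $C_{pq}$, the displayed inequality is equivalent to $i_u(t_{**}) > 0$. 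Thus $i_u$ has exactly two positive zeros $t_1'' < t_2''$, and the factorization $j_u(t) = t^q i_u(t) = I_\lambda(tu)$ produces a graph that starts at $0$, dips negative, returns to $0$ at $t_1''$, peaks positively, returns to $0$ at $t_2''$, and tends to $-\infty$. Hence $j_u$ has at least one local minimum $t_1 \in (0, t_1'')$ and one local maximum $t_2 \in (t_1'', t_2'')$. On the other hand, $j_u'(t) = 0$ is equivalent to $F(t) := t^{2-q} E_\lambda(u) - \lambda t^{p-q} A(u) = \lambda B(u)$, and $F$ has a single critical point on $(0,\infty)$, so $j_u'$ admits at most two positive zeros. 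Therefore $t_1, t_2$ are the only ones, and the sign of $j_u''$ at these points places $t_1 u \in \mathcal{N}_\lambda^+$ and $t_2 u \in \mathcal{N}_\lambda^-$.

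\textbf{Main obstacle.} The most delicate step is the equivalence between $\lambda S(u) < \int |\nabla u|^2$ and $\max_{t > 0} i_u(t) > 0$: the constant $C_{pq}$ in \eqref{cpq} is tuned precisely so that these two conditions coincide (and this is strictly stronger than the naive requirement that $j_u'$ have two positive zeros). Tracking the exponents carefully in this matching, and also in the identification of the threshold $K_1(-m,-a)$ during the boundedness argument under \eqref{n1}, constitutes the main computational burden of the proof.
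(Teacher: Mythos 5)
Your proposal is correct and follows essentially the same route as the paper: the direct method for $\lambda_s$ (boundedness of minimizing sequences via the normalized sequence converging to a nonzero constant, with the three hypotheses each ruling out the resulting constant limit, then weak convergence and exclusion of $\lambda_s=0$), followed by the algebraic equivalence between $i_u(t_0(u))>0$ and $\int_\Omega|\nabla u|^2-\lambda S(u)>0$ to produce exactly two fibering critical points. Your explicit identification of the threshold $K_1(-m,-a)$ in the blow-up step and the monotonicity argument for "at most two critical points of $j_u$" merely spell out computations the paper leaves implicit.
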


\begin{proof}
Let $(u_n)$ be a minimizing sequence for $\lambda_s$, i.e.
$$A(u_n) \geq 0, \quad  B(u_n)\geq 0, \quad S(u_n)=1 \quad \text{and} \quad \int_{\Omega} |\nabla u_n|^2 \rightarrow \lambda_s.$$
If $(u_n)$ is unbounded then we set $v_n=\frac{u_n}{\|u_n\|}$. We may assume that 
$$v_n \rightharpoonup v_0, \quad A(v_n) \rightarrow A(v_0), \quad \text{and} \quad B(v_n) \rightarrow B(v_0).$$
Thus 
$$A(v_0) \geq 0, \quad B(v_0) \geq 0, \quad S(v_0)=0, \quad \text{and} \quad \int_{\Omega} |\nabla v_n|^2 \rightarrow 0.$$ Hence $v_0$ is a nonzero constant, so that 
$$\int_{\Omega} a\geq 0, \quad \int_{\partial \Omega} b\geq 0 \quad \text{and} \quad \int_{\Omega} m +\left(C_{pq}^{-1} \left(\int_{\partial \Omega} b\right) \left(\int_{\Omega} a\right)^{\frac{2-q}{p-2}}\right)^{\frac{p-2}{p-q}}=0,$$
which contradicts our assumption.
Thus $(u_n)$ is bounded and we may assume that $u_n \rightharpoonup u_0$. If $\lambda_s=0$ then $u_0$ is a nonzero constant, and from $A(u_0)\geq 0$, $B(u_0)\geq 0$ and $S(u_0)=1$ we get
$$\int_{\Omega} a\geq 0, \quad \int_{\partial \Omega} b\geq 0 \quad \text{and} \quad \int_{\Omega} m +\left(C_{pq}^{-1} \left(\int_{\partial \Omega} b\right) \left(\int_{\Omega} a\right)^{\frac{2-q}{p-2}}\right)^{\frac{p-2}{p-q}}>0,$$
which contradicts again our assumption.
Therefore $\lambda_s=\int_{\Omega} |\nabla u_0|^2>0$.

Now, let $\lambda \in (0,\lambda_s)$ and $u \in A^+ \cap B^+ \cap E_{\lambda}^+$. Then $j_u$ has two critical points if $j_u(t)>0$ for some $t>0$. In this case,  $j_u$ has a local minimum and a global maximum, i.e. there are $t_1<t_2$ such that $t_1 u \in \mathcal{N}_{\lambda}^+$ and $t_2 u \in \mathcal{N}_{\lambda}^-$. Note that $j_u(t)>0$ if and only if $i_u(t)>0$, where $i_u$ is given by \eqref{eqi}. One may easily check that $i_u$ has a global maximum point given by 
\begin{equation}
\label{eqto}
t_0(u)=\left(\frac{p(2-q)}{2(p-q)} \frac{E_\lambda(u)}{\lambda A(u)}\right)^{\frac{1}{p-2}}.
\end{equation}
We have
\begin{eqnarray*}
i_u(t_0(u))>0 &\Leftrightarrow&
\lambda B(u)<C_{pq} \frac{E_\lambda(u)^{\frac{p-q}{p-2}}}{\left(\lambda A(u)\right)^{\frac{2-q}{p-2}}} \\ 
&\Leftrightarrow &
B(u)<C_{pq} \frac{\left(\lambda^{-1}\int_{\Omega} |\nabla u|^2 -\int_{\Omega} mu^2\right)^{\frac{p-q}{p-2}}}{A(u)^{\frac{2-q}{p-2}}}\\
&\Leftrightarrow &
E_{\lambda}(u)-\lambda \left(C_{pq}^{-1} B(u)A(u)^{\frac{2-q}{p-2}}\right)^{\frac{p-2}{p-q}}>0\\
&\Leftrightarrow & \int_{\Omega} |\nabla u|^2 -\lambda S(u)>0,
\end{eqnarray*}
where $S(u)$ is defined in \eqref{eqs}. Since $$\lambda <\lambda_s=\inf \left\{ \int_{\Omega} |\nabla u|^2;\ u \in A_0^+ \cap B_0^+, \ S(u)=1\right\},$$ we get the existence of $t_1<t_2$.
Finally, from the expression of $j_u$ it is easily seen that $j_u$ can not have more than two critical points.
\end{proof}

We shall get now some kind of local coercivity for $E_\lambda$. More precisely, we shall prove that $E_\lambda$ is coercive on $A_0^+$  (respect. $B_0^+$) for $\lambda<\lambda_a$ (respect. $\lambda<\lambda_b$). To this end, we deal with the maps $\alpha_1,\beta_1:\R \mapsto \R$ given by

\begin{equation}
\alpha_1(\lambda)=\inf\{E_\lambda(u); \ u \in A_0^+,\ \|u\|_2=1\},
\end{equation}

\begin{equation}
\beta_1(\lambda)=\inf\{E_\lambda(u); \ u \in B_0^+,\ \|u\|_2=1\}.
\end{equation}
 
Note that if $\alpha_1(\lambda)>0$ (respect. $\beta_1(\lambda)>0$) then $A_0^+ \setminus \{0\} \subset E_\lambda^+$ (respect $B_0^+ \setminus \{0\} \subset E_\lambda^+$).
From \cite{RQU}, we have the following results:

\begin{prop}
\label{pal1}
\strut
\begin{enumerate}
\item $\alpha_1$ and $\beta_1$ are concave (and therefore continuous).
\item $\lambda_b>0$ if and only if either $\int_\Om m<0$ or $\int_{\partial \Om} b<0$. In this case, $\lambda_b$ is achieved and:
\begin{enumerate}
\item $\beta_1(\lambda)>0$ if $\lambda \in (0,\lambda_b)$. Moreover, if $\int_{\partial \Om} b<0$ then $\beta_1(0)>0$.
\item $\lambda_b=\max\{ \lambda>0; \ \beta_1(\lambda)\geq 0\}$.
\end{enumerate} 
\item  Assume $\int_\Om m<0$. Then $\lambda_b>\lambda_1(m)$ if and only if $\int_{\partial \Om} b\varphi_1^q<0$. 
\item $\lambda_a>0$ if and only if either $\int_\Om m<0$ or $\int_{\Om} a<0$. In this case, $\lambda_a$ is achieved and:
\begin{enumerate}
\item $\alpha_1(\lambda)>0$ if $\lambda \in (0,\lambda_a)$. Moreover, if $\int_{\Om} a<0$ then $\alpha_1(0)>0$.
\item $\lambda_a=\max\{ \lambda>0; \ \alpha_1(\lambda)\geq 0\}$.
\end{enumerate}
\item  Assume $\int_\Om m<0$. Then $\lambda_a>\lambda_1(m)$ if and only if $\int_{\Om} a\varphi_1^p<0$. 
\end{enumerate}
\end{prop}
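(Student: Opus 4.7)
The plan is to treat the statements in order, handling $\beta_1,\la_b$ in detail and obtaining the assertions for $\alpha_1,\la_a$ by the obvious analog with $A_0^+$ in place of $B_0^+$. Part (1) is immediate: for each fixed $u\in H^1(\Om)$ the map $\la\mapsto E_\la(u)$ is affine in $\la$, so $\alpha_1$ and $\beta_1$ are infima of families of affine functions, hence concave, and concavity on $\R$ yields continuity.

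For the existence/positivity clause of (2), I would first treat the ``if'' direction by compactness. Let $(u_n)\subset B_0^+$ be a minimizing sequence with $\int_\Om m u_n^2=1$. The core step is $H^1$-boundedness: if $\|u_n\|\to\infty$, rescaling $v_n=u_n/\|u_n\|$ would yield a non-zero constant weak limit $c\in B_0^+$ with $\int_\Om m c^2=0$, forcing both $\int_\Om m\geq 0$ and $\int_{\partial \Om} b\geq 0$, contrary to the hypothesis. Once bounded, a weak limit $u_0$ is a minimizer by weak lower semicontinuity of $E_0$ together with weak continuity of $B$ and of $u\mapsto \int_\Om m u^2$; and $\la_b=0$ would force $u_0$ to be a non-zero constant, again forcing $\int_\Om m>0$ and $\int_{\partial \Om} b\geq 0$. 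The ``only if'' direction is by contraposition: assuming $\int_\Om m\geq 0$ and $\int_{\partial \Om} b\geq 0$, positive constants give $\la_b=0$ when $\int_\Om m>0$. In the borderline subcase $\int_\Om m=0$, I would take $u_\eps=1+\eps\phi$ with $\phi\in \mathcal{C}^\infty_c(\Om)$ and $\int_\Om m\phi>0$ (such $\phi$ exists since $m^+\not\equiv 0$); as $\phi$ vanishes near $\partial\Om$, $u_\eps\in B_0^+$, and after normalization on the constraint $\int_\Om m u^2=1$ its Dirichlet energy tends to $0$ as $\eps\to 0^+$.

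For (2)(a) and (2)(b), homogeneity of $E_\la$ yields, for $\la>0$, the equivalence $\beta_1(\la)\leq 0\Leftrightarrow \la\geq \la_b$: a competitor $u\in B_0^+\setminus\{0\}$ with $E_\la(u)\leq 0$ must satisfy $\int_\Om m u^2>0$ and, after renormalizing to $\int_\Om m u^2=1$, yields $\la_b\leq \la$; conversely, the minimizer $u_0$ from (2), normalized on the $L^2$-sphere, witnesses $\beta_1(\la)\leq \la_b-\la$. Combined with the continuity from (1), this gives $\beta_1>0$ on $(0,\la_b)$, $\beta_1(\la_b)=0$, and the identity (b). The extra claim $\beta_1(0)>0$ when $\int_{\partial \Om}b<0$ is obtained by rerunning the compactness argument at $\la=0$: otherwise a minimizer would be a non-zero constant, which cannot lie in $B_0^+$.

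For part (3), $\la_b\geq \la_1(m)$ is immediate from $B_0^+\subset H^1(\Om)$. If $\int_{\partial \Om} b\varphi_1^q\geq 0$ then $\varphi_1\in B_0^+$ is a competitor for $\la_b$, so $\la_b\leq \la_1(m)$, hence equality. Conversely, if $\la_b=\la_1(m)$ the minimizer $u_0$ of $\la_b$ also achieves $\la_1(m)$; by simplicity and positivity of $\varphi_1$, $u_0$ is a positive multiple of $\varphi_1$, and $u_0\in B_0^+$ forces $\int_{\partial \Om} b\varphi_1^q\geq 0$, contradicting the hypothesis. Parts (4) and (5) follow by the same arguments with $A$ replacing $B$ throughout. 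The main obstacle I anticipate is the careful treatment of the borderline case $\int_\Om m=0$ in the ``only if'' direction of (2) and (4), where positive constants no longer witness $\la_b=0$ (resp.\ $\la_a=0$) for free, and one must produce a compactly supported perturbation inside $\Om$ that preserves membership in the convex cone $B_0^+$ (resp.\ $A_0^+$).
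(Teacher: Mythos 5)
The paper offers no proof of this proposition to compare against: it is imported verbatim from \cite{RQU}. Judged on its own merits, your argument for (1), for the whole of (2) and (3), and for the ``if''/attainment parts of (4) together with (4)(a), (4)(b) and (5) is sound: concavity as an infimum of affine functions of $\lambda$, the rescaling dichotomy for minimizing sequences, the homogeneity argument identifying $\{\lambda>0:\beta_1(\lambda)\geq 0\}$ with $(0,\lambda_b]$, and the simplicity-of-$\varphi_1$ argument for (3) all go through (for (2)(a)--(b), when $\beta_1(\lambda)=0$ you must rerun the compactness argument to extract an actual competitor $u_0$ rather than just a minimizing sequence, but that is the same argument you already use).

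The genuine gap is exactly where you flag it, and it is not merely a matter of ``careful treatment'': the ``only if'' direction of (4) does \emph{not} follow by replacing $B$ with $A$. Your borderline construction for (2) works only because $\phi\in\mathcal{C}_c^\infty(\Omega)$ vanishes on $\partial\Omega$, so $B(1+\eps\phi)=\int_{\partial\Omega}b$ and membership in $B_0^+$ is free; for $A$ the perturbation is seen by the constraint, since $A(1+\eps\phi)=\int_\Omega a+p\eps\int_\Omega a\phi+O(\eps^2)$. When $\int_\Omega m=0$ and $\int_\Omega a=0$ you therefore need $\phi$ with $\int_\Omega m\phi>0$ and $\int_\Omega a\phi\geq 0$ simultaneously, which is impossible precisely when $a=-km$ with $k>0$. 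In that case the statement itself appears to fail rather than just the method: writing a competitor as $u=c+w$ with $\int_\Omega w=0$, the conditions $\int_\Omega|\nabla u|^2\to 0$ and $\int_\Omega mu^2=1$ force (via Poincar\'e--Wirtinger) $\|w\|_{H^1}\to 0$, $|c|\to\infty$ and $\int_\Omega mw\sim 1/(2c)$, whence $\int_\Omega m|u|^p=\tfrac{p}{2}c^{p-2}(1+o(1))>0$, i.e. $A(u)<0$; so $\lambda_a>0$ even though $\int_\Omega m=\int_\Omega a=0$. You would need either an extra hypothesis excluding $a\in\R_{<0}\,m$ with $\int_\Omega m=0$, or a construction exploiting the linear independence of $\phi\mapsto\int_\Omega m\phi$ and $\phi\mapsto\int_\Omega a\phi$ when it holds; as written, the reduction of (4) to (2) does not close this case.
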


\begin{prop}
\label{p2}
Assume $\int_{\partial \Om} b<0$.
Then for every $\lambda_* \in (0,\lambda_b)$ there exist two constants $C_0=C_0(m,b), D_0=D_0(m,b)>0$ such that:
\begin{enumerate}
\item $E_{\lambda}(u)\geq C_0 \|u\|^2$  for every $\lambda \in (0,\lambda_*)$ and $u \in B_0^+$.
\item $B(u)\leq -D_0 \|u\|^q$ for every $\lambda \in (0,\lambda_*)$ and $u \in E_{\lambda,0}^-$.
\end{enumerate}
\end{prop}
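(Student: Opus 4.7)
I will treat the two parts separately, using repeatedly the continuity and positivity of $\beta_1$ provided by Proposition \ref{pal1}.

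\emph{Part (1).} Since $\int_{\partial\Omega}b<0$, Proposition \ref{pal1}(2) guarantees that $\beta_1(0)>0$ and $\beta_1(\lambda)>0$ for $\lambda\in(0,\lambda_b)$. Combined with the concavity (hence continuity) of $\beta_1$ from Proposition \ref{pal1}(1), this yields
$$\delta\;:=\;\min_{\lambda\in[0,\lambda_*]}\beta_1(\lambda)\;>\;0,$$
since $[0,\lambda_*]\subset[0,\lambda_b)$ is compact. For $u\in B_0^+\setminus\{0\}$, the $q$-homogeneity of $B$ shows that $u/\|u\|_2\in B_0^+$, so $E_\lambda(u)\geq \delta\|u\|_2^2$ for every $\lambda\in[0,\lambda_*]$. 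To upgrade this $L^2$-coercivity to an $H^1$-coercivity, I would simply write
$$\|u\|^2=\int_\Omega|\nabla u|^2+\|u\|_2^2\;=\;E_\lambda(u)+\lambda\!\int_\Omega m u^2+\|u\|_2^2\;\leq\;E_\lambda(u)+\bigl(\lambda_*\|m\|_\infty+1\bigr)\|u\|_2^2,$$
and then bound the last summand by $(\lambda_*\|m\|_\infty+1)\delta^{-1}E_\lambda(u)$, leading to $E_\lambda(u)\geq C_0\|u\|^2$ with $C_0=\bigl(1+(\lambda_*\|m\|_\infty+1)/\delta\bigr)^{-1}$.

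\emph{Part (2).} I will argue by contradiction via the standard concentration/compactness trick. Suppose there are sequences $\lambda_n\in(0,\lambda_*)$ and $u_n\in E_{\lambda_n,0}^-\setminus\{0\}$ with $B(u_n)>-\frac{1}{n}\|u_n\|^q$. Set $v_n:=u_n/\|u_n\|$; then $\|v_n\|=1$, $E_{\lambda_n}(v_n)\leq 0$ and $B(v_n)>-\frac{1}{n}$. Passing to subsequences, I may assume $\lambda_n\to\lambda_0\in[0,\lambda_*]$ and $v_n\rightharpoonup v_0$ in $H^1(\Omega)$. By the compactness of the trace embedding $H^1(\Omega)\hookrightarrow L^q(\partial\Omega)$, $B(v_n)\to B(v_0)$, whence $B(v_0)\geq 0$, i.e.\ $v_0\in B_0^+$. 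Similarly $\int_\Omega m v_n^2\to\int_\Omega m v_0^2$ by compactness of $H^1(\Omega)\hookrightarrow L^2(\Omega)$, and using weak lower semicontinuity of $u\mapsto\int_\Omega|\nabla u|^2$ together with $\lambda_n\to\lambda_0$ one obtains $E_{\lambda_0}(v_0)\leq\liminf E_{\lambda_n}(v_n)\leq 0$. Applying Part (1) at $\lambda_0$ forces $v_0=0$.

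\emph{Conclusion of Part (2).} Once $v_0=0$, the convergence $\int_\Omega m v_n^2\to 0$ gives
$$\int_\Omega|\nabla v_n|^2\;=\;E_{\lambda_n}(v_n)+\lambda_n\!\int_\Omega m v_n^2\;\leq\;\lambda_*\|m\|_\infty\|v_n\|_2^2\;\longrightarrow\;0,$$
and since $\|v_n\|_2\to 0$ as well, we conclude $\|v_n\|\to 0$, which contradicts $\|v_n\|=1$. Hence the desired $D_0>0$ exists.

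The proof is mostly routine compactness; the only subtle point is ensuring that the estimates are uniform in $\lambda\in(0,\lambda_*)$, which is why I extract $\delta=\min_{[0,\lambda_*]}\beta_1$ at the outset and handle the varying parameter $\lambda_n$ by using the strong convergence of $v_n$ in $L^2(\Omega)$ (rather than a naive appeal to item~(2) of the Remark, which is stated only for a fixed $\lambda$).
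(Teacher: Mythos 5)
Your proof is correct. Note that the paper itself does not prove Proposition \ref{p2} --- it is imported from \cite{RQU} together with Proposition \ref{pal1} --- so there is no in-paper argument to compare against; but your method (extracting $\delta=\min_{[0,\lambda_*]}\beta_1>0$ from the concavity/positivity of $\beta_1$, then a normalization-and-weak-limit contradiction for part (2)) is exactly the technique the authors use for the closely analogous Proposition \ref{p2''}, upgraded to be uniform in $\lambda$. The two points that needed care are both handled properly: your part (1) estimate in fact holds on the closed interval $[0,\lambda_*]$ (since $\beta_1(0)>0$ when $\int_{\partial\Omega}b<0$), which is what legitimizes invoking it at the limit parameter $\lambda_0\in[0,\lambda_*]$ in part (2); and the final contradiction correctly combines $E_{\lambda_n}(v_n)\le 0$ with $\|v_n\|_2\to 0$ to force $\|v_n\|\to 0$.
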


The proof of Proposition \ref{p2} can be easily adapted to obtain a similar result on $a$:

\begin{prop}
\label{p2'}
 Assume $\int_{\Om} a<0$.
Then for every $\lambda_* \in (0,\lambda_a)$ there exist two constants $C_0=C_0(m,a), D_0=D_0(m,a)>0$ such that:
\begin{enumerate}
\item $E_{\lambda}(u)\geq C_0 \|u\|^2$  for every $\lambda \in (0,\lambda_*)$ and $u \in A_0^+$.
\item $A(u)\leq -D_0	\|u\|^p$ for every $\lambda \in (0,\lambda_*)$ and $u \in E_{\lambda,0}^-$.
\end{enumerate}
\end{prop}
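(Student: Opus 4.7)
The author remarks that the argument is a straightforward adaptation of Proposition \ref{p2}, so I will follow the same scheme with the roles of $(b,B,q,\lambda_b,\beta_1)$ replaced by $(a,A,p,\lambda_a,\alpha_1)$. The essential tools, all supplied by Proposition \ref{pal1}(4) under the hypothesis $\int_\Omega a<0$, are: $\alpha_1(0)>0$; $\alpha_1(\lambda)>0$ for every $\lambda\in[0,\lambda_a)$; and the concavity of $\alpha_1$, which implies $\inf_{[0,\lambda_*]}\alpha_1>0$ whenever $\lambda_*\in(0,\lambda_a)$. Both items will be proved by contradiction, exploiting the $2$-homogeneity of $E_\lambda$ and the $p$-homogeneity of $A$ to reduce the claimed inequalities to uniform lower bounds under the normalization $\|u_n\|=1$.

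\textbf{Part (1).} If the conclusion fails, a rescaling argument produces sequences $\lambda_n\in(0,\lambda_*)$ and $u_n\in A_0^+$ with $\|u_n\|=1$ and $E_{\lambda_n}(u_n)\to 0$. Passing to subsequences, $\lambda_n\to\lambda_\infty\in[0,\lambda_*]$ and $u_n\rightharpoonup u_0$ in $H^1(\Omega)$, with strong convergence in $L^2(\Omega)$ and $L^p(\Omega)$. Therefore $A(u_0)\geq 0$, and weak lower semicontinuity combined with $\lambda_n\int m u_n^2\to \lambda_\infty\int m u_0^2$ gives $E_{\lambda_\infty}(u_0)\leq \liminf E_{\lambda_n}(u_n)\leq 0$. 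If $u_0\equiv 0$, then $\lambda_n\int m u_n^2\to 0$ forces $\int|\nabla u_n|^2\to 0$, whence $u_n\to 0$ in $H^1(\Omega)$, a contradiction. Otherwise, if $\lambda_\infty>0$, then testing the definition of $\alpha_1(\lambda_\infty)$ on $u_0/\|u_0\|_2$ yields $\alpha_1(\lambda_\infty)\leq 0$, which contradicts $\lambda_\infty\in(0,\lambda_a)$; while if $\lambda_\infty=0$ then $\int|\nabla u_0|^2\leq 0$, so $u_0$ is a nonzero constant, which gives $A(u_0)=|u_0|^p\int_\Omega a<0$, contradicting $u_0\in A_0^+$.

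\textbf{Part (2).} Here I assume, by contradiction, that there exist $\lambda_n\in(0,\lambda_*)$ and $u_n\in E_{\lambda_n,0}^-$ with $\|u_n\|=1$ and $A(u_n)>-1/n$, which is legitimate by the $p$-homogeneity of $A$ and the $2$-homogeneity of $E_\lambda$ (so $E_{\lambda,0}^-$ is a cone). Extract again so that $\lambda_n\to\lambda_\infty\in[0,\lambda_*]$ and $u_n\rightharpoonup u_0$; then $A(u_0)=\lim A(u_n)\geq 0$ and $E_{\lambda_\infty}(u_0)\leq \liminf E_{\lambda_n}(u_n)\leq 0$. The same dichotomy as in Part (1) applies verbatim: the case $u_0\equiv 0$ contradicts $\|u_n\|=1$, and the case $u_0\not\equiv 0$ contradicts either $\alpha_1(\lambda_\infty)>0$ or $\int_\Omega a<0$.

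\textbf{Expected obstacle.} No deep technical difficulty arises; the only delicate point is the boundary case $\lambda_\infty=0$, where the positivity of $\alpha_1$ alone is insufficient and the hypothesis $\int_\Omega a<0$ must be invoked directly to rule out a nonzero constant limit. Everything else is a clean transcription of the argument for Proposition \ref{p2}.
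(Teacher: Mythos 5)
Your argument is correct and is precisely the adaptation the paper has in mind: the paper itself gives no proof of Proposition \ref{p2'}, stating only that the proof of Proposition \ref{p2} (imported from \cite{RQU}) "can be easily adapted", and your normalization-plus-weak-compactness contradiction argument, resolved via the trichotomy $u_0\equiv 0$ / $\lambda_\infty>0$ with $\alpha_1(\lambda_\infty)>0$ / $\lambda_\infty=0$ with $\int_\Omega a<0$, is exactly that adaptation using the tools the paper sets up (Remark 1.3 and Proposition \ref{pal1}(4)). The only cosmetic imprecision is writing $E_{\lambda_n}(u_n)\to 0$ in Part (1) where one only has $\limsup E_{\lambda_n}(u_n)\le 0$, which is all the argument actually uses.
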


Propositions \ref{p2} and \ref{p2'} provide some kind of uniform coercivity (with respect to $\lambda$) to $E_\lambda$ on $A_0^+$ and $B_0^+$, respectively. In case $\lambda_1(m)>0$, we have the following:

\begin{prop}
\label{p2''}
Assume $\int_{\Omega} m<0$.
\begin{enumerate}
\item Given any $\mu<\lambda_1(m)$ there exists a constant $C_\mu>0$ such that $E_\lambda(u)\geq  \lambda C_\mu\|u\|^2$ for every $u \in H^1(\Omega)$ and every $\lambda \in (0,\mu)$.
\item If  $0<\lambda<\lambda_b$ then there exists $C_\lambda>0$ such that $E_\lambda(u)\geq C_\lambda \|u\|^2$ for every $u \in B_0^+$.
\item If $0<\lambda<\lambda_a$ then there exists $C_\lambda>0$ such that $E_\lambda(u)\geq C_\lambda \|u\|^2$ for every $u \in A_0^+$.
\end{enumerate}
\end{prop}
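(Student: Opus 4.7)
\textbf{Plan for Proposition \ref{p2''}.}

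For parts (2) and (3), the plan is to exploit the positivity of $\beta_1(\lambda)$ (respectively $\alpha_1(\lambda)$) on $(0,\lambda_b)$ (respectively $(0,\lambda_a)$) provided by Proposition \ref{pal1}, combined with a straightforward gradient estimate. Given $u \in B_0^+\setminus\{0\}$, the $q$-homogeneity of $B$ ensures $u/\|u\|_2 \in B_0^+$, so $E_\lambda(u) \geq \beta_1(\lambda)\|u\|_2^2$. On the other hand, $\|\nabla u\|_2^2 = E_\lambda(u) + \lambda\int_\Omega m u^2 \leq E_\lambda(u) + \lambda\|m\|_\infty\|u\|_2^2$. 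Adding $\|u\|_2^2$ to both sides and inserting the first bound yields $\|u\|^2 \leq E_\lambda(u)\bigl[1 + (1+\lambda\|m\|_\infty)/\beta_1(\lambda)\bigr]$, which gives the desired inequality with some $C_\lambda > 0$. Part (3) follows identically after replacing $(B,B_0^+,\beta_1,\lambda_b)$ by $(A,A_0^+,\alpha_1,\lambda_a)$.

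Part (1) is more delicate, since the linear factor of $\lambda$ on the right-hand side is forced by constants: on $u\equiv c$ one has $E_\lambda(u) = \lambda c^2|\int_\Omega m|$ while $\|u\|^2 = c^2|\Omega|$, so no $\lambda$-independent bound is possible. The plan is a compactness-contradiction argument. Assuming no such $C_\mu$ exists, extract $\lambda_n \in (0,\mu)$ and $u_n \in H^1(\Omega)$ with $\|u_n\|=1$ and $E_{\lambda_n}(u_n) < \lambda_n/n$; up to a subsequence, $\lambda_n \to \lambda_\infty \in [0,\mu]$, $u_n \rightharpoonup u_\infty$ in $H^1(\Omega)$ and $u_n \to u_\infty$ in $L^2(\Omega)$. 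If $\lambda_\infty > 0$, weak lower semicontinuity gives $E_{\lambda_\infty}(u_\infty) \leq 0$. Should $u_\infty \not\equiv 0$, either $\int_\Omega m u_\infty^2 > 0$ (so $\int|\nabla u_\infty|^2/\int m u_\infty^2 \leq \lambda_\infty \leq \mu < \lambda_1(m)$, contradicting the variational definition of $\lambda_1(m)$), or $\int_\Omega m u_\infty^2 \leq 0$, forcing $u_\infty$ to be a nonzero constant with $\int m u_\infty^2 < 0$ and yet $\int|\nabla u_\infty|^2 = 0 \leq \lambda_\infty\int m u_\infty^2 < 0$, again a contradiction. If instead $u_\infty \equiv 0$, then $\|u_n\|_2 \to 0$ forces $\|\nabla u_n\|_2^2 \to 1$, so $E_{\lambda_n}(u_n) \to 1$, contradicting $E_{\lambda_n}(u_n) < \lambda_n/n \to 0$.

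The subtle case is $\lambda_\infty = 0$, where weak lower semicontinuity of $E_{\lambda_n}$ degenerates to the trivial statement $0\leq 0$ on constants. The plan is first to note that $\|\nabla u_n\|_2^2 \leq \lambda_n/n + \lambda_n\|m\|_\infty\|u_n\|_2^2 \to 0$, whence $u_\infty \equiv c$ with $c^2|\Omega|=1$; and then to divide the hypothesis by $\lambda_n > 0$ \emph{before} passing to the limit, obtaining $\|\nabla u_n\|_2^2/\lambda_n - \int_\Omega m u_n^2 < 1/n$ and hence $-\int_\Omega m u_n^2 < 1/n$. Taking the limit gives $\int_\Omega m u_\infty^2 = c^2\int_\Omega m \geq 0$, contradicting $\int_\Omega m < 0$. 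This rescaling step, needed to preserve first-order information in $\lambda$, is the main technical obstacle in the argument.
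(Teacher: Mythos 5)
Your argument is correct, and for part (1) it is essentially the paper's own proof: the same contradiction setup with a normalized sequence, the same dichotomy on the limit $\lambda_*$ of the parameters, and, crucially, the same rescaling step of dividing the hypothesis by $\lambda_n$ before passing to the limit in the degenerate case $\lambda_*=0$ so as to extract $\int_\Omega m u_\infty^2\geq 0$ and contradict $\int_\Omega m<0$. (Your subcase analysis for $\lambda_*>0$ is slightly more elaborate than the paper's, which simply notes that $E_{\lambda_*}>0$ on $H^1(\Omega)\setminus\{0\}$ for $0<\lambda_*<\lambda_1(m)$ and hence forces the weak limit to vanish, then invokes the strong-convergence remark; both routes are fine.)

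For parts (2) and (3) you take a genuinely more direct route. The paper again argues by contradiction: from a normalized sequence violating the inequality it extracts a nonzero weak limit $v_0\in B_0^+$ with $E_\lambda(v_0)\leq 0$, concluding $\beta_1(\lambda)\leq 0$ against Proposition \ref{pal1}. You instead use the homogeneities of $E_\lambda$ and $B$ to read off $E_\lambda(u)\geq \beta_1(\lambda)\|u\|_2^2$ directly from the definition of $\beta_1$, and then upgrade the $L^2$ control to $H^1$ control via $\|\nabla u\|_2^2\leq E_\lambda(u)+\lambda\|m\|_\infty\|u\|_2^2$. Both proofs rest on the same key input ($\beta_1(\lambda)>0$, resp.\ $\alpha_1(\lambda)>0$, on the relevant interval, from Proposition \ref{pal1}), but yours avoids any compactness argument in (2)--(3) and yields the explicit constant $C_\lambda=\beta_1(\lambda)/\bigl(\beta_1(\lambda)+1+\lambda\|m\|_\infty\bigr)$, which makes the dependence on $\lambda$ transparent; the paper's version requires no quantitative bookkeeping but is purely qualitative. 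No gaps.
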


\begin{proof}
\strut
\begin{enumerate}
\item Assume by contradiction that there exist $\mu<\lambda_1(m)$ and two sequences $(\lambda_n) \subset (0,\mu)$ and $(v_n) \subset H^1(\Omega)$ such that
$$E_{\lambda_n}(v_n)<\frac{\lambda_n}{n}\|v_n\|^2.$$
Setting $w_n=\frac{v_n}{\|v_n\|}$, we may assume that $w_n \rightharpoonup w_0$ in $H^1(\Omega)$ and $\lambda_n \to \lambda_* \in [0,\mu]$. Hence $E_{\lambda_*}(w_0)\leq \limsup E_{\lambda_n}(w_n)\leq 0$. If $\lambda_*>0$ then $w_0\equiv 0$ and $w_n \rightarrow 0$, which is impossible. If $\lambda_*=0$ then $w_0$ is a constant. From
$$-\lambda_n \int_{\Omega} mw_n^2<\frac{\lambda_n}{n}$$ we get $\int_{\Omega} mw_0^2\geq 0$, which contradicts $\int_{\Omega} m<0$.\\
\item Assume that $(u_n) \subset B_0^+$ is such that
$$E_\lambda(u_n)<\frac{\|u_n\|^2}{n}.$$
Then $v_n=\frac{u_n}{\|u_n\|}$ is such that, up to a subsequence,
$v_n \rightharpoonup v_0$ in $H^1(\Omega)$, $B(v_n) \rightarrow B(v_0)$, and $E_\lambda(v_n) \leq \frac{1}{n}$. Then $B(v_0)\geq 0$ and $E_\lambda(v_0)\leq 0$. Finally, $v_0 \not \equiv 0$, since otherwise we would have $v_n \rightarrow 0$, which is impossible.
Therefore we get $\beta_1(\lambda)\leq 0$, which contradicts $\lambda<\lambda_b$.\\
\item The proof is similar to the previous one, so we omit it.
\end{enumerate}
\end{proof}

\section{Minimization in $\mathcal{N}_\lambda^+$}
\label{sec3}
\medskip 
\subsection{Minimization in $\mathcal{N}_\lambda^+ \cap B^+$}
\strut
\medskip

We set
\begin{equation}
\label{eov}
\overline{\lambda}=\begin{cases} \min\{\lambda_s,\lambda_1\} & \text{ if }  \eqref{n1} \text{ holds},\\
\min\{\lambda_s,\lambda_b\} & \text{ if } \int_{\partial \Omega} b<0 \text{ or } \int_\Omega m<0 \text{ and } \int_\Omega a<0. \end{cases}
\end{equation}

\begin{rem}
\label{eb}{\rm 
Note that if $0<\lambda <\overline{\lambda}$ then either $0<\lambda<\lambda_1$ or $0<\lambda<\lambda_b$, so that  $B_0^+ \setminus \{0\} 
\subset E_{\lambda}^+$.
}\end{rem}

Let us first prove that $\mathcal{N}_\lambda^+ \cap B^+$ is non-empty and bounded for $0<\lambda<\overline{\lambda}$:

\begin{prop}
\label{cun}
 Assume $b^+ \not \equiv 0$ and either $\int_{\partial \Omega} b<0$ or \eqref{n1} or $ \int_\Omega m<0$ and $ \int_\Omega a<0$. Then $\mathcal{N}_\lambda^+  \cap B^+ \neq \emptyset$  for every $\lambda \in (0,\overline{\lambda})$. Moreover:
\begin{enumerate}
\item If \eqref{n1} holds then for every $\mu<\overline{\lambda}$ there exists a constant $K=K_\mu>0$ such that $\|u\|\leq K\|b^+\|_{\infty}^{\frac{1}{2-q}}$ for every $u \in \mathcal{N}_\lambda^+$ and every $0<\lambda< \mu$.
\item If $\int_{\partial \Omega} b<0$ then for every $\mu<\overline{\lambda}$ there exists a constant $K=K_\mu>0$ such that $\|u\|\leq K\left(\lambda \|b^+\|_{\infty}\right)^{\frac{1}{2-q}}$ for every $u \in \mathcal{N}_\lambda^+ \cap B_0^+$ and every $0<\lambda< \mu$.
\item If $\int_\Omega a<0$ and  $\int_\Omega m<0$ then for every $\lambda<\overline{\lambda}$ there exists a constant $K_\lambda$ such that $\|u\|\leq K_\lambda \|b^+\|_{\infty}^{\frac{1}{2-q}}$ for every $u \in \mathcal{N}_\lambda^+ \cap B_0^+$.
\end{enumerate}

\end{prop}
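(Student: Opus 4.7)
I would argue in two stages: first exhibit a single element of $\mathcal{N}_\lambda^+ \cap B^+$, then derive the uniform bound from the Nehari relation combined with the coercivity estimates of Section \ref{sec2}.

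Stage 1 (nonemptiness). Under \eqref{n1}, the constant $u \equiv 1$ satisfies $A(1)=\int_\Omega a > 0$, $B(1)=\int_{\partial \Omega} b > 0$ and $E_\lambda(1)=-\lambda \int_\Omega m > 0$, so $1 \in A^+ \cap B^+ \cap E_\lambda^+$, and Proposition \ref{p1} provides $t_1 > 0$ with $t_1 \in \mathcal{N}_\lambda^+ \cap B^+$ for every $\lambda \in (0,\lambda_s)$. In the two remaining cases, $b^+ \not\equiv 0$ lets me pick a smooth $v \in H^1(\Omega)$ concentrated near a boundary point where $b$ is essentially positive, so that $B(v) > 0$. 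Since $\lambda < \lambda_b$, Proposition \ref{pal1} gives $\beta_1(\lambda) > 0$, hence $v \in E_\lambda^+$. If $A(v) > 0$, Proposition \ref{p1} again supplies the required element. If $A(v) \leq 0$, the critical point equation $j_v'(t)=0$ rewrites as $t^{2-q} E_\lambda(v) - \lambda t^{p-q} A(v) = \lambda B(v)$, whose left-hand side is strictly increasing from $0$ to $+\infty$ on $(0,\infty)$, producing a unique critical point $t^*>0$ that is necessarily a strict local minimum of $j_v$, so $t^* v \in \mathcal{N}_\lambda^+ \cap B^+$.

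Stage 2 (boundedness). For any $u \in \mathcal{N}_\lambda^+$, the Nehari identity $E_\lambda(u)=\lambda (A(u)+B(u))$ combined with $\langle J_\lambda'(u),u\rangle = \lambda(2-p)A(u)+\lambda(2-q)B(u) > 0$ gives
\[
A(u) < \frac{2-q}{p-2}\, B(u), \qquad E_\lambda(u) < \lambda\, \frac{p-q}{p-2}\, B(u).
\]
The trace embedding $H^1(\Omega) \hookrightarrow L^q(\partial \Omega)$ yields $B(u) \leq C \|b^+\|_\infty \|u\|^q$, and I pair this with the appropriate coercivity: under \eqref{n1}, Proposition \ref{p2''}(1) supplies $E_\lambda(u) \geq \lambda C_\mu \|u\|^2$ for $\lambda \in (0,\mu)$ with $\mu < \lambda_1$, so after canceling $\lambda$ I obtain assertion (1); under $\int_{\partial \Omega} b < 0$ the positivity of $E_\lambda(u)$ forces $B(u) > 0$, hence $u \in B_0^+$, and Proposition \ref{p2}(1) gives $E_\lambda(u) \geq C_0 \|u\|^2$ uniformly on $(0,\mu)$, producing assertion (2); finally, under $\int_\Omega m < 0$ and $\int_\Omega a < 0$, Proposition \ref{p2''}(2) yields $E_\lambda(u) \geq C_\lambda \|u\|^2$ for each fixed $\lambda < \lambda_b$, producing assertion (3).

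The main obstacle is in Stage 1 when $A(v) \leq 0$: Proposition \ref{p1} does not apply directly, and one must verify by hand, via the monotonicity of the left-hand side above, that $j_v$ has a unique local minimizer lying in $\mathcal{N}_\lambda^+$. Everything else is bookkeeping; the exact power $\|b^+\|_\infty^{1/(2-q)}$ is dictated by the scaling gap between the quadratic $\|u\|^2$ coming from coercivity and the $\|u\|^q$ coming from the trace embedding, and the $\lambda$-dependence in (2) versus the $\lambda$-independence in (1) is a direct consequence of whether the coercivity constant on the left is proportional to $\lambda$ or not.
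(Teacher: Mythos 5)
Your proposal is correct and follows essentially the same route as the paper: nonemptiness by taking an element of $B^+$ (which lies in $E_\lambda^+$ for $\lambda<\overline{\lambda}$) and splitting on the sign of $A$, using Proposition \ref{p1} when $A>0$ and the global minimum of $j_u$ when $A\leq 0$; boundedness from the $\mathcal{N}_\lambda^+$ inequality $E_\lambda(u)<\lambda\frac{p-q}{p-2}B(u)$ paired with the coercivity estimates of Propositions \ref{p2} and \ref{p2''} and the trace bound $B(u)\leq C\|b^+\|_\infty\|u\|^q$. The explicit monotonicity argument you give for the $A(v)\leq 0$ case is exactly the verification behind the paper's one-line claim that $j_u$ then has a global minimum in $\mathcal{N}_\lambda^+\cap B^+$.
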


\begin{proof}
First of all, note that since $b^+ \not \equiv 0$ we have $B^+ \neq \emptyset$. Moreover, from Remark \ref{eb}, if $u \in B^+$ then $u \in E_{\lambda}^+$.
If $u \in A_0^-$ then $j_u$ has a global minimum point $t>0$ such that $tu \in \mathcal{N}_\lambda^+ \cap B^+$. On the other hand, if $u \in A^+$ then, by Proposition \ref{p1}, there exists $t>0$ such that $tu \in \mathcal{N}_\lambda^+ \cap B^+$, so that $\mathcal{N}_\lambda^+ \cap B^+ \neq \emptyset$.\\
\begin{enumerate}
\item If \eqref{n1} holds then, given $0<\mu < \overline{\lambda}$ and $u \in \mathcal{N}_\lambda^+$ with $0<\lambda<\mu$, we may apply Proposition \ref{p2''}. Thus, for some $C_\mu,D>0$, we have
$$\lambda C_\mu \|u\|^2\leq E_\lambda(u)<\lambda \frac{p-q}{p-2} B(u)\leq \lambda D\|b^+\|_{\infty}\|u\|^q,$$
and consequently $$\|u\|\leq \left( \frac{D\|b^+\|_{\infty}}{C_\mu}\right)^{\frac{1}{2-q}}.$$ \\

\item If $\int_{\partial \Omega} b<0$ then, given $0<\mu < \overline{\lambda}$ and $u \in \mathcal{N}_\lambda^+$ with $0<\lambda<\mu$, we apply now Proposition \ref{p2}, so that
$E_\lambda(u)\geq C_0 \|u\|^2$ for every $\lambda \in (0,\mu)$ and $u \in B_0^+$. Thus, for $u \in \mathcal{N}_\lambda^+ \cap B_0^+$ we have $$C_0 \|u\|^2\leq E_\lambda(u)<\lambda \frac{p-q}{p-2} B(u)\leq \lambda D\|b^+\|_{\infty}\|u\|^q,$$ and consequently $$\|u\|\leq \left( \frac{\lambda D\|b^+\|_{\infty}}{C_0}\right)^{\frac{1}{2-q}}.$$\\

\item If $\int_\Omega a<0$ and  $\int_\Omega m<0$ then, by Proposition \ref{p2''}, for every $\lambda<\lambda_b$ there exists a constant $C_\lambda>0$ such that $E_\lambda(u)\geq C_\lambda \|u\|^2$ for $u \in B_0^+$. Thus, if $u \in \mathcal{N}_\lambda^+ \cap B_0^+$ then, for some $D>0$ there holds 
$$ C_\lambda \|u\|^2\leq E_\lambda(u)<\lambda \frac{p-q}{p-2} B(u)\leq \lambda D\|b^+\|_{\infty}\|u\|^q,$$
and consequently $$\|u\|\leq \left( \frac{\lambda D \|b^+\|_{\infty}}{C_\lambda}\right)^{\frac{1}{2-q}}.$$ 
\end{enumerate}
\end{proof}

\begin{rem}
\label{rr}{\rm 
From the proof of Proposition \ref{cun} (1), we may see that if $\int_\Omega m<0$ then  for every $\mu<\lambda_1$ there exists a constant $K=K_\mu>0$ such that $\|u\|\leq K \|b^+\|_{\infty}$ for every $u \in \mathcal{N}_\lambda^+$ and every $0<\lambda< \mu$, i.e. $\mathcal{N}_\lambda^+$ is uniformly bounded for $0<\lambda< \mu$ if $\mu<\lambda_1$. 
}\end{rem}

\begin{lem}
\label{lbb}
Let $b, \tilde{b} \in L^{\infty}(\partial \Omega)$ with $b \leq \tilde{b}$.
Then $\lambda_b\geq \lambda_{\tilde{b}}$ and $\lambda_s(b)\geq \lambda_s(\tilde{b}).$
\end{lem}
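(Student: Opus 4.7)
The plan is to exploit monotonicity of the functional $B$ in $b$ and then use the quadratic scaling of $S$ noted earlier in the paper.

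First, I would record the elementary but crucial observation that if $b \leq \tilde b$ a.e.\ on $\partial\Omega$, then for every $u \in H^1(\Omega)$ one has $B(u) \leq \tilde B(u)$, since $|u|^q \geq 0$. In particular $B_0^+ \subset \tilde B_0^+$, which is the key inclusion of feasible sets.

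For $\lambda_b \geq \lambda_{\tilde b}$, I would just compare feasible sets. Any admissible $u$ for the variational problem defining $\lambda_b$ satisfies $u \in B_0^+$ and $\int_\Omega m u^2 = 1$; by the observation above, $u \in \tilde B_0^+$ with the same normalization, hence is admissible for $\lambda_{\tilde b}$. Taking infima over the larger set for $\tilde b$ yields $\lambda_{\tilde b} \leq \lambda_b$. (If the set is empty then $\lambda_b = \infty$ by the convention stated in the introduction and the inequality holds trivially.)

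For $\lambda_s(b) \geq \lambda_s(\tilde b)$ the argument is similar but needs a rescaling because the constraint $S(u)=1$ itself depends on $b$. Let $u$ be admissible for $\lambda_s(b)$, so $A(u)\geq 0$, $B(u)\geq 0$ and $S(u)=1$. Since $\tilde B(u) \geq B(u) \geq 0$ and $A(u)^{(2-q)/(p-2)} \geq 0$, and the exponent $(p-2)/(p-q)$ is positive, the defining formula \eqref{eqs} gives $\tilde S(u) \geq S(u) = 1 > 0$. Set $t = \tilde S(u)^{-1/2} \in (0,1]$ and $v = tu$. Then $v \in A_0^+ \cap \tilde B_0^+$, and using the quadratic scaling of $S$ (noted just after \eqref{eqs}, which holds for $\tilde S$ as well since the proof depends only on the algebraic form of $S$), we get $\tilde S(v) = t^2 \tilde S(u) = 1$, so $v$ is admissible for $\lambda_s(\tilde b)$. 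Therefore
\[
\lambda_s(\tilde b) \leq \int_\Omega |\nabla v|^2 = t^2 \int_\Omega |\nabla u|^2 \leq \int_\Omega |\nabla u|^2,
\]
and taking the infimum over admissible $u$ gives $\lambda_s(\tilde b) \leq \lambda_s(b)$.

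No real obstacle arises: the only point to verify carefully is that the rescaling is well defined, i.e.\ that $\tilde S(u) > 0$ on the admissible set for $\lambda_s(b)$. This follows at once from $\tilde S(u) \geq S(u) = 1$, which is why the argument works cleanly.
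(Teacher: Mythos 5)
Your proof is correct and follows essentially the same route as the paper: monotonicity of $B$ (hence of $S$) in $b$ together with the quadratic homogeneity of $S$. The paper packages the homogeneity by rewriting $\lambda_s(b)$ as the infimum of the quotient $\int_{\Omega}|\nabla u|^2/S(u)$ over $\{u\in A_0^+\cap B_0^+:\ S(u)>0\}$, which is exactly equivalent to your explicit rescaling $v=\tilde{S}(u)^{-1/2}u$.
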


\begin{proof}
Let $B(u)=\int_{\partial \Omega} b|u|^q$ and $\tilde{B}(u)=\int_{\partial \Omega} \tilde{b}|u|^q$ for $u \in H^1(\Omega)$. Then $B(u)\leq \tilde{B}(u)$ for every $u \in H^1(\Omega)$, so that $B_0^+ \subset \tilde{B}_0^+$.
It follows that $\lambda_b\geq \lambda_{\tilde{b}}$. 
Let us set \begin{equation}
S_b(u)=\int_{\Omega} mu^2+\left(C_{pq}^{-1} B(u)A(u)^{\frac{2-q}{p-2}}\right)^{\frac{p-2}{p-q}}
\end{equation}
and
\begin{equation}
S_{\tilde{b}}(u)=\int_{\Omega} mu^2+\left(C_{pq}^{-1} \tilde{B}(u)A(u)^{\frac{2-q}{p-2}}\right)^{\frac{p-2}{p-q}}.
\end{equation}
Then $S_b(u)\leq S_{\tilde{b}}(u)$ for every $u \in H^1(\Omega)$.
Note that we can write 
$$
\lambda_s(b)=\inf \left\{ \frac{\int_{\Omega} |\nabla u|^2}{S(u)};\ u \in A_0^+ \cap B_0^+,\ S(u)>0\right\},
$$
From this formula, it follows that $\lambda_s(b)\geq \lambda_s(\tilde{b})$.
\end{proof}

\begin{rem}
\label{rbb}
From Lemma \ref{lbb} it follows that if $b_n \rightarrow b$ in $L^{\infty}(\partial \Omega)$ with $b^+ \not \equiv 0$ and $\int_{\partial \Omega} b <0$ then $\overline{\lambda}(b_n)$ is bounded away from zero. Indeed, we can fix $\tilde{b} \in L^{\infty}(\partial \Omega)$ such that $\tilde{b}^+ \not \equiv 0$, $\int_{\partial \Omega} \tilde{b} <0$ and  $b_n \leq \tilde{b}$ for  $n$ sufficiently large. Hence $\lambda_{b_n} \geq \lambda_{\tilde{b}}>0$ and $\lambda_s(b_n)\geq \lambda_s(\tilde{b})>0$ for $n$ sufficiently large.
\end{rem}

\begin{prop}
\label{pmn1}
 Assume $b^+ \not \equiv 0$ and either $\int_{\partial \Omega} b<0$ or \eqref{n1} or $\int_\Omega m<0$ and $\int_{ \Omega} a<0$. Then $\displaystyle \inf_{\mathcal{N}_\lambda^+ \cap B^+} I_{\lambda}$ is achieved by some $u_{0,\lambda}\geq 0$ for $0<\lambda<\overline{\lambda}$. Moreover:
\begin{enumerate}
\item If $\int_{\partial \Omega} b<0$ then $u_{0,\lambda} \rightarrow 0$  in $\mathcal{C}^{\theta}(\overline{\Om})$ for some $\theta \in (0,1)$ as $\lambda \to 0^+$.
\item If $\int_\Omega m<0<\int_{\partial \Omega} b$ and $\int_\Omega a<0$ then $u_{0,\lambda} \rightarrow c_0$  in $\mathcal{C}^{\theta}(\overline{\Om})$ for some $\theta \in (0,1)$ as $\lambda \to 0^+$.
\item If \eqref{n1} holds then $u_{0,\lambda} \rightarrow c_1$  in $\mathcal{C}^{\theta}(\overline{\Om})$ for some $\theta \in (0,1)$ as $\lambda \to 0^+$.
\end{enumerate}  
\end{prop}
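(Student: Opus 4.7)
The plan is to minimize $I_\lambda$ on $\mathcal{N}_\lambda^+ \cap B^+$ via the direct method, exploiting the Nehari constraint to rewrite the functional in a convenient form. First I will verify that the infimum is finite and strictly negative. Since $u \in \mathcal{N}_\lambda$ means $E_\lambda(u)=\lambda[A(u)+B(u)]$, on $\mathcal{N}_\lambda$ one can write
$$I_\lambda(u)=\frac{p-2}{2p}E_\lambda(u)-\lambda\frac{p-q}{pq}B(u).$$
If moreover $u\in \mathcal{N}_\lambda^+$, then $E_\lambda(u)<\lambda\frac{p-q}{p-2}B(u)$, which, combined with $B(u)>0$, yields
$I_\lambda(u)<-\lambda\frac{(p-q)(2-q)}{2pq}B(u)<0.$
So the infimum is $\leq 0$, and non-emptiness of $\mathcal{N}_\lambda^+\cap B^+$ is guaranteed by Proposition \ref{cun}. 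Boundedness of any sequence in $\mathcal{N}_\lambda^+$ in $H^1$, also provided by Proposition \ref{cun}, together with the continuous embeddings $H^1(\Omega)\hookrightarrow L^p(\Omega)$ and $H^1(\Omega)\hookrightarrow L^q(\partial\Omega)$, gives $I_\lambda\geq -C$ on this set, so the infimum is finite.

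Next, I would take a minimizing sequence $(u_n)\subset \mathcal{N}_\lambda^+\cap B^+$, extract $u_n\rightharpoonup u_0$ in $H^1(\Omega)$, replace $u_0$ by $|u_0|$ (which does not alter $E_\lambda$, $A$, $B$ nor $I_\lambda$), and argue $u_0\not\equiv 0$: if $u_0\equiv 0$, then by weak continuity $A(u_n),B(u_n)\to 0$, hence $E_\lambda(u_n)=\lambda[A(u_n)+B(u_n)]\to 0$, and Remark 1.5(2) forces $u_n\to 0$ strongly, contradicting $I_\lambda(u_n)\to\inf<0$. To promote weak to strong convergence, I would apply the fibering analysis of Proposition \ref{p1}: weak continuity of $A,B$ forces $E_\lambda(u_n)\to\lambda[A(u_0)+B(u_0)]$, so weak lower semicontinuity gives $E_\lambda(u_0)\leq \lambda[A(u_0)+B(u_0)]$. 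Assuming strict inequality, the hypotheses of Proposition \ref{p1} (combined with $B(u_0)>0$ which follows as $B(u_n)>0$ balances against $A(u_n)$ via the sign of the infimum) would yield $t_*>0$ with $t_*u_0\in \mathcal{N}_\lambda^+\cap B^+$ and
$$I_\lambda(t_*u_0)=\frac{p-2}{2p}E_\lambda(t_*u_0)-\lambda\frac{p-q}{pq}B(t_*u_0)<\lim I_\lambda(u_n)=\inf,$$
a contradiction. Therefore $u_n\to u_0$ strongly, $u_0\in \mathcal{N}_\lambda^+\cap B^+$, and being a minimizer on the natural-constraint submanifold $\mathcal{N}_\lambda\setminus\mathcal{N}_\lambda^0$, $u_0=:u_{0,\lambda}\geq 0$ is a weak solution of $(P_\lambda)$.

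For the asymptotic profiles I would argue case by case. In case (1), Proposition \ref{cun}(2) directly gives $\|u_{0,\lambda}\|\leq K(\lambda\|b^+\|_\infty)^{\frac{1}{2-q}}\to 0$ as $\lambda\to 0^+$. Standard Moser/Trudinger bootstrapping on the weak form of $(P_\lambda)$, together with H\"older boundary estimates for mixed-type nonlinear problems, upgrades this to $u_{0,\lambda}\to 0$ in $\mathcal{C}^\theta(\overline\Omega)$. In cases (2) and (3), Proposition \ref{cun}(3) and Remark \ref{rr} give a uniform $H^1$ bound; along any sequence $\lambda_n\to 0^+$, extract $u_{0,\lambda_n}\rightharpoonup u_*$. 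Passing to the limit in the weak formulation of $(P_\lambda)$ shows $u_*$ solves $-\Delta u_*=0$ in $\Omega$, $\partial u_*/\partial\mathbf{n}=0$ on $\partial\Omega$, so $u_*\equiv c$ is a constant. The Nehari identity passes to the limit as $\varphi(c)=0$, so $c$ must be one of the zeros of $\varphi$; the condition $u_*\in \overline{\mathcal{N}_\lambda^+\cap B^+}$ singles out, by Remark \ref{cn}(4) resp. Remark \ref{cn}(2), the root $c=c_0$ (case 2) resp. $c=c_1$ (case 3). Elliptic regularity then upgrades weak $H^1$ convergence to $\mathcal{C}^\theta(\overline\Omega)$ convergence for some $\theta\in(0,1)$; since the limit is independent of the subsequence, the full family converges.

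The main obstacle I expect is the strong convergence step, specifically excluding the borderline possibility $B(u_0)=0$ (so that $u_0$ might sit on $\mathcal{N}_\lambda^+\setminus B^+$ and the infimum over $\mathcal{N}_\lambda^+\cap B^+$ is approached from outside). Ruling this out requires combining the sign information from $I_\lambda(u_n)\to\inf<0$ with the $\mathcal{N}_\lambda^+$ inequality to force $A(u_0)+B(u_0)$ (and in particular $B(u_0)$) to remain positive; alternatively, one can projectively renormalize the fibering map $j_{u_0}$ to extract the appropriate $t_*$, as in Proposition \ref{p1}. A secondary, mostly technical, difficulty is the identification of the limiting constant in cases (2) and (3), which rests on the energy comparison $I_\lambda(c_j)\leq\inf_{\mathcal{N}_\lambda^+\cap B^+}I_\lambda+o(1)$ as $\lambda\to 0^+$ together with the classification of zeros of $\varphi$ via Remark \ref{cn}.
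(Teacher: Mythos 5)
Your proposal follows essentially the same route as the paper's proof: direct minimization over $\mathcal{N}_\lambda^+\cap B^+$ using the boundedness from Proposition \ref{cun}, negativity of the infimum, recovery of $u_0\in B^+$ via the coercivity $B_0^+\setminus\{0\}\subset E_{\lambda}^+$, strong convergence by projecting the weak limit onto $\mathcal{N}_\lambda^+$ with the fibering map $j_{u_0}$ and contradicting the infimum, and identification of the limiting constants through the Nehari identity and the zeros of $\varphi$. The only slip is in your closing paragraph: the energy comparison runs the other way, namely $\displaystyle\inf_{\mathcal{N}_\lambda^+\cap B^+}I_\lambda\leq I_\lambda(c_j)<0$ because $c_j\in\mathcal{N}_\lambda^+\cap B^+$, and it is this upper bound (divided by $\lambda_n$) that rules out $u_{0,\lambda_n}\to 0$ in cases (2) and (3).
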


\begin{proof}
Let $0<\lambda<\overline{\lambda}$. By Proposition \ref{cun}
we know that $\mathcal{N}_\lambda^+  \cap B^+$ is non-empty and bounded. We pick up a sequence $(u_n) \subset \mathcal{N}_\lambda^+  \cap B^+$ such that
$$I_{\lambda} (u_n) \rightarrow \inf_{\mathcal{N}_\lambda^+ \cap B^+} I_{\lambda}.$$
Since $(u_n)$ is bounded, we may assume that $$u_n \rightharpoonup u_0 \text{ in } H^1(\Omega)  \quad \text{and} \quad B(u_n) \rightarrow B(u_0).$$ From the shape of $j_u$ it is clear that $\displaystyle \inf_{\mathcal{N}_\lambda^+  \cap B^+} I_{\lambda}<0$. Thus $I_{\lambda}(u_0)<0$, so that $u_0 \not \equiv 0$.
Since $$E_\lambda(u_n)<\lambda \frac{p-q}{p-2}B(u_n)$$  and $u_0 \in B_0^+$ we get
$$0<E_\lambda(u_0)\leq\lambda \frac{p-q}{p-2}B(u_0),$$
i.e. $u_0 \in B^+$. From $\lambda<\lambda_s$ we deduce the existence of $t_1>0$ such that $t_1 u_0 \in \mathcal{N}_\lambda^+ \cap B^+$.
We claim that $u_n \rightarrow u_0$. Indeed, if $u_n \not \rightarrow u_0$ then $$j^{(k)}_{u_0}(t)<\liminf j^{(k)}_{u_n}(t)$$ for $k=0,1,2$ and every $t>0$. In particular, there holds 
$$0=j'_{u_0}(t_1)<j'_{u_n}(t_1)$$ for $n$ sufficiently large.
Thus $t_1>1$, since $j_{u_n}$ is decreasing in $(0,1)$. Now, since $j_{u_0}$ is decreasing in $(0,t_1)$
we get 
$$I_{\lambda}(t_1 u_0)=j_{u_0}(t_1)<j_{u_0}(1)<\liminf j_{u_n}(1)=\lim I_{\lambda}(u_n)=\inf_{\mathcal{N}_\lambda^+ \cap B^+} I_{\lambda},$$
which is a contradiction, since $t_1 u_0 \in \mathcal{N}_\lambda^+ \cap B^+$.
Therefore $u_n \rightarrow u_0$ and $t_1(u_0)=1$, so
$$I_{\lambda}(u_0)=\inf_{\mathcal{N}_\lambda^+ \cap B^+} I_{\lambda}.$$
We denote $u_0$ by $u_{0,\lambda}$.\\
\begin{enumerate}
\item If $\int_{\partial \Omega} b<0$ then, by Prop. \ref{cun}, 
$$\|u_{0,\lambda}\|\leq \left(\lambda\|b^+\|_{\infty}\right)^{\frac{1}{2-q}} K \rightarrow 0$$
as $\lambda \to 0^+$.\\
\item If $\int_\Omega m<0<\int_{\partial \Omega} b$ and $\int_\Omega a<0$, let $\lambda_n \to 0^+$ and $u_n=u_{0,\lambda_n}$. By Remark \ref{rr}, $(u_n)$ is bounded and we may assume that $u_n \rightharpoonup u_0$ in $H^1(\Omega)$.
From $$E_{\lambda_n}(u_n)=\lambda_n\left(A(u_n)+B(u_n)\right)$$
we infer that $\int_{\Omega} |\nabla u_n|^2 \rightarrow 0$, so $u_n \rightarrow u_0$ and $u_0$ is a constant.
Since $\int_\Omega m<0<\int_{\partial \Omega} b$ and $\int_\Omega a<0$, $j_{u_0}$ has a unique critical point, which is a global minimum point. Thus there is an unique constant $c$ in $\mathcal{N}_\lambda^+$. By Remark \ref{cn}, we infer that $c=c_0$, where $c_0$ is the unique zero of $\varphi$.
In particular, $c_0 \in \mathcal{N}_{\lambda_n}^+ \cap B^+$ for every $n$.
Then
$$I_{\lambda_n}(u_n)\leq I_{\lambda_n}(c_0)<0,$$
and consequently
$$-\frac{1}{2}\int_{\Omega} mu_n^2-\frac{1}{p} A(u_n)-\frac{1}{q}B(u_n)\leq -\frac{c_0^2}{2} \int_{\Omega} m-\frac{c_0^p}{p}\int_{\Omega} a -\frac{c_0^q}{q}\int_{\partial \Omega}b<0.$$
It follows that $u_n \not \rightarrow 0$, i.e. $u_0$ is a positive constant. 
Finally, since $u_n$ solves $(P_\lambda)$ for $\lambda=\lambda_n$, we have
\begin{align*}
0 & = \int_\Omega \left(\nabla u_n \nabla u_0 - \lambda_n m u_n u_0\right)  - \lambda_n \int_\Omega a u_n^{p-1} u_0 - \lambda_n \int_{\partial \Omega} b u_n^{q-1} u_0 \\
& = \lambda_n \left\{ - \int_\Omega m u_n u_0  - \int_\Omega a u_n^{p-1} u_0 - \int_{\partial \Omega} b u_n^{q-1} u_0 \right\}
\end{align*}
so, letting $n \to \infty$, we get $$u_0^{2-q} \int_{\Omega} m +u_0^{p-q} \int_{\Omega} a + \int_{\partial \Omega} b=0,$$ i.e. $u_0=c_0$.
\\ 

\item If \eqref{n1} holds then we can proceed as in the previous item to deduce that $u_n \rightarrow c$, where $c$ is a constant.
Now, by Remark \ref{cn} we infer that $c=c_1$.
\end{enumerate}
\end{proof}

\begin{rem}{\rm
Let $b_n \rightarrow b$ in $L^{\infty}(\partial \Omega)$ with $b_n^+ \not \equiv 0$ for every $n$, $b \not \equiv 0$ and $b \leq 0$. Then, from \cite[Remark 2.5]{RQU}, we have $\lambda_{b_n} \to \lambda_b>0$.
Moreover, since $b_n \rightarrow b$ in $L^{\infty}(\partial \Omega)$, we have $\int_{\partial \Omega} b_n<0$ for $n$ sufficiently large and $b_n^+ \to 0$ in $L^{\infty}(\partial \Omega)$. So
we can fix $\tilde{b} \in L^{\infty}(\partial \Omega)$ such that  $\int_{\partial \Omega} \tilde{b}<0$, $\tilde{b}^+ \not \equiv 0$, and $b_n\leq \tilde{b}$ for $n$ sufficiently large. By Remark \ref{rbb}, we have  $\lambda_s(b_n)\geq \lambda_s(\tilde{b})>0$. So $\overline{\lambda}(b_n)\geq \min\{\lambda_s(\tilde{b}), \lambda_{\tilde{b}}\}$ and
$u_{0,\lambda,b_n}$ exists for $0<\lambda<\min\{\lambda_s(\tilde{b}), \lambda_{\tilde{b}}\}$ and every $n$. Moreover, from the proof of Proposition \ref{pmn1}, for some $K_\lambda>0$ we have 
$\|u_{0,\lambda,b_n}\|\leq K_\lambda \|b_n^+\|_{\infty}^{\frac{1}{2-q}}$, and consequently $u_{0,\lambda,b_n} \to 0$ in $\mathcal{C}^{\theta}(\overline{\Om})$ for some $\theta \in (0,1)$ and $0<\lambda<\min\{\lambda_s(\tilde{b}), \lambda_{\tilde{b}}\}$.}
\end{rem}
    
\begin{lem}
\label{leqil}
 Assume $b^+ \not \equiv 0$ and $\int_{\partial \Omega} b<0$. Then, for $0<\lambda<\overline{\lambda}$, there holds
 \begin{equation}
 \label{eqil}
 I_{\lambda}(u_{0,\lambda})<-D_0\lambda^{\frac{2}{2-q}}+o(\lambda^{\frac{2}{2-q}}),
\end{equation}  
for some $D_0>0$.
\end{lem}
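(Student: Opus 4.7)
The strategy is to exhibit an explicit element of $\mathcal{N}_\lambda^+ \cap B^+$ whose energy already has the claimed asymptotic behavior, and then to invoke $I_\lambda(u_{0,\lambda}) = \inf_{\mathcal{N}_\lambda^+ \cap B^+} I_\lambda$. Since $b^+ \not\equiv 0$, I fix once and for all a test function $u_* \in H^1(\Omega)$ with $B(u_*) > 0$. Because $u_* \in B_0^+$ and $\lambda < \overline{\lambda} \leq \lambda_b$, Proposition \ref{p2} gives $E_\lambda(u_*) \geq C_0 \|u_*\|^2 > 0$, so $u_* \in E_\lambda^+$. Whether $A(u_*) > 0$ (apply Proposition \ref{p1}) or $A(u_*) \leq 0$ (inspect $j_{u_*}$ directly), there is a smallest $t = t(\lambda) > 0$ with $j'_{u_*}(t) = 0$ and $j''_{u_*}(t) > 0$, so that $t(\lambda) u_* \in \mathcal{N}_\lambda^+$; since $B(t(\lambda) u_*) = t(\lambda)^q B(u_*) > 0$, also $t(\lambda) u_* \in B^+$.

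Next I perform the rescaling $t = s\, \lambda^{1/(2-q)}$. Dividing the critical-point equation $t^{2-q} E_\lambda(u_*) = \lambda t^{p-q} A(u_*) + \lambda B(u_*)$ by $\lambda$ yields
$$
s^{2-q}\, E_\lambda(u_*) - s^{p-q}\, \lambda^{(p-q)/(2-q)} A(u_*) - B(u_*) = 0.
$$
As $\lambda \to 0^+$, the middle term vanishes (since $p > 2$) and $E_\lambda(u_*) \to E_0(u_*) = \|\nabla u_*\|_2^2$, so an elementary implicit-function argument gives $s(\lambda) \to s_* := \bigl(B(u_*)/E_0(u_*)\bigr)^{1/(2-q)}$.

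Substituting $t(\lambda) = s(\lambda)\lambda^{1/(2-q)}$ into the definition of $I_\lambda$, the first and third terms are of order $\lambda^{2/(2-q)}$ while the middle term is of order $\lambda^{(p+2-q)/(2-q)} = o(\lambda^{2/(2-q)})$. Hence
$$
I_\lambda(t(\lambda) u_*) = \lambda^{2/(2-q)} \left( \tfrac{1}{2} s_*^2\, E_0(u_*) - \tfrac{1}{q} s_*^q\, B(u_*) \right) + o\bigl(\lambda^{2/(2-q)}\bigr).
$$
Using $s_*^{2-q} E_0(u_*) = B(u_*)$, the parenthesized coefficient simplifies to $\bigl(\tfrac{1}{2} - \tfrac{1}{q}\bigr) s_*^2 E_0(u_*) = -\tfrac{2-q}{2q} s_*^2 E_0(u_*) < 0$. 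Setting $D_0 := \tfrac{2-q}{2q} s_*^2 E_0(u_*) > 0$ and combining this with $I_\lambda(u_{0,\lambda}) \leq I_\lambda(t(\lambda) u_*)$ yields \eqref{eqil}.

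The main technical point is verifying that $t(\lambda) u_* \in \mathcal{N}_\lambda^+$ rather than $\mathcal{N}_\lambda^-$ for small $\lambda$. From $j'_{u_*}(t(\lambda)) = 0$ one obtains
$$
j''_{u_*}(t(\lambda)) = \lambda (2-q)\, t(\lambda)^{q-2} B(u_*) - \lambda (p-2)\, t(\lambda)^{p-2} A(u_*).
$$
With $t(\lambda) = s(\lambda) \lambda^{1/(2-q)}$, the first term equals $(2-q) s(\lambda)^{q-2} B(u_*) \to (2-q) s_*^{q-2} B(u_*) > 0$, while the second is of order $\lambda^{(p-q)/(2-q)} \to 0$. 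Thus $j''_{u_*}(t(\lambda)) > 0$ for small $\lambda$, confirming the sign of the curvature and closing the argument.
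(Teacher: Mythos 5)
Your proof is correct and rests on the same underlying idea as the paper's: since $u_{0,\lambda}$ minimizes $I_\lambda$ over $\mathcal{N}_\lambda^+\cap B^+$, it suffices to exhibit a competitor on a fixed ray $\{t u_*\}$ with $B(u_*)>0$ and observe that the balance between $\tfrac{t^2}{2}\int_\Omega|\nabla u_*|^2$ and $\tfrac{\lambda t^q}{q}B(u_*)$ forces the energy down to the scale $-\lambda^{\frac{2}{2-q}}$. The executions differ, though: the paper majorizes $I_\lambda$ on $B^+$ by $\tilde I_\lambda(u)=\tfrac{C_0}{2}\|u\|^2-\tfrac{\lambda}{q}B(u)$ (adding the $A$-term when $A(u)<0$) and minimizes the majorant in closed form, whereas you track the Nehari projection itself, showing $t(\lambda)=s(\lambda)\lambda^{\frac{1}{2-q}}$ with $s(\lambda)\to s_*=\bigl(B(u_*)/\int_\Omega|\nabla u_*|^2\bigr)^{\frac{1}{2-q}}$ and evaluating $I_\lambda$ there. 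Your route yields the sharp leading constant $D_0=\tfrac{2-q}{2q}s_*^2\int_\Omega|\nabla u_*|^2$ and, thanks to the explicit check that $j''_{u_*}(t(\lambda))>0$, it bypasses the step (stated somewhat tersely in the paper) that the local minimum value of $j_{u_*}$ lies below the global minimum of the majorant; the paper's route avoids the implicit-function bookkeeping. Two harmless points worth recording: $\int_\Omega|\nabla u_*|^2>0$ is automatic because $\int_{\partial\Omega}b<0$ forces any $u_*$ with $B(u_*)>0$ to be non-constant; and your chain of estimates gives $\leq$ rather than $<$, which is immaterial in the presence of the $o(\lambda^{\frac{2}{2-q}})$ term.
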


\begin{proof}
Recall that for $0<\lambda<\lambda_b$ we have $B^+ \subset E_{\lambda}^+$. Then there exists $C_0>0$ such that $0<E_\lambda(u)\leq C_0\|u\|^2$ for  $u \in B^+$ and $0<\lambda<\lambda_b$.

Let $u \in B^+ \cap A_0^+$.
Then $$I_\lambda(u) \leq \frac{1}{2}E_\lambda(u)-\frac{\lambda}{q}B(u)\leq \tilde{I}_\lambda(u):=\frac{C_0}{2}\|u\|^2-\frac{\lambda}{q}B(u).$$
Thus $I_\lambda(tu)\leq \tilde{I}_\lambda(tu)$ for every $t>0$.
Note that $\tilde{I}_\lambda(tu)$ has a global minimum point $t_0$ given by
$$t_0=\left(\frac{\lambda B(u)}{C_0\|u\|^2}\right)^{\frac{1}{2-q}}.$$
and $$\tilde{I}_\lambda(t_0u)=-\frac{2-q}{2q}\lambda^{\frac{2}{2-q}}B(u)\left(\frac{B(u)}{C_0\|u\|^2}\right)^{\frac{q}{2-q}}=-D_0\lambda^{\frac{2}{2-q}},$$
where $D_0=\frac{2-q}{2q}B(u)\left(\frac{B(u)}{C_0\|u\|^2}\right)^{\frac{q}{2-q}}$.
It follows that if $I_\lambda(tu)$ has a local minimum at $t_1$ then $$I_\lambda(t_1u)< -D_0\lambda^{\frac{2}{2-q}}$$
with $D_0>0$

Let now $u \in B^+ \cap A^-$. Then $$I_\lambda(u)\leq \tilde{I}_\lambda(u):=\frac{C_0}{2}\|u\|^2-\frac{\lambda}{q}B(u)-\frac{\lambda}{p}A(u)$$ and $\tilde{I}_\lambda (tu)$ has a global minimum point $t_0$ which satisfies
$$t_0C_0\|u\|^2-\lambda t_0^{q-1}B(u)-\lambda t_0^{p-1} A(u)=0.$$
Thus $$t_0C_0\|u\|^2-\lambda t_0^{q-1}B(u)<0,$$
so that $$t_0<\left(\frac{\lambda B(u)}{C_0\|u\|^2}\right)^{\frac{1}{2-q}}.$$
Hence $$\tilde{I}_\lambda (t_0u)<-D_0\lambda^{\frac{2}{2-q}}-\lambda^{\frac{p-q+2}{2-q}}\left(\frac{B(u)}{C_0\|u\|^2}\right)^{\frac{1}{2-q}}A(u)=-D_0\lambda^{\frac{2}{2-q}}+D_1\lambda^{\frac{p-q+2}{2-q}},$$
where $D_0$ is as above and $D_1=-\left(\frac{B(u)}{C_0\|u\|^2}\right)^{\frac{p}{2-q}}\frac{A(u)}{p}$. Once again, if $I_\lambda(tu)$ has a local minimum at $t_1$ then $$I_\lambda(t_1u)<-D_0\lambda^{\frac{2}{2-q}}+D_1\lambda^{\frac{p-q+2}{2-q}}.$$
Therefore we conclude that $$\inf_{\mathcal{N}_\lambda^+ \cap B^+} I_\lambda <-D_0\lambda^{\frac{2}{2-q}}+o(\lambda^{\frac{2}{2-q}})$$
for $0<\lambda<\overline{\lambda}$.
\end{proof}   

\begin{prop}
\label{pasyc}
Assume $b^+ \not \equiv 0$ and $\int_{\partial \Omega} b<0$. If $\lambda_n \to 0^+$ then, up to a subsequence, there holds $w_{0,\lambda_n}=\lambda_n^{-\frac{1}{2-q}}u_{0,\lambda_n} \rightarrow w_0$ in $H^1(\Omega)$, where $w_0$ is a nontrivial non-negative solution of 
\begin{equation}
\label{pwl}
-\Delta w = 0 \quad\mbox{in} \ \Omega, \qquad
\frac{\partial w}{\partial \mathbf{n}} = b(x)w^{q-1} \quad \mbox{on} \
\partial \Omega.
\end{equation}
\end{prop}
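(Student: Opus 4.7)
The plan is to exploit the scaling $u_{0,\lambda}=\lambda^{1/(2-q)}w_{0,\lambda}$, which turns $(P_\lambda)$ into the problem
\begin{equation*}
-\Delta w_n = \lambda_n m(x) w_n + \lambda_n^{\frac{p-q}{2-q}} a(x)|w_n|^{p-2}w_n \ \text{in }\Omega, \qquad \frac{\partial w_n}{\partial \mathbf{n}}= b(x)|w_n|^{q-2}w_n \ \text{on }\partial\Omega,
\end{equation*}
as a direct substitution into $(P_{\lambda_n})$ shows (both sides of the interior equation pick up a factor $\lambda_n^{1/(2-q)}$, while on the boundary the exponent $1+(q-1)/(2-q)=1/(2-q)$ also matches). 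The weak formulation of this rescaled problem is what I will ultimately pass to the limit in.

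First, I will establish uniform $H^1$-boundedness of $(w_n)$. This is where Proposition \ref{cun}(2) enters: since $u_{0,\lambda_n}\in\mathcal{N}_{\lambda_n}^+\cap B^+\subset \mathcal{N}_{\lambda_n}^+\cap B_0^+$, that proposition (applied with any fixed $\mu\in(0,\overline{\lambda})$ larger than all $\lambda_n$ for $n$ large) yields $\|u_{0,\lambda_n}\|\le K(\lambda_n\|b^+\|_\infty)^{1/(2-q)}$, whence $\|w_n\|\le K\|b^+\|_\infty^{1/(2-q)}$. Hence, up to a subsequence, $w_n\rightharpoonup w_0$ in $H^1(\Omega)$, with $w_n\to w_0$ in $L^r(\Omega)$ and in $L^t(\partial\Omega)$ for $r<2^\ast$ and $t<2_\ast$, and $w_0\ge 0$ since each $w_n\ge 0$.

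Next, I will pass to the limit in the weak form. For any $\varphi\in H^1(\Omega)$,
\begin{equation*}
\int_\Omega \nabla w_n\nabla\varphi - \lambda_n\!\int_\Omega m w_n\varphi - \lambda_n^{\frac{p-q}{2-q}}\!\int_\Omega a|w_n|^{p-2}w_n\varphi - \int_{\partial\Omega} b|w_n|^{q-2}w_n\varphi = 0.
\end{equation*}
The two middle terms vanish as $n\to\infty$ (the first from the uniform bound and $\lambda_n\to 0$, the second from the fact that $p<2^\ast$ ensures $|w_n|^{p-2}w_n$ converges strongly in $L^{p/(p-1)}(\Omega)$, and $\lambda_n^{(p-q)/(2-q)}\to 0$). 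The boundary term converges to $\int_{\partial\Omega} b|w_0|^{q-2}w_0\varphi$ by compactness of the trace, and weak convergence handles the gradient term. Thus $w_0$ is a weak solution of \eqref{pwl}. Taking $\varphi=w_n$ in the rescaled identity and $\varphi=w_0$ in the limit identity, I will then compare $\int_\Omega|\nabla w_n|^2$ with $\int_\Omega|\nabla w_0|^2$: both converge to $\int_{\partial\Omega}b|w_0|^q$, so the norms converge and, combined with weak convergence, this upgrades to strong convergence $w_n\to w_0$ in $H^1(\Omega)$.

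The main obstacle is the nontriviality of $w_0$, and this is precisely where Lemma \ref{leqil} will be decisive. Computing $I_{\lambda_n}(u_{0,\lambda_n})$ in terms of $w_n$ gives
\begin{equation*}
\lambda_n^{-\frac{2}{2-q}}I_{\lambda_n}(u_{0,\lambda_n}) = \tfrac12\!\int_\Omega|\nabla w_n|^2 - \tfrac{\lambda_n}{2}\!\int_\Omega m w_n^2 - \tfrac{\lambda_n^{(p-q)/(2-q)}}{p}\!\int_\Omega a|w_n|^p - \tfrac{1}{q}\!\int_{\partial\Omega}b|w_n|^q,
\end{equation*}
and by the $H^1$-boundedness and compact embeddings this quantity converges to $\Psi(w_0):=\frac12\int_\Omega|\nabla w_0|^2 - \frac1q\int_{\partial\Omega}b|w_0|^q$. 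On the other hand Lemma \ref{leqil} gives $\lambda_n^{-2/(2-q)}I_{\lambda_n}(u_{0,\lambda_n})< -D_0+o(1)$, so $\Psi(w_0)\le -D_0<0$, forcing $w_0\not\equiv 0$. This completes the proof, and in fact also delivers a uniform lower bound on the limit energy, which will be useful later.
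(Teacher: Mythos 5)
Your proposal is correct and follows essentially the same route as the paper: the same rescaling, the same $H^1$-bound from Proposition \ref{cun}, Lemma \ref{leqil} divided by $\lambda_n^{2/(2-q)}$ to force $\frac12\int_\Omega|\nabla w_0|^2-\frac1q B(w_0)\le -D_0<0$ and hence $w_0\not\equiv 0$, and passage to the limit in the rescaled weak formulation. The only cosmetic difference is how you upgrade weak to strong convergence (comparing $\int_\Omega|\nabla w_n|^2$ with $\int_\Omega|\nabla w_0|^2$ via the two test functions, rather than testing with $w_n-w_0$ as the paper does), which is an equally valid standard device.
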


\begin{proof}
Let $w_n=w_{0,\lambda_n}=\lambda_n^{-\frac{1}{2-q}}u_{0,\lambda_n}$. Since $$\|u_{0,\lambda}\|\leq \left(\lambda\|b^+\|_{\infty}\right)^{\frac{1}{2-q}} K$$ it follows that $(w_n)$ is bounded in $H^1(\Omega)$. Thus, up to a subsequence, $w_n \rightharpoonup w_0$ in $H^1(\Omega)$. Furthermore,
from Lemma \ref{leqil} we have
$$I_{\lambda_n}(u_{0,\lambda_n})<-D_0\lambda_n^{\frac{2}{2-q}}++o(\lambda_n^{\frac{2}{2-q}}),$$
with $D_0>0$.
Hence
$$\frac{\lambda_n^{\frac{2}{2-q}}}{2} \int_{\Omega} |\nabla w_n|^2 -\frac{\lambda_n^{\frac{4-q}{2-q}}}{2}\int_{\Omega} mw_n^2 -\frac{\lambda_n^{\frac{2}{2-q}}}{q} B(w_n) -\frac{\lambda_n^{\frac{2+p-q}{2-q}}}{p}
A(w_n)<-D_0\lambda_n^{\frac{2}{2-q}}++o(\lambda_n^{\frac{2}{2-q}}).$$
Dividing the above inequality by $\lambda_n^{\frac{2}{2-q}}$ and letting $n \to \infty$ we get
$$\frac{1}{2}\int_{\Omega} |\nabla w_0|^2- \frac{1}{q}B(w_0)\leq -D_0<0,$$
so $w_0 \not \equiv 0$.
Taking $v=w_n-w_0$ in
\begin{equation}
\label{ewn}
\int_{\Omega} \left(\nabla w_n \nabla v -\lambda_n m(x)w_n v -\lambda_n^{\frac{p-q}{2-q}}a(x)w_n^{p-1}v\right) -\int_{\partial \Omega} b(x)w_n^{q-1}v =0 \quad \forall v \in H^1(\Omega)
\end{equation}
and letting $n \to \infty$ 
we get $\lim \int_{\Omega} \nabla w_n \nabla (w_n-w_0) =0$, so that $w_n \rightarrow w_0$ in $H^1(\Omega)$.
Finally, \eqref{ewn} also shows that $w_0$ is a solution of  \eqref{pwl}.
\end{proof}

\medskip
\subsection{Minimization in $\mathcal{N}_\lambda^+ \cap E_\lambda^-$}
\strut
\medskip
\begin{prop}
Assume $\lambda_a, \lambda_b>0$ and $\mathcal{N}_\lambda^+ \cap E_\lambda^- \neq \emptyset$. If $\displaystyle \inf_{\mathcal{N}_\lambda^+ \cap E_\lambda^-} I_{\lambda}<0$ and $0<\lambda<\min\{\lambda_a,\lambda_b\}$ then $\displaystyle \inf_{\mathcal{N}_\lambda^+ \cap E_\lambda^-} I_{\lambda}$ is achieved.
\end{prop}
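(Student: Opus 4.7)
\emph{Plan.} I would start from a minimizing sequence $(u_n)\subset\mathcal{N}_\lambda^+\cap E_\lambda^-$ with $I_\lambda(u_n)\to c<0$. Since $0<\lambda<\min\{\lambda_a,\lambda_b\}$, Proposition \ref{pal1} gives $\alpha_1(\lambda),\beta_1(\lambda)>0$, so $A_0^+\setminus\{0\}\subset E_\lambda^+$ and $B_0^+\setminus\{0\}\subset E_\lambda^+$, and by contraposition every nontrivial $u\in E_\lambda^-$ lies in $A^-\cap B^-$; in particular $A(u_n),B(u_n)<0$. To show $(u_n)$ is bounded, suppose $\|u_n\|\to\infty$, set $v_n=u_n/\|u_n\|$, and pass to a subsequence with $v_n\rightharpoonup v_0$, $A(v_n)\to A(v_0)\le 0$, and $B(v_n)\to B(v_0)\le 0$. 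The scaled Nehari identity $E_\lambda(v_n)=\lambda\|u_n\|^{p-2}A(v_n)+\lambda\|u_n\|^{q-2}B(v_n)$, with $p>2>q$ and $E_\lambda(v_n)$ bounded, forces $A(v_n)\to 0$, hence $v_0\in A_0^+$. If $v_0\not\equiv 0$, then $\alpha_1(\lambda)>0$ gives $E_\lambda(v_0)>0$, contradicting $E_\lambda(v_0)\le\liminf E_\lambda(v_n)\le 0$; if $v_0\equiv 0$, then $\int m v_n^2\to 0$ together with $E_\lambda(v_n)<0$ forces $\int|\nabla v_n|^2\to 0$, contradicting $\|v_n\|=1$.

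Once boundedness is secured, I would extract $u_n\rightharpoonup u_0$ and aim at strong convergence. The limit is nontrivial: otherwise $A(u_n),B(u_n)\to 0$ and the Nehari identity gives $E_\lambda(u_n)\to 0$, so $I_\lambda(u_n)\to 0\ne c$. Suppose by contradiction the convergence is not strong; then $E_\lambda(u_0)<\liminf E_\lambda(u_n)=\lambda(A(u_0)+B(u_0))$, yielding $J_\lambda(u_0)<0$ and, together with $A(u_0),B(u_0)\le 0$, also $E_\lambda(u_0)<0$. The alternatives $A(u_0)=0$ or $B(u_0)=0$ would place $u_0$ in $A_0^+$ or $B_0^+$ and, via Proposition \ref{pal1}, force $E_\lambda(u_0)>0$, a contradiction; hence $E_\lambda(u_0),A(u_0),B(u_0)<0$ strictly. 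In this sign configuration a direct fibering analysis shows that $j_{u_0}$ has exactly two critical points $0<t_1<t_2$ with $t_1u_0\in\mathcal{N}_\lambda^-$ and $t_2u_0\in\mathcal{N}_\lambda^+$, and since $j_{u_0}'(1)=J_\lambda(u_0)<0$ one has $1\in(t_1,t_2)$, an interval on which $j_{u_0}$ is strictly decreasing. Therefore
\[
I_\lambda(t_2u_0)=j_{u_0}(t_2)<j_{u_0}(1)=I_\lambda(u_0)\le\liminf I_\lambda(u_n)=c,
\]
and as $E_\lambda(t_2u_0)=t_2^2E_\lambda(u_0)<0$ we have $t_2u_0\in\mathcal{N}_\lambda^+\cap E_\lambda^-$, contradicting the definition of $c$. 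Thus $u_n\to u_0$ strongly in $H^1(\Omega)$.

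Strong convergence then gives $J_\lambda(u_0)=0$ and $I_\lambda(u_0)=c$. The equality case $E_\lambda(u_0)=0$ would combine with $A(u_0)+B(u_0)=0$ (from Nehari) and $A(u_0),B(u_0)\le 0$ to yield $A(u_0)=B(u_0)=0$, hence $I_\lambda(u_0)=0\neq c$, so in fact $u_0\in E_\lambda^-$. Passing to the limit in $\langle J_\lambda'(u_n),u_n\rangle>0$, which is continuous on bounded sets thanks to the compactness of the Sobolev embeddings, I obtain $\langle J_\lambda'(u_0),u_0\rangle\ge 0$, so $u_0\in\mathcal{N}_\lambda^+\cup\mathcal{N}_\lambda^0$. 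The main obstacle will be to exclude the degenerate case $u_0\in\mathcal{N}_\lambda^0$: at such a point the two fiber critical points coalesce at $t=1$, and a direct computation using the Nehari relation $B(u_0)=\tfrac{p-2}{2-q}A(u_0)$ produces $j'''_{u_0}(1)>0$, so $j_{u_0}(t)<j_{u_0}(1)$ just below $t=1$. A suitable small perturbation of $u_0$ should then yield a point in $\mathcal{N}_\lambda^+\cap E_\lambda^-$ with energy strictly less than $c$, closing the proof; this last step is the technical subtlety I expect to be the hardest.
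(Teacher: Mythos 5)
Your argument is sound up to the last step and follows the same Nehari-manifold minimization strategy as the paper; the only substantive difference along the way is the boundedness proof. The paper obtains boundedness more directly: Proposition \ref{p2'}\,(2) gives $A(u)\leq -D_0\|u\|^p$ on $E_{\lambda,0}^-$ for $\lambda$ below $\min\{\lambda_a,\lambda_b\}$, and combining this with the Nehari identity $\lambda A(u)=E_\lambda(u)-\lambda B(u)$ and $B(u)<0$ yields $\lambda D_0\|u\|^p\leq -\lambda A(u)<-E_\lambda(u)\leq \lambda C\|m\|_\infty\|u\|^2$, i.e.\ a uniform a priori bound $\|u\|\leq K^{1/(p-2)}$ on all of $\mathcal{N}_\lambda^+\cap E_\lambda^-$, not just on the minimizing sequence. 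Your normalization argument reaches the same conclusion for the minimizing sequence and is correct, just longer.

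The genuine gap is the step you flag as ``the hardest'': excluding $u_0\in\mathcal{N}_\lambda^0$. As written the proof stops there, and the route you sketch ($j'''_{u_0}(1)>0$ plus a perturbation) is an unnecessary detour. The exclusion is immediate from facts you have already established: $u_0\in A^-\cap B^-\cap E_\lambda^-$, $I_\lambda(u_0)=c<0$, and $t=1$ is a critical point of $j_{u_0}$. For such $u_0$ the fiber map has the form $j_{u_0}(t)=-\alpha t^2+\beta t^p+\gamma t^q$ with $\alpha,\beta,\gamma>0$, so $t^{1-q}j'_{u_0}(t)=-2\alpha t^{2-q}+p\beta t^{p-q}+q\gamma$ is positive near $0$, tends to $+\infty$, and is first decreasing and then increasing. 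Hence either $j_{u_0}$ has exactly two critical points $t_1<t_2$, with $j_{u_0}$ increasing on $(0,t_1)$ so that $j_{u_0}(t_1)>\lim_{t\to 0^+}j_{u_0}(t)=0$, or it has a single degenerate critical point, in which case $j'_{u_0}\geq 0$ everywhere and every critical value is $\geq 0$. Since $j_{u_0}(1)=c<0$, the critical point $t=1$ must be the nondegenerate minimum $t_2$, i.e.\ $u_0\in\mathcal{N}_\lambda^+$; the degenerate case cannot occur. (This is exactly how the paper concludes: after strong convergence it identifies $t_2(u_0)=1$.) With this one observation inserted, your proof is complete.
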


\begin{proof}
First of all, from $0<\lambda<\min\{\lambda_a,\lambda_b\}$ we have $E_\lambda^- \subset A^- \cap B^-$ and by Proposition \ref{p2'}, taking $\lambda < \lambda_* <\min\{\lambda_a,\lambda_b\}$ we get a constant $D_0>0$ such that $$A(u)\leq -D_0 \|u\|^p$$ if $u \in E_\lambda^-$. Thus, from $$\lambda A(u)=E_\lambda(u)-\lambda B(u)$$ we get
$$\lambda D_0 \|u\|^p \leq -\lambda A(u)< -E_\lambda(u)< \lambda C \|m\|_\infty \|u\|^2,$$ for some $C>0$, and consequently there exists $K>0$ such that 
\begin{equation}
\label{ebe}
\|u\|\leq K^{\frac{1}{p-2}}
\end{equation}
 if $u \in \mathcal{N}_\lambda^+ \cap E_\lambda^-$.
Let now $(u_n) \subset \mathcal{N}_\lambda^+ \cap E_\lambda^-$ be such that $$I_{\lambda}(u_n) \rightarrow \inf_{\mathcal{N}_\lambda^+ \cap E_\lambda^-} I_{\lambda}.$$ Since $(u_n)$ is bounded, we may assume that $u_n \rightharpoonup u_0$ in $H^1(\Omega)$. In particular, we have $E_\lambda(u_0)\leq 0$ and
$I_{\lambda}(u_0)\leq \displaystyle \inf_{\mathcal{N}_\lambda^+ \cap E_\lambda^-} I_{\lambda}<0$, so that $u_0 \not \equiv 0$. Since $\lambda<\min\{\lambda_a,\lambda_b\}$, we have $E_{\lambda,0}^- \setminus \{0\} \subset (A^- \cap B^-) \cup \{0\}$, so 
$u_0 \in A^- \cap B^-$. Moreover, from 
$$0>I_{\lambda}(u_0)=\frac{1}{2}E_\lambda(u_0)-\frac{\lambda}{p}A(u_0) -\frac{\lambda}{q}B(u_0),$$
we infer that $E_\lambda(u_0)<0$, i.e. $u_0 \in E_\lambda^-$.
Now, as $j_{u_0}(1)=I_{\lambda}(u_0)<0$, we see, from the shape of $j_{u_0}$, that there exists $t_2>0$ such that 
$t_2 u_0 \in \mathcal{N}_\lambda^+ \cap E_\lambda^-$, i.e. $t_2$ is a global minimum point of $j_{u_0}$.
If $u_n \not \rightarrow u_0$ then 
$$j_{u_0}(t_2)\leq j_{u_0}(1)<\liminf j_{u_n}(1)=\lim I_{\lambda}(u_n)=\inf_{\mathcal{N}_\lambda^+ \cap E_\lambda^-} I_{\lambda},$$ which is a contradiction.
Therefore $u_n \rightarrow u_0$ and consequently $t_2(u_0)=1$.
Thus $u_0 \in \mathcal{N}_\lambda^+ \cap E_\lambda^-$ and $$I_{\lambda}(u_0)=\inf_{\mathcal{N}_\lambda^+ \cap E_\lambda^-} I_{\lambda}.$$
\end{proof}

\begin{prop}
$\mathcal{N}_\lambda^+ \cap E_\lambda^- \neq \emptyset$ and $\displaystyle \inf_{\mathcal{N}_\lambda^+ \cap E_\lambda^- } I_{\lambda}<0$ in the following cases:
\begin{enumerate}
\item $\int_\Om m>0>\int_\Om a$, $0>\int_{\partial \Om} b > -K_1(m,a)$ and $\lambda>0$.

\item  $\int_\Om m<0$, $\int_{\partial \Om} b\varphi_1^q<0$, 
$-\left(\frac{C_{pq}}{-\int_{\partial \Om} b\varphi_1^q}\right)^{\frac{p-2}{2-q}}<\int_{\Omega} a\varphi_1^p<0$
 and $\lambda >\lambda^*$, where
\begin{equation}
\label{l*}
\lambda^*=\lambda_1(m) \left[1-\left(C_{pq}^{-1}\left(-\int_{\partial \Om} b\varphi_1^q\right) \left(-\int_{\Omega} a\varphi_1^p\right)^{\frac{2-q}{p-2}}\right)^{\frac{p-2}{p-q}}\right]^{-1}.
\end{equation}
\end{enumerate}
\end{prop}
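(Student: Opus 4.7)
The plan is to exhibit an explicit test function that lies in $\mathcal{N}_\lambda^+\cap E_\lambda^-$ and at which $I_\lambda<0$, in each of the two cases.

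For case (1), the natural candidate is the constant function $c_2$ itself. By Remark~\ref{cn}, $c_2\in\mathcal{N}_\lambda^+$ (since $\varphi'(c_2)>0$), and since $\int_\Omega m>0$ and $\lambda>0$ we get $E_\lambda(c_2)=-\lambda c_2^2\int_\Omega m<0$, so $c_2\in E_\lambda^-$. Non-emptiness of $\mathcal{N}_\lambda^+\cap E_\lambda^-$ is therefore immediate. To check $I_\lambda(c_2)<0$, I would use the Nehari identity $\varphi(c_2)=0$ to substitute $\int_{\partial\Omega}b=-c_2^{2-q}\int_\Omega m-c_2^{p-q}\int_\Omega a$ into $I_\lambda(c_2)$, obtaining after a brief simplification
$$I_\lambda(c_2)=\frac{\lambda c_2^2}{2pq}\Bigl[p(2-q)\int_\Omega m+2(p-q)c_2^{p-2}\int_\Omega a\Bigr].$$
This is negative iff $c_2^{p-2}>\hat c^{p-2}$, where $\hat c^{p-2}:=\tfrac{p(2-q)}{2(p-q)}\tfrac{\int_\Omega m}{-\int_\Omega a}$. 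To verify $c_2>\hat c$, I would directly compute $\varphi(\hat c)$; the coefficients collapse (by the very definition of $C_{pq}$) to
$$\varphi(\hat c)=K_1(m,a)+\int_{\partial\Omega}b,$$
which is strictly positive by the standing hypothesis $\int_{\partial\Omega}b>-K_1(m,a)$. Since $\varphi$ is positive exactly on $(c_1,c_2)$ and $\hat c>c^*$ (because $\hat c/c^*=(p/2)^{1/(p-2)}>1$), positivity of $\varphi(\hat c)$ forces $c_1<\hat c<c_2$, in particular $c_2>\hat c$, which closes case (1).

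For case (2), I would use $u=s_2\varphi_1$, where $\varphi_1>0$ is the principal eigenfunction normalized by $\int_\Omega m\varphi_1^2=1$ and $s_2>0$ is to be chosen. The key structural observation is that all the computations above transfer verbatim to this test function if one simply replaces
$$\int_\Omega m\;\longmapsto\;\frac{\lambda-\lambda_1}{\lambda},\qquad \int_\Omega a\;\longmapsto\;A(\varphi_1),\qquad \int_{\partial\Omega}b\;\longmapsto\;B(\varphi_1),$$
since $E_\lambda(\varphi_1)=\lambda_1-\lambda$ and $A(t\varphi_1)=t^pA(\varphi_1)$, $B(t\varphi_1)=t^qB(\varphi_1)$. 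The Nehari relation $t\varphi_1\in\mathcal{N}_\lambda$ becomes $\widetilde\varphi(t)=0$ for
$$\widetilde\varphi(t):=t^{2-q}\frac{\lambda-\lambda_1}{\lambda}+t^{p-q}A(\varphi_1)+B(\varphi_1),$$
which has exactly the same sign pattern as $\varphi$ in case (1) whenever $\lambda>\lambda_1$, $A(\varphi_1)<0$, $B(\varphi_1)<0$. Thus $\widetilde\varphi$ has two positive zeros $s_1<s_2$ precisely when the analog of $|\!\int_{\partial\Omega}b|<K_1$ holds, and unwinding this inequality in terms of the data gives exactly $\lambda>\lambda^*$, where the threshold $\lambda^*$ in \eqref{l*} is well-defined thanks to the hypothesis $-\bigl(C_{pq}/(-\int_{\partial\Omega}b\varphi_1^q)\bigr)^{(p-2)/(2-q)}<\int_\Omega a\varphi_1^p$. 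At the larger zero, $s_2\varphi_1\in\mathcal{N}_\lambda^+$ (local minimum of $j_{\varphi_1}$), and $E_\lambda(s_2\varphi_1)=s_2^2(\lambda_1-\lambda)<0$, so $s_2\varphi_1\in\mathcal{N}_\lambda^+\cap E_\lambda^-$. The negativity $I_\lambda(s_2\varphi_1)<0$ then follows from the same manipulation as in case (1): one shows $I_\lambda(s_2\varphi_1)<0\iff s_2>\hat s$ for the corresponding $\hat s$, and this is equivalent to $\widetilde\varphi(\hat s)>0$, which again unwinds to $\lambda>\lambda^*$.

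The main obstacle is bookkeeping rather than ideas: one must verify carefully that the single condition $\lambda>\lambda^*$ simultaneously guarantees (i) the existence of the two zeros $s_1<s_2$ of $\widetilde\varphi$, and (ii) the strict inequality $s_2>\hat s$ needed for $I_\lambda(s_2\varphi_1)<0$. This is not coincidence: in case (1) both properties were consequences of the single inequality $\varphi(\hat c)>0$, and the substitution above turns $\varphi(\hat c)>0$ into precisely $\lambda>\lambda^*$, so the two requirements coincide here as well. Once this algebraic reduction is made explicit, the remainder of the proof reduces to routine sign-checks.
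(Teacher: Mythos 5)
The proposal is correct and follows essentially the same route as the paper: it tests along the ray of constants in case (1) and along the ray through the principal eigenfunction $\varphi_1$ in case (2), and your threshold point $\hat c$ is exactly the global minimum point $t_0(c)\,c$ of the paper's reduced fibering map $i_c$, so your condition $\varphi(\hat c)>0$ is the same inequality $-\int_{\partial\Omega}b<K_1(m,a)$ (respectively $\lambda>\lambda^*$) that the paper derives. One small slip worth fixing: in case (1) you justify $c_2\in\mathcal{N}_\lambda^+$ by ``$\varphi'(c_2)>0$'', whereas in fact $\varphi'(c_2)<0$ and, for $\lambda>0$, it is $\varphi'(c)<0$ that places the constant $c$ in $\mathcal{N}_\lambda^+$ (consistent with Remark \ref{cn}(1)); the conclusion $c_2\in\mathcal{N}_\lambda^+$ is nevertheless correct, so this does not affect the argument.
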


\begin{proof}
Let $u \in B^- \cap E_{\lambda}^- \cap A^-$. It is clear that if $ j_u(t)<0$ for some $t>0$ then $j_u$ has a local maximum followed by a global minimum, i.e. there are $t_1<t_2$ such that $t_1 u \in \mathcal{N}_{\lambda}^- \cap E_{\lambda}^- $ and $t_2 u \in \mathcal{N}_{\lambda}^+ \cap E_{\lambda}^- $. Moreover, in this case we have $j_u(t_2)<0$, so that $\displaystyle \inf_{\mathcal{N}_\lambda^+ \cap E_\lambda^- } I_{\lambda}<0$.
Note that $j_u(t)<0$ if and only if $i_u(t)<0$. Now $t_0(u)$ given by \eqref{eqto} is a global minimum point of $i_u$ and
\begin{eqnarray*}
i_u(t_0(u))<0 &\Leftrightarrow&
\lambda (-B(u))<C_{pq} \frac{(-E_\lambda(u))^{\frac{p-q}{p-2}}}{\left(-\lambda A(u)\right)^{\frac{2-q}{p-2}}} \\ 
&\Leftrightarrow &
(-B(u))<C_{pq} \frac{\left(\int_{\Omega} mu^2-\lambda^{-1}\int_{\Omega} |\nabla u|^2 \right)^{\frac{p-q}{p-2}}}{(-A(u))^{\frac{2-q}{p-2}}}\\
&\Leftrightarrow & E_{\lambda}(u)+\lambda \left(-C_{pq}^{-1} B(u)\right)^{\frac{p-2}{p-q}}(-A(u))^{\frac{2-q}{p-q}}<0.
\end{eqnarray*}
Note that if $$\int_\Om m>0>\int_\Om a, \quad 0>\int_{\partial \Om} b > -K_1(m,a) \quad \text{and}  \quad \lambda>0 $$
then $c \in  B^- \cap E_{\lambda}^- \cap A^-$ for any constant $c$. Moreover, 
$$E_{\lambda}(c)+\lambda \left(-C_{pq}^{-1} B(c)\right)^{\frac{p-2}{p-q}}(-A(c))^{\frac{2-q}{p-q}}= \lambda c^2\left( -\int_{\Omega} m + \left(-\frac{1}{C_{pq}}\int_{\partial \Om} b\right)^{\frac{p-2}{p-q}}\left(-\int_\Om a\right)^{\frac{2-q}{p-q}}\right),$$
so that $j_c(t_0(c))<0$ for $\lambda>0$.

On the other hand, if
$$\int_\Om m<0, \quad \int_{\partial \Om} b\varphi_1^q<0, \quad
-\left(\frac{C_{pq}}{-\int_{\partial \Om} b\varphi_1^q}\right)^{\frac{p-2}{2-q}}<\int_{\Omega} a\varphi_1^p<0, \quad \text{and}  \quad \lambda>\lambda_1,$$
then
$\varphi_1 \in B^- \cap E_{\lambda}^- \cap A^-.$ Furthermore 
$$E_{\lambda}(\varphi_1)+\lambda  \left(-C_{pq}^{-1} B(\varphi_1)\right)^{\frac{p-2}{p-q}}(-A(\varphi_1))^{\frac{2-q}{p-q}}=\lambda_1-\lambda \left[1-\left(-\frac{1}{C_{pq}}\int_{\partial \Om} b\varphi_1^q\right)^{\frac{p-2}{p-q}} \left(-\int_{\Omega} a\varphi_1^p\right)^{\frac{2-q}{p-q}}\right],$$
so that $j_{\varphi_1}(t_0(\varphi_1))<0$ for $\lambda>\lambda^*$.

\end{proof}

\begin{cor}
\label{c2}
 $\displaystyle \inf_{\mathcal{N}_\lambda^+ \cap E_\lambda^-} I_{\lambda}$ is achieved by some $u_{1,\lambda}\geq 0$ in the following cases:
\begin{enumerate}
\item $\int_\Om m>0>\int_\Om a$, $0>\int_{\partial \Om} b > -K_1(m,a)$ and $0<\lambda<\min\{\lambda_a,\lambda_b\}$.

\item  $\int_\Om m<0$, $\int_{\partial \Om} b\varphi_1^q<0$, 
$-\left(\frac{C_{pq}}{-\int_{\partial \Om} b\varphi_1^q}\right)^{\frac{p-2}{2-q}}<\int_{\Omega} a\varphi_1^p<0$
 and $\lambda^*<\lambda <\min\{\lambda_a,\lambda_b\}$.
\end{enumerate}
\end{cor}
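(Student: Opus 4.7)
The corollary is essentially a direct synthesis of the two preceding propositions: the first guarantees that the infimum over $\mathcal{N}_\lambda^+ \cap E_\lambda^-$ is achieved whenever $\lambda_a, \lambda_b > 0$, the set is non-empty, the infimum is strictly negative, and $0 < \lambda < \min\{\lambda_a, \lambda_b\}$; the second verifies the non-emptiness and negativity in the two geometric situations listed. My plan is therefore to check, case by case, that all four hypotheses of the first proposition are satisfied, and finally to upgrade the minimizer to a non-negative function.

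In Case (1), we have $\int_{\Omega} a < 0$, which by Proposition \ref{pal1}(4) gives $\lambda_a > 0$, and $\int_{\partial\Omega}b < 0$, which by Proposition \ref{pal1}(2) gives $\lambda_b > 0$. The hypothesis $0 < \lambda < \min\{\lambda_a,\lambda_b\}$ is part of the statement, and the preceding proposition (its Case 1, valid for all $\lambda > 0$) provides a $u \in \mathcal{N}_\lambda^+ \cap E_\lambda^-$ with $j_u(t_2) < 0$, so $\mathcal{N}_\lambda^+ \cap E_\lambda^- \neq \emptyset$ and $\inf_{\mathcal{N}_\lambda^+ \cap E_\lambda^-} I_\lambda < 0$. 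All four hypotheses of the minimization proposition hold, and we obtain the minimizer.

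In Case (2), the assumption $\int_\Omega m < 0$ gives $\lambda_a, \lambda_b > 0$ by Proposition \ref{pal1}(2) and (4). Moreover, the signs $\int_{\partial\Omega}b\varphi_1^q < 0$ and $\int_{\Omega} a\varphi_1^p < 0$, together with Proposition \ref{pal1}(3) and (5), yield $\lambda_a, \lambda_b > \lambda_1(m)$; combined with the lower bound $-\bigl(C_{pq}/(-\int_{\partial\Omega}b\varphi_1^q)\bigr)^{(p-2)/(2-q)} < \int_\Omega a\varphi_1^p$, one verifies that the bracket in \eqref{l*} is strictly positive and $< 1$, so $\lambda^* > \lambda_1(m)$ is finite and the interval $(\lambda^*,\min\{\lambda_a,\lambda_b\})$, assumed non-empty in the statement, lies in $(0,\min\{\lambda_a,\lambda_b\})$. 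The preceding proposition (its Case 2, valid for $\lambda > \lambda^*$ via testing with $\varphi_1$) then supplies $\mathcal{N}_\lambda^+ \cap E_\lambda^- \neq \emptyset$ and $\inf I_\lambda < 0$, so the minimization proposition applies again.

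Finally, to obtain $u_{1,\lambda} \geq 0$, I use that $E_\lambda$, $A$, $B$, and hence $I_\lambda$ are all invariant under $u \mapsto |u|$ (by $|\nabla|u||^2 = |\nabla u|^2$ a.e.\ for $u \in H^1(\Omega)$, and because the other terms involve only $u^2$, $|u|^p$, $|u|^q$); consequently, if $u$ minimizes over $\mathcal{N}_\lambda^+ \cap E_\lambda^-$, so does $|u|$, and we may take $u_{1,\lambda} = |u| \geq 0$. The only slightly delicate point is the verification that $\lambda^* < \min\{\lambda_a,\lambda_b\}$ is consistent with the hypotheses of Case (2), which reduces to a direct comparison of the defining formulas; everything else is a bookkeeping check of signs and previously established inequalities.
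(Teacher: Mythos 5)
Your proposal is correct and follows exactly the route the paper intends: the corollary is stated without proof precisely because it is the conjunction of the two preceding propositions, and your case-by-case verification of their hypotheses (positivity of $\lambda_a,\lambda_b$ via Proposition \ref{pal1}, non-emptiness and negativity of the infimum via the constant test function in Case (1) and $\varphi_1$ in Case (2), and the bracket in \eqref{l*} lying in $(0,1)$) is the intended bookkeeping. The final reduction to a non-negative minimizer via the invariance of $I_\lambda$ and of $\mathcal{N}_\lambda^+\cap E_\lambda^-$ under $u\mapsto |u|$ is also the standard argument the paper relies on implicitly.
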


\begin{rem}{\rm 
Let us show that the condition $\lambda^*<\min\{\lambda_a,\lambda_b\}$ assumed in Corollary \ref{c2} (2) may indeed hold when $\int_{\Omega} m<0$. To this end, let $a_0 \in L^{\infty}(\Omega)$ and $b \in  L^{\infty}(\partial \Omega)$ be such that
$\int_{\Omega} a_0\varphi_1^p<0$ and $\int_{\partial \Omega} b\varphi_1^q<0$. Then $\lambda_{a_0}>\lambda_1$ and $\lambda_b>\lambda_1$. Let us set $a_\varepsilon=\varepsilon a_0$ for $\varepsilon>0$. As one can easily see from the definition of $\lambda_a$, we have $\lambda_{a_\varepsilon}=\lambda_{a_0}$  for every $\varepsilon>0$. Furthermore, from \eqref{l*}, we see that
$\lambda^*(m,a_\varepsilon,b) \rightarrow \lambda_1$ as $\varepsilon \to 0$. Thus, for some $\varepsilon_0>0$ there holds $\lambda^*(m,a_\varepsilon,b)<\min\{\lambda_{a_\varepsilon}, \lambda_b\}$.
The same argument applies if we consider $b_\varepsilon=\varepsilon b_0$, where $b_0 \in  L^{\infty}(\partial \Omega)$ is such that $\int_{\partial \Omega} b_0\varphi_1^q<0$. Therefore $\lambda^*(m,a_{\varepsilon},b_{\varepsilon})<\min\{\lambda_a,\lambda_b\}$ if $0<\varepsilon<\varepsilon_0$, for some $\varepsilon_0$.
}\end{rem}

\begin{prop}
\label{p3}
If $\int_\Om m>0>\int_\Om a$ and $0>\int_{\partial \Om} b > -K_1(m,a)$  then $u_{1,\lambda} \to c_2$ in $\mathcal{C}^{\theta}(\overline{\Om})$ for some $\theta \in (0,1)$ as $\lambda \to 0^+$. 
\end{prop}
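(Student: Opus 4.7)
The plan is to show that for any sequence $\lambda_n \to 0^+$, a subsequence of $u_n := u_{1,\lambda_n}$ converges to $c_2$ in $H^1(\Omega)$, and then to upgrade the convergence to $\mathcal{C}^\theta(\overline{\Omega})$ for some $\theta \in (0,1)$ by the elliptic regularity recalled in the introduction; since the limit is unique, the full family $u_{1,\lambda} \to c_2$ as $\lambda \to 0^+$. The starting point is the uniform bound $\|u_n\| \leq K^{1/(p-2)}$ established in \eqref{ebe}, which yields $u_n \rightharpoonup u_0$ along a subsequence. From the Nehari identity $\int_\Omega |\nabla u_n|^2 = \lambda_n\bigl[\int_\Omega m u_n^2 + A(u_n) + B(u_n)\bigr]$, one has $\int_\Omega |\nabla u_n|^2 \to 0$; combined with the Rellich convergence $u_n \to u_0$ in $L^2(\Omega)$, the convergence is strong in $H^1(\Omega)$ and $u_0 = c$ is a nonnegative constant.

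Next, I identify $c$. Testing the weak formulation of $(P_{\lambda_n})$ with $\varphi \equiv 1$ and dividing by $\lambda_n$ yields $\int_\Omega m u_n + \int_\Omega a u_n^{p-1} + \int_{\partial\Omega} b u_n^{q-1} = 0$; passing to the limit I obtain $c\int_\Omega m + c^{p-1}\int_\Omega a + c^{q-1}\int_{\partial\Omega} b = 0$, so either $c = 0$ or $\varphi(c)=0$, i.e.\ $c \in \{0, c_1, c_2\}$. To rule out $c = c_1$, I use the strict membership $u_n \in \mathcal{N}_{\lambda_n}^+$: dividing $E_{\lambda_n}(u_n) < \lambda_n \frac{p-q}{p-2} B(u_n)$ by $\lambda_n$ and letting $n \to \infty$ (noting that $\lambda_n^{-1}\int_\Omega |\nabla u_n|^2 \to c^q\varphi(c) = 0$) gives $-c^{2-q}\int_\Omega m \leq \frac{p-q}{p-2}\int_{\partial\Omega} b$. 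Since $c_1 \in \mathcal{N}_\lambda^-$ by Remark \ref{cn}(1), the strict reverse inequality holds at $c_1$, so $c \neq c_1$.

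To rule out $c = 0$, I compare energies. As $c_2 \in \mathcal{N}_\lambda^+ \cap E_\lambda^-$ (by Remark \ref{cn}(1) together with $E_\lambda(c_2) = -\lambda c_2^2\int_\Omega m < 0$), one has $I_{\lambda_n}(u_n) \leq I_{\lambda_n}(c_2)$. Using the identity $I_\lambda(u) = \frac{\lambda(p-2)}{2p}A(u) - \frac{\lambda(2-q)}{2q}B(u)$ valid on $\mathcal{N}_\lambda$, dividing by $\lambda_n$ and passing to the limit gives $\frac{p-2}{2p}c^p\int_\Omega a - \frac{2-q}{2q}c^q\int_{\partial\Omega} b \leq \kappa$, where $\kappa := \lambda^{-1}I_\lambda(c_2)$ is independent of $\lambda$. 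A direct computation using $\varphi(c_2)=0$ shows that $\kappa < 0$ is equivalent to $c_2^{p-2} > (p/2)(t^*)^{p-2}$, where $t^* = \bigl(-(2-q)\int_\Omega m/((p-q)\int_\Omega a)\bigr)^{1/(p-2)}$ is the maximizer of $\varphi$, and this in turn is equivalent to the standing hypothesis $\int_{\partial\Omega} b > -K_1(m,a)$. Granting $\kappa < 0$, the left side above vanishes at $c = 0$, yielding the contradiction $0 \leq \kappa < 0$; hence $c = c_2$.

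Finally, for the upgrade of the convergence: since $(u_n)$ is uniformly bounded in $H^1(\Omega)$ and $p<2^*$, the right-hand sides of the equation and boundary condition in $(P_{\lambda_n})$ are uniformly controlled in suitable Lebesgue spaces, and the regularity theory recalled in the introduction provides a uniform bound in $\mathcal{C}^\theta(\overline{\Omega})$ for some $\theta \in (0,1)$. Arzelà--Ascoli then yields convergence in $\mathcal{C}^{\theta'}(\overline{\Omega})$ for any $\theta' < \theta$ along a subsequence, and the uniqueness of the limit $c_2$ forces the full family $u_{1,\lambda}$ to converge. The main obstacle is the equivalence $\kappa < 0 \iff \int_{\partial\Omega} b > -K_1(m,a)$: although it reduces to algebraic manipulation of $\varphi(c_2)=0$, it is crucial, since without it one cannot exclude $c = 0$ as the limit.
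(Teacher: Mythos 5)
Your proof is correct, and it follows the paper's skeleton up to the decisive step: uniform bound from \eqref{ebe}, strong $H^1$-convergence to a non-negative constant $c$ via the Nehari identity, identification of $c$ as a zero of $\varphi$ (or $0$) by testing with a constant, and exclusion of $c=c_1$ through the limit of the strict inequality defining $\mathcal{N}_{\lambda_n}^+$ — this last point is exactly the paper's closing argument, which phrases it as $c^{2-q}\geq \frac{p-q}{p-2}\frac{-\int_{\partial\Omega}b}{\int_\Omega m}$ and then invokes \eqref{eph}. Where you genuinely diverge is in ruling out $c=0$. The paper normalizes, setting $v_n=u_n/\|u_n\|$, shows $v_n\to v_0$ with $v_0$ a positive constant, and derives $B(v_0)\geq 0$ from $I_{\lambda_n}(u_n)<0$, contradicting $E_{\lambda_n}^-\subset B^-$ (valid since $\lambda_n<\lambda_b$) together with $\int_{\partial\Omega}b<0$; this soft argument uses only the sign of $\int_{\partial\Omega}b$. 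You instead compare energies with the competitor $c_2\in\mathcal{N}_\lambda^+\cap E_\lambda^-$ and exploit that $\kappa=\lambda^{-1}I_\lambda(c_2)=\frac{p-2}{2p}c_2^p\int_\Omega a-\frac{2-q}{2q}c_2^q\int_{\partial\Omega}b$ is a $\lambda$-independent negative number. I checked your claimed equivalence: $\kappa<0$ reduces to $c_2^{p-2}>\frac{p}{2}\bigl(t^*\bigr)^{p-2}$, and since $\varphi$ is strictly decreasing on $[t^*,\infty)$ this is equivalent to $\varphi\bigl((p/2)^{1/(p-2)}t^*\bigr)>0$, which computes out to exactly $\int_{\partial\Omega}b>-K_1(m,a)$; so the algebra you deferred does close. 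Your route has the merit of explaining why the hypothesis carries the sharper constant $K_1$ rather than the mere two-zeros threshold $\tilde K_1$ — it is precisely the condition $I_\lambda(c_2)<0$ — and it yields the quantitative bound $I_\lambda(u_{1,\lambda})\leq\kappa\lambda$; the paper's normalization argument is less computational and would survive under weaker hypotheses on $b$. Do spell out the $\kappa<0$ computation in a final write-up, since the whole exclusion of $c=0$ hinges on it.
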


\begin{proof}
Let $\lambda_n \to 0^+$ and $u_n=u_{1,\lambda_n}$. From \eqref{ebe} we infer that $(u_n)$ is bounded. So, up to a subsequence, we have $u_n \rightharpoonup u_0$ in $H^1(\Omega)$. From
$$0\geq \liminf I_{\lambda_n}(u_n) \geq \int_{\Omega} |\nabla u_0|^2$$ we get $u_n \rightarrow u_0$ in $H^1(\Omega)$ and $u_0$ is a non-negative constant. If $u_0=0$ then we set $v_n=\frac{u_n}{\|u_n\|}$ and assume that $v_n \rightharpoonup v_0$ in $H^1(\Omega)$. Since $v_n \in E_{\lambda_n}^-$ we have $$\int_{\Omega} |\nabla v_0|^2 \leq \liminf E_{\lambda_n} (v_n)\leq 0$$ so $v_n \rightarrow v_0$ and $v_0$ is a positive constant. Moreover, from
$$\lambda_n\left(-\frac{1}{2}\int_{\Omega} mu_n^2 -\frac{1}{p}A(u_n) -\frac{1}{q} B(u_n)\right)<0$$ we get
$$\frac{1}{q} B(v_n)\geq -\frac{1}{2}\|u_n\|^{2-q} \int_{\Omega} mv_n^2 -\frac{1}{p}\|u_n\|^{p-q} A(v_n).$$
Thus $B(v_0)\geq 0$, which combined with $v_n \in B^-$ provides $v_0 \in B_0$ and contradicts $\int_{\partial \Om} b<0$. Therefore $u_0 \neq 0$, i.e. $u_0$ is a positive constant.
Since $u_n \rightarrow u_0$ and $u_n$ is a solution of $(P_{\lambda_n})$ we have
\begin{align*}
0 & = \int_\Omega \left(\nabla u_n \nabla u_0 - \lambda_n m u_n u_0\right)  - \lambda_n \int_\Omega a u_n^{p-1} u_0 - \lambda_n \int_{\partial \Omega} b u_n^{q-1} u_0 \\
& = \lambda_n \left\{ - \int_\Omega m u_n u_0  - \int_\Omega a u_n^{p-1} u_0 - \int_{\partial \Omega} b u_n^{q-1} u_0 \right\}
\end{align*}
so $$u_0^{2-q} \int_{\Omega} m +u_0^{p-q} \int_{\Omega} a + \int_{\partial \Omega} b=0,$$ i.e. $u_0$ is a positive zero of $\varphi$. Finally, from $u_n \in N_{\lambda_n}^+$, we get
$$E_{\lambda_n}(u_n)<\lambda_n \frac{p-q}{p-2}B(u_n).$$
In particular, we have $$-\lambda_n \int_\Omega mu_n^2 <\lambda_n \frac{p-q}{p-2}B(u_n),$$
so, from $u_n \rightarrow u_0$ we get
$$u_0^{2-q}\geq \frac{p-q}{p-2} \frac{\int_{\partial \Omega}b}{\left(-\int_{\Omega} m\right)}.$$ Since $u_0$ is a positive zero of $\varphi$, we get $$u_0\geq \left(-\frac{2-q}{p-q} \frac{\int_{\Omega} m}{\int_{\Omega} a}\right)^{\frac{1}{p-2}},$$ so that, by \eqref{eph}, $u_0=c_2$.
\end{proof}

\section{Minimization in $\mathcal{N}_\lambda^-$}
\label{sec4}

\begin{prop}
\label{pch}
Assume $a^+ \not \equiv 0$ and either $\int_{\Omega} a<0$ or \eqref{n1}. Then
$\mathcal{N}_\lambda^- \cap A^+ \neq \emptyset$  for every $0<\lambda  <\min\{\lambda_a, \lambda_s\}$. Moreover, for $0<\lambda  <\min\{\lambda_a, \lambda_s\}$, there holds:
\begin{enumerate}
\item $\mathcal{N}_\lambda^- \cap A^+$ is bounded away from zero, i.e. there exists $K_\lambda>0$ such that $\|u\|\geq K_\lambda$ for $u \in \mathcal{N}_\lambda^- \cap A^+$.
\item  If $(u_n) \subset \mathcal{N}_\lambda^- \cap A^+$ is a sequence such that $(I_{\lambda}(u_n))$ is bounded from above then $(u_n)$ is bounded.
\item $\displaystyle \inf_{\mathcal{N}_\lambda^- \cap A^+} I_{\lambda}>0$.
\end{enumerate}
\end{prop}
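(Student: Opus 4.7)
The plan is to combine the shape analysis of the fibering map $j_u(t)=I_\lambda(tu)$ with the coercivity estimates from Propositions \ref{p2'} and \ref{p2''}. Under our hypotheses, Proposition \ref{pal1}(4) gives $\lambda_a>0$ and Proposition \ref{p1} gives $\lambda_s>0$. For the non-emptiness, since $a^+\not\equiv 0$, I would pick $\varphi\in\mathcal{C}_c^\infty(\Omega)$ with $A(\varphi)>0$; then $B(\varphi)=0$, and Proposition \ref{pal1}(4)(a) combined with $\lambda<\lambda_a$ yields $E_\lambda(\varphi)>0$. The reduced map $j_\varphi(t)=\tfrac{t^2}{2}E_\lambda(\varphi)-\tfrac{\lambda t^p}{p}A(\varphi)$ then has a unique positive critical point $t_c>0$, which is a non-degenerate maximum with $j_\varphi(t_c)=\tfrac{p-2}{2p}t_c^2 E_\lambda(\varphi)>0$; hence $t_c\varphi\in\mathcal{N}_\lambda^-\cap A^+$.

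For (1), the characterisation $E_\lambda(u)<\lambda\tfrac{p-q}{2-q}A(u)$ for $u\in\mathcal{N}_\lambda^-\cap A^+$, combined with the lower bound $E_\lambda(u)\geq C_0\|u\|^2$ on $A_0^+$ (Proposition \ref{p2'} when $\int_\Omega a<0$, or \ref{p2''}(3) under \eqref{n1}) and the Sobolev estimate $A(u)\leq C_1\|u\|^p$, gives $\|u\|^{p-2}>\tfrac{(2-q)C_0}{(p-q)\lambda C_1}=:K_\lambda^{p-2}$. For (2), the Nehari identity $E_\lambda(u)=\lambda(A(u)+B(u))$ rewrites $I_\lambda(u)=\tfrac{p-2}{2p}E_\lambda(u)-\lambda\tfrac{p-q}{pq}B(u)$; assuming $I_\lambda(u_n)\leq M$, the case $B(u_n)\leq 0$ immediately yields $E_\lambda(u_n)\leq\tfrac{2pM}{p-2}$, while the case $B(u_n)>0$ together with the trace inequality $B(u_n)\leq D\|u_n\|^q$ leads to $\tfrac{p-2}{2p}C_0\|u_n\|^2\leq M+\lambda\tfrac{p-q}{pq}D\|u_n\|^q$, forcing $(u_n)$ to be bounded since $q<2$.

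For (3), observe first that the shape analysis above already yields $I_\lambda(u)=j_u(1)>0$ for every $u\in\mathcal{N}_\lambda^-\cap A^+$, so $\inf I_\lambda\geq 0$. Arguing by contradiction, suppose the infimum equals $0$ and let $(u_n)$ be a minimising sequence. Part (2) gives boundedness, so up to a subsequence $u_n\rightharpoonup u_0$ in $H^1(\Omega)$, with $A(u_n)\to A(u_0)$ and $B(u_n)\to B(u_0)$. The Nehari identity and (1) give $\lim E_\lambda(u_n)=\lambda(A(u_0)+B(u_0))\geq C_0 K_\lambda^2>0$, so $u_0\not\equiv 0$, and passing to the limit in the identity $I_\lambda(u_n)=\lambda[\tfrac{p-2}{2p}A(u_n)-\tfrac{2-q}{2q}B(u_n)]$ forces $q(p-2)A(u_0)=p(2-q)B(u_0)$; combined with $A(u_0),B(u_0)\geq 0$ and $A(u_0)+B(u_0)>0$, this yields $A(u_0)>0$ and $B(u_0)>0$. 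Since $\lambda<\lambda_a$ implies $E_\lambda(u_0)>0$, Proposition \ref{p1} applied to $u_0\in A^+\cap B^+\cap E_\lambda^+$ then produces some $t_2>0$ with $j_{u_0}(t_2)>0$.

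The hard part is the concluding compactness step, and it rests on the structural remark that for each $u_n\in\mathcal{N}_\lambda^-\cap A^+$ the point $t=1$ is the \emph{global} (and not merely local) maximum of $j_{u_n}$: indeed, if $B(u_n)\leq 0$ then $j_{u_n}$ has a single critical point, while if $B(u_n)>0$ then Proposition \ref{p1} (together with $\lambda<\lambda_s$) yields exactly two critical points, a local minimum followed by a global maximum, the latter being identified with $t=1$ by the very definition of $\mathcal{N}_\lambda^-$. Consequently $j_{u_n}(t_2)\leq j_{u_n}(1)=I_\lambda(u_n)\to 0$, whereas the weak lower semicontinuity of $E_\lambda$ together with the weak continuity of $A$ and $B$ gives $\liminf j_{u_n}(t_2)\geq j_{u_0}(t_2)>0$, the desired contradiction.
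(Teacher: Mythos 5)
Your proposal is correct, and for the non-emptiness statement and for parts (1) and (2) it runs essentially along the paper's lines: the same characterization $E_\lambda(u)<\lambda\frac{p-q}{2-q}A(u)$ of $\mathcal{N}_\lambda^-$, the same coercivity input from Propositions \ref{p2'} and \ref{p2''}, and the same identity $I_\lambda(u)=\frac{p-2}{2p}E_\lambda(u)-\lambda\frac{p-q}{pq}B(u)$ on the Nehari manifold (your non-emptiness witness $\varphi\in\mathcal{C}_c^\infty(\Omega)$ with $A(\varphi)>0$, which exists by density since $a^+\not\equiv 0$, is a special case of the paper's argument for a general $u\in A^+$, split according to the sign of $B(u)$). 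Part (3) is where you genuinely diverge. The paper first disposes of $\mathcal{N}_\lambda^-\cap A^+\cap B_0^-$ directly, and on $B^+$ it tracks the quantity $i_{u_n}(t_0(u_n))$, shows that $I_\lambda(u_n)\to 0$ would force $-\lambda B(u_n)+C_{pq}E_\lambda(u_n)^{\frac{p-q}{p-2}}(\lambda A(u_n))^{-\frac{2-q}{p-2}}\to 0^+$, and contradicts the definition of $\lambda_s$ at the weak limit via $0<i_{u_0}(t_0(u_0))\leq\lim i_{u_n}(t_0(u_n))$. You instead extract $A(u_0)>0$ and $B(u_0)>0$ from the limit of the identity $I_\lambda(u_n)=\lambda\bigl[\frac{p-2}{2p}A(u_n)-\frac{2-q}{2q}B(u_n)\bigr]$, invoke Proposition \ref{p1} at $u_0$ to produce a fixed $t_2$ with $j_{u_0}(t_2)>0$, and conclude from $j_{u_n}(t_2)\leq j_{u_n}(1)\to 0$ versus $\liminf j_{u_n}(t_2)\geq j_{u_0}(t_2)>0$. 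Both arguments hinge on the same structural fact — that for $\lambda<\lambda_s$ the point $t=1$ is the \emph{global} maximum of $j_{u_n}$, whether $B(u_n)\leq 0$ or $B(u_n)>0$ — but your version avoids the case split and the explicit manipulation of $i_u(t_0(u))$, at the price of having to justify that the limit of $I_\lambda(u_n)$ pins down the ratio $A(u_0)/B(u_0)$; it is the same device the paper uses in the minimization arguments of Propositions \ref{pmn1} and \ref{pmn2}, here transplanted to $\mathcal{N}_\lambda^-$. No gaps.
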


\begin{proof}
First of all, note that since $a^+ \not \equiv 0$ we have $A^+ \neq \emptyset$. Let  $u \in A^+$. If $u \in B^-$ then $j_u$ has a global maximum point $t_1>0$, so $t_1 u \in \mathcal{N}_\lambda^-$. The same conclusion holds if $u \in B_0^+$, since $\lambda<\lambda_a$ provides $u\in E_\lambda^+$, whereas $0<\lambda<\lambda_s$ yields that $j_u$ has a global maximum point, by Proposition \ref{p1}. Therefore  $\mathcal{N}_\lambda^- \cap A^+ \neq \emptyset$.
\begin{enumerate}
\item  Let us assume first $\int_{\Omega} a<0$. Given $0<\lambda < \mu<\lambda_a$ and $u \in \mathcal{N}_\lambda^- \cap A^+$, we apply Proposition \ref{p2'}. Then, for some $C_0,D>0$, we have
\begin{equation}
\label{baaaz}
C_0\|u\|^2\leq E_\lambda(u)<\lambda \frac{p-q}{2-q} A(u)\leq \lambda D\|a^+\|_{\infty}\|u\|^p,
\end{equation}
and consequently 
\begin{equation}
\label{baaz}
\|u\|\geq \left( \frac{C_0}{\lambda D\|a^+\|_{\infty}}\right)^{\frac{1}{p-2}}.
\end{equation}
Now, if \eqref{n1} holds then, since $\lambda<\lambda_a$, there is a constant $C_\lambda>0$ such that $E_\lambda(u)\geq C_\lambda \|u\|^2$ for every $u \in A^+$. Thus
$$ C_\lambda \|u\|^2\leq E_\lambda(u)<\lambda \frac{p-q}{2-q} A(u)\leq \lambda D\|a^+\|_{\infty}\|u\|^p,$$
and consequently $$\|u\|\geq \left( \frac{C_\lambda}{\lambda D\|a^+\|_{\infty}}\right)^{\frac{1}{p-2}}$$ 
if $u \in \mathcal{N}_\lambda^- \cap A^+$.\\
\item Note that
$$I_{\lambda}(u)=\frac{p-2}{2p}E_\lambda(u)-\lambda \frac{p-q}{pq}B(u)$$  if $u \in \mathcal{N}_\lambda^-$.
Hence, as $\lambda<\lambda_a$, if in addition $u \in A^+$ then 
$$I_{\lambda}(u)\geq C_\lambda \|u\|^2 - \lambda D \|u\|^q,$$ for some constants $C_\lambda,D>0$.
From the above inequality we deduce that if $(u_n) \subset \mathcal{N}_\lambda^- \cap A^+$ is such that $(I_{\lambda}(u_n))$ is bounded from above then $(u_n)$ is bounded.\\

\item If $u \in \mathcal{N}_\lambda^-$ then
\begin{equation}
\label{ej1}
I_{\lambda}(u)=\frac{p-2}{2p}E_\lambda(u)-\lambda \frac{p-q}{qp}B(u).
\end{equation}
If, in addition, $u \in A^+$ then $E_\lambda(u)\geq C_\lambda \|u\|^2$ for some constant $C_\lambda>0$, since $\lambda<\lambda_a$.
Thus, if $u \in \mathcal{N}_\lambda^- \cap A^+ \cap B_0^-$ then
$$I_{\lambda}(u)\geq D_\lambda \|u\|^2\geq \tilde{D}_\lambda>0,$$ where we used (1).
Now, if $(u_n) \subset \mathcal{N}_\lambda^- \cap A^+ \cap B^+$ then,  from $\lambda<\lambda_s$, we have $$I_{\lambda}(u_n)=j_{u_n}(1)\geq j_{u_n}(t_0(u_n))>0.$$
If, in addition $I_{\lambda}(u_n)\rightarrow 0$
then $j_{u_n}(t_0(u_n))\rightarrow 0$, and consequently either $t_0(u_n)\rightarrow 0$ or $i_{u_n}(t_0(u_n)) \rightarrow 0$.
In the first case, we get $E_\lambda(u_n)\rightarrow 0$, so that  $u_n \rightarrow 0$ in $H^1(\Omega)$, which contradicts (1).
Now, if $i_{u_n}(t_0(u_n)) \rightarrow 0$ then
$$-\lambda B(u_n)+C_{pq} \frac{E_\lambda(u_n)^{\frac{p-q}{p-2}}}{\left(\lambda A(u_n)\right)^{\frac{2-q}{p-2}}} \rightarrow 0^+.$$ 
Since $(u_n)$ is bounded, we may assume that $$u_n \rightharpoonup u_0 \text{ in } H^1(\Omega), \quad A(u_0)=\lim A(u_n)\geq 0 \quad \text{and} \quad B(u_0)=\lim B(u_n).$$
One may easily see that if either $B(u_0)=0$ or $A(u_0)=0$ then $E_\lambda(u_n) \rightarrow 0$ and  we infer again that $u_n \rightarrow 0$ in $H^1(\Omega)$, which contradicts (1).
Thus $u_0 \in A^+ \cap B^+ \cap E_\lambda^+$. From $\lambda<\lambda_s$ we have $$0<i_{u_0}(t_0(u_0))\leq \lim i_{u_n}(t_0(u_n)),$$ which is a contradiction.
Therefore we can't have $I_{\lambda}(u_n) \rightarrow 0$, so  $\displaystyle \inf_{\mathcal{N}_\lambda^- \cap A^+} I_{\lambda}>0$.
 \end{enumerate}
\end{proof}

\begin{rem}
\label{rr2}
\strut
{\rm 
\begin{enumerate}
\item If \eqref{n1} holds and $\lambda<\lambda_1$ then the conclusions of Proposition \ref{pch} remain valid. Indeed, in this case, for every $\lambda<\lambda_1$ there exists a constant $C_\lambda>0$ such that $E_\lambda(u)\geq C_\lambda \|u\|^2$ for every $u \in H^1(\Omega)$.
\item If $\int_{\Omega} a<0$ then \eqref{baaz} shows that $\mathcal{N}_\lambda^- \cap A^+$ is uniformly bounded away from zero for $\lambda \in (0,\mu)$, with $\mu <\lambda_a$.
Furthermore, the statement in (2) can be strengthened as follows: 
if $(\lambda_n) \subset (0,\mu)$ and $(u_n) \subset  \mathcal{N}_{\lambda_n}^- \cap A^+$ are such that $I_{\lambda_n}(u_n)$ is bounded then $(u_n)$ is bounded. As a matter of fact,
in this case we have
$$I_{\lambda_n}(u_n)\geq C_0\|u_n\|^2-\mu D \|u_n\|^q$$
for some constants $C_0,D>0$.
\end{enumerate} 
}\end{rem}


\begin{prop}
\label{pmn2}
Assume $a^+ \not \equiv 0$.  If either \eqref{n1} or $\int_{\Omega} a<0$ holds  then $\displaystyle \inf_{\mathcal{N}_\lambda^- \cap A^+} I_{\lambda}$ is achieved by some $u_{2,\lambda}\geq 0$ for $0<\lambda<\min\{\lambda_s,\lambda_a\}$. Moreover, if \eqref{n1} holds then $u_{2,\lambda} \rightarrow c_2$  in $\mathcal{C}^{\theta}(\overline{\Om})$ for some $\theta \in (0,1)$ as $\lambda \to 0^+$.
\end{prop}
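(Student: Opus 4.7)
The plan is to minimize $I_\lambda$ on $\mathcal{N}_\lambda^-\cap A^+$ by the direct method, and then to pass to the limit $\lambda\to 0^+$ by a compactness argument. I start by picking a minimizing sequence $(u_n)\subset \mathcal{N}_\lambda^-\cap A^+$ for $L:=\inf_{\mathcal{N}_\lambda^-\cap A^+} I_\lambda$; since $E_\lambda$, $A$, $B$ and membership in $\mathcal{N}_\lambda^-\cap A^+$ are all invariant under $u\mapsto |u|$, I may assume $u_n\geq 0$. Proposition \ref{pch}(2) gives the boundedness of $(u_n)$, so up to a subsequence $u_n\rightharpoonup u_0\geq 0$ in $H^1(\Om)$, with $A(u_n)\to A(u_0)$ and $B(u_n)\to B(u_0)$.

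The first delicate point is to show $u_0\in A^+$. The $\mathcal{N}_\lambda^-$ condition $E_\lambda(u_n)<\lambda\frac{p-q}{2-q}A(u_n)$, together with the coercivity $E_\lambda(u_n)\geq C_\lambda\|u_n\|^2$ on $A_0^+$ provided by Proposition \ref{p2'} (if $\int_\Om a<0$) or by Proposition \ref{p2''}(3) (if \eqref{n1} holds), gives
\[
A(u_n)\geq \tfrac{2-q}{\lambda(p-q)}C_\lambda \|u_n\|^2\geq \tfrac{2-q}{\lambda(p-q)}C_\lambda K_\lambda^2>0,
\]
where $K_\lambda$ is the lower bound from Proposition \ref{pch}(1). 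Hence $A(u_0)\geq c>0$. Since $u_0\in A^+$, the argument of Proposition \ref{p1} applies to the fibering map $j_{u_0}$: it has a unique global maximum at some $t^->0$, with $t^-u_0\in \mathcal{N}_\lambda^-\cap A^+$. If the convergence $u_n\to u_0$ were not strong, strict weak lower semi-continuity of $E_\lambda$ would give $j_{u_0}(t)<\liminf j_{u_n}(t)$ for every $t>0$, and then
\[
L\leq I_\lambda(t^- u_0)=j_{u_0}(t^-)<\liminf j_{u_n}(t^-)\leq \liminf \max_{t>0}j_{u_n}(t)=\liminf j_{u_n}(1)=L,
\]
a contradiction (recall that $u_n\in\mathcal{N}_\lambda^-$ means $t=1$ is the global maximum of $j_{u_n}$). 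Hence $u_n\to u_0$ strongly, so $u_{2,\lambda}:=u_0\in \mathcal{N}_\lambda^-\cap A^+$ achieves $L$; since $\mathcal{N}_\lambda^-\cap A^+$ is open in $\mathcal{N}_\lambda\setminus \mathcal{N}_\lambda^0$, the natural constraint argument then yields that $u_{2,\lambda}$ is a critical point of $I_\lambda$ on $H^1(\Om)$, i.e.\ a non-negative solution of $(P_\lambda)$.

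For the asymptotic statement under \eqref{n1}, let $\lambda_n\to 0^+$ and $u_n=u_{2,\lambda_n}$. By Remark \ref{cn}(2) we have $c_2\in \mathcal{N}_{\lambda_n}^-\cap A^+$, so $I_{\lambda_n}(u_n)\leq I_{\lambda_n}(c_2)=O(\lambda_n)$; combining this with $I_\lambda(u)=\frac{p-2}{2p}E_\lambda(u)-\lambda\frac{p-q}{pq}B(u)$ on $\mathcal{N}_\lambda$ and with the global coercivity $E_\lambda(u)\geq \lambda C_\mu\|u\|^2$ from Proposition \ref{p2''}(1), and using the trace inequality $B(u)\leq C\|b\|_\infty\|u\|^q$, yields a uniform bound on $\|u_n\|$. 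Then $E_{\lambda_n}(u_n)=\lambda_n(A(u_n)+B(u_n))\to 0$, so $\int_\Om|\nabla u_n|^2\to 0$ and $u_n\to u_0$ in $H^1(\Om)$ with $u_0$ a non-negative constant. Passing to the limit in the weak formulation of $(P_{\lambda_n})$ tested against $u_0$ gives $\varphi(u_0)=0$, so $u_0\in\{0,c_1,c_2\}$. The lower bound from Proposition \ref{pch}(1) becomes $\lambda$-independent in this regime (because $C_\lambda=\lambda C_\mu$ cancels the $\lambda$ in the denominator), ruling out $u_0=0$. Finally, dividing $E_{\lambda_n}(u_n)<\lambda_n\frac{p-q}{2-q}A(u_n)$ by $\lambda_n$ and letting $n\to\infty$ gives $-u_0^2\int_\Om m\leq \frac{p-q}{2-q}u_0^p\int_\Om a$, i.e.\ $u_0^{p-2}\geq -\frac{(2-q)\int_\Om m}{(p-q)\int_\Om a}$, which by \eqref{eph} forces $u_0=c_2$. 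The $\mathcal{C}^\theta(\overline{\Om})$ convergence then follows from standard elliptic regularity, since the right-hand sides of $(P_{\lambda_n})$ remain uniformly bounded. The main obstacle is keeping all estimates uniform as $\lambda\to 0^+$, which is why the proof crucially hinges on the precise $\lambda$-dependence in Proposition \ref{p2''}.
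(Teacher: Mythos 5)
Your proof is correct and follows the same overall strategy as the paper (direct minimization on $\mathcal{N}_\lambda^-\cap A^+$ with the fibering-map comparison to force strong convergence, then compactness and identification of the limiting constant via \eqref{eph}). Two sub-arguments differ, both in your favor as simplifications. First, to show $A(u_0)>0$ you derive the uniform lower bound $A(u_n)\geq \frac{2-q}{\lambda(p-q)}C_\lambda K_\lambda^2$ directly from the $\mathcal{N}_\lambda^-$ inequality, the coercivity of $E_\lambda$ on $A_0^+$, and the bound $\|u_n\|\geq K_\lambda$ of Proposition \ref{pch}(1); the paper instead rules out $u_0\equiv 0$ and $A(u_0)=0$ separately by contradiction. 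Second, for the boundedness of $(u_{2,\lambda_n})$ under \eqref{n1} you combine $I_{\lambda_n}(u_{2,\lambda_n})\leq I_{\lambda_n}(c_2)=O(\lambda_n)$ with the identity $I_\lambda=\frac{p-2}{2p}E_\lambda-\lambda\frac{p-q}{pq}B$ on $\mathcal{N}_\lambda$ and the global coercivity $E_\lambda\geq\lambda C_\mu\|\cdot\|^2$ of Proposition \ref{p2''}(1), and divide by $\lambda_n$; the paper instead argues by contradiction with the normalized sequence $v_n=u_n/\|u_n\|$ and a case analysis on the sign of $B(u_n)$, so your route avoids that case analysis entirely. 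Your identification of $u_0=c_2$ via $E_{\lambda_n}(u_n)<\lambda_n\frac{p-q}{2-q}A(u_n)$ is exactly the correct adaptation of the paper's Proposition \ref{p3} argument (which, in its original form, uses the $\mathcal{N}^+$ inequality with $B$ for the opposite sign configuration). Only minor points to polish: the appeal to ``the argument of Proposition \ref{p1}'' for the existence of the global maximum $t^-$ of $j_{u_0}$ should be supplemented by the (easy) cases $B(u_0)\leq 0$, where the fibering map has a single critical point which is automatically the global maximum and lies in $\mathcal{N}_\lambda^-$; and the final upgrade from $H^1$ to $\mathcal{C}^\theta(\overline{\Omega})$ convergence deserves a word about the bootstrap (the paper defers this to the proof of Theorem \ref{t1} via Rossi's argument), since $u_{2,\lambda_n}$ is only known a priori to be bounded in $H^1$.
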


\begin{proof}
Let $0<\lambda<\min\{\lambda_s,\lambda_a\}$ and $(u_n) \subset \mathcal{N}_\lambda^- \cap A^+$ be such that
$$I_{\lambda}(u_n) \rightarrow \inf_{\mathcal{N}_\lambda^- \cap A^+} I_{\lambda}.$$
By Proposition \ref{pch}, we know that $(u_n)$ is bounded, so we may assume that $u_n \rightharpoonup u_0$ in $H^1(\Omega)$ and $u_0 \in A_0^+$. 
From 
\begin{equation}
\label{de}
\lambda \frac{p-q}{p-2}B(u_n)<E_\lambda(u_n)< \lambda \frac{p-q}{2-q}A(u_n)
\end{equation}
 we have $$E_\lambda(u_0)\leq \liminf E_\lambda(u_n) \leq \limsup E_\lambda(u_n)\leq \lambda \frac{p-q}{2-q}A(u_0).$$
If $u_0 \equiv 0$ then the above inequalities provide $E_\lambda(u_n) \rightarrow E_\lambda(u_0)$, so $u_n \rightarrow u_0\equiv 0$. This is impossible by Proposition \ref{pch} (1). Hence $u_0 \not \equiv 0$. Moreover, if $A(u_0)=0$ then $E_\lambda(u_0)\leq 0$, with $u_0 \not \equiv 0$, which contradicts $\lambda<\lambda_a$. Thus $u_0 \in A^+$ and it is easily seen that there exists $t_2>0$ such that $t_2 u_0 \in \mathcal{N}_\lambda^- \cap A^+$.
We claim that $u_n \rightarrow u_0$. Indeed, if not then 
$$j_{u_0}(t)<\liminf j_{u_n}(t)$$ for every $t>0$. 
Hence 
$$I_{\lambda} (t_2u_0)=j_{u_0}(t_2)<\liminf j_{u_n}(t_2)\leq \liminf j_{u_n}(1)=\lim I_{\lambda}(u_n)=\inf_{\mathcal{N}_\lambda^- \cap A^+} I_{\lambda}.$$
We have then a contradiction, so $u_n \rightarrow u_0$ in $H^1(\Omega)$ and
$$I_{\lambda}(u_0)=\inf_{\mathcal{N}_\lambda^- \cap A^+} I_{\lambda}.$$
We denote $u_0$ by $u_{2,\lambda}$.

Assume now \eqref{n1}. Let $\lambda_n \to 0^+$ and $u_n=u_{2,\lambda_n}$. We claim that $(u_n)$ is bounded. Indeed, assume that $\|u_n\|\rightarrow \infty$ and set $v_n=\frac{u_n}{\|u_n\|}$. We may assume that $v_n \rightharpoonup v_0$ in $H^1(\Omega)$. Note that
since $\int_\Omega a>0$ and $0<\lambda<\lambda_s$, there is a unique positive constant $c \in \mathcal{N}_{\lambda_n}^- \cap A^+$ for every $n$. Thus
$$\frac{p-2}{2p}\lambda_n A(u_n)-\frac{2-q}{2q}\lambda_n B(u_n)=I_{\lambda_n}(u_n)\leq \lambda_n \left(-\frac{\mathcal{C}^2}{2} \int_{\Omega} m-\frac{\mathcal{C}^p}{p}\int_{\Omega} a -\frac{\mathcal{C}^q}{q}\int_{\partial \Omega}b\right).$$
It follows that $I_{\lambda_n}(u_n)\rightarrow0$ and $A(v_0)=\lim A(v_n)=0$.
If $u_n \in B_0^-$ then $$I_{\lambda_n}(u_n)\geq \frac{p-2}{2p} E_{\lambda_n}(u_n),$$
so that $E_{\lambda_n}(v_n)\rightarrow 0$, and consequently $\int_{\Omega} |\nabla v_n|^2 \rightarrow 0$, i.e. $v_0$ is a nonzero constant. This contradicts $A(v_0)=0$.
On the other hand, if $u_n \in B^+$ then $u_n \in B^+ \cap A^+$ and $j_{u_n}(t_0(u_n)) \to 0$. So either $t_0(u_n) \rightarrow 0$ or 
$$-\lambda_n B(v_n)+C_{pq} \frac{E_{\lambda_n}(v_n)^{\frac{p-q}{p-2}}}{\left(\lambda_n A(v_n)\right)^{\frac{2-q}{p-2}}} \rightarrow 0^+.$$ 
Since 
\begin{eqnarray*}
t_0(u_n)&=&\left(\frac{p(2-q)}{2(p-q)} \frac{E_{\lambda_n}(u_n)}{\lambda_n A(u_n)}\right)^{\frac{1}{p-2}}=\left(\frac{p(2-q)}{2(p-q)} \left(1+\frac{B(u_n)}{A(u_n)}\right)\right)^{\frac{1}{p-2}}\\&>&\left(\frac{p(2-q)}{2(p-q)}\right)^{\frac{1}{p-2}},
\end{eqnarray*}
 the first case is ruled out.
In the second case, it follows that $E_{\lambda_n}(v_n)\rightarrow 0$ and once again we deduce that $v_0$ is a nonzero constant, which is impossible. Therefore $(u_n)$ is bounded and we may assume that $u_n \rightharpoonup u_0$ in $H^1(\Omega)$. From $$E_{\lambda_n}(u_n)=\lambda_n\left(A(u_n)+B(u_n)\right)$$
we infer that $\int_{\Omega} |\nabla u_n|^2 \rightarrow 0$, so $u_n \rightarrow u_0$ and $u_0$ is a constant.
Since $\|u_n\|\geq K>0$, we know that $u_0 \neq 0$. Finally, proceeding as in the proof of Proposition \ref{p3}, we see that $u_0=c_2$.
\end{proof}

\begin{rem}{\rm
Let $a_n \rightarrow a$ in $L^{\infty}(\Omega)$ with $a_n^+ \not \equiv 0$ for every $n$, $a \not \equiv 0$ and $a \leq 0$. Arguing as in the proof of Lemma \ref{lbb}, we may show that if $\tilde{a} \in L^{\infty}(\Omega)$ is such that $\int_\Omega \tilde{a} <0$ and $\tilde{a} \geq a_n$ for every $n$, then $\lambda_{a_n} \geq \lambda_{\tilde{a}}$ and $\lambda_s(a_n) \geq \lambda_s(\tilde{a})$ for every $n$. So $u_{2,\lambda,a_n}$ exists for $0<\lambda<\min\{\lambda_s(\tilde{a}), \lambda_{\tilde{a}}\}$ and every $n$. Moreover, since $a_n^+ \rightarrow 0$ in $L^{\infty}(\Omega)$, from \eqref{baaz} we have $\|u_{2,\lambda,a_n}\|\to \infty$  for $0<\lambda<\min\{\lambda_s(\tilde{a}), \lambda_{\tilde{a}}\}$. Finally, getting back to \eqref{baaaz}, we deduce that $\int_\Omega a_n u_{2,\lambda,a_n}^p \rightarrow \infty$ and therefore $\|u_{2,\lambda,a_n}\|_{\mathcal{C}(\overline{\Omega})}\to \infty$ for $0<\lambda<\min\{\lambda_s(\tilde{a}), \lambda_{\tilde{a}}\}$. }
\end{rem}

\begin{prop}
\label{pewi}
Assume $a^+ \not \equiv 0$ and $\int_{\Omega} a<0$. If $\lambda_n \to 0^+$ then, up to a subsequence, there holds
$w_n:=\lambda_n^{\frac{1}{p-2}}u_{2,\lambda_n} \rightarrow w_\infty$ in $H^1(\Omega)$, where $w_\infty$ is a nontrivial non-negative solution of the  problem
\begin{equation}
\label{eqwi}
-\Delta w = a(x)w^{p-1} \quad\mbox{in} \ \Omega, \qquad
\frac{\partial w}{\partial \mathbf{n}} = 0 \quad \mbox{on} \
\partial \Omega.
\end{equation}

\end{prop}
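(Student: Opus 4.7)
The plan is to exploit the natural rescaling $w_n := \lambda_n^{1/(p-2)} u_{2,\lambda_n}$, under which $(P_{\lambda_n})$ becomes
\[
-\Delta w_n = \lambda_n m(x) w_n + a(x) w_n^{p-1} \ \mbox{in} \ \Omega, \qquad \frac{\partial w_n}{\partial \mathbf{n}} = \lambda_n^{(p-q)/(p-2)} b(x) w_n^{q-1} \ \mbox{on} \ \partial \Omega,
\]
whose formal limit as $\lambda_n \to 0^+$ is exactly \eqref{eqwi}. I would then extract a weakly convergent subsequence $w_n \rightharpoonup w_\infty$ in $H^1(\Omega)$, show strong convergence, and verify that the limit $w_\infty$ is a nontrivial non-negative weak solution of \eqref{eqwi}.

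The hardest step, I expect, is to establish an a priori $H^1$-bound on $(w_n)$. My plan is to pick $\phi_0 \in \mathcal{C}_c^\infty(\Omega_+)$, so that $\phi_0 \in A^+$, $B(\phi_0) = 0$, and $E_{\lambda_n}(\phi_0) > 0$ for $n$ large. The map $j_{\phi_0}$ then has a unique positive critical point $t_n$ with $t_n \phi_0 \in \mathcal{N}_{\lambda_n}^- \cap A^+$, determined by $t_n^{p-2} = E_{\lambda_n}(\phi_0)/(\lambda_n A(\phi_0))$. In particular $\lambda_n^{1/(p-2)} t_n$ stays bounded and a short calculation gives $\lambda_n^{2/(p-2)} I_{\lambda_n}(t_n \phi_0) = O(1)$. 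The minimality of $u_{2,\lambda_n}$ in $\mathcal{N}_{\lambda_n}^- \cap A^+$ thus forces $\lambda_n^{2/(p-2)} I_{\lambda_n}(u_{2,\lambda_n}) \leq C$ uniformly. Combining the identity
\[
\lambda_n^{2/(p-2)} I_{\lambda_n}(u_{2,\lambda_n}) = \frac{p-2}{2p}\lambda_n^{2/(p-2)} E_{\lambda_n}(u_{2,\lambda_n}) - \frac{p-q}{pq}\lambda_n^{(p-q)/(p-2)} \int_{\partial \Omega} b w_n^q
\]
with the coercivity $E_{\lambda_n}(u_{2,\lambda_n}) \geq C_0 \|u_{2,\lambda_n}\|^2$ from Proposition \ref{p2'} (applicable since $\int_\Omega a < 0$ and $u_{2,\lambda_n} \in A^+$), using that $\lambda_n^{2/(p-2)} \|u_{2,\lambda_n}\|^2 = \|w_n\|^2$ and bounding the boundary term by $C \lambda_n^{(p-q)/(p-2)} \|w_n\|^q$ via the trace inequality, yields
\[
\frac{p-2}{2p} C_0 \|w_n\|^2 \leq C_1 + C_2 \lambda_n^{(p-q)/(p-2)} \|w_n\|^q,
\]
and since $q < 2$ this forces $\|w_n\|$ to be bounded.

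Once boundedness is secured, passing to the weak $H^1$-limit in the weak formulation of the rescaled equation is routine: the contribution $\lambda_n \int_\Omega m w_n \varphi$ vanishes because $\lambda_n \to 0$; the boundary term $\lambda_n^{(p-q)/(p-2)} \int_{\partial \Omega} b w_n^{q-1} \varphi$ vanishes thanks to the compact trace $H^1(\Omega) \hookrightarrow L^q(\partial\Omega)$ combined with $\lambda_n^{(p-q)/(p-2)} \to 0$; and $\int_\Omega a w_n^{p-1}\varphi \to \int_\Omega a w_\infty^{p-1}\varphi$ via the compactness of $H^1(\Omega) \hookrightarrow L^p(\Omega)$ (available since $p<2^*$). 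This shows that $w_\infty \geq 0$ is a weak solution of \eqref{eqwi}. Strong convergence $w_n \to w_\infty$ in $H^1(\Omega)$ then follows by subtracting the weak equations for $w_n$ and $w_\infty$ tested against $w_n - w_\infty$ and invoking the same compact embeddings to see that the right-hand side tends to $0$.

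The final point is the nontriviality of $w_\infty$. By Remark \ref{rr2}(2), estimate \eqref{baaz} yields $\|u_{2,\lambda_n}\| \geq \left(C_0/(\lambda_n D \|a^+\|_\infty)\right)^{1/(p-2)}$, hence
\[
\|w_n\| = \lambda_n^{1/(p-2)} \|u_{2,\lambda_n}\| \geq \left(\frac{C_0}{D\|a^+\|_\infty}\right)^{1/(p-2)} > 0
\]
uniformly in $n$. By the strong $H^1$-convergence, this lower bound is inherited by $\|w_\infty\|$, so $w_\infty \not\equiv 0$.
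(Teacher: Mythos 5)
Your argument is correct and is essentially the paper's own proof written in different notation: the quantity $\lambda_n^{2/(p-2)}I_{\lambda_n}(u)$ that you estimate is exactly the rescaled functional $J_{\lambda_n}(w)=\tfrac12 E_{\lambda_n}(w)-\tfrac1p A(w)-\tfrac{1}{q}\lambda_n^{(p-q)/(p-2)}B(w)$ the paper minimizes over $\mathcal{M}_{\lambda_n}^-\cap A^+$, your compactly supported competitor $t_n\phi_0$ with $B(\phi_0)=0$ plays precisely the role of the paper's comparison with $\inf_{\mathcal{M}_{\lambda_n}^-\cap A^+\cap H_0^1(\Omega)}J_{\lambda_n}$ via a fixed test function, and the coercivity input (Proposition \ref{p2'}), the passage to the limit, the strong convergence, and the nontriviality via the uniform lower bound \eqref{baaz} all coincide with the paper's steps. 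One small imprecision: $\phi_0\in\mathcal{C}_c^\infty(\Omega_+)$ does not by itself give $A(\phi_0)>0$ (nothing forces $a\geq 0$ on $\Omega_+$, and strict positivity is needed), so one should instead take $\phi_0$ concentrated near a Lebesgue density point of $\{a>\varepsilon\}$ — the same standard existence fact the paper uses tacitly when it fixes $w_0\in H_0^1(\Omega)\cap A^+$.
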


\begin{proof}
We claim that $(w_n)$ is bounded in $H_0^1(\Omega)$. Indeed, note that 
$w_n$ minimizes $J_{\lambda_n}$ over $\mathcal{M}_{\lambda_n}^- \cap A^+$, where 
$$J_\lambda(w)=\frac{1}{2}E_\lambda(w) - \frac{1}{q}\lambda^{\frac{p-q}{p-2}} B(w)-\frac{1}{p}A(w) \quad \text{for } w\in H^1(\Omega),$$
and $\mathcal{M}_\lambda$ is the Nehari manifold associated to $J_\lambda$. If $w \in \mathcal{M}_\lambda^- \cap A^+$ and $0<\lambda<\min\{\lambda_a,\lambda_s\}$ then
$$J_\lambda(w)=\left(\frac{1}{2}-\frac{1}{p}\right)E_\lambda(w)-\left(\frac{1}{q}-\frac{1}{p}\right)\lambda^{\frac{p-q}{p-2}}B(w) \geq C_0\|w\|^2 -C_1\|w\|^q$$ for some constants $C_0,C_1>0$. If we prove that $J_{\lambda_n}(w_n)$ is bounded from above then we deduce that $(w_n)$ is bounded. We have
$$J_{\lambda_n}(w_n)=\inf_{\mathcal{M}_{\lambda_n}^- \cap A^+} J_{\lambda_n} \leq \inf_{\mathcal{M}_{\lambda_n}^- \cap A^+ \cap H_0^1(\Omega)} J_{\lambda_n}.$$
If $w \in H_0^1(\Omega)$ then $B(w)=0$, so $J_\lambda(w)=\frac{1}{2}E_\lambda(w) -\frac{1}{p}A(w)$, and it can be shown that $\displaystyle \inf_{\mathcal{M}_{\lambda}^- \cap A^+ \cap H_0^1(\Omega)} J_{\lambda}$ is achieved  for $\lambda \in (0,\lambda_1^D(m))$, where $\lambda_1^D(m)$ is the first positive eigenvalue of  $$-\Delta u =\lambda m(x) u \text{ in } \Omega, \quad u=0 \text{ on }\partial \Omega.$$
Finally, we claim that the latter infimum is bounded from above for $\lambda \in (0,\lambda_1^D(m))$, which yields the conclusion.
This claim follows from the inequality
$$J_\lambda(w) \leq L(w):=\frac{C}{2}\|w\|^2 - \frac{1}{p}A(w),$$ which holds for $w \in H_0^1(\Omega)$, $\lambda \in (0,\lambda_1^D(m))$ and some $C>0$. Thus, given $w \in H_0^1(\Omega) \cap A^+$, if $J_\lambda(tw)$ achieves its global maximum at $t_0>0$ then $$J_\lambda(t_0w)\leq L(t_0w)\leq \left(\frac{1}{2}-\frac{1}{p}\right) \left(\frac{C\|w\|^2}{A(w)}\right)^{\frac{1}{p-2}}C\|w\|^2.$$
Therefore, fixing a $w_0 \in H_0^1(\Omega) \cap A^+$, we obtain
$$\inf_{\mathcal{M}_{\lambda}^- \cap A^+ \cap H_0^1(\Omega)} J_{\lambda}\leq K:=\left(\frac{1}{2}-\frac{1}{p}\right) \left(\frac{C\|w_0\|^2}{A(w_0)}\right)^{\frac{1}{p-2}}C\|w_0\|^2,$$ for $\lambda \in (0,\lambda_1^D(m))$, as claimed.
Thus $(w_n)$ is bounded in $H^1(\Omega)$, so up to a subsequence we have $w_n \rightharpoonup w_\infty$ in $H^1(\Omega)$. Taking $v=w_n-w_\infty$ in
\begin{equation}
\label{ewn2}
\int_{\Omega} \left(\nabla w_n \nabla v -\lambda_n m(x)w_n v -a(x)w_n^{p-1}v\right) -\lambda_n^{\frac{p-q}{p-2}}\int_{\partial \Omega} b(x)w_n^{q-1}v =0 \quad \forall v \in H^1(\Omega)
\end{equation}
and letting $n \to \infty$ 
we get $\lim \int_{\Omega} \nabla w_n \nabla (w_n-w_\infty) =0$, so that $w_n \rightarrow w_\infty$ in $H^1(\Omega)$. Furthermore,
since $$C_0\|w_n\|^2\leq E_{\lambda_n}(w_n)<\frac{p-q}{2-q}A(w_n)\leq C_1\|w_n\|^p$$ for some $C_0,C_1>0$, we get $\|w_n\|\geq C^{\frac{1}{p-2}}$ for some $C>0$, so that $w_\infty \not \equiv 0$.
Finally, \eqref{ewn2} also shows that $w_\infty$ is a solution of  \eqref{eqwi}.
\end{proof}

\bigskip

\section{Proofs of the main results}
\label{sec5}

Before proceeding to the proofs of our main results, we prove a partial positivity result on the boundary for nontrivial non-negative solutions of $(P_\lambda)$:

\begin{prop} \label{prop:positive}
\strut
\begin{enumerate}
\item Let $u_\lambda$ be a nontrivial non-negative solution of $(P_\lambda)$ for $\lambda > 0$. Then the set $\{ x \in \partial \Omega : u(x) = 0 \}$ has no interior points in the relative topology of $\partial \Omega$, and it is contained in $\{ x \in \partial \Omega : b(x) \leq 0 \}$ if $b\in \mathcal{C}(\partial \Omega)$.\\ 
\item Let $w_0$ be a nontrivial non-negative solution of \eqref{w0}. Then $w_0 > 0$ in $\Omega$, the set $\{ x\in \partial \Omega : w_0 = 0 \}$ has no interior points in the relative topology of $\partial \Omega$, and it is contained in $\{ x \in \partial \Omega : b(x) \leq 0 \}$ if $b\in \mathcal{C}(\partial \Omega)$.
\end{enumerate}
\end{prop}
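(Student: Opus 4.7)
The proof handles the two claimed properties of the boundary zero set via a unique continuation argument and a Hopf boundary point argument, respectively; the statements for $u$ and for $w_0$ are treated in parallel, with the harmonicity of $w_0$ only simplifying the interior structure.

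\textbf{Step 1 (empty relative interior).} Suppose for contradiction that $Z := \{x \in \partial \Omega : u(x) = 0\}$ contains a relatively open subset $\Sigma$, and pick $x_0 \in \Sigma$. Locally flattening the boundary near $x_0$ so that $\Sigma$ corresponds to a piece of $\{x_N = 0\}$ inside a ball $U$, I extend $u$ by zero across $\Sigma$ to $\tilde u \in H^1(U)$; this is legitimate since the trace of $u$ on $\Sigma$ vanishes. Since $u \in L^\infty(\Omega)$, the equation can be rewritten as $-\Delta u + Vu = 0$ in $\Omega$ with $V := -\lambda(m + a|u|^{p-2}) \in L^\infty(\Omega)$; as both $u = 0$ and $\partial u/\partial \mathbf{n} = \lambda b u^{q-1} = 0$ hold on $\Sigma$, the extension $\tilde u$ solves $-\Delta \tilde u + \tilde V \tilde u = 0$ weakly on $U$, where $\tilde V$ extends $V$ by zero. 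Since $\tilde u$ vanishes on a nonempty open subset of $U$, Aronszajn's unique continuation theorem forces $\tilde u \equiv 0$ on $U$, and a further application of unique continuation on the connected set $\Omega$ yields $u \equiv 0$, contradicting nontriviality.

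\textbf{Step 2 (containment in $\{b \leq 0\}$).} Assume $b \in \mathcal{C}(\partial\Omega)$, and suppose by contradiction that $u(x_0) = 0$ for some $x_0 \in \partial \Omega$ with $b(x_0) > 0$. Continuity of $b$ and of $u$ provides a neighborhood $U$ of $x_0$ on which $b \geq b(x_0)/2$, so the Neumann datum $h := \lambda b u^{q-1}$ is continuous on $\partial\Omega \cap U$. Combining the $W^{2,r}_{\textrm{loc}}(\Omega)$ interior regularity with boundary regularity for the linear Neumann problem $-\Delta u = f$ in $\Omega \cap U$, $\partial u/\partial \mathbf{n} = h$ on $\partial \Omega \cap U$, where $f := \lambda(m + a|u|^{p-2})u \in L^\infty$, one upgrades $u$ to $\mathcal{C}^1(\overline{\Omega \cap U'})$ for a smaller neighborhood $U' \subset U$ of $x_0$. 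In $\Omega \cap U'$, $u \geq 0$ is then a classical supersolution of $-\Delta u + Cu \geq 0$ with $C := \lambda \|m + a|u|^{p-2}\|_\infty$, is strictly positive in $\Omega$, and vanishes at the boundary point $x_0$. The classical Hopf boundary point lemma therefore yields $\partial u(x_0)/\partial \mathbf{n} < 0$, contradicting the pointwise identity $\partial u(x_0)/\partial \mathbf{n} = \lambda b(x_0) u(x_0)^{q-1} = 0$.

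The assertions for $w_0$ follow from the very same two steps. Strict positivity of $w_0$ in $\Omega$ is immediate from the strong maximum principle applied to the harmonic nonnegative function $w_0 \not\equiv 0$, while the two properties of the boundary zero set reduce to the same arguments (with Step 1 simplified by $V \equiv 0$, and Step 2 unchanged).

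The main technical obstacle is the boundary regularity invoked in Step 2. Because $q - 1 \in (0, 1)$, the nonlinearity $b|u|^{q-2}u$ is not $\mathcal{C}^1$ at $u = 0$, which is precisely what prevents a direct application of the boundary point lemma to $(P_\lambda)$, as noted in the introduction. The way around this is to \emph{linearize locally}: the Neumann datum $\lambda b u^{q-1}$ is only required to be continuous, which is guaranteed by $b \in \mathcal{C}(\partial \Omega)$ together with the Hölder continuity $u \in \mathcal{C}^\theta(\overline{\Omega})$ already established. A Schauder-type boundary regularity result for Neumann problems with continuous boundary data (e.g.\ of Lieberman type) then supplies the $\mathcal{C}^1$ regularity up to $\partial \Omega$ near $x_0$, which is exactly what Hopf's lemma requires.
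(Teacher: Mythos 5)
Your Step 1 is a genuinely different route from the paper's. The paper never invokes unique continuation: for the ``no relative interior'' claim it builds a small subdomain $D$ with $\Gamma_1\subset\partial\Omega$, shows that an auxiliary mixed problem \eqref{p.v} (with constant coefficients $m_\infty, a_\infty, K_\infty$) has a weak solution $v_\lambda$ that is positive on $\Gamma_1$ by the strong maximum principle and boundary point lemma applied to the \emph{auxiliary} (smooth) solution, and then uses the weak comparison principle of Appendix~\ref{sec:a} to get $v_\lambda\leq u_\lambda$, contradicting $u_\lambda(x_0)=0$. Your extension-by-zero plus Aronszajn argument is a legitimate alternative for this part (the zero trace and the vanishing of the boundary integral $\int_\Sigma \lambda b\,u^{q-1}\varphi$ do make $\tilde u$ a weak solution across $\Sigma$), provided the boundary is regular enough that the flattened operator has Lipschitz principal coefficients so that unique continuation applies; you should state that hypothesis, since the paper only assumes a ``regular'' domain.

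Step 2, however, has a genuine gap. Continuity of the Neumann datum $h=\lambda b\,u^{q-1}$ does \emph{not} yield $\mathcal{C}^1$ regularity of $u$ up to $\partial\Omega$: Lieberman-type (or Schauder) boundary estimates for conormal problems give $\mathcal{C}^{1,\beta}$ only for H\"older (or at best Dini) continuous data, while merely continuous data gives only $\mathcal{C}^{0,\alpha}$ for every $\alpha<1$. Here $u^{q-1}\in\mathcal{C}^{\theta(q-1)}(\overline\Omega)$, but $b$ is only assumed continuous, so $h$ is merely continuous and the regularity upgrade fails. Without $\mathcal{C}^1$ regularity at $x_0$ you cannot interpret the boundary condition pointwise as $\partial u/\partial\mathbf{n}(x_0)=\lambda b(x_0)u(x_0)^{q-1}=0$, so there is nothing to contradict the Hopf inequality with --- this is exactly the obstruction the authors flag in the introduction as the reason the boundary point lemma ``cannot be applied directly.'' The paper circumvents it by staying entirely at the weak level: it takes the principal eigenfunction $\phi_1$ of the mixed eigenvalue problem \eqref{p.e} (which \emph{is} smooth, so Hopf applies to it, not to $u$), checks that $\varepsilon\phi_1$ is a weak subsolution of \eqref{p.vv} for small $\varepsilon$, and invokes Proposition~\ref{app:prop:comparison} to conclude $\varepsilon\phi_1\leq u_\lambda$ on $\overline D$, contradicting $u_\lambda(x_0)=0<\varepsilon\phi_1(x_0)$. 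To repair your argument you would need either to assume $b$ H\"older continuous, or to replace the classical Hopf lemma by a comparison/barrier argument of this weak type.
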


\begin{proof}
\strut
\begin{enumerate}
\item  Assume by contradiction that $x_0$ is an interior point of $\partial \Omega$ with $u_\lambda (x_0)=0$. Then, there exists $\rho_0 > 0$ such that $u_\lambda (x)=0$ for $x \in \Gamma_1 :=  B_{\rho_0} (x_0) \cap \partial \Omega$. Let $D$ be a subdomain of $\Omega$ with smooth boundary $\partial D$ such that $\Gamma_1 \subset \partial D$ and $\Gamma_0 := \partial D \setminus \overline{\Gamma_1} = \partial D \cap \Omega$. Consider the following mixed problem 
\begin{align} \label{p.v}
\begin{cases}
-\Delta u = \lambda (-m_\infty u  - a_\infty u^{p-1}) & \mbox{in $D$}, \\ 
\frac{\partial u}{\partial \mathbf{n}} = \lambda K_\infty u & \mbox{on $\Gamma_1$}, \\
u=0 & \mbox{on $\Gamma_0$}, 
\end{cases}
\end{align}
where $m_\infty = \Vert m \Vert_\infty$, $a_\infty = \Vert a \Vert_\infty > 0$, and $K_\infty > 0$ is a constant to be determined.   Arguing as in the proof of \cite[Theorem 1]{GRS09}, we can prove that if $K_\infty$ is sufficiently large then \eqref{p.v}  has a unique nontrivial non-negative weak solution $v_\lambda \in H_{\Gamma_0}^1(D)$. Here, $H_{\Gamma_0}^1(D)$ is defined as the closure of $\mathcal{C}_c^{\infty}\left(D \cup (\partial D \setminus \overline{\Gamma_0})\right)$ with respect to the $H^1(D)$ norm.
We remark that $v_\lambda \in \mathcal{C}^2(D \cup \Gamma_1) \cap \mathcal{C}(\overline{D})$ \cite{ADN,Stam}, so that $v_\lambda > 0$ in $D \cup \Gamma_1$ by the strong maximum principle and the boundary point lemma. On the other hand, 
we have $u_\lambda > 0$ on $\Gamma_0$, and for any $\varphi \in H_{\Gamma_0}^1(D)$ satisfying $\varphi \geq 0$ there holds
\begin{align*}
& \int_D \nabla u_\lambda \nabla \varphi -\lambda \int_D \left( -m_\infty u_\lambda - a_\infty u_\lambda^{p-1} \right) \varphi - \lambda \int_{\Gamma_1} K_\infty u_\lambda \varphi \geq 0, 
\end{align*}
since $u_\lambda = 0$ on $\Gamma_1$. Hence, by Proposition \ref{app:prop:comparison}, we deduce that $v_\lambda \leq u_\lambda$ in $\overline{D}$. Thus $u_\lambda (x_0) = 0 < v_\lambda (x_0)$, and a contradiction follows. 

The second assertion can be verified in a similar way. We assume that $u_\lambda (x_0)=0$ but $b(x_0) > 0$ for some $x_0 \in \partial \Omega$. Then there exist $\rho_0, b_0 > 0$ such that $b(x)\geq b_0$ for $x \in \Gamma_1 :=  B_{\rho_0} (x_0) \cap \partial \Omega$. Setting $D, \Gamma_0$ as above, we consider the following mixed problem 
\begin{align} \label{p.vv}
\begin{cases}
-\Delta u = \lambda (-m_\infty u  - a_\infty u^{p-1}) & \mbox{in $D$}, \\ 
\frac{\partial u}{\partial \mathbf{n}} = \lambda b_0 u^{q-1} & \mbox{on $\Gamma_1$}, \\
u=0 & \mbox{on $\Gamma_0$}. 
\end{cases}
\end{align}
By direct computations, we have 
\begin{align*}
\int_D \nabla u_\lambda \nabla \varphi - \lambda \int_D \left( 
-m_\infty u_\lambda - a_\infty u_\lambda^{p-1} \right) \varphi 
- \lambda \int_{\Gamma_1} b_0 u_\lambda^{q-1} \varphi \geq 0
\end{align*}
for any $\varphi\in H_{\Gamma_0}^1(D)$ satisfying $\varphi \geq 0$. Moreover, 
we have $u_\lambda > 0$ in $\Gamma_0 \cup D$, and $u_\lambda \in \mathcal{C}^\theta (\overline{D})$ for some $\theta \in (0,1)$. On the other hand, associated with \eqref{p.vv}, we consider the following eigenvalue 
problem. 
\begin{align} \label{p.e}
\begin{cases}
-\Delta \phi = \lambda (-m_\infty) \phi  + \sigma \phi & \mbox{in $D$}, \\ 
\frac{\partial \phi}{\partial \mathbf{n}} = \lambda K \phi & \mbox{on $\Gamma_1$}, \\
\phi = 0 & \mbox{on $\Gamma_0$}. 
\end{cases}
\end{align}
We note that  if  $K>0$ is sufficiently large then for every $\lambda > 0$ the above problem has a negative first eigenvalue $\sigma_1$, cf. \cite{GRS09}. Let $\phi_1$ be the positive eigenfunction associated to $\sigma_1$ with $\Vert \phi_1 \Vert_\infty = 1$. Since $\phi_1 \in \mathcal{C}^2(\Omega \cup \Gamma_1) \cap \mathcal{C}(\overline{D})$, by the strong maximum principle and the boundary point lemma, we have $\phi_1 > 0$ in $D \cup \Gamma_1$. By direct computations, if $\varepsilon > 0$ is a constant then, for any $\varphi \in H_{\Gamma_0}^1(D)$ satisfying $\varphi \geq 0$, we have 
$$
\int_D \nabla (\varepsilon \phi_1) \nabla \varphi 
- \lambda \int_D \left( -m_\infty \varepsilon \phi_1 - a_\infty (\varepsilon \phi_1)^{p-1} \right) \varphi 
- \lambda \int_{\Gamma_1} b_0 (\varepsilon \phi_1)^{q-1} \varphi \leq 0,
$$ provided that 
\begin{align*}
0< \varepsilon \leq \min \left\{ \left( \frac{-\sigma_1}{\lambda a_\infty} \right)^{\frac{1}{p-2}}, \ \left( \frac{b_0}{K} \right)^{\frac{1}{2-q}} \right\}. 
\end{align*}
Applying Proposition \ref{app:prop:comparison} to \eqref{p.vv}  with $u=\varepsilon \phi_1$ and $v=u_\lambda$, we obtain $\varepsilon \phi_1 \leq u_\lambda$ in $\overline{D}$. However, we have $u_\lambda (x_0)=0< \varepsilon \phi_1(x_0)$, which is a contradiction.\\

\item First of all, by the weak maximum principle, we have $w_0>0$ in $\Omega$. We argue now as in the previous item to deduce the positivity result on $\partial \Omega$.  As a matter of fact, it suffices to consider \eqref{p.v} replaced by the problem 
\begin{align} \label{prob:0}
\begin{cases}
-\Delta u = \lambda (w_0 u - u^2) & \mbox{in $D$}, \\
\frac{\partial u}{\partial \mathbf{n}} = 0 & \mbox{on $\Gamma_1$}, \\
u= 0 & \mbox{on $\Gamma_0$}, 
\end{cases}
\end{align}
and note that \eqref{prob:0} has a unique nontrivial non-negative weak solution for $\lambda > 0$ large. Note also that if $b$ is continuous and $b(x_0)>0=w(x_0)$ then we can apply the same argument to reach a contradiction, so that $\{ x\in \partial \Omega : w_0 = 0 \} \subset \{ x \in \partial \Omega : b(x) \leq 0 \}$. 
\end{enumerate}
\end{proof}

\begin{rem}
If $u_\lambda$ is a nontrivial non-negative solution of $(P_\lambda)$ for $\lambda < 0$ then the assertions of Proposition \ref{prop:positive} (1) hold true replacing $\{ x \in \partial \Omega : b(x) \leq 0 \}$ by $\{ x \in \partial \Omega : b(x) \geq 0 \}$. Indeed, if $\lambda < 0$ then, by the change of variables $\mu = -\lambda$, $(P_\lambda)$ reduces to 
\begin{align*}
\begin{cases}
-\Delta u= \mu \left( (-m) u + (-a)|u|^{p-2}u \right) 
& \mbox{in $\Omega$},  \\ 
\frac{\partial u}{\partial \bf{n}}= \mu (-b) |u|^{q-2}u 
& \mbox{on $\partial \Omega$}. 
\end{cases}
\end{align*}
with $\mu>0$.
\end{rem}
\bigskip

We deduce now our existence results using the fact that local minimizers of $I_\lambda$ constrained to $\mathcal{N} \setminus \mathcal{N}_0$ are critical points of $I_\lambda$ and, therefore, solutions of $(P_\lambda)$. It is clear that $A^+$, $B^+$ and $E_{\lambda}^-$ are open sets, so that, whenever achieved, the infima of $I_\lambda$ constrained to $\mathcal{N}_\lambda^+ \cap B^+$, $\mathcal{N}_\lambda^+ \cap E_{\lambda}^-$ and $\mathcal{N}_\lambda^- \cap A^+$ provide solutions of $(P_\lambda)$.\\

\noindent {\it Proof of Theorem \ref{t1}:}\\

By Propositions \ref{p1} and \ref{pal1}, we have $\lambda_b, \lambda_s>0$ if $\int_{\partial \Omega} b<0$ and $\lambda_a, \lambda_s>0$ if $\int_{\Omega} a<0$. Moreover, if $\int_{\partial \Omega} b<0$ then Proposition \ref{pmn1} yields that $\displaystyle \inf_{\mathcal{N}_\lambda^+ \cap B^+} I_{\lambda}$ is achieved by  $u_{0,\lambda}\geq 0$ for $0<\lambda<\min\{\lambda_b,\lambda_s\}$ and $u_{0,\lambda} \rightarrow 0$  in $\mathcal{C}^{\theta}(\overline{\Om})$ for some $\theta \in (0,1)$ as $\lambda \to 0^+$. Likewise, if $\int_{\Omega} a<0$ then, by Proposition \ref{pmn2}, $\displaystyle \inf_{\mathcal{N}_\lambda^- \cap A^+} I_{\lambda}$ is achieved by $u_{2,\lambda}\geq 0$ for $0<\lambda<\min\{\lambda_a,\lambda_s\}$. Furthermore, by Propositions \ref{pasyc} and \ref{pewi}, the asymptotic profiles of $u_{0,\lambda}$ and $u_{2,\lambda}$ are given by $\lambda^{\frac{1}{q-2}} w_0$ and $\lambda^{\frac{1}{p-2}} w_\infty$ as $\lambda \to 0^+$, where $w_0$ and $w_\infty$  are nontrivial non-negative solutions of \eqref{pwl} and \eqref{eqwi}, respectively.

By a standard bootstrap argument, we obtain $w_\infty \in W^{2,r}(\Omega)$, with $r>N$. The strong maximum principle and boundary point lemma yield $w_\infty > 0$ in $\overline{\Omega}$. Setting $w_\lambda = \lambda^{\frac{1}{p-2}} u_{2,\lambda}$, we have that $w_\lambda$ is bounded in $H^1(\Omega)$ and $w_\lambda$ is a weak solution of the problem
$$
\begin{cases}
-\Delta w = \lambda m w + a w^{p-1} & \text{in} \ \ \Omega, \\ 
\frac{\partial w}{\partial \mathbf{n}} = \lambda^{\frac{p-q}{p-2}}b w^{q-1} & \text{on} \ \ \partial \Omega. 
\end{cases}
$$
Rossi's bootstrap argument \cite{R} yields that $w_\lambda$ is bounded in $\mathcal{C}^\nu (\overline{\Omega})$ for some $\nu \in (0,1)$. By the compact embedding $\mathcal{C}^\nu (\overline{\Omega}) \subset \mathcal{C}^{\theta}(\overline{\Omega})$, $\theta<\nu$, we may obtain that $w_\lambda$ converges to some $w^*$ in $\mathcal{C}^{\theta}(\overline{\Omega})$. Since $w_\lambda \to w_\infty$ in $L^2(\Omega)$, we have $w^* = w_\infty$. From $w_\infty > 0$ in $\overline{\Omega}$, we have $u_{2,\lambda} > 0$ in $\overline{\Omega}$ for $\lambda > 0$ close to $0$, and $\min_{\overline{\Omega}}u_{2,\lambda} \to \infty$ as $\lambda \to 0^+$. It can be verified in a similar way that $\lambda^{-\frac{1}{2-q}}u_{0,\lambda} \to w_0$ in $C^{\tilde{\theta}}(\overline{\Omega})$ for some $\tilde{\theta}\in (0,1)$. Indeed, it suffices to note that $\lambda^{-\frac{1}{2-q}}u_{0,\lambda}$ is a weak solution of the problem
\begin{align*}
\begin{cases}
-\Delta w = \lambda m w + \lambda^{\frac{p-q}{2-q}} a w^{p-1} & \text{in} \ \ \Omega, \\ 
\frac{\partial w}{\partial \mathbf{n}} = b w^{q-1} & \text{on} \ \ \partial \Omega. 
\end{cases}
\end{align*}
Finally, Proposition \ref{prop:positive} provides the positivity properties of $w_0$.\qed \\



\noindent {\it Proof of Theorem \ref{t1'}:}\\

By Corollary \ref{c2},  $\displaystyle \inf_{\mathcal{N}_\lambda^+ \cap E_\lambda^-} I_{\lambda}$ is achieved by $u_{1,\lambda}\geq 0$ for $0<\lambda<\min\{\lambda_a,\lambda_b\}$. Moreover, by Proposition \ref{p3}, $u_{1,\lambda} \to c_2$ in $\mathcal{C}^{\theta}(\overline{\Om})$ for some $\theta \in (0,1)$ as $\lambda \to 0^+$. 

Now, if $\lambda<0$ then we change the signs of  $\lambda$, $m$, $a$ and $b$. Since
$$\int_{\Omega} (-m)<0<\int_{\Omega} (-a) \quad \text{and} \quad 0<\int_{\partial \Omega} (-b)<K_1(m,a),$$
Propositions \ref{pmn1} and \ref{pmn2} yield the existence of two non-negative solutions $u_{0,-\lambda}$, $u_{2,-\lambda}$ for $0<-\lambda<\min\{\tilde{\lambda}_1, \tilde{\lambda}_s\}$, which satisfy $u_{0,-\lambda} \to c_1$ and $u_{2,-\lambda} \to c_2$ in $\mathcal{C}^{\theta}(\overline{\Om})$ for some $\theta \in (0,1)$ as $\lambda \to 0^-$. We set then $v_{1,\lambda}=u_{0,-\lambda}$ and $v_{2,\lambda}=u_{2,-\lambda}$.\qed \\

\noindent {\it Proof of Theorem \ref{t3b}:}\\  

First of all, by a standard bootstrap argument we infer that $u_{2,\lambda}$ is a classical positive solution of $(P_\lambda)$, since $u_{2,\lambda} > 0$ in $\overline{\Omega}$. In order to prove that $u_{2,\lambda}$ is unstable, we consider the following linearized eigenvalue problem at $u_{2,\lambda}$ with an eigenvalue $\gamma$:
\begin{align}  \label{eigenprob}
\begin{cases}
\mathcal{L}_\lambda \psi = \lambda (p-1) a u_{2,\lambda}^{p-2}\psi + \gamma \psi & \text{in} \ \ \Omega, \\
\frac{\partial \psi}{\partial \mathbf{n}} = \lambda (q-1) b u_{2,\lambda}^{q-2} \psi + \gamma \psi & \text{on} \ \ \partial \Omega, 
\end{cases}
\end{align}
where $\mathcal{L}_\lambda = -\Delta - \lambda m$. Let us denote by $\gamma_1$ its smallest eigenvalue and by $\psi_1 \in \mathcal{C}^{2+\alpha}(\overline{\Omega})$ an eigenfunction associated to $\gamma_1$ which is positive in $\overline{\Omega}$. We claim that $\gamma_1 < 0$. To this end, we use Picone's identity \cite{BCN95}. By direct computations, we have
\begin{align*}
\int_\Omega \left( \frac{u_{2,\lambda}}{\psi_1} \right) \sum_j \frac{\partial}{\partial x_j} \psi_1^2 \frac{\partial}{\partial x_j} \left( \frac{u_{2,\lambda}}{\psi_1}\right) &= \int_\Omega \left( \frac{u_{2,\lambda}}{\psi_1} \right) (-\mathcal{L}_\lambda u_{2,\lambda}\psi_1 + u_{2,\lambda} \mathcal{L}_\lambda \psi_1) \\
& = \lambda (p-2)A(u_{2,\lambda}) + \gamma_1 \int_\Omega u_{2,\lambda}^2. 
\end{align*}
On the other hand, by Green's formula we have 
\begin{align*}
\int_\Omega \left( \frac{u_{2,\lambda}}{\psi_1} \right) \sum_j \frac{\partial}{\partial x_j} \psi_1^2 \frac{\partial}{\partial x_j} \left( \frac{u_{2,\lambda}}{\psi_1}\right) & = - \int_\Omega \psi_1^2 \left\vert \nabla \left( \frac{u_{2,\lambda}}{\psi_1} \right)\right\vert^2 + \int_{\partial \Omega} \left( \frac{u_{2,\lambda}}{\psi_1} \right) \psi_1^2 \frac{\partial}{\partial \mathbf{n}} \left( \frac{u_{2,\lambda}}{\psi_1} \right) \\
& =  - \int_\Omega \psi_1^2 \left\vert \nabla \left( \frac{u_{2,\lambda}}{\psi_1} \right)\right\vert^2 + \lambda (2-q) B(u_{2,\lambda}) - \gamma_1 \int_{\partial \Omega} u_{2,\lambda}^2. 
\end{align*}
Hence, 
\begin{align*}
\gamma_1 = \frac{- \int_\Omega \psi_1^2 \left\vert \nabla \left( \frac{u_{2,\lambda}}{\psi_1} \right)\right\vert^2 - \lambda (p-2) A(u_{2,\lambda}) + \lambda (2-q) B(u_{2,\lambda})}{\int_\Omega u_{2,\lambda}^2 + \int_{\partial \Omega} u_{2,\lambda}^2}. 
\end{align*}
Since $u_{2,\lambda} \in \mathcal{N}_\lambda$, we have $\lambda B(u_{2,\lambda}) = E_\lambda (u_{2,\lambda}) - \lambda A(u_{2,\lambda})$. So, it follows that 
\begin{align*}
\gamma_1 = \frac{- \int_\Omega \psi_1^2 \left\vert \nabla \left( \frac{u_{2,\lambda}}{\psi_1} \right)\right\vert^2 + (2-q) E_\lambda (u_{2,\lambda}) - \lambda (p-q) A(u_{2,\lambda})}{\int_\Omega u_{2,\lambda}^2 + \int_{\partial \Omega} u_{2,\lambda}^2}. 
\end{align*}
Since $u_{2,\lambda} \in \mathcal{N}_\lambda^-$, we have $E_\lambda (u_{2,\lambda})< \lambda \left( \frac{p-q}{2-q}\right) A(u_{2,\lambda})$, and hence, $\gamma_1 < 0$, as desired. The proof of Theorem \ref{t3b} is complete. \qed \\


\noindent {\it Sketch of the proof of Remark \ref{rw0}:}\\

Let us assume \eqref{assump:mbreg} and that $w_0$ is a classical positive solution of \eqref{w0}. In the same way as in the proofs of Theorems \ref{t1} and \ref{t3b},  we infer that, for $\lambda > 0$ sufficiently small, $u_{0,\lambda}$ is a classical positive solution of $(P_\lambda)$. In order to discuss the stability of $u_{0,\lambda}$, we replace $u_{2,\lambda}$ by $u_{0,\lambda}$ in \eqref{eigenprob} and analyze the sign of $\gamma_1 = \gamma_1(\lambda)$. Let $\psi_1 = \psi_1(\lambda)$ be the unique positive eigenfunction associated to $\gamma_1$ and satisfying $\int_{\partial \Omega} \psi_1^2 + \int_\Omega \psi_1^2 = 1$. Setting $\xi_\lambda (x) = \lambda m(x) + \lambda (p-1) a(x) u_{0,\lambda} (x)^{p-2}$ and $\eta_\lambda (x) = \lambda (q-1) b(x) u_{0,\lambda} (x)^{q-2}$, we observe from Theorem \ref{t1} that $\xi_\lambda \to 0$ in $L^2(\Omega)$ and $\eta_\lambda \to (q-1) b w_0^{q-2}$ in $H^1(\Omega)$ as $\lambda \to 0^+$, where $b$ is understood as an extension to $\mathcal{C}^{1+\alpha}(\overline{\Omega})$. By the continuity of $\gamma_1, \psi_1$ with respect to $\lambda$, we get to a limiting eigenvalue problem as $\lambda \to 0^+$, namely: 
\begin{align*}
\begin{cases} 
-\Delta \psi_1(0) = \gamma_1(0) \psi_1(0) & \mbox{in} \ \ \Omega, \\ 
\frac{\partial \psi_1(0)}{\partial \mathbf{n}} = (q-1)b(x) w_0^{q-2} \psi_1(0) + \gamma_1(0) \psi_1(0) & \mbox{on} \ \ \partial \Omega. 
\end{cases}
\end{align*}
By Green's formula, we have 
\begin{align} \label{eqlam0}
\int_\Omega |\nabla \psi_1(0)|^2 - \int_{\partial \Omega} (q-1)b w_0^{q-2} \psi_1(0)^2 = \gamma_1(0). 
\end{align}

Now, we claim that $\gamma_1(0) > 0$. Once this is verified, by the continuity of $\gamma_1$ we conclude that $\gamma_1(\lambda) > 0$ for $\lambda > 0$ sufficiently small, and the proof is complete. Using Green's formula again, we see that
\begin{align*}
0 = \int_\Omega (-\Delta w_0) \frac{\psi_1(0)^2}{w_0} = - \int_\Omega \left\vert \frac{\psi_1(0)}{w_0} \nabla w_0 - \nabla \psi_1(0) \right\vert^2 + \int_\Omega |\nabla \psi_1(0)|^2 - \int_{\partial \Omega}b w_0^{q-2}\psi_1(0)^2,  
\end{align*}
which combined with \eqref{eqlam0} yields
\begin{align*}
\gamma_1(0) & = (q-1) \int_\Omega \left\vert \frac{\psi_1(0)}{w_0} \nabla w_0 - \nabla \psi_1(0) \right\vert^2 + (2-q) \int_\Omega |\nabla \psi_1(0)|^2 \\
& \geq (2-q) \int_\Omega |\nabla \psi_1(0)|^2 > 0, 
\end{align*}
since $\psi_1(0)$ is not a constant. \qed \\

\noindent {\it Proof of Theorem \ref{t4}:}\\

Let 
\begin{align*}
\mu_{1,\pm} = \inf\left\{ \int_{D_{\pm}} \left( |\nabla u|^2 - \lambda m u^2 \right) ; \, u \in H^1_0(D_{\pm}), \, \int_{D_\pm} u^2 = 1 \right\}
\end{align*}
be the unique positive principal eigenvalues of the Dirichlet eigenvalue 
problems 
\begin{align*}
\begin{cases}
-\Delta u = \lambda m u + \mu_{\pm} u & \mbox{in} \ D_{\pm}, \\
u = 0 & \mbox{on} \ \partial D_{\pm}, 
\end{cases}
\end{align*}
and let $u_{1,\pm}$ denote the corresponding positive eigenfunctions in $H^1_0(D_{\pm})$, respectively. By a standard regularity argument and the strong maximum principle, it follows that $u_{1,\pm} \in W^{2,r}(D_{\pm})$ for any $r>N$, and $u_{1,\pm} > 0$ in $D_{\pm}$. Then, by Green's formula, we deduce 
\begin{align*}
\int_{D_{\pm}} \nabla u_{1,\pm} \nabla v - \int_{\partial D_{\pm}} \frac{\partial u_{1,\pm}}{\partial \mathbf{n}} v = \lambda \int_{D_{\pm}} mu_{1,\pm}v + 
\mu_{1,\pm} \int_{D_{\pm}} u_{1,\pm}v \quad \mbox{for all} \ v \in 
\mathcal{C}^1(\overline{D_{\pm}}). 
\end{align*}
On the other hand, for any nontrivial non-negative solution $u \in H^1(\Omega)$ of $(P_\lambda)$, we have 
\begin{align*}
\int_\Omega \nabla u \nabla w - \lambda \int_\Omega muw - \lambda \int_\Omega au^{p-1}w - \lambda \int_{\partial \Omega}bu^{q-1}w = 0 \quad \forall \ w \in H^1(\Omega),
\end{align*}
and recall that $u \in \mathcal{C}^\alpha (\overline{\Omega}) \cap W^{2,r}_{\rm loc}(\Omega)$, $r>N$, and $u>0$ in $\Omega$. 

Now, we consider $v=u$ and $w=\widetilde{u_{1,\pm}}$, where  
\begin{align*}
\widetilde{u_{1,\pm}} = \left\{ 
\begin{array}{ll}
u_{1,\pm} & \mbox{in} \ D_{\pm}, \\
0, & \mbox{otherwise}, 
\end{array} \right. 
\end{align*}
and then observe that 
\begin{align*}
0< -\int_{\partial D_{\pm}} \frac{\partial u_{1,\pm}}{\partial \mathbf{n}}u = \mu_{1,\pm} \int_{D_{\pm}} uu_{1,\pm} - \lambda \int_{D_{\pm}} a u^{p-1}u_{1,\pm}, 
\end{align*}
since $u_{1,\pm}\in \mathcal{C}^1(\overline{D_{\pm}})$,  $u_{1,\pm}>0$ in $D_{\pm}$, and $\frac{\partial u_{\pm,1}}{\partial \mathbf{n}}< 0$ from the boundary point lemma (cf. \cite{V}). Hence, it follows that $\mu_{1,+} > 0$ if $\lambda > 0$, and also that $\mu_{1,-} > 0$ if $\lambda < 0$. Since $m$ changes sign in $D_{\pm}$, we have $\mu_{1,\pm} < 0$ for $|\lambda|>\Lambda$ if $\Lambda$ is sufficiently large. Thus, we obtain $|\lambda|\leq \Lambda$. \qed

\bigskip

\subsection{Bifurcating solutions} \label{sec6}
\strut
\medskip


In this final subsection we prove Theorems \ref{t3} and \ref{thm:a=-m:bif} by a bifurcation technique. Since this technique does not require a variational structure for $(P_\lambda)$, the next results hold under the condition 
\begin{align} 
1<q<2<p. \label{1<q<2<p} 
\end{align}
We use the usual orthogonal decomposition $L^2(\Omega) = \R \oplus V$, where $$V = \left\{ v \in L^2(\Omega) : \int_\Omega v = 0 \right\},$$ and the projection $Q : L^2(\Omega) \to V$ given by $$v = Qu = u - \frac{1}{|\Omega|}\int_\Omega u.$$ In this way we reduce the problem of finding a classical positive solution to $(P_\lambda)$ under (\ref{assump:mbreg}) to the following two problems 
	\begin{align}  \label{1stpro}
	\begin{cases}
	-\Delta v + \frac{\lambda}{|\Omega|} 
	\int_{\partial \Omega} b(x) g(t + v) 
	= \lambda Q f(x, t + v) & \mbox{in $\Omega$}, \\
	\dfrac{\partial v}{\partial \mathbf{n}}= \lambda b(x) g(t + v) & 
	\mbox{on $\partial \Omega$}, 
	\end{cases} 
	\end{align}
	\begin{equation} \label{2ndpro}
	\lambda \left( \int_\Omega f(x, t + v) 
	+ \int_{\partial \Omega} b g(t + v) \right) = 0, 
	\end{equation}
where $$t=\frac{1}{|\Omega|}\int_\Omega u, \quad v = Qu = u - t,\quad 
 f(x, u) = m(x)u + a(x) u^{p-1}, \quad \text{and} \quad g(u) = u^{q-1}.$$
First, to solve (\ref{1stpro}) in the H\"older space 
$\mathcal{C}^{2+\alpha}(\overline{\Omega})$, we set 
$$
X = \left\{ v \in \mathcal{C}^{2+\alpha}(\overline{\Omega}) 
: \int_\Omega v = 0 \right\}
$$ 
and introduce the nonlinear mapping $F: \R \times \R \times 
X \to Z$ given by  
	$$
	F(\lambda, t, v) = \left( -\Delta v - \lambda Q f(x, t + v) + \frac{\lambda}{|\Omega|} \int_{\partial \Omega} b g(t + v), \; 
	\frac{\partial v}{\partial \mathbf{n}} - \lambda b g(t + v) \right), 
	$$
where 
$$
Z = \left\{ (\phi, \psi) \in \mathcal{C}^{\alpha}(\overline{\Omega}) \times 
\mathcal{C}^{1+\alpha}(\partial \Omega) : 
\int_\Omega \phi + \int_{\partial \Omega} \psi = 0 \right\}.  
$$
The Fr\'echet derivative of $F$ with respect to $v$ at $(0,c,0)$ is given by $F_v (0, c, 0)v = (-\Delta v, \frac{\partial v}{\partial \mathbf{n}})$, where $c > 0$ is a constant.  From Banach's closed graph theorem it follows that $F_v (0, c, 0)$ is a homeomorphism. By the implicit function theorem, the set $F(\lambda, t, v) = 0$ consists exactly of an unique $\mathcal{C}^\infty$ function $v = v(\lambda, t)$ in a neighbourhood of $(\lambda, t) = (0,c)$, satisfying $v(0,c) = 0$. 
\par

Now, plugging $v(\lambda, t)$ in (\ref{2ndpro}), we obtain the bifurcation equation 
$$
\lambda \left( \int_\Omega f(x, t + v(\lambda, t)) + \int_{\partial \Omega} b g(t + v(\lambda, t)) \right) = 0. 
$$
From this equation we deduce that $\lambda = 0$ corresponds to the trivial solution $(\lambda, u)=(0, d)$ with $c-\varepsilon < d < c+\varepsilon$ for some $\varepsilon > 0$. 

Hence, the study of the set of non-trivial solutions 
for $(\lambda, u)$ close to $(0, c)$ is reduced to the consideration of the equation 
	\begin{equation}  \label{Phi:def}
	\Phi (\lambda, t) 
	:= \int_\Omega f(x, t + v(\lambda, t)) + 
	\int_{\partial \Omega} b g(t + v(\lambda, t)) = 0 
	\end{equation}
for $(\lambda, t)$ close to $(0, c)$.  

Recall that under the condition that $\int_\Omega m > 0 > \int_\Omega a$ and $\int_{\partial \Omega}b < 0$, the assumption $\int_{\partial \Omega} b > - \tilde{K}_1(m,a)$ made in (\ref{assump:tildK}) is equivalent to the existence of two positive zeros $c_1 < c_2$ of $\varphi$ in (\ref{ephi}).

Theorems \ref{t3} and \ref{thm:a=-m:bif} (1) are direct consequences of the following result. 
\begin{prop} \label{prop:bif} 
Assume  \eqref{cm}, \eqref{assump:mbreg}, \eqref{assump:tildK}, and \eqref{1<q<2<p}. Then the following two assertions hold. 
\begin{enumerate}
  \item[(1)]  $(P_\lambda)$ has two classical positive solutions $U_{j, \lambda}$, $j=1,2$, for $\lambda$ close to $0$, given by 
\begin{align*}
U_{j,\lambda} = t_j (\lambda) + v (\lambda, t_j(\lambda)).
\end{align*}
Here $t_j$ is a $\mathcal{C}^1$ function of $\lambda$ such that
$\lambda \mapsto U_{j,\lambda} \in \mathcal{C}^{2+\alpha}(\overline{\Omega})$ is a $\mathcal{C}^1$ map, $t_j(0) = c_j$, and $v(0,c_j) = 0$, $j=1,2$. Moreover, $U_{1,\lambda}$ is unstable (respect. asymptotically stable),  whereas $U_{2,\lambda}$ is asymptotically stable (respect. unstable) if $\lambda > 0$ (respect. $\lambda < 0$).
  \item[(2)] If $(P_\lambda)$ has a classical positive solution $u_\lambda$ with $\lambda \not= 0$ such that $u_\lambda \rightarrow c$ in $\mathcal{C}(\overline{\Omega})$ as $\lambda \to 0$, where $c>0$ is a constant, then  $\varphi (c) = 0$. 
\end{enumerate}
\end{prop}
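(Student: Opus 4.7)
The plan is to extend the framework introduced above by solving the bifurcation equation $\Phi(\lambda,t)=0$ via the implicit function theorem. Since $v(0,t)\equiv 0$, a direct substitution gives
$$\Phi(0,t)=t\int_\Omega m+t^{p-1}\int_\Omega a+t^{q-1}\int_{\partial\Omega}b=t^{q-1}\varphi(t).$$
Under \eqref{assump:tildK}, which is equivalent to the existence of two positive zeros $c_1<c_2$ of $\varphi$, the strict inequality $\int_{\partial\Omega}b>-\tilde{K}_1$ gives $\varphi(t^*)>0$ at the unique critical point $t^*$ of $\varphi$, so both zeros are simple with $\varphi'(c_1)>0$ and $\varphi'(c_2)<0$. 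Therefore $\Phi(0,c_j)=0$ and $\Phi_t(0,c_j)=c_j^{q-1}\varphi'(c_j)\neq 0$, and the implicit function theorem produces a $\mathcal{C}^\infty$ branch $t=t_j(\lambda)$ with $t_j(0)=c_j$. Setting $U_{j,\lambda}:=t_j(\lambda)+v(\lambda,t_j(\lambda))$ then yields a classical solution of $(P_\lambda)$ in $\mathcal{C}^{2+\alpha}(\overline{\Omega})$, and positivity on $\overline{\Omega}$ for $|\lambda|$ small follows from $U_{j,0}=c_j>0$ by continuity.

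For the stability claim I would analyze the principal eigenvalue $\gamma_1(\lambda)$ of the linearized problem \eqref{eigenprob} at $U_{j,\lambda}$. Integration by parts in \eqref{eigenprob} expresses $\gamma_1(\lambda)$ as the infimum of the Rayleigh quotient
$$\gamma_1(\lambda)=\inf_{\psi\in H^1(\Omega)\setminus\{0\}}\frac{\int_\Omega|\nabla\psi|^2-\lambda\int_\Omega m\psi^2-\lambda(p-1)\int_\Omega aU_{j,\lambda}^{p-2}\psi^2-\lambda(q-1)\int_{\partial\Omega}bU_{j,\lambda}^{q-2}\psi^2}{\int_\Omega\psi^2+\int_{\partial\Omega}\psi^2},$$
so that $\gamma_1(0)=0$, attained exactly by the constants, and hence simple. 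Analytic perturbation theory then provides a smooth eigenpair $(\gamma_1(\lambda),\psi_1(\lambda))$ with $\psi_1(0)\equiv 1$. Differentiating \eqref{eigenprob} in $\lambda$, evaluating at $\lambda=0$, and integrating over $\Omega$ while invoking the boundary condition and the divergence theorem cancels all derivatives of $\psi_1$ and produces
$$(|\Omega|+|\partial\Omega|)\gamma_1'(0)=-\int_\Omega m-(p-1)c_j^{p-2}\int_\Omega a-(q-1)c_j^{q-2}\int_{\partial\Omega}b.$$
Substituting $\int_{\partial\Omega}b=-c_j^{2-q}\int_\Omega m-c_j^{p-q}\int_\Omega a$ from $\varphi(c_j)=0$ collapses the right-hand side to $-c_j^{q-1}\varphi'(c_j)$, so $\gamma_1'(0)$ has sign opposite to $\varphi'(c_j)$. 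Since $\gamma_1(\lambda)\sim\gamma_1'(0)\lambda$ for $\lambda$ small, the stability and instability properties of $U_{1,\lambda}$ and $U_{2,\lambda}$ claimed in the proposition follow at once for each sign of $\lambda$.

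For assertion (2), I would proceed directly, without invoking the implicit function theorem. Integrating $(P_\lambda)$ over $\Omega$ and using the divergence theorem converts the Neumann-type boundary condition into the identity
$$\lambda\left(\int_\Omega m\,u_\lambda+\int_\Omega a\,u_\lambda^{p-1}+\int_{\partial\Omega}b\,u_\lambda^{q-1}\right)=0.$$
Dividing by $\lambda\neq 0$ and letting $\lambda\to 0$, dominated convergence together with $u_\lambda\to c$ uniformly on $\overline{\Omega}$ gives $c^{q-1}\varphi(c)=0$, whence $\varphi(c)=0$ because $c>0$. The main obstacle of the whole argument is the smooth dependence step for $\gamma_1(\lambda)$: the eigenparameter $\gamma$ in \eqref{eigenprob} appears simultaneously in the interior equation and in the boundary condition, so one has to recast the problem in the correct variational setting (essentially the Steklov-type space in which $\int_\Omega\psi^2+\int_{\partial\Omega}\psi^2$ is the relevant inner product) in order to apply standard perturbation theory and justify the differentiation under the integral leading to the formula for $\gamma_1'(0)$.
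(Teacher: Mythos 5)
Your proposal is correct and follows essentially the same route as the paper: the implicit function theorem applied to the bifurcation equation $\Phi(\lambda,t)=0$ at the simple zeros $c_1,c_2$ of $\varphi$, a linearized eigenvalue analysis giving $\gamma_1(0)=0$ and $\gamma_1'(0)=-c_j^{q-1}\varphi'(c_j)/(|\Omega|+|\partial\Omega|)$ for the stability assertions, and integration of the equation over $\Omega$ for part (2). Your only (harmless) shortcut is to read off $\Phi_t(0,c_j)=c_j^{q-1}\varphi'(c_j)$ from the identity $\Phi(0,t)=t^{q-1}\varphi(t)$ (valid since $v(0,\cdot)\equiv 0$ near $c_j$), where the paper instead derives $v_t(0,c_j)=0$ from the differentiated auxiliary problem and then expands $\Phi_t(0,c_j)$ term by term.
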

\begin{proof} 
\strut
\begin{enumerate}
\item Differentiating $\Phi$ in \eqref{Phi:def} with respect to $t$ we find
\begin{align*}
\Phi_t (\lambda, t) 
= \int_\Omega f_u(x, t + v) (1 + v_t) 
+ \int_{\partial \Omega} b g'(t + v) (1 + v_t).  
\end{align*}
From (\ref{1stpro}) we have
\begin{align*}
\begin{cases}
-\Delta v_t + \frac{\lambda}{|\Omega|} \int_{\partial \Omega} b g'(t 
+ v) (1 + v_t) = \lambda Q [f_u (x, t + v) (1 + v_t)] & 
\mbox{in $\Omega$}, \\
\dfrac{\partial v_t}{\partial \mathbf{n}} = \lambda b g'(t + v) (1 + v_t) 
& \mbox{on $\partial \Omega$}. 
\end{cases}
\end{align*}
Putting $\lambda = 0$ and $t = c_j$ we get
\begin{align*}
\begin{cases}
-\Delta v_t (0,c_j)  = 0 & \mbox{in $\Omega$} \\
\dfrac{\partial v_t (0, c_j)}{\partial \mathbf{n}} = 0 & \mbox{on $\partial \Omega$}. 
\end{cases}
\end{align*}
Since $v_t (0,c_j) \in V$, we have $v_t (0,c_j) = 0$. 
It follows that 
\begin{align*}
\Phi_t (0,c_j) 
& = \int_\Omega f_u (x, c_j) + \int_{\partial \Omega} b g'(c_j) \\
& = \int_\Omega m + (p-1)c_j^{p-2} \int_\Omega a 
+ (q-1)c_j^{q-2} \int_{\partial \Omega}b. 
\end{align*}
Therefore, 
\begin{align} \label{13082338}
\frac{c_j^{2-q}}{q-1} \Phi_t (0, c_j) 
= \frac{c_j^{2-q}}{q-1} \int_\Omega m + \left( \frac{p-1}{q-1} 
\right) c_j^{p-q} \int_\Omega a + \int_{\partial \Omega}b. 
\end{align}
Now, we claim that $\Phi_t (0, c_j) \not= 0$. Once this is verified, we end the proof of Proposition \ref{prop:bif} (1) by the use of the implicit function theorem. Let $c_0 \in (c_1, c_2)$ be the global maximum point of $\varphi$. 
This one is given explicitly by 
\begin{align} \label{c0}
c_0 = \left( \frac{(2-q)\int_\Omega m}{(p-q)(-\int_\Omega a)}\right)
^{\frac{1}{p-2}}. 
\end{align}
From (\ref{13082338}) and the fact that $\varphi (c_j)=0$, 
we deduce that 
\begin{align*} 
\frac{c^{2-q}}{q-1} \Phi_t (0,c_j) 
= \left( \frac{2-q}{q-1} \right) c_j^{2-q} \int_\Omega m 
+ \left( \frac{p-q}{q-1} \right) c_j^{p-q} \int_\Omega a. 
\end{align*}
It follows from (\ref{c0}) that 
\begin{align*}
\frac{1}{q-1} \Phi_t (0,c_j) 
&= \left( \frac{2-q}{q-1} \right) \int_\Omega m 
+ \left( \frac{p-q}{q-1} \right) c_j^{p-2} \int_\Omega a 
\\
& \left\{ \begin{array}{ll}
> \left( \frac{2-q}{q-1} \right) \int_\Omega m 
+ \left( \frac{p-q}{q-1} \right) c_0^{p-2} \int_\Omega a = 0, & \quad \text{for } j=1, \\ 
< \left( \frac{2-q}{q-1} \right) \int_\Omega m 
+ \left( \frac{p-q}{q-1} \right) c_0^{p-2} \int_\Omega a = 0, & \quad \text{for } j=2. 
\end{array} \right. 
\end{align*}
The conclusion follows. 

\bigskip 

We prove now the stability results of $U_{j,\lambda}$. We recall from \eqref{eigenprob} the linearized eigenvalue problem at $U_{j,\lambda}$:
\begin{align} \label{leigenU}
\begin{cases}
-\Delta \psi = \lambda m \psi + \lambda (p-1)U_{j,\lambda}^{p-2}\psi + \gamma \psi & \mbox{in $\Omega$}, \\
\frac{\partial \psi}{\partial \mathbf{n}}= \lambda (q-1)U_{j,\lambda}^{q-2}\psi + \gamma \psi & \mbox{on $\partial \Omega$}. 
\end{cases}
\end{align}
Let $\gamma_1 = \gamma_1(\lambda)$ be the smallest positive eigenvalue of this problem and  $\psi_1 = \psi_1(\lambda)$ be the unique positive eigenfunction associated to $\gamma_1$, satisfying $\int_\Omega \psi_1^2 + \int_{\partial \Omega} \psi_1^2 = 1$. It is easy to see that $\gamma_1(0)=0$ and $\psi_1(0)=\left( \frac{1}{|\Omega|+|\partial \Omega|}\right)^{1/2}$. 
We differentiate \eqref{leigenU} with respect to $\lambda$ and let $\lambda = 0$ to obtain
\begin{align*}
\begin{cases}
-\Delta \psi_1'(0) = m \psi_1(0) + (p-1)aU_{j,0}^{p-2}\psi_1(0) + \gamma_1'(0) \psi_1(0) & \mbox{in $\Omega$}, \\
\frac{\partial \psi_1'(0)}{\partial \mathbf{n}}= (q-1)bU_{j,0}^{q-2}\psi_1(0) + \gamma_1'(0) \psi_1(0) & \mbox{on $\partial \Omega$}. 
\end{cases}
\end{align*}
By Green's formula, we have
\begin{align*}
& \int_\Omega \left( m \psi_1(0) + (p-1)aU_{j,0}^{p-2}\psi_1(0) + \gamma_1'(0) \psi_1(0) \right) \\ 
& \hspace*{3cm} + \int_{\partial \Omega}\left( (q-1)bU_{j,0}^{q-2}\psi_1(0) + \gamma_1'(0) \psi_1(0) \right) = 0. 
\end{align*}
Since $U_{j,0}=c_j$ and $\varphi (c_j) = 0$ from \eqref{ephi}, we deduce 
\begin{align} \label{gamm1U}
\gamma_1'(0) = -\frac{c_j^{q-2} \left\{ (2-q)c_j^{2-q}\int_\Omega m + (p-q)c_j^{p-q}\int_\Omega a \right\}}{|\Omega|+|\partial \Omega|}. 
\end{align}
By a direct computation, we see that $$c_j\varphi'(c_j)=(2-q)c_j^{2-q}\int_\Omega m + (p-q)c_j^{p-q}\int_\Omega a.$$
Since $\varphi (c_1)>0>\varphi_1(c_2)$, we deduce from \eqref{gamm1U} that $\gamma_1'(0)<0$ for $j=1$ and $\gamma_1'(0)>0$ for $j=2$, which combined with $\gamma_1(0)=0$ provides the desired conclusion. \\ 

\item Let $u$ be a classical positive solution of $(P_\lambda)$ with $\lambda \not= 0$. By Green's formula it follows that 
$$
\int_\Omega -\Delta u = - \lambda \int_{\partial \Omega} b u^{q-1}. 
$$
Hence we have 
$$
\lambda \left( \int_\Omega (mu +  a u^{p-1}) 
+ \int_{\partial \Omega} b u^{q-1} \right) = 0, 
$$
i.e.
$$
\int_\Omega (mu + a u^{p-1}) 
+ \int_{\partial \Omega} b u^{q-1} = 0. 
$$
Since $u \to c$ in $\mathcal{C}(\overline{\Omega})$, where $c$ is a positive constant, we obtain the desired conclusion. 
\end{enumerate}
\end{proof} 

Next, we recall that under the conditions $\int_\Omega m > 0 > \int_\Omega a$ and $\int_{\partial \Omega}b < 0$, the assumption $\int_{\partial \Omega} b = - \tilde{K}_1(m,a)$ is equivalent to the existence of a unique positive zero $c_0$ of $\varphi$, given by (\ref{c0}). In this case, $\varphi (c_0) = \varphi'(c_0) = 0$, and $c_0$ is the global maximum point of $\varphi$. 

\begin{lem} \label{lem:Phi}
Assume \eqref{assump:mbreg}, \eqref{1<q<2<p}, $\int_\Omega m > 0 > \int_\Omega a$, and $\int_{\partial \Omega}b = -\tilde{K}_1(m,a)$. Then, for $\Phi$ defined in \eqref{Phi:def}, we have the following:
\begin{enumerate}
\item $\Phi (0,c_0) = 0$.
\item $\Phi_t (0,c_0) = 0$.
\item $\Phi_{tt}(0,c_0) = - (2-q)(p-2) 
c_0^{-1} \int_\Omega m < 0$.
\item $\Phi_\lambda (0, c_0) = \int_\Omega mv_\lambda (0,c_0) + (p-1)c_0^{p-2} 
\int_\Omega a v_\lambda(0,c_0) 
+ (q-1)c_0^{q-2} \int_{\partial \Omega} 
b v_\lambda (0,c_0). $
\end{enumerate}
In particular, if $a=-km$ for some positive constant $k$, then 
\begin{align} \label{Philamposi}
\Phi_\lambda (0, c_0) = (q-1)c_0^{-1} \int_\Omega |\nabla v_\lambda(0,c_0)|^2 > 0. 
\end{align}
\end{lem}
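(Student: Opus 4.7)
The plan is to compute each quantity by direct differentiation of $\Phi(\lambda,t) = \int_\Omega f(x,t+v(\lambda,t)) + \int_{\partial\Omega}b\,g(t+v(\lambda,t))$ at $(0,c_0)$, using $v(0,c_0)=0$, the defining PDE \eqref{1stpro} for $v(\lambda,t)$, and the two scalar relations $\varphi(c_0)=0$ and $\varphi'(c_0)=0$ (the latter being equivalent, via \eqref{c0}, to $c_0^{p-2}\int_\Omega a = -\frac{2-q}{p-q}\int_\Omega m$). For (1), inserting $v(0,c_0)=0$ gives $\Phi(0,c_0) = c_0\int_\Omega m + c_0^{p-1}\int_\Omega a + c_0^{q-1}\int_{\partial\Omega}b = c_0^{q-1}\varphi(c_0) = 0$.

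For (2) and (3), the key step is to observe that $v_t(0,c_0)=v_{tt}(0,c_0)=0$: differentiating \eqref{1stpro} in $t$ once, respectively twice, and setting $\lambda=0$ produces in each case the homogeneous Neumann Laplace problem, whose only mean-zero solution is zero (the first-order version of this argument is already used in the proof of Proposition \ref{prop:bif}). Hence $\Phi_t(0,c_0)$ and $\Phi_{tt}(0,c_0)$ reduce to purely ``constant'' integral expressions in $m$, $a$, $b$. Using $\varphi(c_0)=0$ to eliminate $\int_{\partial\Omega}b$, a brief rearrangement gives $\Phi_t(0,c_0)=c_0^{q-1}\varphi'(c_0)=0$, proving (2). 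For (3), after the same substitution, the identity $(p-1)(p-2)-(q-1)(q-2)=(p-q)(p+q-3)$ collapses $\Phi_{tt}(0,c_0)$ into a linear combination of $\int_\Omega m$ and $c_0^{p-2}\int_\Omega a$, and then $\varphi'(c_0)=0$ produces exactly $-(2-q)(p-2)c_0^{-1}\int_\Omega m$.

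For (4), a single $\lambda$-derivative of $\Phi$ at $(0,c_0)$ immediately yields the stated formula. To obtain \eqref{Philamposi}, I would set $V_0 := v_\lambda(0,c_0)$ and differentiate \eqref{1stpro} in $\lambda$ at $(0,c_0)$; because $\varphi(c_0)=0$ the constant corrections on the right-hand side cancel and $V_0$ is seen to solve the Neumann problem
\begin{equation*}
-\Delta V_0 = c_0 m + c_0^{p-1} a \quad\text{in }\Omega, \qquad \frac{\partial V_0}{\partial \mathbf{n}} = c_0^{q-1} b \quad\text{on }\partial\Omega.
\end{equation*}
Testing this equation against $V_0$ expresses $\int_\Omega|\nabla V_0|^2$ as $c_0\int_\Omega m V_0 + c_0^{p-1}\int_\Omega a V_0 + c_0^{q-1}\int_{\partial\Omega}b V_0$. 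Under the pointwise relation $a=-km$, the identity $\varphi'(c_0)=0$ sharpens to $(2-q)\int_\Omega m V_0 + (p-q)c_0^{p-2}\int_\Omega a V_0 = 0$, which is exactly the combination needed to recast the formula of part (4) as $(q-1)c_0^{-1}\int_\Omega|\nabla V_0|^2$. Positivity then follows since $V_0$ is mean-zero and $V_0\equiv 0$ would force $c_0 m(1-k c_0^{p-2}) = \frac{c_0(p-2)}{p-q}\,m\equiv 0$ in $\Omega$, contradicting $m^+\not\equiv 0$.

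I expect the main technical point to be the pointwise-proportionality step in \eqref{Philamposi}: the equality $(2-q)\int_\Omega m V_0 + (p-q)c_0^{p-2}\int_\Omega a V_0 = 0$ goes beyond what $\varphi'(c_0)=0$ asserts for integrals of $m$ and $a$ alone, and it genuinely requires the pointwise identity $a=-km$ to transfer the test function $V_0$ across the relation. The algebra in (3) is of the same flavour but simpler, since no $\lambda$-derivative is involved.
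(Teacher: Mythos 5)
Your proposal follows essentially the same route as the paper's proof: direct evaluation via $\varphi(c_0)=0$ for (1), the vanishing of $v_t(0,c_0)$ and $v_{tt}(0,c_0)$ from the homogeneous Neumann problem obtained by differentiating \eqref{1stpro} for (2)--(3), and for \eqref{Philamposi} the identification of $v_\lambda(0,c_0)$ as the mean-zero solution of $-\Delta V_0=mc_0+ac_0^{p-1}$, $\partial V_0/\partial\mathbf{n}=bc_0^{q-1}$, tested against itself, with the pointwise relation $a=-km$ and $kc_0^{p-2}=\tfrac{2-q}{p-q}$ killing the remaining interior term. The only cosmetic difference is at the very end, where the paper deduces $V_0\not\equiv\text{const}$ from $b\not\equiv 0$ via the boundary condition while you use $m\not\equiv 0$ via the interior equation; both are valid.
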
 
\begin{proof}
\strut
\begin{enumerate}
\item It is straightforward from
\begin{align*}
\Phi (0,c_0) &= \int_\Omega (mc_0 + a c_0^{p-1}) 
+ \int_{\partial \Omega} b c_0^{q-1} = c_0^{q-1} \varphi (c_0) = 0. 
\end{align*}\\
\item  We differentiate $\Phi$ with respect to $t$ to get
$$
 \Phi_t = \int_\Omega f_u(x, t + v)(1+v_t) + 
\int_{\partial \Omega} b g'(t + v) (1 + v_t). 
$$
Thus 
\begin{equation} \label{Phialpha0express}
\Phi_t (0,c_0) = \int_\Omega f_u(x,c_0)(1+v_t(0,c_0)) + \int_{\partial \Omega} b g'(c_0) (1 + v_t(0,c_0)). 
\end{equation}
Let us show how we derive $v_t (0,c_0)$ from (\ref{1stpro}). 
Differentiating (\ref{1stpro}) (with $v=v(\lambda, t)$) 
with respect to $t$, we obtain
\begin{align*}
\begin{cases}
-\Delta v_t + \frac{\lambda}{|\Omega|} \int_{\partial \Omega} b g'(t + v) (1 + v_t) = \lambda Q [f_u(x,t + v)(1+v_t)] & \mbox{in $\Omega$}, \\
\dfrac{\partial v_t}{\partial \mathbf{n}} 
= \lambda b g'(t + v)(1+v_t) & \mbox{on $\partial \Omega$}. 
\end{cases}
\end{align*}
Taking $(\lambda, t)=(0,c_0)$ we get
\begin{align*} 
\begin{cases}
-\Delta v_t(0,c_0) = 0 & \mbox{in $\Omega$}, \\
\dfrac{\partial v_t(0,c_0)}{\partial \mathbf{n}} = 0 & \mbox{on $\partial \Omega$}. 
\end{cases}
\end{align*}
Since $v_t (0,c_0) \in V$, i.e. $\int_\Omega v_t (0,c_0) = 0$, we have 
\begin{align} \label{eq140829vt0}
v_t (0,c_0)=0. 
\end{align}
Hence, from (\ref{Phialpha0express}) and $\varphi (c_0) = \varphi'(c_0)= 0$, it follows 
that 
\begin{align*}
\Phi_t (0,c_0) 
& = \int_\Omega (m + (p-1)ac_0^{p-2}) 
+ \int_{\partial \Omega} b (q-1) c_0^{q-2} \\
& = \int_\Omega (m + (p-1)a c_0^{p-2}) 
+ (q-1) c_0^{q-2} \int_{\partial \Omega} b \\
& = \int_\Omega (m + (p-1)ac_0^{p-2}) 
+ (q-1) c_0^{q-2} \left( -c_0^{p-q}\int_\Omega a - c_0^{2-q} \int_\Omega m 
\right) \\
& = (2-q) \int_\Omega m + (p-q) c_0^{p-2} \int_\Omega a = 0. 
\end{align*}\\

\item Differentiating $\Phi$ once more with respect to $t$, we have
\begin{align*}
\Phi_{tt}(0,c_0) &= \int_\Omega \{ f_{uu}(x,c_0) 
(1 + v_t (0,c_0))^2 + f_u(x,c_0) v_{tt}(0,c_0) \} \\
& \quad + \int_{\partial \Omega} b \{ g''(c_0) (1 + v_t (0,c_0))^2 
+ g'(c_0) v_{tt}(0,c_0) \}.
\end{align*}
In the same way as $v_t (0,c_0)=0$, we get $v_{tt}(0,c_0) = 0$ from 
(\ref{1stpro}). It follows that 
\begin{eqnarray*}
 \Phi_{tt}(0,c_0) 
&=& (p-1)(p-2) c_0^{p-3} \int_\Omega a + (q-1)(q-2) c_0^{q-3} 
\int_{\partial \Omega} b \\
&=& (p-1)(p-2) c_0^{p-3} \int_\Omega a + (q-1)(q-2) c_0^{q-3} 
\left( -c_0^{p-q} \int_\Omega a - c_0^{2-q} \int_\Omega m \right) \\
& =& \{ (p-1)(p-2)-(q-1)(q-2)\} c_0^{p-3}\int_\Omega a - (q-1)(q-2) c_0^{-1} 
\int_\Omega m \\
& =& c_0^{-1} \left( \{ (p-1)(p-2)-(q-1)(q-2) \} c_0^{p-2} \int_\Omega a 
- (q-1)(q-2) \int_\Omega m \right) \\
& =& c_0^{-1} \left( \{ (p-1)(p-2)-(q-1)(q-2) \} 
\frac{(2-q)\int_\Omega m}{(p-q)(-\int_\Omega a)} \int_\Omega a
- (q-1)(q-2) \int_\Omega m \right) \\
& =&c_0^{-1}\int_\Omega m \left( \frac{2-q}{p-q} \right) 
\{ (q-1)(q-2) + (p-1)(p-2) - (q-1)(p-q) \} \\
& = &c_0^{-1}\int_\Omega m\left( \frac{2-q}{p-q} \right) 
(p-2)(q-p) =
-(2-q)(p-2) c_0^{-1}\int_\Omega m  < 0,
\end{eqnarray*}
where we have used again that $\varphi(c_0)=\varphi'(c_0)=0$. \\

\item From the formula 
\begin{align*}
\Phi_\lambda = \int_\Omega f_u(x,t + v)v_\lambda 
+ \int_{\partial \Omega} b g'(t + v) v_\lambda,  
\end{align*}
it follows that 
\begin{align*}
\Phi_\lambda (0,c_0) 
&= \int_\Omega f_u(x, c_0)v_\lambda (0, c_0) 
+ \int_{\partial \Omega} b g'(c_0) v_\lambda (0,c_0) \nonumber \\
&= \int_\Omega (m + (p-1) a c_0^{p-2}) v_\lambda (0, c_0) 
+ \int_{\partial \Omega} b (q-1) c_0^{q-2} v_\lambda (0,c_0) \nonumber \\
&= \int_\Omega m v_\lambda (0,c_0) + (p-1) c_0^{p-2} \int_\Omega a 
v_\lambda (0,c_0) + (q-1)c_0^{q-2} \int_{\partial \Omega} 
b v_\lambda (0,c_0). 
\end{align*}
\\

\item From (\ref{1stpro}) we get 
\begin{align*}
\begin{cases}
-\Delta v_\lambda + \frac{1}{|\Omega|} \int_{\partial \Omega} b 
[g(t + v) + \lambda g'(t + v) v_\lambda] 
= Q[f(x, t + v)] + \lambda Q[f_u(x, t + v) v_\lambda] & 
\mbox{in $\Omega$}, \\
\dfrac{\partial v_\lambda}{\partial \mathbf{n}} = b [g(t + v) + \lambda  g'(t + v) v_\lambda] & \mbox{on $\partial \Omega$}. 
\end{cases}
\end{align*}
Put $\lambda = 0$, $t = c_0$, and $v(0,c_0)=0$, to obtain
\begin{align*}
\begin{cases}
-\Delta v_\lambda (0,c_0) 
+ \frac{1}{|\Omega|} \int_{\partial \Omega} b g(c_0) 
= Q[f(x, c_0)] & \mbox{in $\Omega$}, \\ 
\dfrac{\partial v_\lambda (0,c_0)}{\partial \mathbf{n}} 
= b g(c_0) & \mbox{on $\partial \Omega$}, 
\end{cases}
\end{align*}
where $$Q[f(x,c_0)] = mc_0 + a c_0^{p-1} - \frac{1}{|\Omega|} \int_\Omega 
(mc_0 + a c_0^{p-1}).$$ It follows that 
\begin{align*}
\begin{cases}
-\Delta v_\lambda (0,c_0) 
+ \frac{1}{|\Omega|} \{ \int_\Omega (mc_0 +a c_0^{p-1}) + 
\int_{\partial \Omega} b c_0^{q-1} \} 
= mc_0 + a c_0^{p-1} & \mbox{in $\Omega$}, \\
\dfrac{\partial v_\lambda (0,c_0)}{\partial \mathbf{n}} 
= b c_0^{q-1} & \mbox{on $\partial \Omega$},  
\end{cases}
\end{align*}
and consequently 
\begin{align*}
\begin{cases}
-\Delta v_\lambda (0,c_0) 
= mc_0 +a c_0^{p-1} & \mbox{in $\Omega$}, \\ 
\dfrac{\partial v_\lambda (0,c_0)}{\partial \mathbf{n}} 
= b c_0^{q-1} & \mbox{on $\partial \Omega$},  
\end{cases}
\end{align*}
since $\varphi(c_0)=0$. Hence
\begin{align*}
\int_\Omega |\nabla v_\lambda (0,c_0)|^2 - c_0^{q-1} \int_{\partial \Omega} b 
v_\lambda (0,c_0) = \int_\Omega (mc_0 + ac_0^{p-1}) v_\lambda (0,c_0). 
\end{align*}
From (4) we get
\begin{eqnarray*}
 c_0\Phi_\lambda(0,c_0)  &=& c_0 \int_\Omega mv_\lambda(0,c_0) + (p-1)c_0^{p-1}\int_\Omega a v_\lambda(0,c_0) + (q-1)c_0^{q-1}\int_{\partial \Omega}bv_\lambda(0,c_0) \\
& =& (q-1)\int_\Omega |\nabla v_\lambda (0,c_0)|^2 + \int_\Omega \left\{ 
(2-q)c_0 m + (p-q)c_0^{p-1} a \right\} v_\lambda(0,c_0).  
\end{eqnarray*}
Since $k$ is a positive constant and $a=-km$, we have
\begin{align*}
(2-q)c_0 m + (p-q)c_0^{p-1} a &= mc_0 \left\{ (2-q) - (p-q)c_0^{p-2}k 
\right\} \\ 
& = mc_0 \left\{ 
(2-q) - (p-q) \frac{(2-q)\int_\Omega m}{(p-q)(-\int_\Omega a)}k \right\}
\\ 
& = 0. 
\end{align*}
Therefore
\begin{align*}
c_0\Phi_\lambda(0,c_0) = (q-1) \int_\Omega |\nabla v_\lambda(0,c_0)|^2. 
\end{align*}
Moreover, since $b\not\equiv 0$, $v_\lambda(0,c_0)$ is not a constant, so that $\int_\Omega |\nabla v_\lambda(0,c_0)|^2 > 0$. 
\end{enumerate} 

The proof of Lemma \ref{lem:Phi} is now complete. 
\end{proof}

Theorem \ref{thm:a=-m:bif} (2) is then a direct consequence of the following result:
\begin{prop} \label{prop:turn}
Assume  \eqref{assump:mbreg}, \eqref{1<q<2<p}, $a=-km$ for some positive constant $k$ and $$\int_\Omega m > 0 > \int_{\partial \Omega}b = -\tilde{K}_1(m,a).$$ Then there exists a constant $\varepsilon > 0$ and a $\mathcal{C}^3$ function $\lambda:(c_0-\varepsilon,c_0+\varepsilon)\rightarrow \R
$ satisfying $\lambda (c_0) = \lambda'(c_0) = 0$ and $\lambda''(c_0) > 0$, such that the set 
\begin{align*}
\{ (\lambda (t), \ t + v(\lambda (t), t)) : 
t \in (c_0-\varepsilon,c_0+\varepsilon) \}
\end{align*}
is contained in the positive solutions set of $(P_\lambda)$. Moreover, the positive solution $t + v(\lambda (t), t)$ of $(P_{\lambda (t)})$ is asymptotically stable for $c_0 < t < c_0 + \varepsilon$ and unstable for $c_0 - \varepsilon < t < c_0$. 
\end{prop}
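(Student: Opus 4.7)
The plan is to exploit the Lyapunov--Schmidt reduction set up in \eqref{1stpro}--\eqref{Phi:def} together with Lemma~\ref{lem:Phi}. Since \eqref{Philamposi} yields $\Phi_\lambda(0, c_0) > 0$, the implicit function theorem solves $\Phi(\lambda, t) = 0$ for $\lambda = \lambda(t)$ as an arbitrarily smooth function for $t$ near $c_0$, with $\lambda(c_0) = 0$. Implicit differentiation of $\Phi(\lambda(t), t) \equiv 0$ combined with Lemma~\ref{lem:Phi}(2)--(3) gives
\begin{equation*}
\lambda'(c_0) = -\frac{\Phi_t(0, c_0)}{\Phi_\lambda(0, c_0)} = 0, \qquad \lambda''(c_0) = -\frac{\Phi_{tt}(0, c_0)}{\Phi_\lambda(0, c_0)} > 0,
\end{equation*}
so in particular $\lambda(t) > 0$ for $t$ near $c_0$, $t \not= c_0$. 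Setting $u(t) := t + v(\lambda(t), t)$, one has $v(0, t) \equiv 0$ (the only zero-mean solution of \eqref{1stpro} at $\lambda = 0$ is trivial), hence $u(t) = t + O(\lambda(t)) = t + o(1)$ as $t \to c_0$; positivity of $u(t) \in \mathcal{C}^{2+\alpha}(\overline{\Omega})$ for $t$ close to $c_0$ is then immediate from $u(c_0) = c_0 > 0$ and continuity.

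For the stability dichotomy I analyze the principal eigenvalue $\gamma_1(t)$ of the linearization of $(Q_\lambda)$ at $u(t)$ through the weak bilinear form
\begin{equation*}
a_{\lambda, u}(\psi, \phi) = \int_\Omega \nabla \psi \nabla \phi - \lambda \int_\Omega [m + (p-1) a u^{p-2}] \psi \phi - \lambda \int_{\partial \Omega} (q-1) b u^{q-2} \psi \phi,
\end{equation*}
so that $\gamma_1(t) = \inf a_{\lambda(t), u(t)}(\psi, \psi)$ subject to $\langle \psi, \psi \rangle := \int_\Omega \psi^2 + \int_{\partial \Omega} \psi^2 = 1$. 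Testing with the constant $\psi \equiv 1$ (the eigenfunction at $t = c_0$), using $v(0, t) \equiv 0$ to identify the resulting quantity with $-\lambda(t)\Phi_t(0, t)$ at leading order, and Taylor-expanding via Lemma~\ref{lem:Phi}(3), one obtains
\begin{equation*}
a_{\lambda(t), u(t)}(1, 1) = -\lambda(t) \Phi_{tt}(0, c_0)(t - c_0) + O((t - c_0)^4),
\end{equation*}
which has the sign of $t - c_0$ because $\lambda(t) > 0$ and $\Phi_{tt}(0, c_0) < 0$. For $t \in (c_0 - \varepsilon, c_0)$ this already forces $\gamma_1(t) < 0$, so $u(t)$ is unstable.

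To treat $t \in (c_0, c_0 + \varepsilon)$ I decompose every trial function as $\psi = \alpha + \tilde\psi$ with $\alpha \in \R$ and $\langle 1, \tilde\psi \rangle = 0$. The spectral gap of the unperturbed problem yields $a_{\lambda(t), u(t)}(\tilde\psi, \tilde\psi) \geq (\mu_2 - C\lambda(t)) \langle \tilde\psi, \tilde\psi \rangle$ for some $\mu_2 > 0$, while Cauchy--Schwarz gives $|a_{\lambda(t), u(t)}(1, \tilde\psi)| \leq C\lambda(t) \langle \tilde\psi, \tilde\psi \rangle^{1/2}$. Since $a_{\lambda(t), u(t)}(1, 1) \geq c(t - c_0)^3$ with some $c > 0$ while $\lambda(t)^2 = O((t - c_0)^4)$, the discriminant of the resulting $2 \times 2$ quadratic form in $(\alpha, \langle \tilde\psi, \tilde\psi \rangle^{1/2})$ becomes strictly positive for $t$ close enough to $c_0$, delivering $\gamma_1(t) > 0$ and asymptotic stability. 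The main obstacle is precisely this stability direction: because $\lambda(t)$, $a_{\lambda(t), u(t)}(1, 1)$, and the first-order variations of $\gamma_1$ at $c_0$ all vanish, the usual simple-eigenvalue perturbation argument does not apply, so one must work with the full quadratic form, carefully balancing the cubic smallness of the constant-mode contribution against the quartic smallness of the cross terms.
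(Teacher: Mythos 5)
Your argument is correct, and for the stability dichotomy it takes a genuinely different route from the paper. The first half (implicit function theorem via $\Phi_\lambda(0,c_0)>0$, the formulas $\lambda'(c_0)=-\Phi_t/\Phi_\lambda$ and $\lambda''(c_0)=-\Phi_{tt}/\Phi_\lambda$, and positivity of $t+v(\lambda(t),t)$ by continuity) coincides with the paper's proof. For stability, the paper instead differentiates the linearized eigenvalue problem \eqref{prob:eigen140829} three times in $t$, showing successively $\gamma_1(c_0)=\gamma_1'(c_0)=\gamma_1''(c_0)=0$ and $\gamma_1'''(c_0)>0$ (the last step again invoking $a=-km$ and $\int_{\partial\Omega}b=-\tilde K_1$), and reads off the sign of $\gamma_1(t)$ from the cubic leading term. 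You work instead with the Rayleigh quotient: the expansion $a_{\lambda(t),u(t)}(1,1)=-\lambda(t)\Phi_{tt}(0,c_0)(t-c_0)+O((t-c_0)^4)$ is legitimate because $v(0,t)\equiv 0$ and $v_t(0,t)\equiv 0$ force $v,v_t=O(\lambda)$, so the discrepancy between $a(1,1)$ and $-\lambda\Phi_t$ is $O(\lambda^2)=O((t-c_0)^4)$, dominated by the cubic main term; testing with the constant then gives instability for $t<c_0$ immediately, and for $t>c_0$ your orthogonal splitting $\psi=\alpha+\tilde\psi$, the spectral gap $\mu_2>0$ of the unperturbed problem $-\Delta\psi=\mu\psi$ in $\Omega$, $\partial_{\mathbf n}\psi=\mu\psi$ on $\partial\Omega$, and the bound $|a(1,\tilde\psi)|\leq C\lambda\langle\tilde\psi,\tilde\psi\rangle^{1/2}$ do yield a positive-definite $2\times2$ form since $(C\lambda)^2=O((t-c_0)^4)$ is beaten by $a(1,1)\,\mu_2\gtrsim (t-c_0)^3$. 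This variational argument buys you a shorter and more robust proof that avoids the paper's lengthy computation of $\gamma_1'''(c_0)$ (and the attendant differentiation of the eigenfunction), at the cost of invoking the min--max characterization of the principal eigenvalue of the generalized (self-adjoint) eigenvalue problem with the eigenvalue appearing both in $\Omega$ and on $\partial\Omega$; that characterization is standard here since the relevant bilinear forms are symmetric and the embedding of $H^1(\Omega)$ into $L^2(\Omega)\times L^2(\partial\Omega)$ is compact. Two cosmetic points: the linearization you write is that of $(P_\lambda)$ with $a=-km$ (not literally $(Q_\lambda)$, which is the case $k=1$), and to conclude $\gamma_1(t)>0$ rather than merely $\gamma_1(t)\geq 0$ you should note that positive definiteness of the $2\times2$ matrix gives a quantitative lower bound $a(\psi,\psi)\geq\delta\langle\psi,\psi\rangle$ with $\delta>0$ depending on $t$.
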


\begin{rem}{\rm 
From \eqref{c0} note that if $a=-km$ then $c_0 = c_0(k) = \left( \frac{2-q}{(p-q)k}\right)^{\frac{1}{p-2}}$. It follows that $k \mapsto c_0(k)$ is decreasing, $\lim_{k \to 0^+} c_0(k)=\infty$ and $\lim_{k \to \infty} c_0(k)=0$.
}\end{rem}

\begin{proof} 
%
By (\ref{Philamposi}) and the implicit function theorem,  we deduce that there exists a $\mathcal{C}^3$ function $t \mapsto \lambda (t)$  such that 
\begin{align*}
\Phi (\lambda, t) = 0 \ \ \mbox{for $(\lambda, t)$ close to $(0,c_0)$} \Longleftrightarrow 
(\lambda, t) = (\lambda (t), t) \ \ \mbox{for $t$ close to $c_0$}.
\end{align*}
From Lemma \ref{lem:Phi} (1) we have $\lambda (c_0) = 0$, whereas  from Lemma \ref{lem:Phi} (2), (3), and (\ref{Philamposi}), we have
\begin{align*}
\lambda'(c_0) = - \frac{\Phi_t (0,c_0)}{\Phi_\lambda (0,c_0)} = 0, 
\quad 
\lambda''(c_0) = -\frac{\Phi_{tt} (0,c_0)}{\Phi_\lambda (0,c_0)} 
>0, 
\end{align*}
as desired. 
 
We prove now the stability result. Recall $c_0$ is the unique zero of $\varphi$, given by \eqref{ephi}, as well as its global maximum point. So we have
\begin{align} \label{eq:140828c0i}
& c_0^{q-2}\varphi (c_0) =  \int_\Omega m + c_0^{p-2} \int_\Omega a + c_0^{q-2}\int_{\partial \Omega}b=0, \\
& c_0^{q-1}\varphi' (c_0)= (2-q)\int_\Omega m + (p-q)c_0^{p-2} \int_\Omega a = 0. \label{eq:140828c0ii}
\end{align} 
We let $w(t):= t + v(\lambda (t), t)$, and consider the stability of $(\lambda (t), w(t))$, $|t-c_0|<\varepsilon$. To this end, we study the linearized eigenvalue problem at $(\lambda (t), w(t))$, which is given by
\begin{align} \label{prob:eigen140829}
\begin{cases} 
-\Delta \psi = \lambda m \psi + \lambda a (p-1) w^{p-2}\psi + \gamma \psi & \mbox{in $\Omega$}, \\ 
\partial_{\mathbf{n}}\psi = \lambda b (q-1) w^{q-2}\psi + \gamma \psi & \mbox{on $\partial \Omega$}.
\end{cases}
\end{align}
Let $\gamma_1 = \gamma_1(t)$ be its smallest eigenvalue, which is simple, and $\psi_1 = \psi_1(t)$ be the positive eigenfunction associated to $\gamma_1$ satisfying $ \int_\Omega \psi_1^2 + \int_{\partial \Omega} \psi_1^2=1$. 

First, we claim that
\begin{align} \label{val140829gamm10}
\gamma_1(c_0) = 0, \quad \mbox{and} \ \ \psi_1(c_0)\equiv \left( \frac{1}{|\Omega|+|\partial \Omega|}\right)^{1/2}. 
\end{align}
Indeed, putting $t=c_0$ in \eqref{prob:eigen140829}, we have 
\begin{align*}
\begin{cases}
-\Delta \psi_1(c_0) = \gamma_1(c_0) \psi_1(c_0) & \mbox{in $\Omega$}, \\ 
\partial_{\mathbf{n}}\psi_1 = \gamma_1(c_0) \psi_1(c_0) & \mbox{on $\partial \Omega$}. 
\end{cases}
\end{align*}
By uniqueness, it follows that $\gamma_1(c_0)=0$ and 
$$
\psi_1(c_0)\equiv \left( \frac{1}{|\Omega|+|\partial \Omega|}\right)^{1/2}, 
$$
as claimed. 

Second, we show that
\begin{align} \label{val140829gamm11}
\gamma_1'(c_0) =0, \quad \mbox{and} \ \ \psi_1'(c_0)= 0.
\end{align}
To this end, we differentiate \eqref{prob:eigen140829} with respect to $t$ (with $\gamma=\gamma_1$ and $\psi=\psi_1$) to obtain 
\begin{align} \label{prob:140828psi1pri}
\begin{cases}
-\Delta \psi_1' = m\left(\lambda' \psi_1 + \lambda \psi_1' \right) +(p-1)a\left( \lambda' w^{p-2}\psi_1 + \lambda (p-2) w^{p-3} w'\psi_1  + \lambda  w^{p-2} \psi_1'\right) \\
\hspace*{2cm} + \gamma_1' \psi_1 + \gamma_1 \psi_1' & \mbox{in $\Omega$}, \\ 
\partial_{\mathbf{n}}\psi_1' = b (q-1)\left(\lambda' w^{q-2}\psi_1 + \lambda  (q-2)w^{q-3}w' \psi_1 + \lambda  w^{q-2}\psi_1' \right)
+ \gamma_1' \psi_1 + \gamma_1 \psi_1' & \mbox{on $\partial \Omega$}. 
\end{cases}
\end{align}
Taking $t=c_0$  in \eqref{prob:140828psi1pri} we get
\begin{align*}
\begin{cases}
-\Delta \psi_1'(c_0) = \gamma_1'(c_0) \psi_1(c_0) & \mbox{in $\Omega$}, \\ 
\partial_{\mathbf{n}}\psi_1'(c_0) = \gamma_1'(c_0) \psi_1(c_0) & \mbox{on $\partial \Omega$}. 
\end{cases}
\end{align*}
By Green's formula, we deduce that
\begin{align*}
& \int_\Omega (\gamma_1'(c_0) \psi_1(c_0)) + \int_{\partial \Omega} (\gamma_1'(c_0) \psi_1(c_0)) = 0, 
\end{align*}
so that $\gamma_1'(c_0) \psi_1(c_0) (|\Omega|+|\partial \Omega|)=0$. Hence, we have $\gamma_1'(c_0) = 0$, and consequently $\psi_1'(c_0)$ is a constant. On the other hand, differentiating $\int_\Omega \psi_1^2 + \int_{\partial \Omega} \psi_1^2 = 1$ with respect to $t$ we obtain 
\begin{align} \label{eq:140828psipsipri}
\int_\Omega \psi_1 \psi_1' + \int_{\partial \Omega} \psi_1 \psi_1' = 0. 
\end{align}
Thus we have $\psi_1(c_0) \psi_1'(c_0) (|\Omega|+|\partial \Omega|) = 0$, which implies $\psi_1'(c_0) = 0$, as desired.

Third, we verify that
\begin{align} \label{val140829gamm12}
\gamma_1''(c_0)=0.
\end{align}
Differentiating \eqref{prob:140828psi1pri} with respect to $t$, we obtain
\begin{align} \label{prob140828psi13}
\begin{cases}
-\Delta \psi_1'' = m\left( \lambda''  \psi_1 + 2\lambda'  \psi_1' + \lambda  \psi''\right) +(p-1)a\left(  \lambda'' w^{p-2}\psi_1 + \lambda' (w^{p-2}\psi_1)' \right)& \\ 
\hspace*{1.5cm} + \left[\lambda a (p-1)\left((p-2) w^{p-3} w' \psi_1 +  w^{p-2} \psi_1'\right)\right]' + \gamma_1'' \psi_1 + 2 \gamma_1' \psi_1' + \gamma_1 \psi_1'' & \mbox{in $\Omega$}, \\ 
\partial_{\mathbf{n}}\psi_1'' = b (q-1)\left[\lambda''  w^{q-2}\psi_1 + \lambda'  (w^{q-2}\psi_1)' \right]  + \left[\lambda b (q-1)\left( (q-2)w^{q-3}w' \psi_1 +  w^{q-2}\psi_1'\right)\right]' & \\
\hspace*{1.5cm} + \gamma_1'' \psi_1 + 2\gamma_1' \psi_1' + \gamma_1 \psi_1'' & \mbox{on $\partial \Omega$}. 
\end{cases}
\end{align}
Taking $t=c_0$ we get
\begin{align*}
\begin{cases}
-\Delta \psi_1''(c_0) = \lambda''(c_0) m \psi_1(c_0) + \lambda''(c_0) a (p-1) 
(w(c_0))^{p-2} \psi_1(c_0) + \gamma_1''(c_0) \psi_1(c_0) & \mbox{in $\Omega$}, \\ 
\partial_{\mathbf{n}}\psi_1''(c_0) = \lambda''(c_0) b (q-1) (w(c_0))^{q-2}\psi_1(c_0) + \gamma_1''(c_0) \psi_1(c_0) & \mbox{on $\partial \Omega$}. 
\end{cases}
\end{align*}
Since $\psi_1(c_0)$ is a positive constant, it follows by Green's formula that 
\begin{align*}
& \lambda''(c_0)\left( 
\int_\Omega m + (p-1) \int_\Omega a(w(c_0))^{p-2} + (q-1)\int_{\partial \Omega}
b(w(c_0))^{q-2} \right) \\
& \hspace*{1.5cm}+ \gamma_1''(c_0) (|\Omega|+|\partial \Omega|) = 0.
\end{align*}
Note that $w(c_0)=c_0$, which combined with \eqref{eq:140828c0i} and \eqref{eq:140828c0ii} implies 
\begin{align} \label{eq1408281842}
\int_\Omega m + (p-1) \int_\Omega a(w(c_0))^{p-2} + (q-1)\int_{\partial \Omega}
b(w(c_0))^{q-2} = 0,
\end{align}
and \eqref{val140829gamm12} follows. 

Finally, we verify that
\begin{align} \label{140829gamm13posi} 
\gamma_1'''(c_0) > 0. 
\end{align}
We differentiate once more \eqref{prob140828psi13} with respect to $t$ and take $t=c_0$. Since $\lambda (c_0)=\lambda'(c_0)=0$ and $\psi_1'(c_0)=0$ we deduce that
\begin{align*}
\begin{cases}
-\Delta \psi_1'''(c_0) = \lambda'''(c_0)m\psi_1(c_0)  + \lambda'''(c_0)a(p-1) (w(c_0))^{p-2} \psi_1(c_0) & \\
\hspace*{2cm} + 3\lambda''(c_0)a(p-1)(p-2) (w(c_0))^{p-3}w'(c_0)\psi_1(c_0) + \gamma_1'''(c_0)\psi_1(c_0) & \mbox{in $\Omega$}, \\
\partial_{\mathbf{n}}\psi_1'''(c_0) = \lambda'''(c_0)b(q-1)(w(c_0))^{q-2}\psi_1(c_0)  + 3 \lambda''(c_0)b(q-1)(q-2)(w(c_0))^{q-3}w'(c_0)\psi_1(c_0) 
& \\
\hspace{2cm} + \gamma_1'''(c_0)\psi_1(c_0) & \mbox{on $\partial \Omega$}. 
\end{cases}
\end{align*}
Since $w(c_0)=c_0$ and $\psi_1(c_0)$ is a positive constant, by Green's formula, we deduce that
\begin{align*}
0&= \gamma_1'''(c_0) (|\Omega|+|\partial \Omega|) + \lambda'''(c_0) \left\{ \int_\Omega \left( m + (p-1)a c_0^{p-2} \right) + (q-1) c_0^{q-2} \int_{\partial \Omega}b \right\} \\
&+ 3\lambda''(c_0) \left\{ (p-1)(p-2)c_0^{p-3} \int_\Omega a w'(c_0) + (q-1)(q-2) c_0^{q-3} \int_{\partial \Omega}b w'(c_0) \right\}. 
\end{align*}
Using \eqref{eq1408281842}, we see that the second term on the right hand side vanishes. In order to deal with the third term, we consider $w'(c_0)$. Note that, from the definition of $w$, we have $w'(t) = 1 + v_\lambda \lambda' + v_t$. Since $\lambda'(c_0)=0$, we have then $w'(c_0) = 1 + v_t(0,c_0) = 1$, where we have used \eqref{eq140829vt0}. From $a=-km$, it follows that 
$$
 \gamma_1'''(c_0) (|\Omega|+|\partial \Omega|) \\
 = 3\lambda''(c_0) \left\{ k(p-1)(p-2)c_0^{p-3} \int_\Omega m + (q-1)(2-q) c_0^{q-3} \int_{\partial \Omega}b \right\}.
$$

Now, we recall the assumption
\begin{align} \label{eq140829bm} 
\int_{\partial \Omega}b = -\tilde{K}_1(m,-km),
\end{align}
which yields $\int_{\partial \Omega}b = -\tilde{C}_{pq}k^{\frac{q-2}{p-2}}\int_\Omega m$, where 
\begin{align*}
\tilde{C}_{pq} = \left( \frac{q}{2} \left( \frac{p}{2} \right)^{\frac{2-q}{p-2}} \right)^{-1} \frac{q(p-2)}{2(p-q)} \left( \frac{p(2-q)}{2(p-q)} \right)^{\frac{2-q}{p-2}} = \frac{p-2}{p-q} \left( \frac{2-q}{p-q}\right)^{\frac{2-q}{p-2}}.  
\end{align*}
Moreover, from \eqref{eq:140828c0ii} we also have
\begin{align*}
c_0^{p-q} = k^{\frac{q-p}{p-2}}\left( \frac{2-q}{p-q}\right)^{\frac{p-q}{p-2}}. \end{align*}
Thus, from \eqref{eq140829bm} we deduce that
\begin{eqnarray*}
 \gamma_1'''(c_0) (|\Omega|+|\partial \Omega|)  
& =& 3\lambda''(c_0) c_0^{q-3} \left\{ (p-1)(p-2)c_0^{p-q}\int_\Omega m + (q-1)(2-q) \int_{\partial \Omega}b \right\}  \\
& =& 3\lambda''(c_0) c_0^{q-3} \left( \int_\Omega m \right) k^{\frac{q-2}{p-2}}\left( \frac{2-q}{p-q}\right)^{\frac{2-q}{p-2}} (p-2)(2-q). 
\end{eqnarray*}
From $\lambda''(c_0)>0$ and $\int_\Omega m > 0$, we infer \eqref{140829gamm13posi}. 

Summing up, from \eqref{val140829gamm10}, \eqref{val140829gamm11}, \eqref{val140829gamm12}, and \eqref{140829gamm13posi}, the desired conclusion follows. 

The proof of Proposition \ref{prop:turn} is now complete. 
\end{proof} 

\bigskip

\appendix 

\section{A comparison principle for mixed Dirichlet and Neumann nonlinear boundary conditions}\label{sec:a} 
In this Appendix, we provide a variant of the comparison principle 
proved by Ambrosetti, Brezis and Cerami \cite[Lemma 3.3]{ABC} to mixed Dirichlet and Neumann nonlinear boundary conditions. We consider the general boundary value problem with mixed nonlinear boundary conditions 
\begin{align} \label{gmp}
\begin{cases}
-\Delta u = f(x,u) & \mbox{in $D$}, \\
\frac{\partial u}{\partial \mathbf{n}} = g(x,u) & \mbox{on $\Gamma_1$}, \\
u=0 & \mbox{on $\Gamma_0$}, 
\end{cases}
\end{align}
where:
\begin{itemize}
\item $D$ is a bounded domain of $\R^N$ with smooth boundary $\partial D$.
\item $\Gamma_0, \Gamma_1 \subset \partial D$ are disjoint, open, and smooth $(N-1)$ dimensional surfaces of $\partial D$.
\item $\overline{\Gamma_0}, \overline{\Gamma_1}$ are compact manifolds with $(N-2)$ dimensional closed boundary $\gamma = \overline{\Gamma_0} \cap \overline{\Gamma_1}$ such that $\partial D = \Gamma_0 \cup \gamma \cup \Gamma_1$.
\item $f: \overline{\Omega}\times [0,\infty) \to \R$ and $g: \overline{\Gamma_1} \times [0,\infty) \to \R$ are continuous.\\
\end{itemize}

\begin{prop} \label{app:prop:comparison}
Under the above conditions, assume that for every $x \in D$, $t \mapsto \frac{f(x,t)}{t}$ is decreasing in $(0,\infty)$, and for every $x \in \Gamma_1$, $t \mapsto \frac{g(x,t)}{t}$ is non-increasing in $(0,\infty)$. Let $u,v \in H^1 (D) \cap \mathcal{C}(\overline{D})$ be non-negative functions satisfying $u= 0\leq v$ on $\Gamma_0$, and 
\begin{align*}
\int_D \nabla u \nabla \varphi - \int_D f(x,u) \varphi - \int_{\Gamma_1} g(x,u) \varphi \leq \ 0, \quad \forall \varphi \in H_{\Gamma_0}^1(D) \ \ \mbox{such that} \ \ \varphi \geq 0, 
\end{align*}
\begin{align*}
\int_D \nabla v \nabla \varphi - \int_D f(x,v) \varphi - \int_{\Gamma_1} g(x,v) \varphi \geq \ 0, \quad \forall \varphi \in H_{\Gamma_0}^1(D) \ \ \mbox{such that} \ \ \varphi \geq 0.  
\end{align*}
If $u, v > 0$ in $D$, then $u\leq v$ in $\overline{D}$. 
\end{prop}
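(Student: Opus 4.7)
The plan is to adapt the Brezis--Oswald test-function argument used in \cite[Lemma 3.3]{ABC}, engineered to exploit the monotonicity of $f(x,t)/t$ and $g(x,t)/t$. Setting $w := (u^2 - v^2)^+$, I would plug
\[
\varphi_1 := \frac{w}{u} \quad \text{into the inequality for } u, \qquad \varphi_2 := \frac{w}{v} \quad \text{into the inequality for } v.
\]
Both are nonnegative, and they vanish on $\Gamma_0$ because $u=0$ and $v\ge 0$ there force $w=0$ on $\Gamma_0$. Subtracting the two inequalities yields
\[
\int_D \bigl(\nabla u \cdot \nabla \varphi_1 - \nabla v \cdot \nabla \varphi_2 \bigr)
\; \le \; \int_D \Bigl(\tfrac{f(x,u)}{u} - \tfrac{f(x,v)}{v}\Bigr) w \; + \int_{\Gamma_1} \Bigl(\tfrac{g(x,u)}{u} - \tfrac{g(x,v)}{v}\Bigr) w.
\]
By the strict decrease of $t\mapsto f(x,t)/t$ and the non-increase of $t\mapsto g(x,t)/t$, the right-hand side is $\le 0$, with a strictly negative volume contribution on any subset of $\{u>v\}$ of positive measure.

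The key algebraic observation is a Picone-type identity: on $\{u>v\}$ a direct computation (expanding $\varphi_1 = u - v^2/u$ and $\varphi_2 = u^2/v - v$) gives
\[
\nabla u \cdot \nabla \varphi_1 - \nabla v \cdot \nabla \varphi_2 \;=\; \Bigl|\tfrac{v}{u}\nabla u - \nabla v\Bigr|^2 + \Bigl|\nabla u - \tfrac{u}{v}\nabla v\Bigr|^2 \;\ge\; 0.
\]
Combining this with the previous display forces both integrals on the right-hand side to vanish; the strict monotonicity of $f(x,\cdot)/\cdot$ then gives $|\{u>v\}|=0$, and the continuity of $u,v$ on $\overline{D}$ upgrades this to $u\le v$ on $\overline{D}$.

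The main technical obstacle is checking that $\varphi_1,\varphi_2$ are legitimate $H^1_{\Gamma_0}(D)$ test functions, since $u$ and $v$ are assumed positive only \emph{inside} $D$ and may vanish on $\Gamma_1$ or along $\gamma$, making the ratios $v^2/u$ and $u^2/v$ potentially singular near the boundary. I would handle this by a standard regularization: replace $u,v$ in the denominators by $u+\varepsilon,v+\varepsilon$, define $\varphi_{i,\varepsilon}$ accordingly, verify the resulting functions lie in $H^1_{\Gamma_0}(D)$ (they are bounded, vanish on $\Gamma_0$, and have gradients controlled by $|\nabla u|+|\nabla v|$), derive the subtracted inequality for $\varphi_{i,\varepsilon}$, and then pass to the limit $\varepsilon\to 0^+$. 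The limit in the volume and $\Gamma_1$ integrals follows from dominated convergence, using the continuity of $u,v$ on $\overline{D}$ and the continuity of $f,g$; the non-negative Picone-type lower bound for the gradient terms is preserved in the limit by Fatou's lemma on $\{u>v\}$, which suffices to close the argument.
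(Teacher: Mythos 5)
Your pointwise identity on $\{u>v\}$ is correct, and the Brezis--Oswald/D\'iaz--Saa ``hidden convexity'' route is in principle a legitimate alternative to the truncation argument of Ambrosetti, Brezis and Cerami that the paper actually follows. But the step you flag as ``the main technical obstacle'' is not a side issue here: it is the crux, and the proposed fix does not close it. Note first that $\varphi_1=(u^2-v^2)^+/u$ is harmless, since $v/u<1$ on $\{u>v\}$ gives $0\le\varphi_1\le u$ and $|\nabla\varphi_1|\le 2(|\nabla u|+|\nabla v|)$. The danger is entirely in $\varphi_2=(u^2-v^2)^+/v$: the hypotheses only give $v>0$ \emph{inside} $D$, and in the very applications of this proposition in Section 5 the supersolution $v=u_\lambda$ vanishes identically on $\Gamma_1$ while the subsolution $u$ is strictly positive there. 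Then $\{u>v\}$ reaches $\Gamma_1$, the ratio $u/v$ is unbounded, and $\varphi_2$ need not be bounded nor in $H^1_{\Gamma_0}(D)$ (its gradient contains $\frac{u^2}{v^2}\nabla v$ and $\frac{u}{v}\nabla u$). After replacing the denominators by $u+\varepsilon$, $v+\varepsilon$ the test functions become admissible, but the sum-of-squares identity is destroyed: $\nabla u\cdot\nabla\varphi_{1,\varepsilon}-\nabla v\cdot\nabla\varphi_{2,\varepsilon}$ is no longer pointwise nonnegative, so Fatou's lemma does not apply to it; the limiting squares such as $\bigl|\nabla u-\frac{u}{v}\nabla v\bigr|^2$ may fail to be integrable, so dominated convergence does not apply either; and the boundary term $\int_{\Gamma_1}\frac{g(x,v)}{v+\varepsilon}\,(u^2-v^2)^+$ can diverge as $\varepsilon\to0^+$ on $\{v=0,\ u>0\}\cap\Gamma_1$. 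Without an extra comparability hypothesis such as $u/v\in L^\infty(D)$ --- which is exactly what makes the D\'iaz--Saa scheme run, and which is not available here --- the limit passage does not go through as described.

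The paper avoids all of this by testing with $v\,\theta_\varepsilon(u-v)$ and $u\,\theta_\varepsilon(u-v)$, where $\theta_\varepsilon$ is a smoothed Heaviside function. These are bounded elements of $H^1_{\Gamma_0}(D)$ involving no division; the quotients $f(x,u)/u$, $f(x,v)/v$, $g(x,u)/u$, $g(x,v)/v$ only ever appear multiplied by $uv\,\theta_\varepsilon(u-v)$, which kills the singularity (the boundary term on $\Gamma_1$ is handled by restricting to $\{u,v>0\}$ and using $g(x,0)\ge 0$); and the gradient contribution is absorbed into $\gamma_\varepsilon(t)=\int_0^t s\,\theta_\varepsilon'(s)\,ds\le\varepsilon$, which is then itself used as a test function before letting $\varepsilon\to0^+$. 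If you wish to keep your route you must either establish $u\le Cv$ a priori or redesign the regularization so that a sign-definite gradient expression survives the limit; as written, the proof is incomplete at its decisive step.
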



\begin{proof} 
Let $\theta: \R \rightarrow \R$,  be a nonnegative nondecreasing smooth function such that $\theta (t)=0$ for $t\leq 0$ and $\theta (t) = 1$ for $t\geq 1$. For $\varepsilon > 0$ we set 
$\theta_\varepsilon (t) = \theta (t/\varepsilon)$. Since $u-v \leq 0$ on $\Gamma_0$, we have $v \theta_\varepsilon (u-v) \in H_{\Gamma_0}^1(D)$, so that 
\begin{align} \label{u:ineq}
\int_D \nabla u \nabla (v \theta_\varepsilon (u-v)) - \int_D f(x,u)v \theta_\varepsilon (u-v) - \int_{\Gamma_1} g(x,u) v \theta_\varepsilon (u-v) \leq 0. 
\end{align}
Likewise, since $u \theta_\varepsilon (u-v) \in H_{\Gamma_0}^1(D)$, 
we have 
\begin{align} \label{v:ineq}
\int_D \nabla v \nabla (u \theta_\varepsilon (u-v)) - \int_D f(x,v)u \theta_\varepsilon (u-v) - \int_{\Gamma_1} g(x,v) u \theta_\varepsilon (u-v) \geq 0. 
\end{align}
Let $\Gamma_1^+ = \{ x \in \Gamma_1 : u, v > 0 \}$. 
Since $t \mapsto \frac{g(x,t)}{t}$ is non-increasing in $(0,\infty)$, we have $g(x,0)\geq 0$, which combined with \eqref{u:ineq} and \eqref{v:ineq} yields
\begin{align*}
& \int_D  u \theta_\varepsilon^\prime (u-v) \nabla v (\nabla u - \nabla v) 
- \int_D  v \theta_\varepsilon^\prime (u-v) \nabla u (\nabla u - \nabla v) 
\\
& \geq \int_D uv \left( \frac{f(x,v)}{v} - \frac{f(x,u)}{u} \right) 
\theta_\varepsilon (u-v) 
+ \int_{\Gamma_1^+} uv \left( \frac{g(x,v)}{v} - \frac{g(x,u)}{u} \right) \theta_\varepsilon (u-v) \\
& \geq \int_D uv \left( \frac{f(x,v)}{v} - \frac{f(x,u)}{u} \right) 
\theta_\varepsilon (u-v). 
\end{align*}
From $-\int_D u \theta_\varepsilon^\prime (u-v) |\nabla (u-v)|^2 \leq 0$, it follows that  
\begin{align} \label{ineq140826}
\int_D  (u-v) \theta_\varepsilon^\prime (u-v) \nabla u\nabla (u-v) 
\geq \int_D uv \left( \frac{f(x,v)}{v} - \frac{f(x,u)}{u} \right) 
\theta_\varepsilon (u-v). 
\end{align}

Now, we introduce $\gamma_\varepsilon (t) = \int_0^t s \theta_\varepsilon^\prime (s) ds$ for $t\in \R$. We have then $0\leq \gamma_\varepsilon (t) \leq \varepsilon$, $t \in \R$. Note that $\nabla (\gamma_\varepsilon (u-v)) = (u-v) \theta_\varepsilon^\prime (u-v) \nabla (u-v)$. Hence, from \eqref{ineq140826} we deduce that 
\begin{align*}
\int_D \nabla u \nabla (\gamma_\varepsilon (u-v)) 
\geq \int_D uv \left( \frac{f(x,v)}{v} - \frac{f(x,u)}{u} \right) 
\theta_\varepsilon (u-v). 
\end{align*}
Now, since $\gamma_\varepsilon (u-v) \in H_{\Gamma_0}^1(D)$ and 
$\gamma_\varepsilon (u-v) \geq 0$, we note that
\begin{align*}
\int_D \nabla u \nabla (\gamma_\varepsilon (u-v)) 
- \int_D f(x,u) \gamma_\varepsilon (u-v) 
- \int_{\Gamma_1} g(x,u) \gamma_\varepsilon (u-v) \leq 0, 
\end{align*}
and combining the two latter assertions, we get
\begin{align*}
\int_D f(x,u) \gamma_\varepsilon (u-v) + \int_{\Gamma_1} g(x,u) \gamma_\varepsilon (u-v) \geq \int_D uv \left( \frac{f(x,v)}{v} - \frac{f(x,u)}{u} \right) \theta_\varepsilon (u-v). 
\end{align*}
Since $\gamma_\varepsilon (t)\leq \varepsilon$, there exists a constant $C>0$ such that 
\begin{align} \label{ineq:Cvarep}
C\varepsilon \geq \int_D uv \left( \frac{f(x,v)}{v} - \frac{f(x,u)}{u} \right) 
\theta_\varepsilon (u-v). 
\end{align}
Since $t \mapsto \frac{f(x,t)}{t}$ is decreasing in $(0,\infty)$, 
we use Fatou's lemma to deduce from \eqref{ineq:Cvarep} that  
\begin{align*}
\int_D \liminf_{\varepsilon \to 0^+}\, uv \left( \frac{f(x,v)}{v} - \frac{f(x,u)}{u} \right) \theta_\varepsilon (u-v) \leq 0. 
\end{align*}
Note that
\begin{align*}
\lim_{\varepsilon \to 0^+} \theta_\varepsilon (u-v) = \left\{ 
\begin{array}{ll}
1, & u > v, \\
0, & u\leq v, 
\end{array} \right. 
\end{align*}
so that 
\begin{align*}
\int_{\{ u>v \}} uv \left( \frac{f(x,v)}{v} - \frac{f(x,u)}{u} \right) \leq 0. 
\end{align*}
Using again that $t \mapsto \frac{f(x,t)}{t}$ is decreasing in $(0,\infty)$, we conclude $|\{ u>v \}|=0$, which implies $u\leq v$ a.e.\ in $D$. By continuity, the desired conclusion follows. 
\end{proof}

\section{Positivity of nontrivial non-negative weak solutions in the one-dimensional case}  \label{sec:posi}

In this Appendix, we show the positivity of nontrivial non-negative weak solutions for the one-dimensional case of $(P_\lambda)$.   We take $\Omega = I = (0,1)$ and show that  under some regularity assumptions on $m$ and $a$ a nontrivial non-negative solution satisfies $u > 0$ on $\overline{I}$. More precisely, we consider nontrivial non-negative weak solutions of the problem 
\begin{align} \label{p:1dim}
\begin{cases}
-u'' = \lambda (m(x)u + a(x) u^{p-1}) & \mbox{in $I$}, \\ 
-u'(0)=\lambda b_0 u(0)^{q-1}, & \\
u'(1) = \lambda b_1 u(1)^{q-1}, & 
\end{cases}
\end{align}
where $1<q<2<p$, $m, a \in \mathcal{C}^1(\overline{I})$, and $b_0, b_1 \in \R$. A non-negative function $u \in H^1(I)$ is a non-negative weak solution of \eqref{p:1dim} if it satisfies 
\begin{align*}
\int_I u'\phi' = \lambda \left( b_0 u(0)^{q-1}\phi (0) + b_1 u(1)^{q-1}\phi (1) \right) + \lambda \int_I (mu + au^{p-1})\phi , \quad \forall \phi \in H^1(I). 
\end{align*}

We prove here the following:

\begin{prop} \label{p:positivedef}
Let $b_0, b_1 \in \R$ be arbitrary. Then any nontrivial non-negative weak solution $u$ of \eqref{p:1dim} satisfies $u>0$ in $\overline{I}$. 
\end{prop}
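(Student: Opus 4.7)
The plan is to combine one-dimensional regularity with a Cauchy-type uniqueness argument for the associated ODE. First, I would upgrade the regularity of $u$: since $u \in H^1(I) \hookrightarrow \mathcal{C}(\overline{I})$, the right-hand side $\lambda(m(x)u + a(x)u^{p-1})$ lies in $\mathcal{C}(\overline{I})$, so by standard bootstrap $u \in \mathcal{C}^2(\overline{I})$ and satisfies the equation and both nonlinear boundary conditions in the classical sense. In particular $u$ attains its values (including possible zeros) as a $\mathcal{C}^1$ function on $[0,1]$.

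Next, I would argue by contradiction: suppose $u(x_0) = 0$ for some $x_0 \in \overline{I}$. I claim that also $u'(x_0) = 0$. Indeed, if $x_0 \in (0,1)$, then $x_0$ is an interior minimum of the non-negative function $u$, so $u'(x_0)=0$. If $x_0 = 0$ (respect.\ $x_0 = 1$), the boundary condition yields $u'(0) = -\lambda b_0 u(0)^{q-1} = 0$ (respect.\ $u'(1)= \lambda b_1 u(1)^{q-1} = 0$), because $q-1 > 0$ makes $0^{q-1}=0$.

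The key step is then to apply uniqueness for the ODE with the Cauchy data $u(x_0) = u'(x_0) = 0$. Rewrite the equation as the linear second-order ODE
\[
u''(x) + c(x)\,u(x) = 0, \qquad c(x) := \lambda m(x) + \lambda a(x) u(x)^{p-2}.
\]
Since $p>2$, the function $t \mapsto t^{p-2}$ is continuous at $0$, so $c \in \mathcal{C}(\overline{I})$ (using $u \in \mathcal{C}(\overline{I})$ and $m,a \in \mathcal{C}^1(\overline{I})$). With a continuous coefficient $c$, the linear IVP for $v'' + c(x)v = 0$ with $v(x_0) = v'(x_0) = 0$ has the unique solution $v \equiv 0$ on $[0,1]$ by Picard--Lindel\"of (or equivalently by Gronwall applied to $|u| + |u'|$). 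Hence $u \equiv 0$, contradicting nontriviality; therefore $u > 0$ on $\overline{I}$.

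The main subtlety is the choice of linearization: one might worry that the nonlinearity $u^{p-1}$ is not globally Lipschitz in $u$, but $p>2$ is precisely what ensures that the factored coefficient $a(x)u(x)^{p-2}$ remains continuous (in fact $\mathcal{C}^0$) even at a zero of $u$, so the argument reduces cleanly to uniqueness for a linear ODE with bounded continuous coefficient. Note that the condition $q>1$ (not merely $q\ge 0$) is also essential, as it forces the boundary derivatives to vanish at any boundary zero of $u$; without this, the boundary condition could be compatible with $u(0)=0$ and $u'(0)\ne 0$ and the argument would break down.
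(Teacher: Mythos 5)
Your proof is correct, and it takes a genuinely different route from the paper's. The paper also begins by upgrading regularity (to $H^2(I)\subset \mathcal{C}^1(\overline{I})$ and then $\mathcal{C}^2(I)$ by bootstrap) and by reading off the classical boundary conditions, but it then splits the positivity argument in two: the strong maximum principle gives $u>0$ in the open interval $I$, and at an endpoint a contradiction is derived between the Hopf boundary point lemma (which would force $-u'(0)<0$ at a boundary zero) and the boundary condition (which forces $u'(0)=-\lambda b_0 u(0)^{q-1}=0$ since $q>1$). Your argument replaces both the strong maximum principle and the boundary point lemma by a single, unified observation: every zero of $u$ — interior (as a minimum of a $\mathcal{C}^1$ non-negative function) or boundary (via the boundary condition and $q>1$) — is a double zero, and $u$ solves the linear ODE $u''+c(x)u=0$ with $c=\lambda m+\lambda a\,u^{p-2}$ continuous precisely because $p>2$, so Cauchy uniqueness forces $u\equiv 0$. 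This is more elementary and self-contained in one dimension (in effect you are re-proving the 1D Hopf lemma via Gronwall), and it treats interior and endpoint zeros uniformly; the paper's version is the one that generalizes to higher dimensions, where the factored coefficient trick is unavailable but the maximum principle and boundary point lemma survive. Both proofs isolate the same two structural facts — $q>1$ kills the boundary derivative at a boundary zero, and $p>2$ keeps the zeroth-order coefficient bounded near a zero of $u$ — so there is no gap in your argument.
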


\begin{proof}
If $u$ is a nonnegative weak solution of \eqref{p:1dim} then, thanks to the inclusion $H^1(I)\subset \mathcal{C}(\overline{I})$ (see \cite{Bre11}) we have $u \in \mathcal{C}(\overline{I})$. Moreover, we claim that $u \in H^2(I)$, so that $u \in \mathcal{C}^1(\overline{I})$. Indeed, from the definition we derive 
\begin{align*}
\int_I u'\phi' = \lambda \int_I (mu + au^{p-1}) \phi, \quad\mbox{$\forall \phi \in \mathcal{C}^1_c(I)$}.  
\end{align*}
This implies that $(u')' = -au^{p-1}$ in $I$ in the distribution sense. By the chain rule we obtain $mu + au^{p-1} \in H^1(I)$, since $m,a \in \mathcal{C}^1(\overline{I})$. By definition we infer that $u \in H^2(I)$. From the inclusion $H^2(I)\subset \mathcal{C}^1(\overline{I})$, it follows that $u \in \mathcal{C}^1(\overline{I})$.

In fact, by a bootstrap argument and elliptic regularity, we have $u \in \mathcal{C}^2(I)$. Hence, it follows that $u \in \mathcal{C}^1(\overline{I}) \cap \mathcal{C}^2(I)$, and we infer that $u > 0$ in $I$ by the strong maximum principle. In order to show that $u(0)>0$, we assume by contradiction that $u(0)=0$. Then the boundary point lemma tells us that $-u'(0)<0$. However, the boundary condition in \eqref{p:1dim} is understood in the classical sense under the condition $u \in \mathcal{C}^1(\overline{I}) \cap \mathcal{C}^2(I)$, and thus, $u'(0)=0$, which is a contradiction. Likewise we can show that $u(1)>0$. 
\end{proof}

\begin{rem}{\rm 
Using the same argument as in Proposition \ref{p:positivedef}, we infer that in the case $N=1$ nontrivial non-negative solutions of \eqref{w0}  satisfy $w_0 > 0$ on $\overline{\Omega}$. 
}\end{rem}

\bigskip


\end{document}